\title[The Bolzano-Weierstrass Theorem is the Jump of Weak K\H{o}nig's Lemma]{The Bolzano-Weierstrass Theorem\\ is the Jump of Weak K\H{o}nig's Lemma}
\author{Vasco Brattka}
\author{Guido Gherardi}
\author{Alberto Marcone}
\address{Laboratory of Foundational Aspects of Computer Science\\ Department of Mathematics \& Applied Mathematics\\
University of Cape Town, South Africa} 
\address{Dipartimento di Filosofia\\ Universit\`a di Bologna\\ Italy}
\address{Dipartimento di Matematica e Informatica\\ Universit\`a di Udine\\ Italy}
\email{Vasco.Brattka@uct.ac.za}
\email{Guido.Gherardi@unibo.it}
\email{Alberto.Marcone@dimi.uniud.it}
\def\AA{{\mathcal A}}
\def\KK{{\mathcal K}}
\def\UU{{\mathcal U}}
\def\IN{{\mathbb{N}}}
\def\IR{{\mathbb{R}}}
\def\Low{\mathfrak{L}}
\def\TO{\Longrightarrow}
\def\In{\subseteq}
\def\into{\hookrightarrow}
\def\prefix{\sqsubseteq}
\def\mto{\rightrightarrows}
\def\id{{\rm id}}
\def\pr{{\rm pr}}
\def\dom{{\rm dom}}
\def\range{{\rm range}}
\def\graph{{\rm graph}}
\def\Cantor{{\{0,1\}^\IN}}
\def\Baire{{\IN^\IN}}
\def\ll#1{\ell_{#1}}
\def\Tr{{\rm Tr}}
\newcommand{\SO}[1]{{{\bf\Sigma}^0_{#1}}}
\newcommand{\PO}[1]{{{\bf\Pi}^0_{#1}}}
\def\LPO{\text{\rm\sffamily LPO}}
\def\LLPO{\text{\rm\sffamily LLPO}}
\def\WKL{\text{\rm\sffamily WKL}}
\def\RCA{\text{\rm\sffamily RCA}}
\def\ACA{\text{\rm\sffamily ACA}}
\def\BCT{\text{\rm\sffamily BCT}}
\def\BWT{\text{\rm\sffamily BWT}}
\def\HBT{\text{\rm\sffamily HBT}}
\def\C{\mbox{\rm\sffamily C}}
\def\UC{\mbox{\rm\sffamily UC}}
\def\CN{\text{\rm\sffamily C$_{\IN}$}}
\def\CA{\text{\rm\sffamily C$_{\rm\mathsf A}$}}
\def\LPO{\mbox{\rm\sffamily LPO}}
\def\LLPO{\mbox{\rm\sffamily LLPO}}
\def\WLPO{\text{\rm\sffamily WLPO}}
\def\MLPO{\mbox{\rm\sffamily MLPO}}
\def\MCT{\text{\rm\sffamily MCT}}
\def\UBWT{\text{\rm\sffamily UBWT}}
\def\K{\text{\rm\sffamily K}}
\def\L{\text{\rm\sffamily L}}
\def\CL{\text{\rm\sffamily CL}}
\def\KL{\text{\rm\sffamily KL}}
\def\KC{\text{\rm\sffamily KC}}
\def\UCL{\text{\rm\sffamily UCL}}
\def\U{\text{\rm\sffamily U}}
\def\A{\text{\rm\sffamily A}}
\def\CA{\text{\rm\sffamily CA}}
\def\leqT{\mathop{\leq_{\mathrm{T}}}}
\def\equivT{\mathop{\equiv_{\mathrm{T}}}}
\def\leqW{\mathop{\leq_{\mathrm{W}}}}
\def\equivW{\mathop{\equiv_{\mathrm{W}}}}
\def\leqSW{\mathop{\leq_{\mathrm{sW}}}}
\def\leqSSW{\mathop{\leq_{\mathrm{ssW}}}}
\def\equivSW{\mathop{\equiv_{\mathrm{sW}}}}
\def\nleqW{\mathop{\not\leq_{\mathrm{W}}}}
\def\nleqSW{\mathop{\not\leq_{\mathrm{sW}}}}
\def\nleqSSW{\mathop{\not\leq_{\mathrm{ssW}}}}
\def\lW{\mathop{<_{\mathrm{W}}}}
\def\lSW{\mathop{<_{\mathrm{sW}}}}
\def\bigtimes{\mathop{\mathsf{X}}}
\def\stars{*_{\rm s}\;\!}
\newcommand{\fa}{\forall}
\newcommand{\ex}{\exists}
\newcommand{\eps}{\emptyset}
\newcommand{\bbN}{\mathbb{N}}
\newcommand{\bbR}{\mathbb{R}}
\newcommand{\Bai}{\ensuremath{{\bbN^\bbN}}}
\newcommand{\omu}{{\omega+1}}
\newcommand{\AS}{\text{\rm\sffamily AS}}
\newcommand{\de}{\delta}
\date{\today}
\newtheorem{theorem}{Theorem}[section]
\newtheorem{proposition}[theorem]{Proposition}
\newtheorem{lemma}[theorem]{Lemma}
\newtheorem{fact}[theorem]{Fact}
\newtheorem{corollary}[theorem]{Corollary}
\theoremstyle{definition}
\newtheorem{definition}[theorem]{Definition}
\newtheorem{example}[theorem]{Example}
\begin{document}

\maketitle


\begin{abstract}
We classify the computational content of the Bolzano-Weierstra\ss{} Theorem and variants thereof in the 
Weihrauch lattice. For this purpose we first introduce the concept of a derivative or jump in this lattice and we show
that it has some properties similar to the Turing jump. Using this concept we prove that the derivative
of closed choice of a computable metric space is the cluster point problem of that space. 
By specialization to sequences with a relatively compact range we obtain a characterization
of the Bolzano-Weierstra\ss{} Theorem as the derivative of compact choice. 
In particular, this shows that the Bolzano-Weierstra\ss{} Theorem on real numbers is
the jump of Weak K\H{o}nig's Lemma. 
Likewise, the Bolzano-Weierstra\ss{} Theorem
on the binary space is the jump of the lesser limited principle of omniscience $\LLPO$
and the Bolzano-Weierstra\ss{} Theorem on natural numbers can be characterized
as the jump of the idempotent closure of $\LLPO$ (which is the jump of the finite parallelization of $\LLPO$). 
We also introduce the compositional product of two Weihrauch degrees $f$ and $g$ as the supremum of the
composition of any two functions below $f$ and $g$, respectively. 
Using this concept we can express the main
result such that the Bolzano-Weierstra\ss{} Theorem is the
compositional product of Weak K\H{o}nig's Lemma and the Monotone Convergence Theorem.
We also study the class of weakly limit computable functions, which are functions
that can be obtained by composition of weakly computable functions with limit computable
functions. We prove that the Bolzano-Weierstra\ss{} Theorem on real numbers is complete
for this class. Likewise, the unique cluster point problem on real numbers is complete
for the class of functions that are limit computable with finitely many mind changes.
We also prove that the Bolzano-Weierstra\ss{} Theorem on real numbers and, more generally,
the unbounded cluster point problem on real numbers is uniformly low limit computable. 
Finally, we also provide some separation techniques that allow to prove non-reducibilities between
certain variants of the Bolzano-Weierstra\ss{} Theorem.
\end{abstract}

\section{Introduction}

In this paper we continue the programme to classify the computational content
of mathematical theorems in the Weihrauch lattice. This programme has been started recently in 
\cite{GM09,BG11,BG11a,Pau09,BBP,Pau10} and the basic idea is to interpret statements of the form
\[(\forall x\in X)(x\in D\TO (\exists y\in Y)(x,y)\in A)\]
as partial multi-valued functions
$f:\In X\mto Y,x\mapsto\{y\in Y:(x,y)\in A\}$ with $\dom(f)=D$.
Here the symbol ``$\In$'' is used to indicate that the function is partial and ``$\mto$''
denotes that it is multi-valued. 
The translation of theorems into such multi-valued functions is straightforward and
these functions are directly the elements of the Weihrauch lattice. 
The lattice is defined using the concept of Weihrauch reducibility, denoted by $f\leqW g$,
and intuitively the meaning is that one can use a realization of $g$ 
to implement $f$. A variant of this reducibility has been introduced by Klaus Weihrauch in the 1990s
and it has been studied since then  (see \cite{Ste89,Wei92a,Wei92c,Her96,Bra99,Bra05}).
The underlying machinery that allows one to work with different sets such as real numbers $\IR$ or 
other metric spaces $X$ is the theory of representations as it is used in computable analysis \cite{Wei00}.
The Weihrauch lattice can be seen as giving a fine structure to the effective Borel hierarchy.

In some sense the Weihrauch lattice is a simple and efficient approach to computable metamathematics.
The space that one studies contains the theorems as points (straightforwardly represented by multi-valued
functions in the above sense) and the underlying technicalities of data types are hidden and encapsulated in representations.
The ``user'' can fully concentrate on comparing the points (i.e.\ theorems) in the lattice and one can directly apply methods
of computability theory, topology and descriptive set theory without considering any additional models. 
Despite the fact that no logical system in the proof theoretic sense is used, one obtains a very fine picture of the 
computational relations of theorems. In particular, the picture is detailed enough to explain the specific computational 
properties of certain theorems that are left unexplained by some other approaches and yet the picture is in strong correspondence
with the results of reverse mathematics, constructive mathematics and proof theory.

In this paper we want to analyze the computational content of the Bolzano-Weierstra\ss{} Theorem, which is the statement 
that any bounded sequence $(x_n)$ of real numbers has a cluster point $x$. In fact, we will study this theorem more generally for
a computable metric space $X$ and then the formulation reads as follows.

\begin{theorem}[Bolzano-Weierstra\ss{} Theorem]
Let $X$ be a metric space.
Any sequence $(x_n)$ in $X$ with a relatively compact range has a cluster point $x$.
\end{theorem}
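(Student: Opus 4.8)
The plan is to reduce the statement to the sequential compactness of a compact metric space, which I would establish by a pigeonhole argument based on total boundedness. First I would pass to the subspace $K:=\overline{\{x_n:n\in\IN\}}$, the closure of the range of the sequence. By hypothesis the range is relatively compact, so $K$ is a compact metric space; in particular $K$ is complete and totally bounded. Since every term $x_n$ lies in $K$, it suffices to produce a cluster point of $(x_n)$ inside $K$.

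Next I would build a Cauchy subsequence by iterated use of total boundedness. Set $S_0:=\IN$, and suppose an infinite set $S_{m-1}\subseteq\IN$ has been defined. Covering $K$ by finitely many open balls of radius $2^{-m}$, the pigeonhole principle yields one such ball $B_m$ for which $S_m:=\{\,n\in S_{m-1}:x_n\in B_m\,\}$ is infinite. Choosing indices $n_0<n_1<n_2<\dots$ with $n_m\in S_m$, we have $x_{n_j},x_{n_k}\in B_m$ whenever $j,k\ge m$, so $d(x_{n_j},x_{n_k})<2^{-m+1}$; hence $(x_{n_m})_{m\in\IN}$ is a Cauchy sequence, and by completeness of $K$ it converges to some $x\in K$. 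Finally, $x$ is a cluster point of $(x_n)$, since every open ball around $x$ contains $x_{n_m}$ for all sufficiently large $m$ and therefore contains $x_n$ for infinitely many $n$.

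I do not expect a genuine obstacle, as this is the classical Bolzano-Weierstra\ss{} Theorem. The only points requiring a little care are that $X$ itself need not be complete --- which is why the limit of the Cauchy subsequence must be taken inside the compact, hence complete, subspace $K$ rather than in $X$ --- and the degenerate case of a finite range, in which some value is attained infinitely often and is trivially a cluster point, though this case is already subsumed by the construction above.
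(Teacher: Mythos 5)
Your proof is correct. Note, however, that the paper never proves this statement itself: it is quoted as the classical Bolzano--Weierstra\ss{} Theorem, and the only justification offered (in the discussion of well-definedness of $\BWT_X$) is a citation of the general topological fact that every sequence in a compact Hausdorff space has a cluster point (Theorem~3.1.23 in \cite{Eng89}), whose standard proof goes through the finite intersection property applied to the closed tails $\overline{\{x_n:n\geq k\}}$: their intersection is non-empty by compactness, and any point of it is a cluster point. Your route is genuinely different and specifically metric: you pass to the compact closure $K$, exploit that compact metric spaces are complete and totally bounded, and extract a Cauchy subsequence by the pigeonhole refinement $S_0\supseteq S_1\supseteq\cdots$ with balls of radius $2^{-m}$, converging inside $K$. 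What your argument buys is an elementary, self-contained proof that even produces a convergent subsequence (sequential compactness), which is the form most directly relevant to the paper's remark that in metric (indeed Fr\'echet) spaces cluster points coincide with limits of subsequences; what the cited argument buys is generality, since it works for arbitrary compact Hausdorff spaces, which is exactly the setting in which the paper defines $\BWT_X$ for represented Hausdorff spaces. Your handling of the two delicate points (taking the limit inside the complete subspace $K$ rather than in the possibly incomplete $X$, and the finite-range degenerate case) is also correct.
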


Here a set is called {\em relatively compact}, if its closure is compact.
The straightforward interpretation of this theorem as a partial multi-valued map is denoted by $\BWT_X:\In X^\IN\mto X$ 
(see Definition~\ref{def:BWT} for the precise definition). We emphasize that the input sequence $(x_n)$ is just
given with the guarantee to have a relatively compact range, but no further input information or bound is provided for this set.
We also study the {\em cluster point problem} $\CL_X$, which is an extension of $\BWT_X$ in the sense that the guarantee
provided for the input sequence $(x_n)$ is only that it has a cluster point, but the range of the sequence is not necessarily
relatively compact. Moreover, we also consider the situation that the sequence has a unique cluster point and then the
corresponding restrictions of the above functions are denoted by $\UBWT_X$ and $\UCL_X$, respectively.

We mention that the finite versions $\BWT_k=\CL_k$ of the
Bolzano-Weierstra\ss{} Theorem can be interpreted as an infinite version of the pigeonhole principle
(here and in the following we identify the number $k\in\IN$ with the set $\{0,1,...,k-1\}$):

\begin{theorem}[Infinite Pigeonhole Principle]
In every sequence $(x_n)$ in $k^\IN$ some element $i<k$ occurs infinitely often.
\end{theorem}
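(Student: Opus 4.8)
The plan is to give the elementary contradiction argument and then to observe that the statement is nothing but the specialization of the Bolzano--Weierstra\ss{} Theorem to a finite discrete space, which is precisely what justifies calling it an infinite pigeonhole principle.

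First I would suppose, towards a contradiction, that no $i<k$ occurs infinitely often in $(x_n)$. Then for each $i<k$ the fiber $\{n\in\IN:x_n=i\}$ is finite, hence bounded by some $N_i\in\IN$, meaning $x_n\neq i$ whenever $n>N_i$. Since there are only finitely many values $i<k$, I can set $N:=1+\max_{i<k}N_i$. For any $n\geq N$ we have $n>N_i$ for every $i<k$, so $x_n\neq i$ for all $i<k$; but $(x_n)$ takes values in $k=\{0,\dots,k-1\}$, so $x_n$ must equal some $i<k$ --- a contradiction. Hence some $i<k$ occurs infinitely often.

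Alternatively, and more in the spirit of the present section, one can deduce the statement from the Bolzano--Weierstra\ss{} Theorem itself: equip $k$ with the discrete metric, under which it is a compact metric space, so every sequence $(x_n)$ in $k$ automatically has a relatively compact range. By the Bolzano--Weierstra\ss{} Theorem $(x_n)$ has a cluster point $x\in k$, that is, every neighbourhood of $x$ contains $x_n$ for infinitely many $n$. Since $\{x\}$ is itself open in the discrete topology, this says exactly that $x$ occurs infinitely often, and $x=i$ for some $i<k$. This makes transparent why $\BWT_k=\CL_k$ deserves to be read as the infinite pigeonhole principle.

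There is no genuine obstacle here; the only point worth flagging is that the first argument implicitly uses that a finite union of finite sets is finite, which if one wants to be scrupulous can be discharged by a routine induction on $k$. For the purposes of this paper the argument above is entirely sufficient.
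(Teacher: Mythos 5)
Your proof is correct. The paper itself offers no proof of this statement: it is presented in the introduction as an elementary observation whose only purpose is to explain why the finite cases $\BWT_k=\CL_k$ deserve to be read as an infinite pigeonhole principle. Your first argument (each fiber finite, hence bounded by some $N_i$, take $N:=1+\max_{i<k}N_i$ and derive a contradiction at any $n\geq N$) is the standard finitary proof and is complete; the caveat about a finite union of finite sets being finite is harmless at this level. Your second argument, specializing the Bolzano--Weierstra\ss{} Theorem to $k$ with the discrete metric and noting that a cluster point $x$ with $\{x\}$ open must occur infinitely often, is exactly the reading the paper intends when it identifies $\BWT_k$ with $\CL_k$ and calls the result an infinite pigeonhole principle, so it fits the context well. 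Either argument alone would suffice.
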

 
Hence, these principles are worth being studied by themselves and our result, mentioned above, shows that the
strength of these principles grows in the Weihrauch lattice with $k$. In \cite{BG11a} we have classified the 
Baire Category Theorem $\BCT\equivW\C_\IN$ and this theorem can be interpreted as another infinite version
of a pigeonhole principle (every ``large'' metric space cannot be decomposed into countably many ``small'' portions).

It turns out that the {\em derivative} or {\em jump} $f'$ of a multi-valued function is a very useful tool
to study higher levels of the Weihrauch lattice. Essentially, it is the counterpart of the Turing jump in the Weihrauch
lattice. Intuitively, the derivative $f'$ of $f$ is just the same function, but with weaker input information. The original
information is replaced by a sequence that converges to it. This makes $f'$ usually much harder to compute than $f$.
We introduce and study the derivative and we show that the cluster point problem is the derivative
of closed choice $\C_X$, i.e.\ $\C_X'\equivW\CL_X$ and analogously the Bolzano-Weierstra\ss{} Theorem is the derivative
of compact choice $\K_X$, i.e.\ $\K_X'\equivW\BWT_X$. Hence, the cluster point problem and the Bolzano-Weierstra\ss{} Theorem
play a role on the third level of the Weihrauch lattice that is analogous to the role of closed and compact choice on the second level. 
Our further main results on the cluster point problem and the Bolzano-Weierstra\ss{} Theorem can be summarized as follows
(we discuss the mentioned notions of computability in Section~\ref{sec:higher-classes}):

\begin{enumerate}
\item The Bolzano-Weierstra\ss{} Theorem $\BWT_X$ is relatively independent of the underlying metric space $X$. 
         If $X$ is a computable metric space that contains an embedded copy of Cantor space, then $\BWT_X\equivW\BWT_\IR$.
         In particular, we obtain
         $\BWT_\Cantor\equivW\BWT_\Baire\equivW\BWT_{\IR^n}\equivW\BWT_{[0,1]}\equivW\BWT_{\ll{2}}$.
\item The finite versions of the Bolzano-Weierstra\ss{} Theorem $\BWT_n$ yield a proper hierarchy of principles:
         $\BWT_2\lW\BWT_3\lW...\lW\BWT_\IN\lW\BWT_\IR$.
\item The Bolzano-Weierstra\ss{} Theorem on reals is the jump of Weak K\H{o}nig's Lemma, i.e.\ $\BWT_\IR\equivW\WKL'$.
\item The Bolzano-Weierstra\ss{} Theorem $\BWT_\IR$ is complete for functions $f$ that are {\em weakly limit computable}.
         These are functions that can be represented as composition $f=g\circ h$ of a weakly computable function $g$
         and a limit computable $h$.
\item The unique cluster point problem $\UCL_\IR$ is complete for functions $f$ that are {\em limit computable with finitely many mind changes}.
        These are functions that can be represented as composition $f=g\circ h$ of a function $g$ that is computable with finitely many mind changes
        and a limit computable $h$.
\item The Bolzano-Weierstra\ss{} Theorem $\BWT_\IR$ and the cluster point problem $\CL_\IR$ are low limit computable, i.e.\
         if a limit computable function $g$ is composed with any function $h$ below the cluster point problem $\CL_\IR$, then 
         the resulting function $g\circ h$ is still $3$--computable (as the cluster point problem $\CL_\IR$ itself).
\item The cluster point problem $\CL_\IR$ is strictly stronger than the Bolzano-Weier\-stra\ss{} Theorem, i.e.\ $\BWT_\IR\lW\CL_\IR$,
         the unique version $\UCL_\IR$ and the cluster point problem $\CL_\IN$ are incomparable with $\BWT_\IR$.
\item The unique Bolzano-Weierstra\ss{} Theorem $\UBWT_\IR$ is complete for limit computable functions and $\UBWT_\IN$ is
         complete for functions that are computable with finitely many mind changes (the same holds for the contrapositive version $\AS$ 
         of $\BWT_\IR$, which is sometimes called {\em Anti-Specker Theorem}). Hence, $\UBWT_\IN$ and $\AS$ are equivalent to the
         Baire Category Theorem $\BCT$.
\end{enumerate}

Figure~\ref{fig:choice} in the conclusions visualizes these and other results. 
We briefly describe the further structure of this paper. 
In the next two sections we summarize some relevant information on the Weihrauch lattice,
its algebraic structure and on the closed choice principle $\C_X$. 
In the following Sections~4-7 we introduce compositional products and the concept of a derivative.
The main result on derivatives is Theorem~\ref{thm:derivatives}, which describes the principal ideal
generated by a derivative $f'$ as composition of the principal ideals of $f$ and the limit computable functions.
We also briefly discuss algebraic properties of the derivative that help to determine derivatives in practice. 
In Section~8 we introduce classes of functions that can be described by composition of limit computable
functions with other functions and we characterize complete elements of these classes using derivatives.
In Sections~9-11 we study the cluster point problem and the Bolzano-Weierstra\ss{} Theorem
and we show that they are derivatives of closed and compact choice, respectively. 
We derive numerous other properties from these characterizations. 
In Sections~12-13 we provide separation results that help to separate certain versions of the
cluster point problem and the Bolzano-Weierstra\ss{} Theorem from each other.
In Section~14-15 we discuss further variants of the Bolzano-Weierstra\ss{} Theorem, such 
as  
the contrapositive form of the Bolzano-Weierstra\ss{} Theorem.
Moreover, we compare the cluster point problem with the accumulation point problem.
Finally, in the Conclusion we compare our results with other results that have been obtained
in constructive analysis, reverse mathematics and proof theory.

\section{The Weihrauch Lattice}

In this section we briefly recall some basic results and definitions regarding
the Weihrauch lattice. The original definition of Weihrauch reducibility is due to Weihrauch
and has been studied for many years (see \cite{Ste89,Wei92a,Wei92c,Her96}).
Only recently it has been noticed that a certain variant of this reducibility yields
a lattice that is very suitable for the classification of mathematical theorems
(see  \cite{GM09,BG11,BG11a,Pau09,BBP,Pau10}). The basic reference for all notions
from computable analysis is \cite{Wei00}.
The Weihrauch lattice is a lattice of multi-valued functions over represented
spaces. We briefly recall the definition of a representation.

\begin{definition}[Representation]
A \emph{representation} $\delta$ of a set $X$ is a surjective (potentially partial)
function $\delta :\In\Baire\to X$. A \emph{represented space} $(X, \delta)$ is a set $X$ together
with a representation $\delta$ of it.
\end{definition}

In general we use the symbol ``$\In$'' in order to indicate that a function is potentially partial.
Using represented spaces we can define the concept of a realizer. We denote the composition of
two (multi-valued) functions $f$ and $g$ either by $f\circ g$ or by $fg$.

\begin{definition}[Realizer]
Let $f : \In (X, \delta_X) \mto (Y, \delta_Y)$ be a multi-valued function between represented spaces.
A \emph{realizer} of $f$ is a function $F :\In \Baire \to \Baire$ satisfying
$\delta_YF(p)\in f\delta_X(p)$ for all $p\in\dom(f\delta_X)$.
We use the notation $F \vdash f$ for expressing that $F$ is a realizer of $f$.
\end{definition}

As realizers are single-valued by definition, the statement that some function $F$ is a realizer
always implies its single-valuedness. Realizers allow us to transfer the notions of computability
and continuity and other notions available for Baire space to any represented space;
a function between represented spaces will be called {\em computable}, if it has a computable realizer, etc.
Given two representations $\delta_1,\delta_2$ of $X$, we say that $\delta_1$ is {\em reducible} to $\delta_2$,
if the identity $\id:(X,\delta_1)\to(X,\delta_2)$ is computable. If the identity is computable in both directions,
then we write $\delta_1\equiv\delta_2$ and we say that the representations are {\em equivalent}.
Now we can define Weihrauch reducibility.
By $\langle\;,\;\rangle:\Baire\times\Baire\to\Baire$ we denote the standard pairing function, defined by
$\langle p,q\rangle(2n):=p(n)$ and $\langle p,q\rangle(2n+1):=q(n)$ for all $p,q\in\Baire$ and $n\in\IN$.

\begin{definition}[Weihrauch reducibility]
Let $f:\In X\mto Y$ and $g:\In Z\mto W$ be multi-valued functions between represented spaces. Define $f \leqW g$, if there are computable
functions $K,H:\In\IN^\IN\to\IN^\IN$ satisfying $K\langle \id, GH \rangle \vdash f$ for all $G \vdash g$.
In this situation we say that $f$ is {\em Weihrauch reducible} to $g$. We write $f\leqSW g$ and we say that $f$ is
{\em strongly Weihrauch reducible} to $g$ if an analogous condition holds, but with the property $KGH\vdash f$
in place of  $K\langle \id, GH \rangle \vdash f$.
\end{definition}

Here $K\langle\id,GH\rangle(p)=K\langle p,GH(p)\rangle$ for all $p\in\Baire$.
Hence the difference between ordinary and strong Weihrauch reducibility is that the ``output modificator'' $K$ has
direct access to the original input in case of ordinary Weihrauch reducibility, but not in case of strong Weihrauch reducibility. 
In \cite{GM09} it has been proved that $f\leqW g$ holds if and only if there are computable multi-valued functions
$h:\In X\mto Z$ and $k:\In X\times W\mto Y$ such that
$\emptyset\not=k(x,gh(x))\In f(x)$ for all $x\in\dom(f)$. Similarly, $\leqSW$ can be characterized using suitable functions $h,k$ with
$\emptyset\not=kgh(x)\In f(x)$.

We note that the relations $\leqW$, $\leqSW$ and $\vdash$ implicitly refer to the underlying representations, which
we will only mention explicitly if necessary. It is known that these relations only depend on the underlying equivalence
classes of representations, but not on the specific representatives (see Lemma~2.11 in \cite{BG11}).
The relations $\leqW$ and $\leqSW$ are reflexive and transitive, thus they induce corresponding partial orders on the sets of 
their equivalence classes (which we refer to as {\em Weihrauch degrees} or {\em strong Weihrauch degrees}, respectively).
These partial orders will be denoted by $\leqW$ and $\leqSW$ as well. In this way one obtains distributive bounded lattices
(for details see \cite{Pau09} and \cite{BG11}).
We use $\equivW$ and $\equivSW$ to denote the respective equivalences regarding $\leqW$ and $\leqSW$, 
and by $\lW$ and $\lSW$ we denote strict reducibility.
It is interesting to mention that some variant of the theory of (continuous) Weihrauch degrees has recently
been proved to be undecidable (see \cite{KSZ10}) and some initial fragments have been analyzed with
respect to computational complexity (see \cite{HS11}).

The Weihrauch lattice is equipped with a number of useful algebraic operations that we summarize in the next definition.
We use $X\times Y$ to denote the ordinary set-theoretic {\em product}, $X\sqcup Y:=(\{0\}\times X)\cup(\{1\}\times Y)$ in order
to denote {\em disjoint sums} or {\em coproducts}, by $\bigsqcup_{i=0}^\infty X_i:=\bigcup_{i=0}^\infty(\{i\}\times X_i)$ we denote the 
{\em infinite coproduct}. By $X^i$ we denote the $i$--fold product of a set $X$ with itself, where $X^0=\{()\}$ is some canonical singleton.
By $X^*:=\bigsqcup_{i=0}^\infty X^i$ we denote the set of all {\em finite sequences over $X$}
and by $X^\IN$ the set of all {\em infinite sequences over $X$}. 
All these constructions have parallel canonical constructions on representations and the corresponding representations
are denoted by $[\delta_X,\delta_Y]$ for the product of $(X,\delta_X)$ and $(Y,\delta_Y)$, $\delta_X\sqcup\delta_Y$
for the coproduct and $\delta^*_X$ for the representation of $X^*$ and $\delta_X^\IN$ for the representation
of $X^\IN$ (see \cite{BG11,Pau09,BBP} for details). We will always assume that these canonical representations
are used, if not mentioned otherwise. 

\begin{definition}[Algebraic operations]
\label{def:algebraic-operations}
Let $f:\In X\mto Y$ and $g:\In Z\mto W$ be multi-valued functions on represented spaces. Then we define
the following operations:
\begin{enumerate}
\itemsep 0.2cm
\item $f\times g:\In X\times Z\mto Y\times W, (f\times g)(x,z):=f(x)\times g(z)$ \hfill (product)
\item $f\sqcap g:X\times Z\mto Y\sqcup W, (f\sqcap g)(x,z):=(\{0\}\times f(x))\cup(\{1\}\times g(z))$ \hfill (sum)
\item $f\sqcup g:\In X\sqcup Z\mto Y\sqcup W$, with $(f\sqcup g)(0,x):=\{0\}\times f(x)$ and\\
        $(f\sqcup g)(1,z):=\{1\}\times g(z)$ \hfill (coproduct)
\item $f^*:X^*\mto Y^*,f^*(i,x):=\{i\}\times f^i(x)$ \hfill (finite parallelization)
\item $\widehat{f}:X^\IN\mto Y^\IN,\widehat{f}(x_n):=\bigtimes_{i=0}^\infty f(x_i)$ \hfill (parallelization)
\end{enumerate}
\end{definition}

In this definition and in general we denote by $f^i:\In X^i\mto Y^i$ the $i$--th fold product
of the multi-valued map $f$ with itself. For $f^0$ we assume that $X^0:=\{()\}$ is a canonical
singleton for each set $X$ and hence $f^0$ is just the constant operation on that set.
It is known that $f\sqcap g$ is the {\em infimum} of $f$ and $g$ with respect to strong as well as
ordinary Weihrauch reducibility (see \cite{BG11}, where this operation was denoted by $f\oplus g$).
Correspondingly, $f\sqcup g$ is known to be the {\em supremum} of $f$ and $g$ (see \cite{Pau09}).
The two operations $f\mapsto\widehat{f}$ and $f\mapsto f^*$ are known to be {\em closure operators}
in the corresponding lattices, which means
$f \leqW \widehat{f}$ and $\widehat{f} \equivW\,\widehat{\!\!\widehat{f}}$, and $f \leqW g$ implies $\widehat{f} \leqW \widehat{g}$
and analogously for finite parallelization (see \cite{BG11,Pau09}).
Sometimes, the finite parallelization is written as $f^*:=\bigsqcup_{i=0}^\infty f^i$. 
More generally, we use the notation $\bigsqcup_{i=0}^\infty f_i:\In\bigsqcup_{i=0}^\infty X_i\mto\bigsqcup_{i=0}^\infty Y_i$ 
for a sequence $(f_i)$ of multi-valued functions $f_i:\In X_i\mto Y_i$ on represented spaces and then it denotes
the operation given by $(\bigsqcup_{i=0}^\infty f_i)(i,u):=\{i\}\times f_i(u)$. 
We mention that all the algebraic operations mentioned in Definition~\ref{def:algebraic-operations} preserve
(strong) Weihrauch equivalence. 

There is some terminology related to these algebraic operations. 
We say that $f$ is a {\em a cylinder} if $f\equivSW\id\times f$ where $\id:\Baire\to\Baire$ always
denotes the identity on Baire space, if not mentioned otherwise. 
Cylinders $f$ have the property that $g\leqW f$ is equivalent to $g\leqSW f$ (see \cite{BG11}).
We say that $f$ is {\em idempotent}
if $f\equivW f\times f$ and {\em strongly idempotent}, if $f\equivSW f\times f$. 
We say that a multi-valued function on represented spaces is {\em pointed}, if it has a computable
point in its domain. For pointed $f$ and $g$ we obtain $f\sqcup g\leqSW f\times g$. 
If $f\sqcup g$ is (strongly) idempotent, then we also obtain the inverse (strong) reduction. 
The finite prallelization $f^*$ can also be considered as {\em idempotent closure} as for pointed $f$ one
can easily see that idempotency is equivalent to $f\equivW f^*$.
We call $f$ {\em parallelizable} if $f\equivW\widehat{f}$ and it is easy to see that $\widehat{f}$ is always idempotent.
In \cite{BBP} a multi-valued function on represented spaces has been called 
{\em join-irreducible} if $f\equivW\bigsqcup_{n\in\IN}f_n$ implies that there
is some $n$ such that $f\equivW f_n$. Analogously, we can define {\em strong join-irreducibility}
using strong Weihrauch reducibility in both instances. 
The properties of pointedness, (strong) idempotency and (strong) join-irreducibility are all preserved under
(strong) equivalence and hence they can be considered as properties of the respective (strong) degrees.

In \cite{BBP} a large class of multi-valued functions has been identified that is join-irreducible 
and we will call them {\em fractals}.\footnote{In this context the terminology of a {\em fractal} 
has been coined by Arno Pauly (personal communication).}
Intuitively, a fractal is a function that is able to compute itself in its entirety from the values 
of a realizer in any small neighbourhood of its domain. Hence a fractal has a computational self-similarity property.
In order to express this property formally, we need the following terminology. 
If $f:\In X\mto Y$ is a function between represented spaces, with representation
$\delta$ of $X$, then we define
$f_A$ for each set $A\In\Baire$ as follows. We let $(X_A,\delta|_A)$ be
the represented space with $X_A:=\delta(A)$ and the restriction $\delta|_A$
of $\delta$ to $A$. Then $f_A:\In X_A\mto Y$ is the restriction of $f$ to $(X_A,\delta_A)$.
Using this notation we can define (strong) fractals.

\begin{definition}[Fractals]
Let $(X,\delta_X)$ and $Y$ be represented spaces. Then a multi-valued function $f:\In X\mto Y$
is called a {\em strong fractal}, if $f\leqSW f_A$ for each $A\In\Baire$ such that
$A$ is clopen and non-empty in $\dom(f\delta_X)$.
We call $f$ a {\em fractal} if the analogous condition holds for $\leqW$ instead of $\leqSW$.
\end{definition}

One reason for the importance of fractals is that being a fractal is often an easily verifiable
condition that implies join-irreducibility. 

\begin{proposition}[Join-irreducibility of fractals]
\label{prop:join-irreducible-fractals}
Each fractal is join-irreducible, each strong fractal is join-irreducible and strongly join-irreducible.
\end{proposition}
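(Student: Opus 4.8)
The plan is to prove the two assertions in tandem, since join-irreducibility for fractals follows from strong join-irreducibility for strong fractals once we observe that an ordinary fractal can be replaced by a ``strongified'' companion. First I would recall the key definitions: $f$ is a fractal if $f\leqW f_A$ for every clopen non-empty $A\In\dom(f\delta_X)$, and $f$ is join-irreducible if $f\equivW\bigsqcup_{n\in\IN}f_n$ implies $f\equivW f_n$ for some $n$. The heart of the argument is the interplay between a reduction witnessing $f\equivW\bigsqcup_{n}f_n$ and the tree-structure of the domain of $\bigsqcup_n f_n$, namely that a computable realizer of this coproduct reads only finitely much of its input before committing to a component index $n$.

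Here is the main line of argument. Suppose $f$ is a strong fractal and $f\equivSW\bigsqcup_{n\in\IN}f_n$. Fix computable $H,K$ witnessing $f\leqSW\bigsqcup_n f_n$, so that for every realizer $G$ of $\bigsqcup_n f_n$ we have $KGH\vdash f$. Now $H$ maps $\dom(f\delta_X)$ into $\dom$ of a representation of $\bigsqcup_n X_n$; the canonical representation of a coproduct outputs a leading index $n$ after reading a finite prefix of the input. Hence for each $p\in\dom(f\delta_X)$ the value $H(p)$ determines some index $n=n(p)$, and by continuity of $H$ the index $n(p)$ is constant on a clopen neighbourhood of $p$ within $\dom(f\delta_X)$. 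Pick one such clopen non-empty $A\In\dom(f\delta_X)$ on which $H$ routes everything into component $n_0$. On $A$, the composite $KGH$ only ever calls $G$ on inputs in $\dom$ of the $n_0$-th component, i.e.\ it only uses a realizer of $f_{n_0}$; so the same $H,K$ (with $H$ followed by the projection stripping the index) witness $f_A\leqSW f_{n_0}$. Since $f$ is a strong fractal, $f\leqSW f_A\leqSW f_{n_0}$, and the reduction $f_{n_0}\leqSW\bigsqcup_n f_n\equivSW f$ is immediate, giving $f\equivSW f_{n_0}$. This proves strong join-irreducibility; ordinary join-irreducibility for strong fractals follows since $\equivW$ is coarser, using that cylinders or the characterization via $h,k$ let one convert a $\leqW$-reduction $f\equivW\bigsqcup_n f_n$ into the appropriate routing data (the index chosen by $h$ on input $x$ again depends continuously on $x$, hence is locally constant, and the same neighbourhood argument applies). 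For an ordinary fractal $f$ we run exactly the same neighbourhood argument but with $\leqW$ throughout: from $f\equivW\bigsqcup_n f_n$ we obtain locally on some clopen $A$ that $f_A\leqW f_{n_0}$, and the fractal property gives $f\leqW f_A\leqW f_{n_0}\leqW f$.

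The step I expect to be the main obstacle is making precise the claim that the realizer of $\bigsqcup_n f_n$ (equivalently the function $h$ in the $h,k$ characterization) commits to a component index after reading only finitely much input, and that this index is a continuous — hence, on the zero-dimensional domain, locally constant — function of the input. This requires unwinding the definition of the canonical coproduct representation $\delta^\sqcup:=\bigsqcup_n\delta_{X_n}$ (some fixed coding putting the index in, say, the first coordinate) and checking that the preimage under $H$ of ``index $=n_0$'' is open in $\dom(f\delta_X)$, so that one can genuinely extract a clopen witness $A$ on which to apply the fractal hypothesis. A second, more routine obstacle is bookkeeping in the ordinary (non-strong) case: one must verify that composing the given Weihrauch reductions with the index-projection still yields admissible computable $K,H$, i.e.\ that restricting to $A$ and discarding the now-constant index does not break the reduction; this is a direct manipulation of the definitions but needs to be stated carefully. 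Everything else — transitivity of $\leqW$ and $\leqSW$, the trivial reductions $f_{n_0}\leqSW\bigsqcup_n f_n$, and the fact that $A$ can be taken clopen in $\dom(f\delta_X)$ — is immediate from the material recalled above.
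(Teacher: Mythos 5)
Your argument is correct and is essentially the standard proof: the paper itself does not spell it out but defers to Lemma~5.5 of \cite{BBP}, whose proof is exactly your observation that the input-adapting functional commits to a component index of the coproduct after reading a finite prefix, hence on a clopen non-empty piece $A$ of $\dom(f\delta_X)$ one gets $f_A\leq f_{n_0}$, and the fractal property lifts this back to $f$. The only cosmetic point is that ordinary join-irreducibility of a strong fractal is cleanest via ``strong fractal $\Rightarrow$ fractal'' rather than ``$\equivW$ is coarser'', but you in effect run that argument anyway, so nothing is missing.
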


The version for ordinary fractals has been proved in Lemma~5.5 of \cite{BBP}.
We mention that the analogous statement for strong fractals and strong join-irreducibility
has essentially the same proof.
Another concept that turns out to be useful for our purposes is the concept
of slimness.
We recall that for a multi-valued function $f:\In X\mto Y$ we call
$\range(f)=\bigcup_{x\in\dom(f)}f(x)$ the {\em range} of $f$. 
This range might contain ``superfluous'' elements and we call
multi-valued functions slim that actually use all elements in
their range as singletons.

\begin{definition}[Slim]
Let $f:\In X\mto Y$ be a multi-valued function. We call $f$ {\em slim},
if for all $y\in\range(f)$ there is some $x\in\dom(f)$ such that $f(x)=\{y\}$.
\end{definition}

Obviously, all single-valued functions are slim, but many multi-valued
functions that we are interested in are also slim. 
As mentioned already in the introduction, we are occasionally interested
in the unique variant of a given multi-valued function, a concept that we define now.

\begin{definition}[Unique variant]
Let $f:\In X\mto Y$ be a multi-valued function on represented spaces.
Then $\U f:\In X\to Y$ is defined as restriction of $f$ with
$\dom(\U f):=\{x\in\dom(f):f(x)$ is a singleton$\}$.
\end{definition}

Obviously, $\U f$ is just a restriction of $f$ to the inputs with a unique output.
We note that for slim $f$ we obtain $\range(f)=\range(\U f)$.

\section{Closed Choice}

Particularly interesting degrees in the Weihrauch lattice can be defined as variants of closed choice.
This operation has been studied in \cite{GM09,BG11,BG11a,BBP} and it is known that many
notions of computability can be calibrated using the right version of choice. 
Basically, closed choice means to find a solution, given a description of what does
not constitute a solution. Since for closed choice we only consider closed sets of possible solutions, a negative
description means to describe the open complement of the solution set.
This can be achieved with the representation $\psi_-$ that we describe now.

A {\em computable metric space} is a triple $(X,d,\alpha)$ such that $(X,d)$ is a metric
space and $\alpha:\IN\to X$ is some sequence that is dense in $X$ and such that
$d\circ(\alpha\times\alpha)$ is a computable sequence of reals. 
For each computable metric space we can derive a numbering of open rational balls by
\[B_{\langle n,k\rangle}:=B(\alpha(n),\overline{k}):=\{x\in X:d(\alpha(n),x)<\overline{k}\},\]
where $\overline{k}$ denotes the $k$--th rational number with respect to some standard numbering of rationals.
Using this notation we obtain a representation $\psi_-:\Baire\to\AA(X)$ of the set $\AA(X):=\{A\In X:A$ closed$\}$ by
\[\psi_-(p):=X\setminus\bigcup_{i=0}^\infty B_{p(i)}.\]
The full space $X$ is captured here as well, as we also consider empty balls $B(\alpha(n),0)$.
Intuitively, a name $p$ of a closed set $A\In X$ is an enumeration of rational open balls (centered in the dense subset)
that exhaust the complement of $A$.
The set $\AA(X)$ equipped with the representation $\psi_-$ is denoted by $\AA_-(X)$ in order to indicate
that we are using {\em negative information}, which describes the complement of the represented set.
The computable points in $\AA_-(X)$ are called {\em co-c.e.\ closed sets}.

Computable metric spaces themselves are typically represented by the {\em Cauchy representation}
$\delta:\In\Baire\to X$ that is defined by $\delta(p)=x:\iff\lim_{n\to\infty}\alpha p(n)=x$ for all $p\in\Baire$
such that $d(\alpha(n),\alpha(k))<2^{-n}$ for all $k>n$. 
If not mentioned otherwise, we will assume that computable metric spaces $X$ are represented
with the Cauchy representation and $\AA_-(X)$ is represented by $\psi_-$ as defined above.
Typically we assume that Baire space $\IN^\IN$ is represented just by the identity $\id:\Baire\to\Baire$
and Cantor space $\{0,1\}^\IN$ by its corresponding restriction. In particular, any function $f:\In\Baire\to\Baire$
is its only realizer up to extensions.

In some cases $\psi_-$ can also be described in simpler terms. For instance for $X=\IN$ we can equivalently
define $\psi_-(p):=\IN\setminus\{n\in\IN:(\exists k)\;p(k)=n+1\}$.
Hence $p$ is a $\psi_-$ name for a set $A\In\IN$ if $p$ is an enumeration of all elements in the complement of $A$
(where the number $0$ is used as a placeholder that indicates no information and allows to represent $\IN$ itself).
We now define closed choice for the case of computable metric spaces.

\begin{definition}[Closed Choice]
Let $X$ be a computable metric space. Then the {\em closed choice} operation
of this space is defined by
\[\C_X:\In\AA_-(X)\mto X,A\mapsto A\]
with $\dom(\C_X):=\{A\in\AA_-(X):A\not=\emptyset\}$.
\end{definition}

Intuitively, $\C_X$ takes as input a non-empty closed set in negative description (i.e.\ given by $\psi_-$) 
and it produces an arbitrary point of this set as output.
Hence, $A\mapsto A$ means that the multi-valued map $\C_X$ maps
the input $A\in\AA_-(X)$ to the set $A\In X$ as a set of possible outputs.
We mention a couple of properties of closed choice for specific spaces.
It is easy to see that $\C_X$ is always pointed and slim (since singletons $\{x\}$ are closed in metric spaces).
We recall that by $\UC_X$ we denote unique choice.
We recall that we identify $k\in\IN$ with the set $\{0,1,...,k-1\}$ and hence $\C_1=\C_{\{0\}}$.
Correspondingly, we consider $\C_0=\C_\emptyset$ as the nowhere defined function (of type $\{\emptyset\}\to\emptyset$),
despite the fact that $\emptyset$ is not a computable metric space.
Moreover, the following is known.

\begin{fact}[Closed choice]
\label{fact:closed-choice}
We obtain the following:
\begin{enumerate}
\item $\C_\IN,\C_\Cantor,\C_\Baire$ and $\C_\IR$ are strongly idempotent and strong fractals, hence also strongly join-irreducible,
\item $\C_\Cantor$, $\C_\Baire$ and $\C_\IR$ are cylinders (likewise $\UC_\Cantor,\UC_\Baire$ and $\UC_\IR$),
\item $\C_1$, $\C_\IN$, $\C_\Cantor$ and $\widehat{\C_\IN}$ are complete
        with respect to Weihrauch reducibility for the classes of multi-valued function on represented spaces
        that are computable, computable with finitely many mind changes, weakly computable and limit computable, respectively. 
\item $\C_\Baire$ is complete for all effectively Borel measurable single-valued functions on 
         computable Polish spaces. 
\end{enumerate}
\end{fact}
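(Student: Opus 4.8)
The statement collects results that are essentially due to \cite{GM09,BG11,BG11a,BBP}, and the plan is to go through the four items, recording the constructions behind each. For (1), the strong idempotency of $\C_X$ for $X\in\{\IN,\Cantor,\Baire\}$ follows by mapping a pair of negative names of non-empty closed sets $A,B\In X$ to a negative name of the product $A\times B$, viewed as a closed subset of $X\times X$, and composing with a computable isomorphism $X\times X\cong X$; the converse reduction is just a projection. For $X=\IR$ the same product construction gives $\C_\IR\times\C_\IR\leqSW\C_{\IR^2}$, and $\C_{\IR^2}\equivSW\C_\IR$ follows from the known facts that $\C_{\IR^n}\equivW\WKL$ and that $\C_\IR$ is a cylinder (item (2)). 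For the strong fractal property, observe that a non-empty clopen subset $A$ of $\dom(\C_X\psi_-)$ can be shrunk to a basic cylinder $[w]$; such a $[w]$ pins down finitely much negative information, i.e.\ commits the input closed set to avoiding a fixed basic open set, and an arbitrary instance of $\C_X$ can then be re-encoded into the complementary region --- passing to the subtree below a node for $\Cantor$, composing with a computable bijection onto a cofinite set for $\IN$, rescaling into a rational subball for $\IR$ (and $[0,1]$). This yields $\C_X\leqSW(\C_X)_{[w]}\leqSW(\C_X)_A$, and strong join-irreducibility then follows from Proposition~\ref{prop:join-irreducible-fractals}.

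For (2), the plan is to show $\C_X\equivSW\id\times\C_X$, and likewise $\UC_X\equivSW\id\times\UC_X$, for $X\in\{\Cantor,\Baire,\IR\}$. One direction is trivial. For the other, fix a computable injection $j:\Baire\to X$ with the property that a negative name of the singleton $\{j(p)\}$ is computable uniformly in $p$ (take $j=\id$ for $\Baire$, $j(p)=0^{p(0)}10^{p(1)}10^{p(2)}1\ldots$ for $\Cantor$, and a continued-fraction-type injection into the irrationals for $\IR$); then, given $(p,A)$, feed the closed set $\{j(p)\}\times A\In X\times X$ to $\C_{X\times X}\equivSW\C_X$ and decode the answer as a pair $(p,a)$ with $a\in A$. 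Since this construction sends singletons to singletons, the same map settles the unique variants.

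For (3), the completeness of each principle splits into a hardness half and an upper-bound half. The hardness halves are precisely the characterization theorems of \cite{BG11,BBP}: a multi-valued function is computable iff it is Weihrauch reducible to $\C_1$, computable with finitely many mind changes iff reducible to $\C_\IN$, weakly computable iff reducible to $\C_\Cantor$ (via $\C_\Cantor\equivW\WKL$), and limit computable iff reducible to $\widehat{\C_\IN}$ (via $\widehat{\C_\IN}\equivW\Lim$). For the upper bounds one checks that $\C_1$ is computable; that $\C_\IN$ is computable with finitely many mind changes, since the strategy ``output the least natural number not yet enumerated out of the input set $A$'' changes its guess monotonically and the guesses are bounded above by $\min A$, hence stabilise after finitely many changes at an element of $A$; that $\C_\Cantor\equivW\WKL$ is weakly computable by definition; and that $\widehat{\C_\IN}\equivW\Lim$ is limit computable.

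Item (4) is the deepest and the one I expect to be the main obstacle. Here the plan is to recall from \cite{GM09} that $\C_\Baire$ is itself effectively Borel measurable (so it lies in the class) and that, conversely, every effectively Borel measurable single-valued $f$ on a computable Polish space is Weihrauch reducible to $\C_\Baire$: uniformly in a name of the input $x$, one must transform the Borel predicate ``$y=f(x)$'' into a co-c.e.\ closed subset of $\Baire$ whose unique element is $f(x)$. Making this transformation precise --- and uniform in the name --- requires the effective theory of Borel codes and the parametrised Borel hierarchy, and this is substantially more work than the elementary coding arguments behind (1)--(3); the remaining points above are either routine explicit constructions or direct citations of the characterization theorems just mentioned.
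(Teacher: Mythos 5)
The paper does not reprove this Fact at all---it simply cites the literature (Corollary~5.6 of \cite{BBP} for the fractal claims, Corollary~7.6 of \cite{BBP} for idempotency and closure under composition, Proposition~8.11 of \cite{BBP} for the cylinder claims, and \cite{BBP} for (3) and (4))---so reconstructing the arguments is a legitimate plan, and your constructions for $\IN$, $\Cantor$ and $\Baire$ (products of closed sets plus a computable pairing homeomorphism, passing to a leftover cylinder or to a cofinite subset for the fractal property, and the singleton-pairing trick for the cylinder property) are sound. The genuine problems are concentrated in the $\IR$ cases. First, the claim ``$\C_{\IR^n}\equivW\WKL$'' is false: $\C_\IR\equivW\C_\Cantor\times\C_\IN$, and $\C_\IN$ and $\C_\Cantor\equivW\WKL$ are incomparable (this is exactly what is used around Example~\ref{ex:no-derivative}), so $\C_\IR$ lies strictly above $\WKL$; you have presumably confused closed choice $\C_\IR$ with compact choice $\K_\IR\equivSW\widehat{\LLPO}$. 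Since $\C_{\IR^2}\equivSW\C_\IR$ is precisely what your arguments for strong idempotency of $\C_\IR$ and for the cylinder property of $\C_\IR$ rest on, both $\IR$ claims are left unsupported as written---and note the circularity: your item (1) for $\IR$ appeals to item (2), while your item (2) for $\IR$ feeds a set into $\C_{\IR\times\IR}\equivSW\C_\IR$, i.e.\ appeals to item (1). The standard repair is different: $\IR^2$ is a computable $K_\sigma$--space, so $\C_{\IR^2}\leqW\C_\IR$ by Proposition~4.8/Corollary~4.9 of \cite{BBP} (upgraded to $\leqSW$ once the cylinder property is in place), or one argues directly with $\C_\IR\equivW\C_\Cantor\times\C_\IN$ and the idempotency of the two factors.

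Second, the fractal step for $\C_\IR$ by ``rescaling into a rational subball'' fails: a homeomorphism of $\IR$ onto a bounded open interval does not map closed subsets of $\IR$ to closed subsets of $\IR$ (an unbounded closed set is sent to a set accumulating at the endpoints of the interval), and taking closures destroys the computable way back, since the added endpoints carry no information about the original set; indeed $\IR$ admits no closed embedding into a bounded ball at all. A correct argument exploits that the finitely many rational balls excluded by the prefix $w$ leave both unbounded rays $(-\infty,-N]$ and $[M,\infty)$ untouched: split the input set $A$ into $A\cap(-\infty,0]$ and $A\cap[0,\infty)$, translate the two (possibly empty) pieces into the two rays, take the union, and recover a point of $A$ from the answer by the case distinction between the two separated rays (which is decidable from a Cauchy name) followed by the inverse translation---or simply cite Corollary~5.6 of \cite{BBP}, as the paper does, noting that the strong version follows by the same argument. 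Items (3) and (4) are, as you say, exactly the characterization theorems from the literature; the only quibble is that (4) is due to \cite{BBP} rather than \cite{GM09}.
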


These facts were essentially proved in \cite{BG11,BG11a,BBP}. 
In case of (1) an even stronger property than idempotency is known: the principal ideal given by 
the respective choice principle is closed under composition, see Corollary~7.6 in \cite{BBP}.
In Corollary~5.6 of \cite{BBP} the claims on fractals were proved and the statement for strong fractals follows analogously.
The claim on cylinders of choice was proved in Proposition~8.11 in \cite{BBP}, except for $\C_\Baire$, for which it follows easily.
The related extra claims for unique choice can be proved correspondingly.
The statements (3) and (4) have been proved in \cite{BBP}.

The omniscience principles $\LPO$ and $\LLPO$ turned out to be very useful and they are closely related to the
closed choice. We recall the definitions (see \cite{BG11} for more details).

\begin{definition}[Omniscience principles]
We define:
\begin{itemize}
\item $\LPO:\IN^\IN\to\IN,\hspace{4.9mm}\LPO(p)=\left\{\begin{array}{ll}
       0 & \mbox{if $(\exists n\in\IN)\;p(n)=0$}\\
       1 & \mbox{otherwise}
       \end{array}\right.$,
\item $\LLPO:\In\IN^\IN\mto\IN,\LLPO(p)\ni\left\{\begin{array}{ll}
       0 & \mbox{if $(\forall n\in\IN)\;p(2n)=0$}\\
       1 & \mbox{if $(\forall n\in\IN)\;p(2n+1)=0$}
       \end{array}\right.$,
\end{itemize}
where $\dom(\LLPO):=\{p\in\IN^\IN:p(k)\not=0$ for at most one $k\}$.
\end{definition}

It is easy to see that $\C_2\equivSW\LLPO$.
Closed choice can be used to characterize the computational content of many
theorems. By $\WKL:\In\Tr\mto\{0,1\}^\IN$ we denote the formalization of Weak K\H{o}nig's Lemma,
i.e.\ $\Tr$ denotes the set of binary trees represented via characteristic functions, $\dom(\WKL)$ is the
set of all infinite binary trees and $\WKL(T)$ is the set of all infinite paths in a given infinite tree $T\in\Tr$ (see \cite{BG11}
and \cite{GM09} where $\WKL$ was originally introduced under the name ${\rm Path}_2$).
By $\HBT$ we denote the formalization of the Hahn-Banach Theorem (see \cite{GM09} for details). 

\begin{fact}[Weak K\H{o}nig's Lemma]
\label{fact:WKL}
$\WKL\equivSW\HBT\equivSW\C_{\Cantor}\equivSW\widehat{\LLPO}$.
\end{fact}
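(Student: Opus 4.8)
The plan is to split the four-fold equivalence into the three claims $\WKL\equivSW\C_{\Cantor}$, $\C_{\Cantor}\equivSW\widehat{\LLPO}$ and $\WKL\equivSW\HBT$, each established by two strong reductions; transitivity of $\equivSW$ then yields the Fact. For $\WKL\leqSW\C_{\Cantor}$, given an infinite binary tree $T$ by its characteristic function, I note that $[T]=\Cantor\setminus\bigcup\{[\sigma]:\sigma\notin T\}$, that $\{\sigma:\sigma\notin T\}$ is enumerable from the characteristic function of $T$, and that this yields a $\psi_-$-name of the non-empty (by K\H{o}nig's Lemma) closed set $[T]$, so that $\C_{\Cantor}$ returns a path of $T$. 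For $\C_{\Cantor}\leqSW\WKL$, from a $\psi_-$-name of a non-empty closed $A\In\Cantor$ I extract (a routine reformulation of $\psi_-$ on Cantor space) a computable enumeration $(\sigma_i)_{i\in\IN}$ of binary strings with $A=\Cantor\setminus\bigcup_i[\sigma_i]$ and put $T:=\{\tau\in 2^{<\IN}:(\forall i\le|\tau|)\ \sigma_i\not\prefix\tau\}$; a short check shows that $T$ is a binary tree with computable characteristic function and that $[T]=A$, hence $T$ is infinite whenever $A\ne\emptyset$, so any path of $T$ is a point of $A$. In both directions the output is produced without consulting the original input, so the reductions are strong.

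For the second equivalence, since $\C_2\equivSW\LLPO$ and parallelization preserves strong equivalence, it suffices to prove $\C_{\Cantor}\equivSW\widehat{\C_2}$. The reduction $\widehat{\C_2}\leqSW\C_{\Cantor}$ is easy: the product $\prod_n A_n$ of a sequence of non-empty co-c.e.\ closed subsets $A_n\In\{0,1\}$ is a non-empty co-c.e.\ closed subset of $\Cantor$ whose $\psi_-$-name is computable from those of the $A_n$ (each bit $b$ excluded from $A_n$ forbids the clopen set $\{x:x(n)=b\}$), and a point of the product solves all the $\C_2(A_n)$ simultaneously. The converse $\C_{\Cantor}\leqSW\widehat{\C_2}$ is the heart of the matter. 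Using the first equivalence I replace $A$ by a computable infinite binary tree $T$ with $[T]=A$. Call $\sigma\in 2^{<\IN}$ \emph{dead} if the subtree of $T$ above $\sigma$ is finite; deadness is c.e.\ in $T$, the empty string is not dead, and a non-dead string has at least one non-dead immediate successor. For every $\sigma$ build a $\C_2$-instance $B_\sigma$ by excluding the bit $b$ from its answer set as soon as $\sigma b$ is found dead, \emph{unless} the other bit has already been excluded; then every $B_\sigma$ has a valid answer (so the sequence $(B_\sigma)_{\sigma\in 2^{<\IN}}$, coded as an input of $\widehat{\C_2}$ via a computable bijection $2^{<\IN}\cong\IN$, lies in the domain), while for non-dead $\sigma$ every valid answer $b$ satisfies ``$\sigma b$ non-dead''. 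From the answer sequence $(b_\sigma)_\sigma$ returned by $\widehat{\C_2}$ define $x\in\Cantor$ recursively by $x(n)=b_\sigma$ where $\sigma=x(0)x(1)\dots x(n-1)$; inductively every prefix of $x$ is non-dead, hence lies in $T$, so $x\in[T]=A$. As $x$ is reconstructed from $(b_\sigma)_\sigma$ alone, this is a strong reduction.

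For $\WKL\equivSW\HBT$ I rely on the corresponding result of Gherardi--Marcone~\cite{GM09}: the admissible norm-preserving extensions of a bounded functional on a subspace of a separable Banach space form a non-empty co-c.e.\ closed subset of a computably compact space (a product of bounded real intervals indexed by a dense sequence), which gives $\HBT\leqSW\WKL$ via the first equivalence, while conversely the infinite paths of a binary tree can be encoded as the admissible extensions of a suitable functional. I expect the main obstacle to be the reduction $\C_{\Cantor}\leqSW\widehat{\C_2}$: one must keep \emph{every} coordinate problem $B_\sigma$ inside $\dom(\C_2)$ — so that $\widehat{\C_2}$ may legitimately be applied to the whole sequence, including the many $\sigma$ lying off the branch eventually selected — and still force the answers along that branch into $T$; the device of suppressing the second exclusion is exactly what reconciles these two demands, and reconstructing the path from the answers alone is what makes the reduction strong rather than merely Weihrauch.
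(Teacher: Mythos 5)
Your proposal is correct in substance, but it takes a genuinely different route from the paper: the paper does not prove this Fact at all, it assembles it from citations ($\WKL\equivSW\widehat{\LLPO}$ and $\C_{\Cantor}\equivSW\widehat{\LLPO}$ are quoted from Theorems~8.2 and 8.5 of \cite{BG11}, and the Hahn--Banach part from \cite{GM09}), whereas you reprove the core equivalences directly. Your two constructions for $\WKL\equivSW\C_{\Cantor}$ (paths of $T$ as the co-c.e.\ closed set $\Cantor\setminus\bigcup\{[\sigma]:\sigma\notin T\}$, and conversely the pruned tree $T=\{\tau:(\forall i\leq|\tau|)\,\sigma_i\not\sqsubseteq\tau\}$) are correct, and so is the key reduction $\C_{\Cantor}\leqSW\widehat{\C_2}$: the per-node instances $B_\sigma$ with exclusions triggered by c.e.\ detection of dead successors, together with the suppression of the second exclusion so that every $B_\sigma$ stays in $\dom(\C_2)$, is exactly the mechanism needed to respect the domain condition of $\LLPO$, and your inductive reconstruction of the path from the answers alone does make the reduction strong. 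What your route buys is a self-contained proof of $\WKL\equivSW\C_{\Cantor}\equivSW\widehat{\LLPO}$ at the cost of redoing arguments the paper outsources to \cite{BG11}.

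The one step you should not assert as freely as you do is the strong reduction $\HBT\leqSW\WKL$. The paper is careful here: \cite{GM09} establishes $\WKL\equivW\HBT$, and only the direction $\WKL\leqSW\HBT$ is claimed to follow from inspecting that proof; the converse is upgraded from $\leqW$ to $\leqSW$ by the separate fact that $\WKL$ (equivalently $\widehat{\LLPO}$) is a cylinder (Proposition~6.5 of \cite{BG11}). Your sentence that the extension-set construction ``gives $\HBT\leqSW\WKL$ via the first equivalence'' silently assumes that the decoding of a path (or of a point of the compact set of admissible extensions) into a name of the extension functional can be done without consulting the original input --- e.g.\ without the norm bounds and scaling data of the given space and functional --- and that is precisely what a strong reduction forbids using. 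Either verify this input-independence in the \cite{GM09} construction, or do what the paper does: prove that $\C_{\Cantor}$ is a cylinder (interleave a delimiter-coded copy of the Baire-space input with the closed set; this is short) and then conclude $\HBT\leqSW\WKL$ from the ordinary reduction. As written, this is a fixable but real omission in the only place where the four-fold \emph{strong} equivalence is at stake.
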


The equivalence $\WKL\equivSW\widehat{\LLPO}$ was proved in Theorem~8.2 of \cite{BG11},
the equivalence $\C_\Cantor\equivSW\widehat{\LLPO}$ was proved in Theorem~8.5 of \cite{BG11}.
In Proposition~6.5 of \cite{BG11} it was proved that $\widehat{\LLPO}$ and hence $\WKL$ are cylinders.
The equivalence $\WKL\equivW\HBT$ was proved in \cite{GM09} and the proof even shows $\WKL\leqSW\HBT$.
The other direction holds with respect to strong reducibility, since $\WKL$ is a cylinder.
This also shows that $\HBT$ is a cylinder.

Another important equivalence class is the class of choice $\C_\IN$ on natural numbers, which turned
out to be equivalent to the Baire Category Theorem $\BCT$ and to the limit operation $\lim_\IN$ on natural 
numbers. By $\lim:\In\Baire\to\Baire,\langle p_0,p_1,p_2,...\rangle\mapsto\lim_{i\to\infty}p_i$ we denote the usual
limit operation on Baire space (with the input sequence encoded in a single sequence) and by $\lim_\Delta$ we denote
the restriction of $\lim$ to the limit with respect to the discrete topology on $\Baire$. 
It is easy to see that $\lim$ and $\lim_\Delta$ are cylinders (see below).
In general, we denote by $\lim_X:\In X^\IN\to X$ the ordinary limit operation of a metric space $X$.
We mention some known facts.

\begin{fact}[Limit]
\label{fact:lim}
$\lim\equivSW\lim_{\{0,1\}^\IN}\equivSW\lim_\IR\equivSW\widehat{\LPO}\equivSW\widehat{\lim_\IN}$
and all the mentioned functions are cylinders.
\end{fact}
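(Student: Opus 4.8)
The goal is to establish Fact~\ref{fact:lim}, namely
\[\lim\equivSW\lim_{\{0,1\}^\IN}\equivSW\lim_\IR\equivSW\widehat{\LPO}\equivSW\widehat{\lim_\IN},\]
together with the claim that all five functions are cylinders. I will assume the standard machinery of computable analysis as set up in the excerpt: the Cauchy representation of computable metric spaces, the representation $\psi_-$, and the algebraic operations of Definition~\ref{def:algebraic-operations}.

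The plan is to prove the chain of equivalences by a sequence of $\leqSW$-reductions forming a cycle, so that all equivalences drop out. First I would treat $\widehat{\LPO}\equivSW\lim$: the parallelization $\widehat{\LPO}$ takes a sequence $(p_i)$ and outputs the sequence $(b_i)$ with $b_i=0$ iff $p_i$ hits $0$; this is exactly the limit in the discrete topology of a suitably recoded sequence, so $\widehat{\LPO}\equivSW\lim_\Delta$, and then one checks $\lim_\Delta\equivSW\lim$ by the standard trick that a convergent sequence in $\Baire$ together with a modulus (discrete convergence) is recoverable from the raw sequence, and conversely the ordinary limit of $(q_i)$ on $\Baire$ can be computed from the discrete limits of the ``stabilization at position $n$'' auxiliary sequences. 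Second, $\lim\equivSW\lim_{\{0,1\}^\IN}$: one direction is immediate since $\{0,1\}^\IN$ embeds into $\Baire$; for the other, encode a Cauchy-fast convergent sequence in $\Baire$ as a single convergent sequence in Cantor space using a standard computable injection $\Baire\into\{0,1\}^\IN$ that is compatible with the limit operation. Third, $\lim\equivSW\lim_\IR$: a name of a real as a fast Cauchy sequence can be converted to a convergent sequence in $\Baire$ and back; care must be taken that convergence in $\IR$ matches convergence of names, which is handled by passing through fast Cauchy names on both sides. Finally, $\widehat{\LPO}\equivSW\widehat{\lim_\IN}$: since $\LPO\equivW\lim_\IN$ (both amount to: decide from a sequence of naturals stabilizing to a value what that value is — indeed $\lim_\IN$ is exactly the limit on $\IN$ with the discrete topology, which is computably equivalent to countably many instances of $\LPO$), parallelization preserves this equivalence, and one only needs to upgrade $\leqW$ to $\leqSW$, which works because parallelizations of pointed functions are cylinders.

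For the cylinder claims, I would argue that $\lim$ is a cylinder by showing $\lim\equivSW\id\times\lim$: given $\langle p,\langle q_0,q_1,\dots\rangle\rangle$ one forms the sequence whose limit codes the pair $\langle p,\lim_i q_i\rangle$, using that $p$ can be ``frozen'' into every term of an auxiliary convergent sequence; conversely, from the limit of a sequence one extracts the sequence itself by reading off approximations, so the input $p$ is recoverable, giving the $\leqSW$ in the nontrivial direction. The same argument, essentially verbatim, works for $\lim_{\{0,1\}^\IN}$, $\lim_\IR$ and $\lim_\Delta$; once one of these is known to be a cylinder, the cylinder property transfers along $\equivSW$, so it suffices to do it once. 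The fact that $\widehat{f}$ is always a cylinder when $f$ is pointed (a routine consequence of $\widehat f\equivSW\widehat{\id\times f}\equivSW\id\times\widehat f$, the first equivalence holding because $\id$ on $\Baire$ is pointed and parallelization distributes over products up to $\equivSW$) then covers $\widehat{\LPO}$ and $\widehat{\lim_\IN}$.

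The main obstacle I anticipate is not any single reduction but the bookkeeping needed to keep the reductions \emph{strong} rather than merely ordinary: in each case the ``output modificator'' must not consult the original input, so the encodings must be arranged so that all information needed to postprocess is already carried inside the limit value. The cleanest way around this is to route everything through $\lim$ itself (which is manifestly a cylinder) and to use that for cylinders $\leqW$ and $\leqSW$ coincide (see \cite{BG11}), so that the easier ordinary reductions automatically upgrade. Concretely I would first establish all the equivalences with respect to $\leqW$, observe that $\lim$ is a cylinder by the explicit argument above, conclude via the cylinder property that each of the other four functions is $\equivSW\lim$ and hence itself a cylinder, and thereby obtain all the strong equivalences at once. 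The only genuinely computational checks are the standard ``convergent sequence versus fast Cauchy name'' translations on $\Baire$, $\{0,1\}^\IN$ and $\IR$, which are routine and well known.
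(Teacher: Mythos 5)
Your overall strategy (route everything through $\lim$, which is a cylinder, and upgrade ordinary reductions to strong ones) is sound and matches the spirit of the literature the paper cites, but two of your key links are argued via claims that are actually false. First, the chain $\widehat{\LPO}\equivSW\lim_\Delta\equivSW\lim$ is broken at both steps: $\lim_\Delta$ is the limit with respect to the \emph{discrete} topology on $\Baire$, i.e.\ it only accepts eventually constant sequences, and $\lim_\Delta\equivW\C_\IN\lW\lim$ -- this strict separation is not a side issue but the backbone of the paper's distinction between ``limit computable'' and ``computable with finitely many mind changes'' (Facts~\ref{fact:BCT} and \ref{fact:lim-delta-UCR}). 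Your recoding of $\widehat{\LPO}$ produces a sequence that converges coordinatewise (product topology), not discretely: each coordinate stabilizes, but not uniformly, so the interleaved sequence need not be eventually constant; and your converse ``stabilization at position $n$'' trick computes $\lim$ from countably many applications of $\lim_\Delta$, i.e.\ from $\widehat{\lim_\Delta}$, not from a single one. Second, $\LPO\equivW\lim_\IN$ is false: $\LPO$ is computable with one mind change while $\lim_2\leqW\lim_\IN$ is not computable with any bounded number of mind changes (the paper uses exactly this to get $\LPO\lW\lim_2$); equivalently $\lim_\IN\equivW\C_\IN\nleqW\LPO$. So $\widehat{\LPO}\equivSW\widehat{\lim_\IN}$ cannot be obtained by parallelizing an equivalence at the unparallelized level; you need genuine reductions such as $\lim_\IN\leqW\widehat{\LPO}$ (for each stage $k$ and value $j$ ask the $\Sigma^0_1$ questions ``does the sequence take a value $\neq j$ after stage $k$'', and recover the limit from the answers alone so that the reduction is strong), and then parallelize.

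The cylinder part also rests on a false lemma: ``$\widehat{f}$ is a cylinder whenever $f$ is pointed'' fails, e.g.\ $\C_1$ is pointed but $\widehat{\C_1}\equivSW\C_1$ is not a cylinder (a constant realizer destroys all input information, so $\id\nleqSW\C_1$), and your justification is circular, since the step $\widehat{\id\times f}\leqSW\widehat{f}$ is precisely the cylinder property you are trying to prove -- pointedness of $\id$ contributes nothing. This matters for your final plan: to upgrade $\lim\leqW\widehat{\LPO}$ and $\lim\leqW\widehat{\lim_\IN}$ to strong reductions you need \emph{those} functions to be cylinders (or direct strong reductions), independently of the equivalences being proved; deriving their cylinderhood from $\equivSW\lim$ is circular. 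The fix is a direct argument in each case: constant sequences fed to $\lim_\IN$, and the queries ``$p(n)=k$?'' fed to $\LPO$, let you code an arbitrary $p\in\Baire$ into computable instances run alongside the given one and recover it from the answers, so $\id\times\widehat{\lim_\IN}\leqSW\widehat{\lim_\IN}$ and $\id\times\widehat{\LPO}\leqSW\widehat{\LPO}$. With these repairs (direct $\lim\equivSW\widehat{\lim_\IN}$ coordinatewise, a genuine $\lim_{\{0,1\}^\IN}\equivSW\widehat{\LPO}$, the routine type translations for $\{0,1\}^\IN$ and $\IR$, and per-function cylinder proofs) your outline goes through; note the paper itself does not reprove this fact but cites Proposition~9.1 of \cite{Bra05} and Corollary~6.4 and Proposition~6.5 of \cite{BG11}, observing only that $\lim\equivSW\widehat{\lim_\IN}$ is immediate.
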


The claim can be derived from Proposition~9.1 in \cite{Bra05}, Corollary~6.4 and Proposition~6.5 in \cite{BG11}
and the equivalence $\lim\equivSW\widehat{\lim_\IN}$ can easily be seen directly.

\begin{fact}[Baire Category]
\label{fact:BCT}
$\BCT\equivW\UC_\IN\equivW\C_\IN\equivW\lim_\IN\equivW\lim_\Delta\equivW\UC_\IR$.
\end{fact}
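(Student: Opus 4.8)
The plan is to establish the cycle of equivalences
$\BCT\equivW\UC_\IN\equivW\C_\IN\equivW\lim_\IN\equivW\lim_\Delta\equivW\UC_\IR$
by combining already-cited results with a small amount of direct work. The equivalence $\C_\IN\equivW\lim_\IN$ is the backbone and should be treated first: it is precisely the content of Fact~\ref{fact:closed-choice}(3) read together with the observation that $\widehat{\C_\IN}$ is limit-complete while $\C_\IN$ is complete for the finitely-many-mind-changes computable functions, but more directly it is known from \cite{BG11,BG11a} that a $\psi_-$-name of a nonempty closed (hence here co-c.e.) subset $A\In\IN$ is exactly an enumeration of the complement, from which one reads off a discrete limit: list the candidate $n\in A$ and change the guess each time a smaller candidate gets excluded. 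This same bookkeeping shows $\C_\IN\leqW\lim_\Delta$, and conversely a discrete-limit input can be turned into a $\psi_-$-name by enumerating, at stage $s$, all values different from the current approximation, giving $\lim_\Delta\leqW\C_\IN$; thus $\C_\IN\equivW\lim_\Delta$. Then $\lim_\Delta\equivW\lim_\IN$ is immediate because a discrete limit on Baire space is, coordinatewise, an eventually-constant sequence of naturals, i.e. a $\lim_\IN$-instance, parallelized — but parallelization of $\lim_\IN$ lands back at $\lim$ by Fact~\ref{fact:lim}, so one must be slightly careful: the point is that a single discrete limit is computed by finitely-many-mind-changes from $\lim_\IN$-oracle answers, and conversely $\lim_\IN\leqW\lim_\Delta$ trivially, so $\lim_\IN\equivW\lim_\Delta$ follows once $\C_\IN\equivW\lim_\Delta$ is in hand and $\C_\IN\equivW\lim_\IN$ is cited.

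Next I would handle the unique variants. Since $\C_\IN$ is slim, every point of $\IN$ is a singleton value, but $\C_\IN$ is not single-valued; nonetheless $\UC_\IN\leqW\C_\IN$ trivially, and for the reverse one uses that the limit description above actually produces a \emph{canonical} point of $A$ (say its minimum, which is well-defined once the complement is enumerated), so $\C_\IN\leqSW\UC_\IN$: the reduction that picks $\min A$ is a single-valued selector computable from the same $\lim_\IN$-type data. This gives $\C_\IN\equivW\UC_\IN$. For $\UC_\IR$: one inclusion is $\UC_\IR\leqW\C_\IN$ via the standard fact that a co-c.e. closed subset of $\IR$ which happens to be a singleton $\{x\}$ lets one compute, from negative information, a fast-converging rational sequence to $x$ with \emph{finitely many mind changes at each precision}, i.e. a $\lim_\IN$ computation at each dyadic level — this is exactly the argument that $\UC_\IR$ is finitely-mind-change-below $\lim$; and the fact that a singleton is being named is what pins down the limit. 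The reverse $\C_\IN\leqW\UC_\IR$ embeds $\IN$ into $\IR$ as the isolated points $\{2^{-n}:n\in\IN\}\cup\{0\}$; a co-c.e. closed nonempty subset $A\In\IN$ of this copy, intersected with the guarantee that we pass to its topological closure's least element, becomes a singleton after one takes the canonical selector $\min$, yielding a $\UC_\IR$-instance. So $\C_\IN\equivW\UC_\IR$.

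Finally $\BCT\equivW\C_\IN$ is quoted as established in \cite{BG11a}: the forward direction encodes, from a sequence of closed nowhere dense sets covering nothing, a search for a point avoiding all of them, which by a name-tracking argument reduces to choosing a natural number; the reverse direction realizes a co-c.e. closed nonempty $A\In\IN$ as the complement in a suitable Polish space of an effective union of nowhere dense closed sets. I would simply cite this and assemble the chain. The main obstacle, as usual with these choice-vs-limit equivalences, is getting the $\lim_\IN$ versus $\lim_\Delta$ bookkeeping exactly right and making sure the selectors used for the unique variants are genuinely computable from the data at hand rather than merely defined in terms of it — but all of this is routine name manipulation, and the substantive inputs ($\C_\IN$ complete for finitely-many-mind-changes, $\widehat{\C_\IN}$ complete for limit computable, $\BCT\equivW\C_\IN$, and Fact~\ref{fact:lim}) are already available in the excerpt, so the proof is essentially an exercise in chaining known reductions.
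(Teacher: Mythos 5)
Your chain is assembled from the right pieces, and several steps are fine: $\C_\IN\leqW\lim_\IN$ by tracking the least number not yet enumerated into the complement, $\lim_\IN\leqW\lim_\Delta$, the citation of $\BCT\equivW\C_\IN$, and (if you lean on Fact~\ref{fact:closed-choice}(3) together with the characterization of mind-change computability by $\lim_\Delta$) also $\lim_\Delta\leqW\C_\IN$. The genuine gap is in every reduction \emph{into} a unique choice operation, which is exactly the part of the Fact that the paper proves in-house rather than cites. To show $\C_\IN\leqSW\UC_\IN$ or $\C_\IN\leqW\UC_\IR$ you must computably transform a $\psi_-$-name of an arbitrary nonempty $A$ into a $\psi_-$-name of a \emph{singleton}; observing that your eventual answer is canonical ($\min A$) does not produce such a name. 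Indeed $\{\min A\}$ is not co-c.e.\ uniformly in a $\psi_-$-name of $A$: excluding a number $m>\min A$ requires positive knowledge that some smaller number lies in $A$. Concretely, on the name of $A=\IN$ (nothing enumerated) your selector would have to exclude $1$ after finitely many steps, but that finite behaviour is consistent with the input going on to name $A=\IN\setminus\{0\}$, where $1=\min A$; since enumerated exclusions cannot be retracted, no such procedure exists. The same irreversibility breaks your sketch ``enumerate, at stage $s$, all values different from the current approximation'' for $\lim_\Delta\leqW\C_\IN$: the first time the approximation is wrong, the true limit has already been excluded and the set being built can become empty. Your reduction $\C_\IN\leqW\UC_\IR$ via the copy $\{2^{-n}\}\cup\{0\}$ has the same flaw: the co-c.e.\ set you build there is not a singleton, and ``taking $\min$'' of it is again not available from negative information.

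What is missing is precisely the device of Proposition~\ref{prop:unique-choice-N}, which proves $\lim_\IN\leqSW\UC_\IN$ (and hence $\C_\IN\leqSW\lim_\IN\leqSW\UC_\IN$): instead of trying to keep the limit value itself unexcluded, one spares at each moment a single pair $\langle n_i,k\rangle$, where $n_i$ is the current input value and $k$ records how often the construction has switched; when the input changes its mind, the previously spared pair is enumerated and a fresh pair with a new second coordinate is spared. Because the input changes only finitely often, exactly one pair survives, so the constructed set is a genuine co-c.e.\ singleton, and projecting its unique element to the first coordinate recovers the limit. For $\C_\IN\equivW\UC_\IR$ and $\C_\IN\equivW\lim_\Delta$ the paper simply cites \cite{BBP} (Corollary~6.4 and Corollary~7.11), as you may as well. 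Finally, your argument for $\UC_\IR\leqW\C_\IN$ as ``a $\lim_\IN$ computation at each dyadic level'' would amount to using $\widehat{\lim_\IN}\equivW\lim$, which is too strong; the correct route is a single application of $\C_\IN$ (equivalently one finite-mind-change search) to find a bounded interval $[-N,N]$ containing the point --- the set of such $N$ is co-c.e.\ in the input by compactness --- after which unique choice on a compact interval is outright computable from the original name.
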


The equivalence $\BCT\equivW\C_\IN$ has been proved in Theorem~5.2 of \cite{BG11a}.
The equivalence $\C_\IN\equivW\UC_\IR$ has been proved in Corollary~6.4 of \cite{BBP},
the equivalence $\C_\IN\equivW\lim_\Delta$ has been proved in Corollary~7.11 of \cite{BBP}.
In Proposition~6.2 of \cite{BBP} it was proved that $\UC_\IN\equivW\C_\IN$. 
The equivalence of $\lim_\IN$ and $\C_\IN$ is discussed in Proposition~\ref{prop:unique-choice-N} below.

Although the above equivalence describes a single Weihrauch degree, this degree decomposes
into a number of interesting strong degrees.
Firstly, we mention that $\lim_\Delta$ and $\UC_\IR$ are cylinders.
This is easy to see in case of $\lim_\Delta$ (using the normal pairing function
on Baire space, we obtain $\langle q,\lim_\Delta(p_i)\rangle=\lim_\Delta\langle q,p_i\rangle$.)
In case of $\UC_\IR$, this can be proved as for $\C_\IR$, see Fact~\ref{fact:closed-choice}.

\begin{fact}
\label{fact:lim-delta-UCR}
$\lim_\Delta\equivSW\UC_\IR\equivSW\C_\IN\times\id$ and $\lim_\Delta$ and $\UC_\IR$ are cylinders.
\end{fact}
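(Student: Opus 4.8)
The plan is to prove $\lim_\Delta\equivSW\UC_\IR\equivSW\C_\IN\times\id$ by a cycle of strong reductions, and then obtain the cylinder claim for free from the form $\C_\IN\times\id$.

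First I would establish $\lim_\Delta\leqSW\C_\IN\times\id$. Given a sequence $\langle p_0,p_1,\ldots\rangle$ that converges in the discrete topology to some $p\in\Baire$, I note that for each coordinate $n$ the sequence $(p_i(n))_i$ is eventually constant; thus from the input one can compute, for each $n$, a name of the singleton $\{p(n)\}\in\AA_-(\IN)$ (enumerate $m$ into the complement whenever one sees $p_i(n)\neq m$ for some $i$ after $m$ has been a candidate — more precisely, enumerate the negative information that eventually excludes every wrong value). Packaging these coordinatewise choice instances into one instance of $\widehat{\C_\IN}$ is too strong; instead observe that the discrete limit means the \emph{whole} sequence $\langle p_i\rangle$ is eventually constant, so a single natural-number choice suffices: the set $\{i : (\forall j\ge i)\ p_j = p_i\}$ is co-c.e.\ as a subset of $\IN$ (enumerate $i$ into the complement once one sees $p_j\neq p_i$ for some $j>i$) and non-empty, and from any element $i$ of it together with direct access to the original input (the $\id$ component) one computes $p=p_i$. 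This gives $\lim_\Delta\leqSW\C_\IN\times\id$, using the $\C_\IN$ component for the index and the $\id$ component to read off $p_i$.

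Next I would show $\C_\IN\times\id\leqSW\UC_\IR$. Since $\UC_\IR$ is a cylinder (Fact~\ref{fact:closed-choice}, via the same argument as for $\C_\IR$), it suffices to reduce $\C_\IN$ to $\UC_\IR$. A standard construction (as in the cited Corollary~6.4 of \cite{BBP}) takes a co-c.e.\ closed non-empty $A\In\IN$ and builds a co-c.e.\ closed non-empty $B\In\IR$ which is forced to be a singleton $\{x_A\}$ from which an element of $A$ is recoverable: e.g.\ place potential real "witnesses" near the points $2^{-n}$ for $n\notin$(complement enumerated so far) and shrink the allowed neighbourhoods as the complement grows, so that in the limit exactly one real survives, coding $\min A$. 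This yields $\C_\IN\leqSW\UC_\IR$ and hence $\C_\IN\times\id\leqSW\UC_\IR$.

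Finally I would close the cycle with $\UC_\IR\leqSW\lim_\Delta$. Given a co-c.e.\ closed singleton $\{x\}\In\IR$, I compute a sequence in $\Baire$ (say, of rational approximations packaged as fast-converging names of reals, or more simply of integers coding a shrinking nested sequence of rational intervals) which, because the solution is unique, must stabilize in the discrete topology: at stage $s$ output the least-index rational interval of length $2^{-s}$ not yet excluded by the negative information; uniqueness of $x$ forces each such integer to be eventually fixed, so the whole $\Baire$-valued sequence is eventually constant and its discrete limit is a name of $x$. Composing these three strong reductions gives $\lim_\Delta\equivSW\UC_\IR\equivSW\C_\IN\times\id$; the cylinder assertion then follows since $f\times\id$ is always a cylinder, and in particular $\lim_\Delta\equivSW\lim_\Delta\times\id\equivSW\UC_\IR\times\id$.

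The main obstacle I anticipate is the \emph{uniqueness} bookkeeping in the reductions involving $\UC_\IR$: one must arrange the encodings so that the produced closed set in $\IR$ is provably a singleton whenever the hypotheses hold (for $\C_\IN\leqSW\UC_\IR$) and so that the approximation process genuinely stabilizes in the discrete topology rather than merely in the metric topology (for $\UC_\IR\leqSW\lim_\Delta$). Getting discrete — not just metric — convergence is precisely where uniqueness of the solution is used, and it is the step that would need the most care; everything else is routine enumeration and packaging, and the cylinder consequence is immediate from the product-with-$\id$ shape.
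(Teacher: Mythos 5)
Your first two legs are essentially sound. The stabilization-index argument for $\lim_\Delta\leqSW\C_\IN\times\id$ works: the set of indices from which the snapshot sequence is constant is co-c.e.\ and non-empty, and the $\id$-component carries the input so that the chosen index can be evaluated. For $\C_\IN\times\id\leqSW\UC_\IR$ you correctly reduce to $\C_\IN\leqW\UC_\IR$ via the cylinder property of $\UC_\IR$, and citing Corollary~6.4 of \cite{BBP} for that is legitimate (the paper does the same); note only that your ``witness near $2^{-n}$'' sketch would not survive a change of candidate, since by the time $n$ is excluded everything outside a small neighbourhood of the old witness has already been enumerated into the complement, so the real construction must nest all future candidates inside every neighbourhood of the current one.

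The genuine gap is the third leg, $\UC_\IR\leqSW\lim_\Delta$. Your construction stabilizes only coordinatewise: for each fixed $s$ the ``least-index interval of length $2^{-s}$ not yet excluded'' is eventually fixed, but $\lim_\Delta$ requires the sequence of whole points of $\Baire$ to be \emph{eventually constant}, i.e.\ all coordinates must have stabilized by one common finite time. Nothing in your construction forces this: the stabilization time of coordinate $s$ is the time by which all wrong intervals of length $2^{-s}$ of smaller index have been covered by the enumerated balls, and an admissible $\psi_-$-name may delay these exclusions arbitrarily (e.g.\ by padding with empty balls), so for suitable $x$ and suitable names no common stabilization time exists. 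What your argument actually yields is a reduction to $\lim$, which is of no use here; ``uniqueness'' alone does not convert pointwise convergence in $\Baire$ into discrete convergence. The missing idea is to channel all mind changes through a single natural-number parameter: the set $\{n\in\IN:A\cap[-n,n]\neq\emptyset\}$ is co-c.e.\ and non-empty, so its least non-excluded element $n_t$ changes only finitely often, and on the compact interval $[-n,n]$ unique choice is computable (search for finitely many enumerated balls covering $[-n,n]$ up to a rational interval of length $2^{-k}$, which must then contain $x$). If the $t$-th snapshot is the \emph{entire} name of $x$ recomputed from the input under the assumption that $n_t$ is correct, the whole snapshot depends only on $(n_t,p)$ and hence changes only when $n_t$ does; equivalently, prove $\UC_\IR\leqSW\C_\IN\times\id$ and $\C_\IN\times\id\leqSW\lim_\Delta$ and compose. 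Alternatively you could argue as the paper implicitly does: all three maps are Weihrauch equivalent to $\C_\IN$ by the cited results, all three are cylinders, and an ordinary reduction into a cylinder upgrades automatically to a strong one, which gives the stated strong equivalences without any new construction.
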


Since the other four functions mentioned in Fact~\ref{fact:BCT} cannot be cylinders (for mere cardinality reasons of the output), it follows
that they are not in the same strong degree. We strengthen here the above result by proving that at least
three of the above functions are in the same strong degree.

\begin{proposition}
\label{prop:unique-choice-N}
$\UC_\IN\equivSW\C_\IN\equivSW\lim_\IN$.
\end{proposition}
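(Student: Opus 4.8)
The plan is to prove the statement by establishing the cycle of strong reductions
\[
\UC_\IN\leqSW\C_\IN\leqSW\lim_\IN\leqSW\UC_\IN .
\]
The first of these is trivial: $\UC_\IN$ is by definition a restriction of $\C_\IN$ (to those $A\in\AA_-(\IN)$ that happen to be singletons), and on this smaller domain $\C_\IN$ and $\UC_\IN$ agree, so any realizer of $\C_\IN$ already realizes $\UC_\IN$ and we may take both the pre- and the post-processing maps to be the identity.

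For $\C_\IN\leqSW\lim_\IN$ I would use the familiar fact that the least element of a co-c.e.\ set is a limit of a computable sequence of numbers. Given a $\psi_-$-name $p$ of a non-empty set $A\In\IN$, the preprocessing $H$ reads off, stage by stage, the finite set $C_s\In\IN$ of numbers enumerated into $\IN\sm A$ after $s$ steps of $p$, and outputs a name of the sequence $q_s:=\min(\IN\sm C_s)$. Once every number below $\min A$ has been enumerated into the complement we have $q_s=\min A$, hence $(q_s)_s$ converges in $\IN$ to $\min A\in A$. Composing with $\lim_\IN$ and letting the postprocessing $K$ simply reformat the resulting name of $\min A$ yields a realizer of $\C_\IN$; since $K$ ignores $p$, the reduction is strong.

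The substantial step is $\lim_\IN\leqSW\UC_\IN$. The naive attempt --- turning a name of a convergent sequence $(q_n)$ in $\IN$ (necessarily eventually constant) into a $\psi_-$-name of the singleton $\{\lim_n q_n\}$ --- is doomed, because membership of $m$ in the complement of $\{\lim_n q_n\}$ is only $\PO{2}$ in $(q_n)$: nothing rules out the sequence returning to an already abandoned value. The remedy is to feed $\UC_\IN$ a richer singleton. Let $N$ be the \emph{settling time} of $(q_n)$, i.e.\ the least $N$ with $q_m=q_N$ for all $m\ge N$ (it exists by the domain hypothesis), and put $c:=\langle q_0,q_1,\dots,q_N\rangle$, the code of this finite tuple under a standard coding of finite sequences. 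The key observation is that $\IN\sm\{c\}$ is c.e.\ uniformly in a name of $(q_n)$: for a code $w$ decoding to a tuple $(a_0,\dots,a_M)$ one has $w\ne c$ if and only if $a_i\ne q_i$ for some $i\le M$, or $M\ge 1$ and $q_{M-1}=q_M$, or $q_m\ne a_M$ for some $m\ge M$ --- and each disjunct is, uniformly in $(q_n)$, either decidable or c.e. Hence $H$ can compute from a name of $(q_n)$ a $\psi_-$-name of the (genuine, by the guarantee) singleton $\{c\}$, which is a legitimate input for $\UC_\IN$; the postprocessing $K$ then decodes the returned number $c$ and outputs its last entry $q_N=\lim_n q_n$, again without consulting the original input, so the reduction is strong. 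I expect the only delicate point to be the verification of the displayed three-way disjunction: one must check both that each disjunct is incompatible with $w=c$ and, conversely, that a code $w$ escaping all three disjuncts must equal $c$, which is exactly where the minimality of the settling time $N$ enters.
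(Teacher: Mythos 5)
Your proposal is correct and follows essentially the same route as the paper: the same cycle $\UC_\IN\leqSW\C_\IN\leqSW\lim_\IN\leqSW\UC_\IN$, with the first two reductions argued exactly as in the paper (the least element of a co-c.e.\ subset of $\IN$ obtained as a limit). The only difference is one of bookkeeping in the key step $\lim_\IN\leqSW\UC_\IN$: the paper feeds $\UC_\IN$ the singleton $\{\langle n,k\rangle\}$ left un-enumerated by a dynamic process that releases its candidate at each mind change, whereas you feed it the code of the initial segment up to the settling time together with an explicit c.e.\ criterion for the complement --- both encode the limit plus auxiliary data so that the complement of the singleton is c.e., and your verification of the three-clause criterion (in particular the use of minimality of $N$) is correct.
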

\begin{proof}
It is clear that $\UC_\IN\leqSW\C_\IN$. 
It can easily be seen that also $\C_\IN\leqSW\lim_\IN$. 
To this end a sequence $p\in\Baire$ such that $\{n:n+1\in\range(p)\}=\IN\setminus A$, for a non-empty $A\In\IN$,
is scanned for the least number larger than $0$ that is missing.
This number is written to the output repeatedly, until it appears in the input. Then the number is replaced
by the next missing number. Eventually this process will converge, since $A$ is non-empty.
It is clear that $\lim_\IN$, applied to the output, yields a number $i$ such that $i-1\in A$.

We prove the reduction $\lim_\IN\leqSW\UC_\IN$. Given a sequence $(n_i)$ that converges to $n$,
we generate a sequence $p\in\Baire$ such that $A:=\IN\setminus\{n:n+1\in\range(p)\}$ has a single element.
For this purpose we scan the input sequence $(n_i)$ and seeing the first element $n_0$
we start to generate a list of all numbers $\langle m,k\rangle+1$ except $\langle n_0,0\rangle+1$.
At stage $i+1$, if the next element $n_{i+1}$ on the input is identical to the previous $n_i$, then
we just continue with this process.
If some new element $n_{i+1}\not=n_i$ appears on the input side,
then we add the number $\langle n_i,k\rangle+1$ that was previously left out to the output,
and we continue enumerating all numbers $\langle m,k\rangle+1$ except for $\langle n_{i+1},k\rangle+1$, 
where $k$ is least such that the corresponding number was not yet enumerated.
Since the input sequence converges to $n$, it is eventually constant with value $n$ and the process will end
enumerating a name of the set $A=\{\langle n,k\rangle\}$ for some $k$.
Unique choice $\UC_\IN$ applied to this set yields $\langle n,k\rangle$ and the projection
to the first component is the limit $n$ of the input sequence.
\end{proof}

The finer characterization provided by Proposition~\ref{prop:unique-choice-N}
is useful for the classification of the Bolzano-Weierstra\ss{} Theorem $\UBWT_\IN$,
which, in fact, is identical to $\lim_\IN$.
Finally, we mention that in Corollary~8.12 and Theorem~8.10, both in \cite{BBP},
a uniform version of the Low Basis Theorem was proved. 
We state this result for further reference here as well.

\begin{fact}[Uniform Low Basis Theorem] 
\label{fact:low-basis}
$\C_\IR\leqSW\Low$.
\end{fact}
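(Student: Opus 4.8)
We only sketch the proof; the full argument is the uniform version of the Jockusch--Soare Low Basis Theorem, carried out as Theorem~8.10 and Corollary~8.12 of \cite{BBP}. By known facts about closed choice --- $\WKL$ is parallelizable and $\C_\IR$ is idempotent, cf.\ Fact~\ref{fact:WKL} and Fact~\ref{fact:closed-choice}, and both sides are cylinders --- one has $\C_\IR\equivSW\C_\Cantor\times\C_\IN$, and $\C_\IN\leqSW\Low$ is immediate since $\C_\IN$ is limit-computable and its solutions are finite, hence trivially low relative to the input; so (the $\C_\IN$-factor being absorbed by $\Low$) the whole content lies in reducing $\C_\Cantor\equivSW\WKL$ to $\Low$. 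Concretely: given an infinite binary tree $T\in\Tr$, we must produce, uniformly and computably relative to $\emptyset'$, not merely a path $X\in\WKL(T)$ but such a path \emph{together with} its Turing jump $(X\oplus T)'$; this honest pair is exactly the datum that $\Low$ supplies (recall that $\lim$ is strongly Weihrauch equivalent to the Turing jump operator, so ``computable relative to $\emptyset'$'' amounts to ``$\lim$-computable''). The post-processing of the reduction then merely reads off the first component, so the reduction is strong.

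The pair is built by the classical emptiness-driven forcing over $\emptyset'$. One constructs a descending sequence $T=T_0\supseteq T_1\supseteq\dots$ of infinite, co-c.e.\ binary trees together with a list of bits, interleaving two kinds of steps. At a \emph{jump step} for an index $e$, form a co-c.e.\ tree $T^*$ with $[T^*]=\{Z\in[T_s]:\Phi_e^Z(e)\!\uparrow\}$; since $[T^*]$ is again a $\pO{1}(T)$ class and $2^\IN$ is compact, ``$[T^*]=\emptyset$'' is equivalent to ``$T^*$ is finite'', a $\sO{1}(T)$ event, hence decided by $\emptyset'$. If $[T^*]\neq\emptyset$ put $T_{s+1}:=T^*$ and record $e\notin(X\oplus T)'$; otherwise put $T_{s+1}:=T_s$ and record $e\in(X\oplus T)'$, which is correct since then $\Phi_e^Z(e)\!\downarrow$ holds for \emph{every} $Z\in[T_s]$, in particular for the path being built. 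At a \emph{value step} for a position $n$, let $i\in\{0,1\}$ be least with $\{Z\in[T_s]:Z(n)=i\}\neq\emptyset$ --- again a $\sO{1}(T)$ emptiness test --- restrict $T_{s+1}$ accordingly and record $X(n)=i$. Running over all indices $e$ and positions $n$, the recorded values pin down the leftmost path $X$ of $\bigcap_s[T_s]\In\WKL(T)$, the recorded jump bits form exactly $(X\oplus T)'$, and the whole construction is computable in $T\oplus\emptyset'$, uniformly in $T$.

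The only genuine content is the classical observation underlying the Low Basis Theorem: by compactness of $2^\IN$ a nonempty $\pO{1}$ class over the parameter is \emph{enumerably} nonempty, so its emptiness is a $\sO{1}$ event, and hence a single jump of the parameter suffices both to drive the forcing and to decide the jump of the generic bit by bit. Everything else is bookkeeping: carrying co-c.e.\ indices for the trees $T_s$, checking that $[T_s]$ never becomes empty so that the construction never breaks, the parallel treatment of the $\C_\IN$-component, and --- what I expect to be the fiddliest point --- assembling the countably many $\emptyset'$-queries into one computable pre-processing and matching the emitted datum $(X,(X\oplus T)')$ to the precise input/output format of $\Low$, so that the whole thing reads off as the required strong Weihrauch reduction.
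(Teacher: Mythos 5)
The paper itself does not prove this fact; it simply quotes it from Theorem~8.10 and Corollary~8.12 of \cite{BBP}, and your sketch is essentially the argument given there: jump-controlled forcing with $\Pi^0_1$ classes relative to the input, all emptiness queries answerable by one jump of the parameter, and the whole construction assembled into a single limit computation (via the transparency of $\lim$, Fact~\ref{fact:galois}) whose limit is the jump of the point that $\Low$ then hands back. One slip of relativization: the emptiness of the co-c.e.$(T)$ class $[T^*]$ is a $\Sigma^0_1(T)$ event and is therefore decided by $T'$, not by $\emptyset'$, and the construction is computable in $T'$ uniformly, not in $T\oplus\emptyset'$; this is exactly what makes the preprocessing limit computable in the input, which is what the reduction to $\Low=J^{-1}\circ\lim$ needs. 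Your concluding paragraph (``a single jump of the parameter suffices'') has it right, so this is notation rather than substance.

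The more substantive problem is the step ``the $\C_\IN$-factor being absorbed by $\Low$, so the whole content lies in reducing $\C_\Cantor$ to $\Low$''. From $\C_\IR\equivSW\C_\Cantor\times\C_\IN$, $\C_\Cantor\leqSW\Low$ and $\C_\IN\leqSW\Low$ one cannot conclude $\C_\IR\leqSW\Low$, because $\Low$ is not idempotent: $\Low\times\Low\nleqW\Low$ (Proposition~\ref{prop:low-idempotent} with $n=0$, going back to Theorem~8.8 of \cite{BBP}). So the discrete factor cannot be handled by a separate appeal to $\Low$; it must be folded into the single point whose jump you limit-compute, and this is genuinely part of the proof rather than bookkeeping. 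Concretely: run the $\C_\IN$-search (which stabilizes after finitely many mind changes) alongside the forcing, target the combined point $X\oplus T\oplus\widehat{n}$ --- appending a natural number does not raise the jump, uniformly in $n$ once it has stabilized --- and restart the approximation to the jump whenever the candidate $n$ changes; since there are only finitely many changes, the emitted sequence still converges to $(X\oplus T\oplus\widehat{n})'$, $\Low$ returns $X\oplus T\oplus\widehat{n}$, and the postprocessor reads off both a path and a member of the discrete set. With that amendment your sketch does match the cited proof. (Also, $\C_\IR\equivSW\C_\Cantor\times\C_\IN$ is Corollary~4.9 of \cite{BBP} strengthened via the cylinder property in Fact~\ref{fact:closed-choice}; it is not a consequence of parallelizability of $\WKL$ and idempotency of $\C_\IR$.)
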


We recall that $\Low:=J^{-1}\circ\lim$. 
Here $J:\Baire\to\Baire,p\mapsto p'$ denotes the {\em Turing jump operator}, where
$p'$ is the Turing jump\footnote{More formally, the Turing jump $p'\in\{0,1\}^\IN$ of a sequence $p\in\IN^\IN$
can be considered as the characteristic function of the ordinary Turing jump of the set $\graph(p)\In\IN^2$, but
we will make no technical use of this definition.} of $p\in\IN^\IN$.  
We point out that we consider $J$ as a set-theoretic function and not as an 
operator on Turing degrees. In the former sense it is easily seen to be injective
(in the latter sense it is known not to be injective). 
A point $p\in\Baire$ is {\em low} if and only if there is a computable $q$ such that $\Low(q)=p$.
The classical Low Basis Theorem of Jockusch and Soare \cite{JS72} states that any non-empty co-c.e.\ closed set $A\In\Cantor$ has a 
low member and Fact~\ref{fact:low-basis} can be seen as a uniform version of this result (see \cite{BBP} for a further discussion of this theorem).

\section{Compositional Products}

We define two types of compositional products, one with respect to ordinary Weihrauch
reducibility and the other one with respect to strong Weihrauch reducibility.

\begin{definition}[Compositional product]
Let $f$ and $g$ be multi-valued functions on represented spaces.
Then we define the {\em compositional product} 
\[f*g:=\sup\{f_0\circ g_0:f_0\leqW f\mbox{ and }g_0\leqW g\}.\]
Only compositions $f_0\circ g_0$ with compatible types are considered here.
The supremum is understood with respect to $\leqW$.
By $f\stars g$ we denote the {\em strong compositional product}
where both reductions are replaced by $\leqSW$ and the supremum is also
understood with respect to $\leqSW$.
\end{definition}

We point out that the compositional product $f*g$, if it exists, is 
a Weihrauch degree, not just a specific multi-valued function. 
Nevertheless, we treat it in the following as if it is some representative
of its equivalence class. This will not lead to any confusion, mainly
because the compositional product is monotone, as the next 
result shows.

\begin{lemma}[Monotonicity]
\label{lem:monotone-composition}
Let $f_1,f_2,g_1$ and $g_2$ be multi-valued functions on represented
spaces. If $f_1*g_1$ and $f_2*g_2$ exist, then the following holds:
\[f_1\leqW f_2\mbox{ and }g_1\leqW g_2\TO f_1*g_1\leqW f_2*g_2.\]
An analogous result holds for strong Weihrauch reducibility $\leqSW$ and
the strong compositional product $\stars$.
\end{lemma}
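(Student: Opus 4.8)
The plan is to prove monotonicity directly from the definition of the compositional product as a supremum, using only the transitivity of Weihrauch reducibility. Suppose $f_1 \leqW f_2$ and $g_1 \leqW g_2$, and suppose that both $f_1 * g_1$ and $f_2 * g_2$ exist as Weihrauch degrees. First I would unwind what it means for $f_1 * g_1 \leqW f_2 * g_2$: since $f_1 * g_1 = \sup\{f_0 \circ g_0 : f_0 \leqW f_1 \mbox{ and } g_0 \leqW g_1\}$, it suffices by the defining property of the supremum to show that every $f_0 \circ g_0$ with $f_0 \leqW f_1$ and $g_0 \leqW g_1$ (of compatible types) satisfies $f_0 \circ g_0 \leqW f_2 * g_2$.

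So fix such $f_0$ and $g_0$. By transitivity of $\leqW$, from $f_0 \leqW f_1 \leqW f_2$ we get $f_0 \leqW f_2$, and similarly $g_0 \leqW g_1 \leqW g_2$. Hence $f_0 \circ g_0$ is one of the compositions appearing in the set whose supremum defines $f_2 * g_2$, so by definition of the supremum $f_0 \circ g_0 \leqW f_2 * g_2$. Since this holds for every such pair $(f_0, g_0)$, and $f_1 * g_1$ is the least upper bound of all these compositions, we conclude $f_1 * g_1 \leqW f_2 * g_2$. The argument for the strong compositional product $\stars$ is verbatim the same, replacing $\leqW$ by $\leqSW$ throughout and using transitivity of strong Weihrauch reducibility together with the fact that the supremum defining $\stars$ is taken with respect to $\leqSW$.

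The main (and essentially only) subtlety to be careful about is the existence hypothesis: the statement explicitly assumes $f_1 * g_1$ and $f_2 * g_2$ both exist, so I do not need to worry about whether the supremum is attained or even well-defined — I only use that when a supremum exists, it is an upper bound of its defining set and is below any other upper bound. No computability-theoretic construction (no cobbling together of realizers $K$, $H$) is needed here; this is a purely order-theoretic argument once the compositional product is phrased as a supremum. I expect this to be a short proof, with the only place one might slip being to remember that "compatible types" restricts which pairs $(f_0,g_0)$ enter the supremum, but since any pair compatible for $f_1 * g_1$ is also compatible for $f_2 * g_2$ (the reductions $\leqW$ do not change the represented spaces involved in a way that breaks composability of the corresponding domains and codomains), this causes no difficulty.
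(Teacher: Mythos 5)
Your proof is correct and is essentially the paper's own argument: the paper simply notes that transitivity gives an inclusion of the set defining $f_1*g_1$ into the set defining $f_2*g_2$, and your write-up just spells out the order-theoretic step (each element of the smaller set is bounded by $f_2*g_2$, so the least upper bound $f_1*g_1$ is too) that the paper leaves implicit. The strong case is handled the same way in both.
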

\begin{proof}
If $f_1\leqW f_2$ and $g_1\leqW g_2$, then we obtain by transitivity 
\[\{f_0\circ g_0:f_0\leqW f_1\mbox{ and }g_0\leqW g_1\}\In\{f_0\circ g_0:f_0\leqW f_2\mbox{ and }g_0\leqW g_2\},\]
which implies the claim.
\end{proof}

The next result shows that the compositional product is related to 
the ordinary product of two multi-valued function.

\begin{lemma}[Products and compositional products]
Let $f$ and $g$ be multi-valued maps on represented spaces. 
If $f*g$ exists, then $f\times g\leqW f*g$.
If $f\stars g$ exists and $f$, $g$ are cylinders, then $f\times g\leqSW f\stars g$.
\end{lemma}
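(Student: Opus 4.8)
The plan is to establish the two reductions $f \times g \leqW f * g$ and (under the cylinder hypothesis) $f \times g \leqSW f \stars g$ essentially by exhibiting, for each product, a suitable \emph{composition} of functions reducible to $f$ and $g$ that already dominates $f \times g$. The key observation is that if we pad $f$ and $g$ by the identity on Baire space so as to carry along unused data, then a single composition of two such padded maps can simulate the product. Concretely, I would work with the maps $f_0 := \id \times f : \Baire \times X \mto \Baire \times Y$ and $g_0 := g \times \id : Z \times \Baire \mto W \times \Baire$, or rather versions of these arranged so that the output type of $g_0$ matches the input type of $f_0$.

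First I would spell out the simulation. Starting from an input $(x,z) \in X \times Z$, feed $z$ (paired with a copy of $x$) into $g$ via $g_0$, obtaining a solution $w \in g(z)$ together with the preserved datum $x$; then feed this pair into $f$ via $f_0$, obtaining $y \in f(x)$ together with the preserved $w$. The upshot is a function of the form $f_0 \circ g_0$ whose behaviour on $(x,z)$ produces a pair $(y,w)$ with $y \in f(x)$ and $w \in g(z)$, hence a realizer of $f \times g$ after a trivial rearrangement of components. Since $f_0 \leqW f$ and $g_0 \leqW g$ — indeed $\id \times f \leqSW f$ holds automatically (padding with the identity on Baire space is computable and recoverable), and similarly for $g$ — the composition $f_0 \circ g_0$ is one of the maps in the set over which the supremum defining $f * g$ is taken. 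Therefore $f \times g \leqW f_0 \circ g_0 \leqW f * g$, using that $f * g$ exists so the supremum is attained.

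For the strong version, the subtlety is that ordinary Weihrauch reducibility's outer functional $K$ has access to the original input, which is exactly what lets the padding trick above work so smoothly; under $\leqSW$ we lose this, and that is where the cylinder hypothesis enters. If $f$ and $g$ are cylinders then $f \equivSW \id \times f$ and $g \equivSW \id \times g$ by definition, so the padded maps $f_0, g_0$ satisfy $f_0 \leqSW f$ and $g_0 \leqSW g$; the composition $f_0 \circ g_0$ then lies in the set defining $f \stars g$. The only thing that needs care is checking that the simulation of $f \times g$ by $f_0 \circ g_0$ can be carried out with the wiring of input/output data done by \emph{computable} pre- and post-processing functions that do not need access to the original input beyond what the cylinder structure already supplies — i.e.\ that the rearrangement of the paired components $(y,w)$ into $Y \times W$, and the initial duplication of $x$ into the $Z \times \Baire$ slot, are plain computable maps composable in the strong sense. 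This is routine once one fixes the pairing conventions from Section~2, but it is the main (and only) obstacle: one must be slightly careful that the information $x$ destined for $f$ is threaded through $g_0$ via its identity component rather than smuggled in by the outer functional.

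I expect the whole argument to be short. The essential content is: $\id \times (-)$ is a padding operation that is $\leqSW$ below its argument (always), and $\equivSW$ to its argument precisely for cylinders; and composing two appropriately padded copies realizes the product. The write-up would therefore consist of (i) defining $f_0$ and $g_0$, (ii) noting the reductions $f_0 \leqW f$, $g_0 \leqW g$ (resp.\ $\leqSW$ in the cylinder case), (iii) describing the computable wiring that turns $f_0 \circ g_0$ into a realizer of $f \times g$, and (iv) invoking the definition of the (strong) compositional product to conclude. No genuine calculation is needed beyond bookkeeping with pairing functions.
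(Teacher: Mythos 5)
Your proposal is correct and follows essentially the same route as the paper: there one writes $f\times g=(f\times\id_W)\circ(\id_X\times g)$, padding with identities on the represented spaces themselves, notes $f\times\id_W\leqW f$ and $\id_X\times g\leqW g$ (resp.\ $\leqSW$ when $f$ and $g$ are cylinders), and concludes since the (strong) compositional product bounds every such composition; your padding by $\id_\Baire$ with names threaded through is the same idea up to the routine wiring you describe. One slip should be removed, though: the parenthetical claim that $\id\times f\leqSW f$ ``holds automatically'' is false --- $\id\times f\leqSW f$ is exactly the statement that $f$ is a cylinder, and it fails e.g.\ for $f=\C_1$, whose realizers may be constant --- and your own treatment of the strong case invokes the cylinder hypothesis for precisely this point. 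For the first claim all you need, and all that is true in general, is $\id\times f\leqW f$ (and $g\times\id\leqW g$), which holds because in an ordinary Weihrauch reduction the outer functional has access to the original input and can copy the padded component across; with that correction the argument is exactly the paper's.
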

\begin{proof}
Let $f:\In X\mto Y$ and $g:\In Z\mto W$. Then $f\times\id_Z\leqW f$ and $\id_X\times g\leqW g$.
Then we obtain 
$f\times g=(f\times\id_Z)\circ(\id_X\times g)\leqW f*g$.
The second claim is proved analogously.
\end{proof}

One could ask whether the reduction $f\times g\leqW f*g$ can be strengthened to
an equivalence or to a strict reduction in general.
We provide two examples that show that the equivalence might or might not hold
and we provide another example that shows that the compositional product cannot 
be exchanged with parallelization.

\begin{example} 
\label{ex:products}
We obtain the following:
\begin{enumerate}
\item $\lim\times\lim\equivW\lim\lW\lim\circ\lim\equivW\lim*\lim$,
\item $\C_{\Cantor}\times\C_\IN\equivW\C_\IR\equivW\C_{\Cantor}*\C_\IN$,
\item $\widehat{\C_\IN*\C_\IN}\equivW\widehat{\C_\IN}\lW\widehat{\C_\IN}*\widehat{\C_\IN}$.
\end{enumerate}
\end{example}

The correctness of these examples follows from results in \cite{BG11} and Corollaries~4.9 and 7.6 in \cite{BBP}.

\section{Derivatives}

Now we define the {\em jump} or {\em derivative} of a Weihrauch degree. 
To some extent this concept yields an analogue of the Turing jump for Weihrauch reducibility.
We use the jump $\delta':=\delta\circ\lim$ of a representation for this purpose, as 
it has been used by Ziegler \cite{Zie07}.

\begin{definition}[Derivative]
Let $f:\In (X,\delta_X)\mto (Y,\delta_Y)$ be a multi-valued function on represented spaces. 
Then the {\em derivative} or {\em jump} $f'$ of $f$ is the function $f:\In (X,\delta'_X)\mto(Y,\delta_Y)$,
i.e.\ the same function, but defined on the input space with the jump of the original representation.
By $f^{(n)}$ we denote the $n$--th derivative of $f$ for $n\in\IN$, which is defined inductively by $f^{(0)}:=f$
and $f^{(n+1)}:=(f^{(n)})'$.
\end{definition}

The intuition behind this definition is that the derivative of a function $f$ is the same function,
but with a different input representation.\footnote{We note that
the jump operation defined here does not commute with Turing jumps under the embedding of Turing degrees defined in \cite{BG11}.
For the latter purpose one would have to define a different jump operation on Weihrauch degrees that is applicable on the output side.}
The derivative $\delta'$ as input representation yields
less information about the input than the original representation $\delta$, namely only a sequence
that converges to some input with respect to $\delta$. 
Having less input information makes $f'$ potentially harder to realize than $f$.
For functions $F:\In\Baire\to\Baire$ the derivative can be determined easily.

\begin{lemma}
\label{lem:derivative-Baire}
Let $F:\In\Baire\to\Baire$ be a function. Then $F'\equivSW F\circ\lim$.
\end{lemma}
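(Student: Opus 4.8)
The plan is to unfold both sides of the claimed equivalence $F' \equivSW F \circ \lim$ directly from the definitions, using the fact that on Baire space a function has itself (up to extension) as its only realizer. Recall that $F'$ is literally the same partial function $F$ but regarded as a map $(\Baire, \id') \mto (\Baire, \id)$, where $\id' = \id \circ \lim = \lim$ by definition of the jump of a representation. So a realizer of $F'$ is a function $G : \In \Baire \to \Baire$ such that, whenever $p$ is a name of $x$ in the jumped input representation — i.e. whenever $\lim(p) = x$ — we have $G(p) \in F(x) = \{F(x)\}$, that is $G(p) = F(\lim(p))$. On the other hand, $F \circ \lim : \In \Baire \to \Baire$ is an ordinary single-valued function whose only realizer up to extension is $F \circ \lim$ itself.

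First I would prove $F' \leqSW F \circ \lim$. For strong Weihrauch reducibility we need computable $H, K : \In \Baire \to \Baire$ such that $K \circ G_0 \circ H$ is a realizer of $F'$ for every realizer $G_0$ of $F \circ \lim$. Take $H = K = \id$ (both trivially computable). Given a name $p$ for $x$ in the jumped representation, so $\lim(p) = x$, the realizer $G_0 \vdash F \circ \lim$ satisfies $G_0(p) = F(\lim(p)) = F(x)$ (on the relevant domain), hence $\id \circ G_0 \circ \id$ sends $p$ to $F(x)$, which is exactly what a realizer of $F'$ must do. So the identity pre- and post-processing works.

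For the converse $F \circ \lim \leqSW F'$, I again need computable $H, K$ with $K \circ G_0 \circ H \vdash F \circ \lim$ for every $G_0 \vdash F'$. The point is that the input of $F \circ \lim$ is a sequence $p$ (encoding $\langle p_0, p_1, \dots\rangle$) with $\lim(p)$ defined and in $\dom(F)$; this same $p$ is, by definition of $\id' = \lim$, a name of $\lim(p)$ in the jumped input representation of $F'$. So with $H = K = \id$: given such $p$, any $G_0 \vdash F'$ satisfies $G_0(p) \in F'(\lim(p)) = F(\lim(p))$, which is what a realizer of $F \circ \lim$ must output on $p$. Hence the reduction holds strongly, again with trivial $H$ and $K$.

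The only thing to be careful about — and I expect this to be the sole "obstacle," though a minor one — is matching domains and conventions precisely: that the jump of the identity representation on Baire space really is $\lim$ (clear from $\id' := \id \circ \lim = \lim$), that $\dom(F \circ \lim) = \{p : \lim(p) \text{ exists and } \lim(p) \in \dom(F)\}$ coincides with $\dom(F \delta'_X)$ where $\delta'_X = \lim$, and the standard caveat that realizers are only determined up to restriction/extension of domain, so equalities like $G_0(p) = F(\lim(p))$ are asserted only on the appropriate domain. Once these bookkeeping points are pinned down, both reductions are witnessed by the identity maps, giving $F' \equivSW F \circ \lim$.
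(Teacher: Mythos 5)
Your proof is correct and follows essentially the same route as the paper's: since Baire space carries the identity representation, $F$ is its own unique realizer (up to extension), the jump of the identity representation is $\lim$, and hence the realizers of $F'$ are exactly the extensions of $F\circ\lim$, giving both strong reductions with trivial $H$ and $K$. The paper states this in one sentence; your version just spells out the same bookkeeping explicitly.
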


This follows from the fact that $F$ is its unique realizer (with respect to the
identity as representation of Baire space) and hence $F\circ\lim$ is the unique
realizer of $F'$. We mention a couple of examples.
By $\id_X:X\to X$ we denote
the identity of $X$ (we recall our convention $\id=\id_\Baire$).

\begin{example} 
\label{ex:derivatives}
We obtain the following:
\begin{enumerate}
\item $\C_0'\equivSW\C_0$,
\item $\C_1'\equivSW\C_1$,
\item $\id_2'\equivSW\lim_2$,
\item $\id_\IN'\equivSW\lim_\IN$,
\item $\id'\equivSW\lim$,
\item $\lim'\equivSW\lim\circ\lim$,
\item $(J^{-1})'\equivSW J^{-1}\circ\lim=\Low$,
\item $\Low'\equivSW J^{-1}\circ\lim'$.
\end{enumerate}
\end{example}

We point out that this example in particular shows that 
\[\C_1'\equivSW\C_1\lW\id_\IN'\equivSW\lim\nolimits_\IN\lW\lim\equivSW\id'\]
despite the fact that $\C_1\equivW\id_\IN\equivW\id$. 
Hence, we cannot expect that derivatives are monotone with respect to $\leqW$ at all. 
We will see this again in Example~\ref{ex:UCL-R}.
In some sense the derivative can ``amplify'' small differences between functions (even from the
same Weihrauch degree) to substantial differences between their derivatives. 
In Example~\ref{ex:strong-super-strong} we will see that also the opposite can happen: functions from different
Weihrauch degrees can have derivatives of even the same strong Weihrauch degree.
However, Proposition~\ref{prop:monotone-derivative} 
shows that the amplification of differences cannot happen for functions from the same strong
Weihrauch degree: derivatives are monotone with respect to 
strong Weihrauch reducibility $\leqSW$. 
In order to prove this we first provide a technical lemma that relates realizers of functions
to realizers of their derivatives. We mention that we will use this result several times
and our proof uses the Axiom of Choice.

\begin{lemma}[Jump realization]
\label{lem:jump-realization}
Let $f$ and $g$ be multi-valued functions on represented spaces. 
Let $H,K:\In\Baire\to\Baire$ be functions. Then the following are equivalent:
\begin{enumerate}
\item $HG\lim K\vdash f$ for all $G\vdash g$,
\item $HFK\vdash f$ for all $F\vdash g'$.
\end{enumerate}
\end{lemma}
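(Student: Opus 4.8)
The plan is to prove the equivalence directly by unwinding the definition of the derivative $g'$ and using the key fact that a realizer of $g'$ is, up to the $\lim$ operation, just a realizer of $g$. Recall that $g':\In(X,\delta'_X)\mto(Y,\delta_Y)$ where $\delta'_X=\delta_X\circ\lim$. So if $G$ is any realizer of $g$ (with respect to $\delta_X,\delta_Y$), then $G\circ\lim$ is a realizer of $g'$ (with respect to $\delta'_X,\delta_Y$), since for $p\in\dom(g'\delta'_X)=\dom(g\delta_X\lim)$ we have $\delta_Y G\lim(p)\in g\delta_X\lim(p)=g'\delta'_X(p)$. The slightly delicate point, and where the Axiom of Choice enters, is the converse direction: an arbitrary realizer $F$ of $g'$ need not literally factor as $G\circ\lim$ for a realizer $G$ of $g$, because $F$ is only constrained on sequences that converge, and two convergent sequences with the same limit can be sent by $F$ to different (but both correct) outputs. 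What is true is that from $F$ one can extract, using choice, a realizer $G$ of $g$ with $F$ and $G\circ\lim$ agreeing on all convergent inputs that matter.

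Concretely, here is how I would organize the argument. For the implication (1)$\Rightarrow$(2): let $F\vdash g'$ be arbitrary. I want to produce $G\vdash g$ such that $G\circ\lim$ agrees with $F$ on $\dom(g'\delta'_X)$, so that (1) applied to this $G$ yields $HG\lim K=HFK$ on the relevant domain, giving $HFK\vdash f$. To build $G$: for each $x\in\dom(g)$, the representation $\delta_X$ surjects onto $X$, so there is some $p\in\Baire$ with $\delta_X(p)=x$; the constant-ish sequence $\langle p,p,p,\dots\rangle$ (or any sequence converging fast to $p$) is in $\dom(\lim)$ with $\lim$ of it equal to $p$, hence lies in $\dom(g'\delta'_X)$, and $F$ applied to it produces a $\delta_Y$-name of some element of $g'\delta'_X(\cdot)=g(x)$. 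Using the Axiom of Choice, pick for each name $q$ of $x$ such a convergent input and set $G(q):=F(\text{that input})$; then $\delta_Y G(q)\in g\delta_X(q)$ for all $q\in\dom(g\delta_X)$, so $G\vdash g$, and by construction $HG\lim K$ and $HFK$ agree wherever we need them (namely, we can arrange that on every $p\in\dom(g'\delta'_X)$ we use $F(p)$ itself, since $\lim(p)$ is a name of $\delta_X\lim(p)$ and we are free in our choice function to have selected $p$ for that name — more carefully, one chooses the selection so that it is the identity on the image of $\dom(g'\delta'_X)$ under $\lim$, which is legitimate since $\lim$ is injective enough here, or one simply observes that any correct output works because $f$-realization only requires membership, not equality).

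For the converse (2)$\Rightarrow$(1): let $G\vdash g$ be arbitrary. Then $F:=G\circ\lim\vdash g'$ as noted above, so (2) gives $HFK=H(G\circ\lim)K=HG\lim K\vdash f$. Since $G$ was arbitrary, (1) holds. This direction is immediate. The main obstacle, as flagged, is the careful bookkeeping in (1)$\Rightarrow$(2): one must be precise about domains (that the convergent inputs we feed to $F$ actually lie in $\dom(g'\delta'_X)$, which needs $\delta_X\lim$ of them to be in $\dom(g)$), about the role of the Axiom of Choice in simultaneously choosing a convergent preimage for every $\delta_X$-name, and about why it suffices that $HG\lim K$ and $HFK$ produce \emph{some} correct output rather than literally the same sequence — this is exactly because $\vdash f$ is a membership condition $\delta_Y(\cdot)\in f\delta_X(\cdot)$. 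Once these points are handled the proof is short; the lemma is really just the observation that ``realizer of $g'$'' and ``realizer of $g$ precomposed with $\lim$'' coincide modulo choosing preimages, combined with the freedom afforded by multi-valuedness.
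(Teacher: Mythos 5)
Your easy direction, (2)$\Rightarrow$(1), is exactly the paper's argument and is fine: $G\vdash g$ implies $G\lim\vdash g'$, so apply (2). The hard direction (1)$\Rightarrow$(2), however, has a genuine gap. You try to manufacture a \emph{single} realizer $G\vdash g$ from $F$ and then claim that $HG\lim K$ and $HFK$ agree ``wherever we need them''. This cannot be arranged in general, because $\lim$ is far from injective: there may be names $p,\tilde p\in\dom(f\delta_X)$ with $\lim K(p)=\lim K(\tilde p)$ but $FK(p)\neq FK(\tilde p)$, and a single $G$ takes only one value at the point $\lim K(p)$, so $G\lim K$ can agree with $FK$ at $p$ or at $\tilde p$ but not both. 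Neither of your two fallbacks repairs this: the claim that ``$\lim$ is injective enough here'' is simply false, and the observation that ``any correct output works because realization only requires membership'' does not help, because hypothesis (1) only tells you that $H$ behaves correctly on the particular name $G\lim K(p)$ that you actually fed it; since $H$ is an arbitrary function, correctness there says nothing about $H$ on the \emph{different} name $FK(p)$, and it is precisely $\delta_YHFK(p)\in f\delta_X(p)$ that you must establish.

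The repair is to use that (1) is quantified over \emph{all} realizers of $g$, so the realizer may be chosen depending on the input $p$ --- this is what the paper does. Write $\delta_Z$, $\delta_W$ for the representations of the input and output spaces of $g$. Fix $p\in\dom(f\delta_X)$; then (1) forces $K(p)\in\dom(\lim)$ and $\lim K(p)\in\dom(g\delta_Z)$ (otherwise a realizer of $g$ defined only on $\dom(g\delta_Z)$ would make $HG\lim K(p)$ undefined). Take any $G\vdash g$ (this is where the Axiom of Choice enters) and modify it at the single point $\lim K(p)$ by setting $G_p(\lim K(p)):=FK(p)$; this $G_p$ is still a realizer of $g$, because $\delta_WFK(p)\in g'\delta_Z'(K(p))=g\delta_Z(\lim K(p))$. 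Applying (1) to $G_p$ and evaluating at $p$ gives $\delta_YHG_p\lim K(p)\in f\delta_X(p)$, and $HG_p\lim K(p)=HFK(p)$ by construction; since $p$ was arbitrary, $HFK\vdash f$. Your construction $G(q):=F\langle q,q,q,\dots\rangle$ is a perfectly good way to obtain \emph{some} realizer of $g$ from $F$ (and needs no choice for that step), but it cannot substitute for this per-input modification.
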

\begin{proof}
We consider $g:\In Z\mto W$ and the representation $\delta_Z$ of $Z$.
Let us assume that $HG\lim K\vdash f$ for all $G\vdash g$ and let $F\vdash g'$.
Let $p\in\Baire$ be a name for some point in $\dom(f)$. Then $\lim K(p)\in\dom(g\delta_Z)$
and hence $K(p)\in\dom(g\delta_Z')$. 
By the Axiom of Choice there exists some $G\vdash g$. 
This $G$ can be modified on input $\lim K(p)$ in order to obtain a $G_p\vdash g$ with $G_p\lim K(p)=FK(p)$. 
This implies $HFK(p)=HG_p\lim K(p)$ and hence the claim follows.

For the other direction we note that for $G\vdash g$ we have $G\lim\vdash g'$,
which implies the claim.
\end{proof}

Now we mention a normal form result for limit computable functions, the proof of
which is easy and has been provided in \cite{Bra07x}.
We call a function $H:\In\Baire\to\Baire$ {\em transparent} if for every computable
$F:\In\Baire\to\Baire$ there exists a computable $G:\In\Baire\to\Baire$ such that
$FH=HG$ holds.\footnote{Matthew de Brecht has introduced the name ``jump operator'' for transparent
functions, which we do not use here in order to avoid confusion with the jump.}

\begin{fact}
\label{fact:galois}
Let $F:\In\Baire\to\Baire$ be a function. Then the following are equivalent:
\begin{enumerate}
\item $F$ is limit computable (i.e.\ $F\leqW\lim$),
\item $F=\lim G$ for some computable $G:\In\Baire\to\Baire$,
\item $F=GJ$ for some computable $G:\In\Baire\to\Baire$.
\end{enumerate}
In particular, $\lim$ and $J^{-1}$ are transparent.
\end{fact}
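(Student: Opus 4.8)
The plan is to prove the three equivalences of Fact~\ref{fact:galois} by a cycle of implications, $(2)\Rightarrow(1)\Rightarrow(3)\Rightarrow(2)$, treating the final sentence about transparency as a corollary extracted from the construction. The easy direction is $(2)\Rightarrow(1)$: if $F=\lim G$ with $G$ computable, then $F\leqW\lim$ is witnessed by the outer computable map being the identity and the inner one being $G$, so $F$ is limit computable by definition.

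For $(1)\Rightarrow(3)$ I would unwind $F\leqW\lim$. By the characterization of Weihrauch reducibility there are computable $K,H$ with $K\langle\id,\lim\circ H\rangle\vdash F$ for the (essentially unique) realizer setup on Baire space; since $F$ is its own realizer, this reads $F(p)=K\langle p,\lim H(p)\rangle$ on $\dom(F)$. The point is that $\lim$ can be computed relative to the Turing jump: there is a computable functional that, given $J(q)$ for a suitable coding $q$ of the sequence $H(p)$, outputs $\lim H(p)$ whenever the limit exists — this is the standard fact that the limit of a sequence is computable from the jump of (a name for) the sequence. Composing this with the computable maps $H$ and $K$ and the pairing, and using that $J$ is the map $p\mapsto p'$, I can assemble a single computable $G$ with $F=GJ$. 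The one point requiring a little care is that $J(\langle p,H(p)\rangle)$ (or $J(p)$ together with a computable manipulation) already contains enough information to run both the copy of $p$ and the jump-computation of $\lim H(p)$; this is routine since $p\leqT p'$ and the relevant functionals are uniform.

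For $(3)\Rightarrow(2)$ I would use Fact~\ref{fact:low-basis}'s background, or rather just the direct observation that $J=p\mapsto p'$ satisfies $J\equivW\lim$ in the strong sense that $J(p)$ is computable from $\lim$ applied to a computable modification of $p$: indeed $p'$ is the limit of its finite approximations $\langle n,s\rangle\mapsto$ ``$n$ has entered $p'$ by stage $s$'', which stabilize, so there is a computable $e$ with $J=\lim\circ e$. Hence $F=GJ=G\lim e=\lim(G'e)$ is not quite right as stated — I would instead argue $G\circ\lim$ equals $\lim$ of a computable function when $G$ is computable, because a computable $G$ commutes with $\lim$ up to a computable reindexing (given a computable $G$ and a convergent sequence, one can compute from the approximating sequence a new approximating sequence whose limit is $G$ of the original limit, using that $G$ is continuous with a computable modulus on its Baire-space domain). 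Combining, $F=G\lim e=\lim(G^\sharp e)$ with $G^\sharp e$ computable, giving~(2).

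The main obstacle — and where I expect the bookkeeping to concentrate — is the commutation step just mentioned: making precise and uniform the claim that for computable $G:\subseteq\Baire\to\Baire$ and $\lim$ one has $G\circ\lim=\lim\circ\widetilde{G}$ for some computable $\widetilde{G}$. This is essentially the statement that $\lim$ is transparent (in the paper's terminology), so the cycle of implications and the transparency assertion are really the same fact proved once; I would phrase the argument so that the construction of $\widetilde{G}$ from $G$ is displayed explicitly (read $G$ with the approximating sequence, output at stage $n$ the current best finite guess of $G$ on the current finite guess of the limit), note that it converges because $G$ is continuous and the input sequence converges, and then read off transparency of $\lim$ and, by the $J=\lim\circ e$ identity together with $\lim=J\circ(\text{computable})$ hidden in $(1)\Rightarrow(3)$, transparency of $J^{-1}$ as well. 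Since the paper explicitly says the proof ``is easy and has been provided in \cite{Bra07x}'', I would keep the write-up at the level of this sketch rather than grinding through the indices.
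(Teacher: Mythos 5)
Your argument is correct, and there is nothing in the paper to compare it with line by line: the paper does not prove Fact~\ref{fact:galois} at all but defers to \cite{Bra07x}, and your cycle $(2)\Rightarrow(1)\Rightarrow(3)\Rightarrow(2)$ is precisely the standard argument that reference contains (compute limits from the jump of the sequence; approximate the jump by a computable stabilizing sequence; commute a computable function past $\lim$ by simulating it on the approximations). Two points are worth making explicit if you write it out. First, in $(1)\Rightarrow(3)$ the fact you need is slightly more than $p\leqT p'$: you need uniform jump monotonicity, i.e.\ that $(H(p))'$ is computable from $p'$ uniformly for computable $H:\In\Baire\to\Baire$; note that this same fact, or equivalently your direction $(2)\Rightarrow(3)$ applied to $J\circ F=\lim\circ\, e\circ F$, is exactly what the transparency of $J^{-1}$ amounts to, since $F\circ J^{-1}=J^{-1}\circ G$ unravels (using injectivity of $J$) to $G(J(p))=J(F(p))$. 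Second, in the commutation step no ``computable modulus'' for $G$ is needed or available; the use principle alone shows that running $G$ for $n$ steps on the $n$-th approximation yields a coordinatewise stabilizing output, which is all you use. Finally, your constructions naturally produce extensions (e.g.\ $\lim\circ\widetilde{G}$ may be defined beyond $\dom(G\circ\lim)$) rather than literal equalities; under the conventions of this paper, where partial computable functions may be restricted arbitrarily and only behaviour on the relevant names matters, this bookkeeping is harmless, so the sketch is sound as written.
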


It is clear that the class of transparent functions contains the identity and is closed under
composition. 

Now we can formulate and prove our main result on monotonicity, which relies on the
Axiom of Choice (via the Jump Realization Lemma~\ref{lem:jump-realization}).
From now on we will not mention such indirect references to the Axiom of Choice any longer.

\begin{proposition}[Monotonicity of derivatives]
\label{prop:monotone-derivative}
Let $f$ and $g$ be multi-valued functions on represented spaces. We obtain:
\begin{enumerate}
\item $f\leqSW f'$,
\item $f\leqSW g\TO f'\leqSW g'$.
\end{enumerate}
\end{proposition}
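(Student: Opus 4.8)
The plan is to prove both claims by exhibiting explicit computable "input/output modificators" and invoking the Jump Realization Lemma~\ref{lem:jump-realization} to pass between realizers of a function and realizers of its derivative. For part~(1), $f\leqSW f'$, recall that $f'$ is literally the same multi-valued function $f$ but with the input space carrying the representation $\delta_X'=\delta_X\circ\lim$ instead of $\delta_X$. So I would take $H=K=\id$ and verify the reduction directly from the definition of $\leqSW$: given $F\vdash f'$, I must produce a realizer of $f$ of the form $KFH=F$. Concretely, if $p$ is a $\delta_X$-name of $x\in\dom(f)$, then the constant sequence $\langle p,p,p,\dots\rangle$ is a $\delta_X'$-name of the same $x$ (since $\lim\langle p,p,\dots\rangle=p$), so composing $F$ with the computable map $p\mapsto\langle p,p,\dots\rangle$ yields a realizer of $f$. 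Equivalently, using Lemma~\ref{lem:jump-realization} with $H=K=\id$: condition~(2) there is "$F\vdash f'\Rightarrow F\vdash f$" after precomposition, and condition~(1) is "$G\lim\vdash f$ for all $G\vdash f$", which holds because $\lim$ restricted to constant sequences is the identity. This gives $f\leqSW f'$.

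For part~(2), assume $f\leqSW g$ via computable $H,K$ with $KGH\vdash f$ for all $G\vdash g$. I want computable $\widetilde H,\widetilde K$ witnessing $f'\leqSW g'$, i.e.\ $\widetilde K F\widetilde H\vdash f'$ for all $F\vdash g'$. The natural guess is to let $\widetilde K=K$ and to replace $H$ by its "lifted" version acting coordinatewise on the convergent input sequence: if the input to $f'$ is a sequence $\langle p_i\rangle$ converging to a $\delta_X$-name $p$ of some $x\in\dom(f)$, apply $H$ to each $p_i$, obtaining $\langle H(p_i)\rangle$, which converges to $H(p)$ — here I use that $H$ is computable, hence continuous, hence commutes with limits in the sense that $\lim\langle H(p_i)\rangle=H(\lim\langle p_i\rangle)$ whenever the latter is defined. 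Denote this lifted map $\overline H$ (computable). Then $\overline H$ sends a $\delta_X'$-name of $x$ to a sequence whose limit is the $\delta_Z$-name $H(p)$ that $H$ produces in the original reduction, i.e.\ to a $\delta_Z'$-name of the point $\delta_Z H(p)\in\dom(g)$. Now apply the Jump Realization Lemma: I know $KGH\vdash f$ for all $G\vdash g$; I want $K F \overline H\vdash f'$ for all $F\vdash g'$. To fit the lemma's format I set its "$f$" to be $f'$... but more cleanly I argue directly: given $F\vdash g'$, for any $\delta_X'$-name $q=\langle p_i\rangle$ of $x$ we have $\overline H(q)$ is a $\delta_Z'$-name of $\delta_ZH(p)$, so $F\overline H(q)$ is a $\delta_Z$-name of some element of $g\delta_ZH(p)=gH(p)$; and since $G:=(\text{any realizer of }g)$ modified to agree with $F\overline H$ at the relevant argument satisfies $G H(p)=F\overline H(q)$, we get $KF\overline H(q)=KGH(p)\in f\delta_X(p)=f\delta_X'(q)$, as desired. (The modification of a realizer at one point, justified by the Axiom of Choice, is exactly the device used inside Lemma~\ref{lem:jump-realization}, so alternatively one just cites that lemma with $H\mapsto\overline H$, $K\mapsto K$.)

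I expect the only real subtlety — the "hard part" — to be the bookkeeping in part~(2) that shows $\overline H$ genuinely maps $\delta_X'$-names to $\delta_Z'$-names, which hinges on the continuity of the computable $H$ and the fact that $\delta'=\delta\circ\lim$; everything else is routine unwinding of definitions. There is also a minor point worth stating explicitly: $\overline H$ is computable because $H$ is (one runs $H$ on each coordinate with finitely much information, interleaving the computations), and its domain is correct because convergence of $\langle p_i\rangle$ to a point of $\dom(H\delta_X)$ forces convergence of $\langle H(p_i)\rangle$ — this uses that realizers/modificators here are total enough on the relevant names, or that $H$ extends to a continuous function on a large enough $G_\delta$ set; since the paper works up to extensions of $F:\In\Baire\to\Baire$, no difficulty arises. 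Part~(1) is then just the special case $g=f$, $H=K=\id$ of part~(2) together with the constant-sequence observation, so one could even derive~(1) from~(2), but stating~(1) separately keeps the argument transparent.
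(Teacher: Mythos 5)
Your overall architecture is the same as the paper's: for (1) you reduce via the constant-sequence map $p\mapsto\langle p,p,p,\dots\rangle$ (note that this, not $K=\id$ as you first announce, is the actual inner modificator) and conclude with Lemma~\ref{lem:jump-realization}; for (2) you lift the inner modificator of the reduction $f\leqSW g$ through $\lim$ so that it carries $\delta_X'$-names to $\delta_Z'$-names, and then apply the Jump Realization Lemma (or redo its one-point modification trick). This is exactly the paper's proof, except for how the lifted inner modificator is obtained, and that is where there is a genuine gap.

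Your lifted map $\overline H\langle p_i\rangle:=\langle H(p_i)\rangle$ is not well-defined. A $\delta_X'$-name is merely a sequence $(p_i)$ in $\Baire$ whose \emph{limit} is a $\delta_X$-name; the individual terms $p_i$ need not be names of anything and need not lie in $\dom(H)$, so $H(p_i)$ may simply not exist. Your hedges do not repair this: a partial computable $H:\In\Baire\to\Baire$ in general has no total continuous extension, and extending it to its natural $G_\delta$ domain does not help because the $p_i$ can still fall outside it (the machine may produce only finitely many output symbols on $p_i$); likewise ``total enough on the relevant names'' is vacuous here since the $p_i$ are not names. So continuity alone does not yield $\lim\langle H(p_i)\rangle=H(\lim\langle p_i\rangle)$. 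What is actually needed is precisely the transparency of $\lim$ (Fact~\ref{fact:galois}): for every computable $H$ there is a computable $K_0$ with $\lim\circ K_0=H\circ\lim$, and this is what the paper invokes at this point. The fix for your construction is to work at the level of finite approximations rather than applying $H$ coordinatewise: let $q_i$ be the finite output produced by a monotone machine for $H$ on input $p_i$ within $i$ steps, padded to an element of $\Baire$; then $\langle p_i\rangle\mapsto\langle q_i\rangle$ is computable, and if $p_i\to p\in\dom(H)$ then $q_i\to H(p)$, since for each $m$ some prefix of $p$ already forces the first $m$ output symbols and the $p_i$ eventually carry that prefix. With $K_0$ obtained this way, the remainder of your argument (modifying a realizer of $g$ at the single relevant point, i.e.\ the Jump Realization Lemma) goes through verbatim and coincides with the paper's proof.
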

\begin{proof}
(1) The computable function $K:\Baire\to\Baire$ defined by $K(p)=\langle p,p,p,...\rangle$ satisfies
$\lim\circ K=\id$. Taking $H=\id$ we have $HF\lim K=F$ for every $F\vdash f$.
By Lemma~\ref{lem:jump-realization} it follows that $HGK\vdash f$ for every $G\vdash f'$ and hence $f\leqSW f'$.\\
(2) 
Let us now assume $f\leqSW g$. Then there are computable functions $H,K:\In\Baire\to\Baire$ 
such that 
\begin{enumerate}
\item[(a)] $HGK\vdash f$ for all $G\vdash g$. 
\end{enumerate}
It follows that 
\begin{enumerate}
\item[(b)] $HGK\lim\vdash f'$ for all $G\vdash g$.
\end{enumerate}
By Fact~\ref{fact:galois} $\lim$ is transparent and it follows that there
is some computable $K_0$ such that $\lim K_0=K\lim$. This implies that
\begin{enumerate}
\item[(c)] $HG\lim K_0\vdash f'$ for all $G\vdash g$.
\end{enumerate}
By the Jump Realization Lemma~\ref{lem:jump-realization} we obtain
\begin{enumerate}
\item[(d)] $HEK_0\vdash f'$ for all $E\vdash g'$. 
\end{enumerate}
This means $f'\leqSW g'$.
\end{proof}

This result allows us to extend the concept of a derivative from single functions to 
entire strong Weihrauch degrees. The derivative of a strong Weihrauch degree is just
the strong equivalence class of the derivative of some representative of the original degree.
The previous proposition guarantees that the result does not depend on the representative.
Altogether, the behaviour of the derivative with respect to strong Weihrauch reducibility
is similar to the behaviour of the Turing jump with respect to Turing reducibility.

Next we want to understand how derivatives interact with the algebraic structure of the lattice.

\begin{proposition}[Algebraic properties of the derivative]
\label{prop:products-parallelization}
Let $f$ and $g$ be multi-valued functions on represented spaces. Then we obtain
\begin{enumerate}
\item $f\circ g'=(f\circ g)'$,
\item $f'\times g'\equivSW(f\times g)'$,
\item $\widehat{f'\,}\equivSW (\widehat{f}\,)'$,
\item $f'\sqcap g'\equivSW (f\sqcap g)'$,
\item $f'\sqcup g'\leqSW(f\sqcup g)'$,
\item ${f'}^*\leqSW {f^*}'$,
\item $\U(f')=(\U f)'$.
\end{enumerate}
\end{proposition}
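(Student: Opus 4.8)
The plan is to prove each of the seven items by directly unwinding the definition of the derivative, namely that $f'$ is the function $f$ reading its input through the jumped representation $\delta'_X = \delta_X \circ \lim$, and by repeatedly invoking the Jump Realization Lemma~\ref{lem:jump-realization} together with Fact~\ref{fact:galois} (transparency of $\lim$ and $J^{-1}$). The recurring mechanism is the same as in the proof of Proposition~\ref{prop:monotone-derivative}: a reduction between derivatives is established by exhibiting computable $H,K$ with $HG\lim K \vdash (\text{target})'$ for all $G$ realizing the source, which by Lemma~\ref{lem:jump-realization} is equivalent to $HEK \vdash (\text{target})'$ for all $E$ realizing the source's derivative; and in the other direction one uses that $G \vdash g$ implies $G\lim \vdash g'$.

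I would proceed as follows. For (1), $f \circ g'$ and $(f\circ g)'$ have literally the same underlying multifunction and the same input representation $\delta'$ on the domain of $g$, so the equivalence is immediate from the definitions — this is just an observation, not a computation. For (2), $(f\times g)'$ reads a pair $\langle p, q\rangle$ through $\delta_X' \times \delta_Z'$, i.e.\ it takes $\lim p$ and $\lim q$; since the pairing function commutes with $\lim$ up to a computable rearrangement ($\lim\langle p,q\rangle$ recovers $\langle \lim p, \lim q\rangle$ via a computable bijection on $\Baire$), one gets $f'\times g' \equivSW (f\times g)'$ by supplying this rearrangement as the $H$ and $K$ maps in both directions. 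Item (3) is the countable version of the same phenomenon: $\widehat f$ works componentwise on sequences, $\lim$ on the encoded double sequence can be computably rearranged into a sequence of limits, so $\widehat{f'} \equivSW (\widehat f\,)'$ by the analogous argument. For (4), $f' \sqcap g'$ has input space $X \times Z$ with representation $\delta_X' \times \delta_Z'$, which again matches $(f \sqcap g)'$ after the same pairing/$\lim$ commutation; the output side is untouched by the derivative, so the infimum property transfers verbatim. Item (7) is purely set-theoretic: the unique variant restricts the domain to inputs with singleton value, and this restriction commutes with changing the input representation from $\delta$ to $\delta'$, giving $\U(f') = (\U f)'$ on the nose.

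The genuinely one-directional items are (5) and (6), and these are where I expect the only real subtlety. For (5), the coproduct $f \sqcup g$ reads an input $\langle i, p\rangle$ where the first coordinate selects the branch; in $(f\sqcup g)'$ the selector bit $i$ is itself only given in the limit, so a realizer of $(f\sqcup g)'$ need not know which branch it is in until it has read arbitrarily much of the input, whereas $f' \sqcup g'$ gives the branch immediately. Hence $f'\sqcup g' \leqSW (f\sqcup g)'$ holds (one can forget information: from an immediate branch selector one produces a constant, hence convergent, selector sequence, and the inner $\lim$ on the $p$-part is handled as before), but the reverse should fail in general, so one only claims $\leqSW$. Similarly for (6), ${f'}^*$ is the coproduct $\bigsqcup_n (f')^n \equivSW \bigsqcup_n (f^n)'$ using (2), and each summand reduces into $(f^n)' \leqSW ({f^*})'$ by monotonicity (Proposition~\ref{prop:monotone-derivative}, since $f^n \leqSW f^*$); the supremum of these reductions gives ${f'}^* \leqSW {f^*}'$, and again the failure of a matching lower bound on the convergence of the length/selector data means no equivalence is asserted. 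The main obstacle, then, is item (5): carefully verifying that the branch-selector bit, once only available in the limit, can still be used computably to route the remainder of the input to the correct inner realizer — this requires observing that a realizer of $(f\sqcup g)'$ may be assumed (via Lemma~\ref{lem:jump-realization}) to act on inputs of the form $\langle i^{\,\IN}, p\rangle$ with the selector already stable, which is exactly the shape produced when one composes with $\lim\circ K$ for the natural computable $K$.
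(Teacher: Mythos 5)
Your treatment of (1)--(4) and (7) is essentially the paper's: (1) and (7) are read off the definition, and (2)--(4) rest on the representation equivalences $[\delta_X,\delta_Z]'\equiv[\delta_X',\delta_Z']$ and $(\delta_X^\IN)'\equiv(\delta_X')^\IN$, i.e.\ on the fact that $\lim$ commutes computably with (countable) tupling, which is exactly what the paper invokes. For (5) you argue directly (turn the exact selector into a constant, hence convergent, selector sequence and feed the result to a realizer of $(f\sqcup g)'$, with the outer map the identity), whereas the paper deduces it from monotonicity of the derivative together with the supremum property of the binary $\sqcup$; your direct construction is correct and has the advantage of not appealing to any lattice-theoretic property of $\sqcup$ under $\leqSW$.

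The one genuine gap is in (6). After the correct step ${f'}^*=\bigsqcup_{i}(f')^i\equivSW\bigsqcup_{i}(f^i)'$ you conclude from $(f^i)'\leqSW(f^*)'$ for each $i$ that the countable coproduct reduces, calling this ``the supremum of these reductions''. That inference is not valid as a general principle: the countable coproduct is not the supremum of its summands in the (strong) Weihrauch lattice, since from $h_i\leqSW g$ for all $i$ one cannot conclude $\bigsqcup_i h_i\leqSW g$ unless the witnessing pairs $(H_i,K_i)$ can be chosen computably uniformly in $i$ (and, for $\leqSW$, the outer map must moreover recover the tag $i$ from $g$'s output alone). What saves the argument here is precisely that the reductions $(f^i)'\leqSW(f^*)'$ are uniform in $i$, and you need to say so. The cleanest repair is the paper's explicit realizer: if $G\vdash(f^*)'$, then the map sending $\langle n,p\rangle$ to $G\langle s(n),p\rangle$, where $s(n)$ is the constant sequence with value $n$, realizes $\bigsqcup_i(f^i)'$ -- the constant index sequence paired with the convergent name sequence is a name with respect to $(\delta^*)'$, and the output sides already agree, so no outer translation is needed. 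With that uniformity made explicit, your proof of (6) goes through and coincides with the paper's.
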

\begin{proof}
The first claim (1) follows directly from the definition.
Let $(X,\delta_X)$ and $(Y,\delta_Y)$ now be represented spaces.
Then $[\delta_X,\delta_Y]'\equiv[\delta_X',\delta_Y']$ and $(\delta_X^\IN)'\equiv(\delta_X')^\IN$
is easy to see and has been proved in \cite{Bra07x}. Hence claims (2)--(4) follow. 
Due to monotonicity of the derivative and the fact that $\sqcup$ is the supremum with respect to
$\leqSW$, we obtain $f'\leqSW(f\sqcup g)'$ and $g'\leqSW(f\sqcup g)'$ and hence $f'\sqcup g'\leqSW(f\sqcup g)'$.
We obtain ${f'}^*=\bigsqcup_{i=0}^\infty (f')^i\equivSW\bigsqcup_{i=0}^\infty (f^i)'=:h$ with the help of (2).
If $G\vdash {f^*}'=(\bigsqcup_{i=0}^\infty f^i)'$, then $H$ with $H\langle n,p\rangle:=G\langle s(n),p\rangle$ 
is a realizer of $h$, where $s:\IN\to\Baire$ is the computable function that maps any number 
$n$ to the constant sequence with value $n$. 
Hence, we obtain ${f'}^*\equivSW h\leqSW{f^*}'$. The identity $(\U f)'=\U(f')$ follows directly from the definition
since $\U f$ is a restriction of $f$.
\end{proof}

Another useful algebraic property of derivatives is that they are necessarily join-irreducible.
This follows with Proposition~\ref{prop:join-irreducible-fractals} from the fact that they are strong fractals.

\begin{proposition}[Join-irreducibility of derivatives]
\label{prop:join-irreducibility}
Let $f$ be a multi-valued function on represented spaces. 
Then $f'$ is a strong fractal and hence strongly join-irreducible and join-irreducible.
\end{proposition}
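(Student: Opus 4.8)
The plan is to show that $f'$ is a strong fractal directly from the definition, and then to invoke Proposition~\ref{prop:join-irreducible-fractals} to conclude join-irreducibility and strong join-irreducibility. So the only real content is the strong fractal property: for every clopen non-empty $A \In \Baire$ that is clopen in $\dom(f' \delta_X') = \dom(f \delta_X')$, we must exhibit $f' \leqSW (f')_A$.

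First I would unravel what $\dom(f\delta_X')$ looks like. Since $\delta_X' = \delta_X \circ \lim$, a point $p \in \Baire$ is in $\dom(f\delta_X')$ iff $p$ codes a convergent sequence $\langle p_0, p_1, \dots\rangle$ whose limit $q := \lim p_i$ satisfies $\delta_X(q) \in \dom(f)$. Given a clopen non-empty set $A$ in this domain, the key observation is that one can computably, from an arbitrary name $p \in \dom(f\delta_X')$, produce a name $p^* \in A$ coding a sequence with the \emph{same} limit $q$. The idea: fix a computable point $a \in A$ (possible since $A$ is non-empty and clopen, hence contains a name of the form guaranteed by an enumeration of basic clopen sets — more carefully, $A$ being clopen in the domain means $A = U \cap \dom(f\delta_X')$ for some clopen $U \In \Baire$, and we pick $a$ a name of some element, possibly after passing through the standard argument that clopen subsets of Baire space are finite unions of basic cylinders). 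Then given input $p$, we run a procedure that starts outputting (a convergent-sequence-coding of) $a$ but, while doing so, monitors longer and longer prefixes of $p$; as soon as enough of $p$ is read to decide that continuing along $p$'s trajectory stays inside $U$ (using clopenness of $U$), we splice over to copying the tail of $p$ — and because both $a$ and the tail of $p$ are merely convergent sequences, a finite modification of the initial segment does not affect the limit. The transparency/robustness of $\lim$ under finite (indeed, on a null-content) modifications of the coded sequence is what makes this work: $\lim$ only depends on the tail.

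Concretely: let $H, K$ be the computable maps with $K$ the "splice into $A$" map just described and $H = \id$. One checks $\lim K(p)$ has the same $\delta_X$-value as $\lim$ applied to $p$ (both equal $q$), so for every $G \vdash f$ we have $G \lim K(p) \in f\delta_X(\lim p) = f' \delta_X'(p)$ along the realizer identity, i.e. $HG\lim K \vdash f'$ in the appropriate sense — and since $K$ lands in $A$, restricting to realizers of $(f')_A$ still works, giving $H E K \vdash f'$ for all $E \vdash (f')_A$ via Lemma~\ref{lem:jump-realization}. This is precisely $f' \leqSW (f')_A$.

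The main obstacle I anticipate is handling the clopenness correctly: "$A$ clopen in $\dom(f\delta_X')$" does not make $A$ clopen in all of $\Baire$, so extracting a computable element of $A$ and, more delicately, \emph{deciding} from a finite prefix whether a given $p$ continues inside $A$, must be done relative to the domain. The honest way is: since $A$ is clopen in $\dom(f\delta_X')$, write $A = U \cap \dom(f\delta_X')$ with $U$ open in $\Baire$ (clopen is the intersection of an open and a closed relative set, but for the fractal argument one only needs that the \emph{complement} of $A$ within the domain is also relatively open, which lets us switch into $A$ safely once a prefix forces us in). The splicing must guarantee we never leave $A$, and since the tail of $p$ keeps the limit $q$ fixed and $q \in \dom(f\delta_X')$ is a domain point, staying in $U$ plus being a valid convergent-sequence code of $q$ keeps us in $A$. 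I would lean on the fact, already used in \cite{BBP} for the fractality of $\C_X$, that such domains of jump representations are closed under this kind of prefix surgery, and note that $f'$ inherits the self-similarity essentially because $\lim$ erases finite information. Everything else is the routine translation through Lemma~\ref{lem:jump-realization} and an appeal to Proposition~\ref{prop:join-irreducible-fractals}.
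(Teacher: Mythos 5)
Your overall route is the same as the paper's: show that $f'$ is a strong fractal by a finite modification of $\delta_X'$-names that forces them into $A$ without changing the represented point, and then quote Proposition~\ref{prop:join-irreducible-fractals}. Two steps, however, do not survive a literal reading. First, you cannot in general ``fix a computable point $a\in A$'': here $A\In\dom(f\delta_X')$, and this domain may contain no computable points at all; moreover, clopen subsets of $\Baire$ need not be finite unions of basic cylinders (that holds in Cantor space by compactness, not in Baire space), and a computable point of $U$ need not lie in $A$. Fortunately $a$ is not needed: since $A$ is non-empty and relatively open, there is a word $w\in\IN^*$ with $\emptyset\not=w\IN^\IN\cap\dom(f\delta_X')\In A$, and $w$ can simply be hard-coded into $K$, since the reduction may depend non-uniformly on $A$. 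Second, your splice trigger --- wait until ``enough of $p$ is read to decide that continuing along $p$'s trajectory stays inside $U$'' --- must not refer to $p$ at all. The reduction has to handle every $p\in\dom(f\delta_X')$, including those outside $U$, and membership of an infinite sequence in $U$ is never decided from a finite prefix of $p$ unless the output already written forces it; read literally, your machine may never splice, in which case $K(p)$ codes $a$ and represents the wrong point. The repair is exactly the paper's construction: from $p=\langle p_0,p_1,\dots\rangle$ output $q_p:=\langle q_0,\dots,q_n,p_0,p_1,\dots\rangle$, where the finitely many terms $q_0,\dots,q_n$ are chosen once and for all (independently of $p$) so that $w\prefix q_p$; prepending finitely many terms does not change the limit of the coded sequence, so $\delta_X'K(p)=\delta_X'(p)$ and $K(p)\in w\IN^\IN\cap\dom(f\delta_X')\In A$.

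The concluding appeal to Lemma~\ref{lem:jump-realization} is also misplaced: $(f')_A$ carries the representation $\delta_X'|_A$, which is not the jump of a restriction of $\delta_X$, so the lemma does not apply --- but none is needed. From $K(p)\in A\cap\dom(f\delta_X')$ and $\delta_X'K(p)=\delta_X'(p)$ you obtain directly that $\delta_YEK(p)\in f\delta_X'(p)$ for every $E\vdash(f')_A$, i.e.\ $f'\leqSW(f')_A$ with $H=\id$. With these corrections your argument coincides with the paper's proof.
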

\begin{proof}
We assume that $f:\In X\mto Y$, where $\delta_X$ is the representation of $X$.
Let $A\In\IN^\IN$ be clopen and non-empty in $\dom(f\delta_X')$.
Then there is some word $w\in\IN^*$ with $\emptyset\not=w\IN^\IN\cap\dom(f\delta_X')\In A$. A name $p=\langle p_0,p_1,p_2,...\rangle$
with respect to $\delta_X'$ consists of a sequence $(p_n)$ that converges to a name with respect to $\delta_X$. We can find $q_0,...,q_n\in\IN^\IN$ such that
$w\prefix q_p:=\langle q_0,...,q_n,p_0,p_1,p_2,...\rangle$ for all $p=\langle p_0,p_1,...\rangle$. Since $\lim(q_p)=\lim(p)$ and the function
$K:\IN^\IN\to\IN^\IN,p\mapsto q_p$ is computable, 
we immediately obtain $F\delta_X'(p)=F\delta_X'K(p)$ for all $F\vdash f$.
This proves that $f$ is a strong fractal and hence it is join-irreducible and strongly join-irreducible
by Proposition~\ref{prop:join-irreducible-fractals}.
\end{proof}

In particular, this result allows to show that certain degrees are not derivatives.
For instance $\C_\Cantor\sqcup\C_\IN$ is a join of two incomparable multi-valued functions (see Section~4 in \cite{BG11})
and hence it is neither join irreducible nor strongly join-irreducible and hence not a derivative. 

\begin{example}
\label{ex:no-derivative}
There is no multi-valued function $f$ on represented spaces such that $f'\equivW\C_\Cantor\sqcup\C_\IN$.
\end{example}

The similar Example~\ref{ex:coproduct-derivative} shows 
that the result on coproducts in Proposition~\ref{prop:products-parallelization} cannot be strengthened to equivalence in general.

A consequence of Proposition~\ref{prop:products-parallelization} 
is that the derivative $f'$ of a 
cylinder $f$ is a cylinder again. We can even say more than this.

\begin{corollary}
\label{cor:cylinder}
Let $f$ be a multi-valued function on represented spaces. Then
\[(f\times\id)'\equivSW f'\times\lim.\]
In particular, if $f$ is a cylinder, then $f'\equivSW f'\times\lim$ and $f'$ is a cylinder.
\end{corollary}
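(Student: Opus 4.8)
The plan is to prove the key identity $(f\times\id)'\equivSW f'\times\lim$ and then read off the two consequences, since a cylinder satisfies $f\equivSW f\times\id$ and $(\lim)$ is the derivative of $\id$ by Example~\ref{ex:derivatives}(5). The main tool is part~(2) of Proposition~\ref{prop:products-parallelization}, which gives $(f\times g)'\equivSW f'\times g'$ for any $f,g$. Applying this with $g=\id$ yields immediately
\[(f\times\id)'\equivSW f'\times\id'\equivSW f'\times\lim,\]
where the second equivalence uses $\id'\equivSW\lim$ from Example~\ref{ex:derivatives}(5). So the first displayed statement of the corollary follows directly, with essentially no further work beyond invoking results already established in the excerpt.

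For the "in particular" clause, suppose $f$ is a cylinder, i.e.\ $f\equivSW\id\times f$. By monotonicity of the derivative with respect to strong Weihrauch reducibility (Proposition~\ref{prop:monotone-derivative}(2), applied in both directions), strong equivalence is preserved under $'$, so $f'\equivSW(\id\times f)'\equivSW(f\times\id)'$, where the last step uses commutativity of $\times$ up to $\equivSW$. Combining with the identity just proved gives $f'\equivSW f'\times\lim$. To conclude that $f'$ is a cylinder, I would recall that $\lim\equivSW\lim\times\id$ — this is Fact~\ref{fact:lim}, since $\lim$ is a cylinder — hence
\[f'\equivSW f'\times\lim\equivSW f'\times\lim\times\id\equivSW f'\times\id,\]
so $f'$ is a cylinder by definition.

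I do not anticipate a genuine obstacle here: the corollary is a bookkeeping consequence of Proposition~\ref{prop:products-parallelization}(2) together with the facts that $\id'\equivSW\lim$ and that $\lim$ is a cylinder. The only point requiring minor care is keeping the distinction between $\id=\id_\Baire$ (the object appearing in the definition of "cylinder") and general identities $\id_X$; since the definition of cylinder is stated specifically in terms of $\id_\Baire$, and $(\id_\Baire)'\equivSW\lim$ by Example~\ref{ex:derivatives}(5), everything matches up. One should also note that $\times$ is commutative and associative up to $\equivSW$ (and even up to $\equivSW$ it preserves equivalence, as remarked after Definition~\ref{def:algebraic-operations}), which is what licenses the rearrangements above; these are routine and need not be spelled out in detail.
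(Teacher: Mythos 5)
Your argument is correct and matches the paper's intended derivation: the paper states this corollary without proof as an immediate consequence of Proposition~\ref{prop:products-parallelization}(2) together with $\id'\equivSW\lim$ (Example~\ref{ex:derivatives}(5)), which is exactly what you use, plus the routine bookkeeping via monotonicity of the derivative and the fact that $\lim$ is a cylinder. No gaps.
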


Now we can also conclude that for cylinders the derivative is monotone with respect to ordinary 
Weihrauch reducibility. This is because for cylinders $g$ also $g'$ is a cylinder and strong reducibility
to a cylinder is equivalent to ordinary reducibility.

\begin{corollary}
\label{cor:monotone-derivative}
Let $f$ and $g$ be multi-valued functions on represented spaces and let $g$ be a cylinder.
We obtain that $f\leqW g$ implies $f'\leqW g'$.
\end{corollary}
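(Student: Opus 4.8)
The plan is to reduce Corollary~\ref{cor:monotone-derivative} to the already established monotonicity of the derivative under strong Weihrauch reducibility (Proposition~\ref{prop:monotone-derivative}(2)) together with Corollary~\ref{cor:cylinder}. First I would recall the standing fact, mentioned in Section~2 right after Definition~\ref{def:algebraic-operations}, that for a cylinder $h$ one has $e\leqW h\iff e\leqSW h$ for every multi-valued function $e$ on represented spaces. The strategy is therefore: starting from $f\leqW g$, upgrade this to a strong reduction against a suitable cylinder, apply strong monotonicity, and then downgrade back to an ordinary reduction using that the target is again a cylinder.

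Concretely, here are the steps in order. Assume $f\leqW g$ with $g$ a cylinder. Since $g$ is a cylinder, $f\leqW g$ already gives $f\leqSW g$ by the cylinder characterization. By Proposition~\ref{prop:monotone-derivative}(2), strong monotonicity of the derivative yields $f'\leqSW g'$, hence in particular $f'\leqW g'$. This already proves the statement, but it is worth noting that we may also invoke the second part of Corollary~\ref{cor:cylinder}: since $g$ is a cylinder, $g'$ is a cylinder as well, so the conclusion $f'\leqW g'$ is moreover equivalent to $f'\leqSW g'$, consistent with what we derived. In short, the proof is essentially a two-line deduction: $f\leqW g\xRightarrow{g\text{ cylinder}}f\leqSW g\xRightarrow{\text{Prop.~\ref{prop:monotone-derivative}}}f'\leqSW g'\xRightarrow{}f'\leqW g'$.

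There is no real obstacle here; the only point that needs a moment's care is making sure the cylinder hypothesis is genuinely doing the work at the first step, because in general (as Example~\ref{ex:derivatives} is used to illustrate) ordinary Weihrauch reducibility $f\leqW g$ does \emph{not} imply $f'\leqW g'$ — the derivative is not monotone for $\leqW$ without the cylinder assumption. So the write-up should emphasize that the equivalence $f\leqW g\iff f\leqSW g$ is exactly what the cylinder property on $g$ buys us, and that everything then reduces to the strong case. One could also phrase the argument slightly differently by using Corollary~\ref{cor:cylinder} to pass to $g\equivSW g\times\id$ explicitly, but invoking the cylinder characterization directly is cleaner and is the approach I would take.

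I would write the proof as follows:
\begin{proof}
Since $g$ is a cylinder, the reduction $f\leqW g$ is equivalent to $f\leqSW g$.
By Proposition~\ref{prop:monotone-derivative} we obtain $f'\leqSW g'$, and in particular $f'\leqW g'$.
\end{proof}
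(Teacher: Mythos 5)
Your proof is correct and follows essentially the same route as the paper: the cylinder hypothesis on $g$ upgrades $f\leqW g$ to $f\leqSW g$, Proposition~\ref{prop:monotone-derivative} gives $f'\leqSW g'$, and hence $f'\leqW g'$ (the paper additionally notes, as you do, that $g'$ is again a cylinder by Corollary~\ref{cor:cylinder}).
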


This implies that a meaningful definition of a derivative of a Weihrauch degree 
with representative $f$ is the Weihrauch degree of the derivative of $f\times\id$.
Since $f\times\id\equivW f$ and $f\times\id$ is a cylinder, this definition does not depend
on the representative $f$. 

From Proposition~\ref{prop:products-parallelization} we can also derive the following result on idempotency.

\begin{corollary}
\label{cor:idempotent}
Let $f$ be a multi-valued function on represented spaces.
\begin{enumerate}
\item If $f$ is strongly idempotent, then $f'$ is strongly idempotent too.
\item If $f$ is idempotent and a cylinder, then $f'$ is idempotent too.
\end{enumerate}
\end{corollary}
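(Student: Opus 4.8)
The plan is to deduce both parts from Proposition~\ref{prop:products-parallelization}(2), which identifies $(f\times g)'$ with $f'\times g'$ up to strong equivalence, combined with the two monotonicity statements for the derivative, namely Proposition~\ref{prop:monotone-derivative}(2) for $\leqSW$ and Corollary~\ref{cor:monotone-derivative} for $\leqW$ (valid when the larger function is a cylinder).

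For part (1), suppose $f$ is strongly idempotent, i.e.\ $f\equivSW f\times f$. Applying monotonicity of the derivative with respect to $\leqSW$ (Proposition~\ref{prop:monotone-derivative}(2)) in both directions yields $f'\equivSW(f\times f)'$, and Proposition~\ref{prop:products-parallelization}(2) gives $(f\times f)'\equivSW f'\times f'$. Chaining these equivalences produces $f'\equivSW f'\times f'$, so $f'$ is strongly idempotent.

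For part (2), I would first record that $f\times f$ is again a cylinder: since $f$ is a cylinder we have $f\equivSW\id\times f$, hence $f\times f\equivSW(\id\times f)\times f\equivSW\id\times(f\times f)$. Now I use Corollary~\ref{cor:monotone-derivative}. From the trivial reduction $f\leqW f\times f$ (which holds since $f\equivW f\times f$ by idempotency) and the fact that $f\times f$ is a cylinder, we obtain $f'\leqW(f\times f)'\equivSW f'\times f'$. Conversely, from $f\times f\leqW f$ and the fact that $f$ is a cylinder, Corollary~\ref{cor:monotone-derivative} gives $(f\times f)'\leqW f'$, hence $f'\times f'\leqW f'$. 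Together these two reductions give $f'\equivW f'\times f'$, i.e.\ $f'$ is idempotent.

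The only point needing a little care — and the only place where the cylinder hypothesis in part (2) does real work beyond turning $\equivSW$ into $\equivW$ — is the observation that $f\times f$ is itself a cylinder, since this is what licenses pushing the reduction $f\leqW f\times f$ through the derivative via Corollary~\ref{cor:monotone-derivative}. Everything else is a direct chain of equivalences already established in the previous two sections, so I do not anticipate any genuine obstacle.
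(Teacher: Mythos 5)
Your proof is correct and follows essentially the same route as the paper, which likewise combines monotonicity of the derivative with the identity $(f\times f)'\equivSW f'\times f'$ from Proposition~\ref{prop:products-parallelization} (the paper compresses this to the single implication $f\times f\leqSW f\TO f'\times f'\equivSW(f\times f)'\leqSW f'$, using the cylinder hypothesis in part (2) only to upgrade $f\times f\leqW f$ to a strong reduction). One small remark: the detour through showing $f\times f$ is a cylinder is unnecessary, since $f'\leqSW f'\times f'$ holds trivially by duplicating the input, so the cylinder hypothesis does its real work only in the converse direction $(f\times f)'\leqW f'$, not where you indicate.
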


This follows since $f\times f\leqSW f$ implies $f'\times f'\equivSW(f\times f)'\leqSW f'$.

The following theorem characterizes derivatives in terms of compositions with limit computable
functions. 
For two multi-valued functions $f_1,f_2:\In X\mto Y$ we write $f_1\sqsupseteq f_2$
if $\dom(f_1)\In\dom(f_2)$ and $f_1(x)\supseteq f_2(x)$ for all $x\in\dom(f_1)$.
It is worth mentioning that $f_1\sqsupseteq f_2$ implies $f_1\leqSW f_2$
and that for a multi-valued function $f:\In (X,\delta_X)\mto(Y,\delta_Y)$ the property $F\vdash f$ is
equivalent to $f\sqsupseteq \delta_YF\delta_X^{-1}$.
We will use the following observation in the proof of Theorem~\ref{thm:derivatives}.

\begin{lemma}
\label{lem:realizers-extensions}
Let $f,g:\In X\mto Y$ be multi-valued functions on represented spaces.
Then the following are equivalent:
\begin{enumerate}
\item $f\sqsupseteq g$,
\item $F\vdash f$ for all $F\vdash g$.
\end{enumerate}
\end{lemma}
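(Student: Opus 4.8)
\textbf{The plan is to} prove the equivalence $f\sqsupseteq g \iff (F\vdash f$ for all $F\vdash g)$ by unwinding the observation already recorded in the paragraph before the statement, namely that for a multi-valued function $f:\In (X,\delta_X)\mto(Y,\delta_Y)$ the property $F\vdash f$ is equivalent to $f\sqsupseteq \delta_YF\delta_X^{-1}$. That observation is the workhorse; the lemma should follow almost formally from it together with transitivity of $\sqsupseteq$.

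\textbf{For the direction (1)$\To$(2)}, I would take any $F\vdash g$. By the recorded observation applied to $g$, we have $g\sqsupseteq\delta_YF\delta_X^{-1}$. Combining with the hypothesis $f\sqsupseteq g$ and transitivity of the relation $\sqsupseteq$ (which is immediate from its definition, since domain inclusions and pointwise superset inclusions both compose), we obtain $f\sqsupseteq\delta_YF\delta_X^{-1}$. Applying the recorded observation in the other direction, now to $f$, this says exactly $F\vdash f$. So every realizer of $g$ is a realizer of $f$.

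\textbf{For the direction (2)$\To$(1)}, I would use the Axiom of Choice to pick a realizer $F_0\vdash g$ — this is legitimate since the paper has announced it suppresses such appeals to choice, and a realizer of any multi-valued function always exists by choosing, for each $p\in\dom(g\delta_X)$, some element of $g\delta_X(p)$. (One should check $\dom(g)\neq\emptyset$ is not needed: if $\dom(g\delta_X)=\emptyset$ then both sides degenerate appropriately, or rather $F_0$ can be the empty function; this edge case deserves a sentence.) By hypothesis $F_0\vdash f$ as well, so by the recorded observation applied to $f$ we get $f\sqsupseteq\delta_YF_0\delta_X^{-1}$. It remains to upgrade this to $f\sqsupseteq g$. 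The point is that $F_0$ was chosen to realize $g$, so $\delta_YF_0\delta_X^{-1}\sqsupseteq$ is not quite $g$ — rather $g\sqsupseteq\delta_YF_0\delta_X^{-1}$ — so transitivity the wrong way does not immediately close the argument.

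\textbf{The main obstacle} is exactly this last mismatch: a single realizer $F_0$ only witnesses one selection from $g$, so $f\sqsupseteq\delta_YF_0\delta_X^{-1}$ alone does not give $f\sqsupseteq g$. The fix is to quantify over \emph{all} realizers: to show $f(x)\supseteq g(x)$ for a fixed $x\in\dom(f\delta_X)$ with name $p$, and a fixed $y\in g\delta_X(p)$, I would build a realizer $F_y\vdash g$ with $F_y(p)$ a $\delta_Y$-name of $y$ (possible by modifying $F_0$ at $p$, since $y\in g\delta_X(p)$), conclude $F_y\vdash f$ from hypothesis (2), hence $\delta_Y F_y(p)\in f\delta_X(p)$, i.e.\ $y\in f\delta_X(p)$; and likewise $\dom(f)\In\dom(g)$ because any name in $\dom(f\delta_X)$ must lie in $\dom(g\delta_X)$ (else no $F\vdash g$ could be defined there, contradicting that such an $F$ — necessarily a realizer of $f$ — is). This per-point argument, rather than a single application of transitivity, is what makes the direction work; it is the same modification-of-a-realizer technique used in the proof of Lemma~\ref{lem:jump-realization}, so it should be routine to carry out.
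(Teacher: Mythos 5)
Your argument is correct and is essentially the proof the paper intends: the paper merely remarks that the lemma ``follows immediately, using the Axiom of Choice,'' and your per-point modification of a chosen realizer (the same technique as in Lemma~\ref{lem:jump-realization}), together with the recorded equivalence $F\vdash f\iff f\sqsupseteq\delta_YF\delta_X^{-1}$, supplies exactly those details. One clause should be repaired: in the domain-inclusion step it is not true that no $F\vdash g$ can be defined at a name $p\notin\dom(g\delta_X)$ --- realizers may be defined beyond $\dom(g\delta_X)$ --- rather, \emph{some} realizer of $g$ is undefined at $p$ (e.g.\ the restriction of any realizer to $\dom(g\delta_X)$), and since hypothesis (2) forces every realizer of $g$ to be defined on all of $\dom(f\delta_X)$, this gives $\dom(f\delta_X)\In\dom(g\delta_X)$ and hence $\dom(f)\In\dom(g)$.
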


The proof follows immediately, using the Axiom of Choice.

\begin{theorem}[Derivatives]
\label{thm:derivatives}
Let $f$ and $g$ be multi-valued functions on represented spaces.
If $g$ is a cylinder, then the following are equivalent:
\begin{enumerate}
\item $f\leqW g'$,
\item $f=g_0\circ l_0$ for some $g_0\leqW g$ and $l_0\leqW\lim$.
\end{enumerate}
If $g$ is not necessarily a cylinder, then an analogous equivalence holds with $\leqSW$ in place of $\leqW$
and with either $\sqsupseteq$ or $\leqSW$ instead of $=$.
\end{theorem}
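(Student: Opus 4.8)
The plan is to prove the equivalence (2)$\Rightarrow$(1) first, since it is the easier direction, and then tackle (1)$\Rightarrow$(2), which carries the real content. Throughout I will use the Jump Realization Lemma~\ref{lem:jump-realization} and the normal form for limit computable functions from Fact~\ref{fact:galois}, together with the fact (Corollary~\ref{cor:cylinder}) that $g'$ is a cylinder whenever $g$ is, so that $\leqW$ and $\leqSW$ into $g'$ coincide.

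\medskip
\textbf{Direction (2)$\Rightarrow$(1).} Suppose $f = g_0 \circ l_0$ with $g_0 \leqW g$ and $l_0 \leqW \lim$. Write $f \colon \In X \mto Y$, and let $\delta_X$ be the representation of $X$. Since $l_0 \leqW \lim$, by Fact~\ref{fact:galois} (or directly from the definition of $\leqW$) a realizer of $l_0$ can be produced from a realizer $L = \lim$ of $\lim$ using computable pre- and post-processing; the essential point is that $l_0$ has a realizer of the form $L_1 \lim L_2$ with $L_1, L_2$ computable. Composing with a realizer of $g_0$ (which in turn is obtained from any $G \vdash g$ by computable wrapping since $g_0 \leqW g$), one obtains for $f$ a realizer of the shape $H G \lim K$ for suitable computable $H, K$ and every $G \vdash g$. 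Now the Jump Realization Lemma~\ref{lem:jump-realization} converts this into: $H E K \vdash f$ for all $E \vdash g'$, which is exactly $f \leqSW g'$, hence $f \leqW g'$. (When $g$ is merely a cylinder this still gives $f \leqW g'$; when $g$ is arbitrary the same computation yields $f \leqSW g'$, establishing the "$\leqSW$, $\sqsupseteq$/$\leqSW$ in place of $=$" variant in one direction — note that $f = g_0 \circ l_0$ is stronger than $f \sqsupseteq g_0 \circ l_0$, and we are free to assume either.)

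\medskip
\textbf{Direction (1)$\Rightarrow$(2).} This is where I expect the main obstacle, namely producing an honest \emph{composition} (or at least a $\sqsupseteq$-extension, which by Lemma~\ref{lem:realizers-extensions} amounts to the same realizer-level statement) rather than merely a reduction. Assume $f \leqW g'$; since $g$ is a cylinder, $g'$ is a cylinder (Corollary~\ref{cor:cylinder}), so in fact $f \leqSW g'$. Unwinding strong reducibility, there are computable $H, K$ with $H E K \vdash f$ for all $E \vdash g'$. By the Jump Realization Lemma~\ref{lem:jump-realization} this is equivalent to $H G \lim K \vdash f$ for all $G \vdash g$. I would now \emph{define} $l_0 \colon \In X \rightrightarrows Z_{\delta'}$ (where $Z$ is the source space of $g$, equipped with a suitable representation) essentially as the multi-valued map realized by $\lim K$ followed by the appropriate representation map — concretely, $l_0 := \delta_Z' K \delta_X^{-1}$ restricted to $\dom(f)$, which satisfies $l_0 \leqW \lim$ because $\lim K$ is a "limit-computable realizer" (using that $\lim$ is transparent, $\lim K = K_0 \lim$ for computable $K_0$, so $\lim K$ witnesses a $\leqW \lim$ reduction of $l_0$). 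Then I would define $g_0$ to be a suitable restriction/corestriction of $g$ — namely $g$ precomposed with $\delta_Z'$ on the inputs arising from $K$, and postcomposed with $\delta_Y H$ — chosen so that $g_0 \leqW g$ (the outer $H$ is the computable output wrapper, the point that $K$ is absorbed into $l_0$ means $g_0$ needs no input wrapper beyond picking out a co-restriction). The computation $f \sqsupseteq \delta_Y H G \lim K \delta_X^{-1} = g_0 \circ l_0$ then follows from $H G \lim K \vdash f$ via the realizer-characterization of $\sqsupseteq$; in the cylinder case one upgrades $\sqsupseteq$ to $=$ by the usual trick of throwing the identity into $g_0$ (using $g \equivSW \id \times g$) so that $g_0 \circ l_0$ reproduces $f$ exactly rather than a multivalued extension.

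\medskip
\textbf{Main obstacle.} The delicate point is keeping the types honest: $g'$ has source $(Z, \delta_Z')$, and when we peel off "$\lim$" from $\delta_Z' = \delta_Z \circ \lim$ we must make sure the leftover map $K$ really does land (after one $\lim$) in $\dom(g \delta_Z)$, which is guaranteed precisely because $HG\lim K \vdash f$ forces $\lim K(p) \in \dom(g\delta_Z)$ for every name $p$ of a point of $\dom(f)$ — this is the same observation used inside the proof of Lemma~\ref{lem:jump-realization}. A secondary subtlety is that $l_0$ as defined is genuinely multi-valued in general (different names converging to the same $\delta_Z$-point), so to see $l_0 \leqW \lim$ one should present $l_0$ as $\{$point named by $\lim K(p)\}$ and note $\lim K$ is a limit-computable realizer of it; and to get the clean $f = g_0 \circ l_0$ in the cylinder case one uses $g \equivSW \id \times g$ to carry along enough of the original input so that $g_0$ can recover $f$'s output exactly. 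All of this uses only results already in the excerpt (Lemmas~\ref{lem:jump-realization}, \ref{lem:realizers-extensions}, Fact~\ref{fact:galois}, Corollary~\ref{cor:cylinder}) plus the realizer-level characterization of $\leqSW$ and $\sqsupseteq$.
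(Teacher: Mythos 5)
Your proposal is correct and follows essentially the same route as the paper's proof: (2)$\Rightarrow$(1) by composing realizers, absorbing the computable parts into $\lim$ via transparency (Fact~\ref{fact:galois}) and then applying the Jump Realization Lemma~\ref{lem:jump-realization}; (1)$\Rightarrow$(2) by peeling $\delta_Z'=\delta_Z\circ\lim$ into a part $l_0\leqW\lim$ realized by $\lim K$ and a part $g_0\leqW g$ built from $\delta_Y H$ and $g$, deducing $f\sqsupseteq g_0\circ l_0$ from the realizer characterization (Lemma~\ref{lem:realizers-extensions}), and upgrading to equality in the cylinder case by carrying the original input along via $g\equivW\id\times g$, exactly as in the paper (which uses $l_0=\langle\id,\lim K\rangle\delta_X^{-1}$ and $g_3=g_1\circ\langle\id\times g_2\rangle\circ\pi^{-1}$). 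Only two cosmetic slips: the intermediate space must carry $\delta_Z$ (the paper simply uses Baire space), not $\delta_Z'$ as you wrote --- otherwise $l_0$ would be computable and the outer map would absorb the jump --- and transparency of $\lim$ gives $K\lim=\lim K_0$, not $\lim K=K_0\lim$ (indeed $\lim K$ is limit computable directly by Fact~\ref{fact:galois}(2)); neither affects the argument.
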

\begin{proof}
``(2)$\TO$(1)'' 
Let  $f\leqSW g_0\circ l_0$ with $g_0\leqSW g$ and $l_0\leqSW \lim$.
(If $g$ is a cylinder, then this follows from the assumption as stated in (2) above.
Otherwise, it follows from $f\sqsupseteq g_0\circ l_0$.)
Then there are computable $H,K,H_1,K_1,H_2,K_2:\In\Baire\to\Baire$ such that
\begin{enumerate}
\item[(a)] $HFK\vdash f$ for all $F\vdash g_0\circ l_0$,
\item[(b)] $H_1RK_1\vdash g_0$ for all $R\vdash g$,
\item[(c)] $H_2\lim K_2\vdash l_0$ (where $\lim$ is the only realizer of itself up to extension).
\end{enumerate}
In particular, by combination of these properties
\begin{enumerate}
\item[(d)] $H_1RK_1H_2\lim K_2\vdash g_0\circ l_0$ for all $R\vdash g$,
\item[(e)] $HH_1RK_1H_2\lim K_2K\vdash f$ for all $R\vdash g$.
\end{enumerate}
Since $K_1H_2\lim K_2K$ is limit computable, there is a computable $K_3$ such that
$\lim K_3=K_1H_2\lim K_2K$ by Fact~\ref{fact:galois}. Moreover, $H_3=HH_1$ is computable.
We obtain by simplification of (e) that
\begin{enumerate}
\item[(f)] $H_3R\lim K_3\vdash f$ for all $R\vdash g$.
\end{enumerate}
By the Jump Realization Lemma~\ref{lem:jump-realization} this implies
\begin{enumerate}
\item[(g)] $H_3SK_3\vdash f$ for all $S\vdash g'$.
\end{enumerate}
This implies $f\leqSW g'$.\\
``(1)$\TO$(2)'' 
We consider $f:\In X\mto Y$ and $g:\In Z\mto W$ with represented spaces $(X,\delta_X)$,
$(Y,\delta_Y)$, $(Z,\delta_Z)$ and $(W,\delta_W)$.
Let us now assume that $f\leqSW g'$.
That means that there are computable $H,K:\In\Baire\to\Baire$ such that 
\begin{enumerate}
\item[(h)] $HSK\vdash f$ for all $S\vdash g'$.
\end{enumerate}
Now we consider the functions $g_1:=\delta_YH:\In\Baire\to Y$, $g_2:=\delta_W^{-1}g\delta_Z:\In\Baire\mto\Baire$,
$g_0:=g_1g_2:\In\Baire\mto Y$ and $l_0:=\lim K\delta_X^{-1}:\In X\mto\Baire$. 
We claim that $f\sqsupseteq g_0\circ l_0$ and $g_0\leqSW g$ and $l_0\leqSW\lim$. 
Firstly, it is clear that $l_0\leqSW\lim K\leqSW\lim$.
Secondly, $g_2$ and $g$ share the same realizers, i.e.\
\begin{enumerate}
\item[(i)] $R\vdash g_2\iff R\vdash g$,
\end{enumerate}
which implies $g_0\leqSW g_2\equivSW g$. Moreover, it also implies
that $g_2\lim$ and $g'$ share the same realizers as well:
\begin{enumerate}
\item[(j)] $S\vdash g_2\lim\iff S\vdash g'$.
\end{enumerate}
Together with (h) this implies $\delta_YHg_2\lim K(p)\In f\delta_X(p)$ for all $p\in\dom(f\delta_X)$, which means
\begin{enumerate}
\item[(k)]  $F\vdash f$ for all $F\vdash Hg_2\lim K$.
\end{enumerate}
Altogether, we obtain
\begin{enumerate}
\item[(l)] $F\vdash f\Longleftarrow F\vdash\delta_YHg_2\lim K\delta_X^{-1}\Longleftarrow F\vdash g_0\circ l_0$.
\end{enumerate}
But since $f,g_0\circ l_0$ are both multi-valued functions from $X$ to $Y$, this implies
$f\sqsupseteq g_0\circ l_0$ by Lemma~\ref{lem:realizers-extensions}. This proves the claim for the case that $g$ is not necessarily
a cylinder. 

We now refine the proof for the case that $g$ is a cylinder.
Let us hence assume that $g$ is a cylinder and $f\leqW g'$.
That means that there are computable $H,K:\In\Baire\to\Baire$ such that 
\begin{enumerate}
\item[(h')] $H\langle\id,SK\rangle\vdash f$ for all $S\vdash g'$.
\end{enumerate}
Now we consider the functions $g_1:=\delta_YH:\In\Baire\to Y$, $g_2:=\delta_W^{-1}g\delta_Z:\In\Baire\mto\Baire$,
$g_3:=g_1\circ\langle\id\times g_2\rangle\circ\pi^{-1}:\In\Baire\mto Y$ and $l_0:=\langle\id,\lim K\rangle\delta_X^{-1}:\In X\mto\Baire$. 
Here $\pi:\Baire\times\Baire\to\Baire,(p,q)\mapsto\langle p,q\rangle$ denotes the standard pairing function.
We claim that $f\sqsupseteq g_3\circ l_0$ and $g_3\leqW g$ and $l_0\leqW\lim$. 
Firstly, it is clear that $l_0\leqW\langle\id,\lim K\rangle\leqW\lim$.
Secondly, $\langle\id\times g_2\rangle\circ\pi^{-1}$ and $\id\times g$ share the same realizers, i.e.\
\begin{enumerate}
\item[(i')] $R\vdash \langle\id\times g_2\rangle\circ\pi^{-1}\iff R\vdash \id\times g$,
\end{enumerate}
which implies $g_3\leqW\langle\id\times g_2\rangle\circ\pi^{-1}\equivW \id\times g\equivW g$. 
Moreover, also $g_2$ and $g$ share the same realizers, which implies
that $g_2\lim$ and $g'$ share the same realizers as well:
\begin{enumerate}
\item[(j')] $S\vdash g_2\lim\iff S\vdash g'$.
\end{enumerate}
Together with (h') this implies $\delta_YH\langle p,g_2\lim K(p)\rangle\In f\delta_X(p)$ for all $p\in\dom(f\delta_X)$, which means
\begin{enumerate}
\item[(k')]  $F\vdash f$ for all $F\vdash H\langle\id, g_2\lim K\rangle$.
\end{enumerate}
Altogether, we obtain
\begin{enumerate}
\item[(l')] $F\vdash f\Longleftarrow F\vdash\delta_YH\langle\id, g_2\lim K\rangle\delta_X^{-1}\Longleftarrow F\vdash g_3\circ l_0$.
\end{enumerate}
But since $f,g_3\circ l_0$ are both multi-valued functions from $X$ to $Y$, this implies
$f\sqsupseteq g_3\circ l_0$ by Lemma~\ref{lem:realizers-extensions}.
In this situation we can now replace $g_3$ by $g_0\sqsupseteq g_3$ such that $f=g_0\circ l_0$. This is
possible, because $g_3$ in the composition $g_3\circ l_0$ has direct access to a name of the original input of $l_0$,
due to the definition of $g_3$ and $l_0$. 
Hence one can just extend $g_3:\In\IN^\IN\mto Y$ in the image as necessary in order to obtain $g_0:\In\IN^\IN\mto Y$ with $f=g_0\circ l_0$.
For any $g_0\sqsupseteq g_3$ we obtain $g_0\leqW g_3\leqW g$.
\end{proof}

We mention that the property that $g$ is a cylinder has only been used for the direction (2)$\TO$(1).
The requirement that $g$ is a cylinder is not superfluous as the example $g=\C_1$ shows. 
In this case we have $\id\leqW g$ but $\lim\nleqW\C_1\equivSW g'$.

Statement (j) in the proof of Theorem~\ref{thm:derivatives} provides a kind of a normal form for derivatives.
We formulate this more precisely.

\begin{corollary}
\label{cor:derivatives-composition}
Let $g$ be a multi-valued function on represented spaces. Then $g'\equivSW g_0\circ \lim$ for some
$g_0\equivSW g$.
\end{corollary}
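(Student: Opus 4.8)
The plan is to produce $g_0$ by hand: it should be the ``Baire space copy'' of $g$ that already appears inside the proof of Theorem~\ref{thm:derivatives} under the name $g_2$. Writing $g:\In(Z,\delta_Z)\mto(W,\delta_W)$, I would set
$g_0:=\delta_W^{-1}\circ g\circ\delta_Z:\In\Baire\mto\Baire$,
with Baire space carrying the identity representation on both sides, and then claim that $g_0\equivSW g$ and $g_0\circ\lim\equivSW g'$; the corollary follows by combining these two equivalences. In fact both equivalences are exactly what statements (i) and (j) in the proof of Theorem~\ref{thm:derivatives} assert (with $g_2$ in place of $g_0$), so morally the corollary is just a restatement of that part of the proof.

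For completeness I would spell out why $g_0\equivSW g$: since $\delta_W^{-1}$ is total on $W$, the partial map $g_0$ has domain of names $\dom(g\delta_Z)$, and for such a name $p$ one has $F(p)\in g_0(p)\iff\delta_WF(p)\in g\delta_Z(p)$, i.e.\ $F\vdash g_0\iff F\vdash g$. Two functions with literally the same set of realizers are $\equivSW$-equivalent, witnessed by the identity maps $H=K=\id$. The same kind of computation gives $g_0\circ\lim\equivSW g'$: by definition $g'$ is $g:\In(Z,\delta_Z\circ\lim)\mto(W,\delta_W)$, and unwinding the composition one checks $\dom(g_0\circ\lim)=\lim^{-1}(\dom g_0)=\dom(g(\delta_Z\circ\lim))$ as subsets of $\Baire$, while on that common domain $F(p)\in(g_0\circ\lim)(p)$ reduces to $\delta_WF(p)\in g\delta_Z(\lim p)$, which is precisely $F\vdash g'$.

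The only delicate point --- and it really is the sole content of the argument --- is checking that the domains $\dom(g_0\circ\lim)$ and $\dom(g')$ coincide as sets of Baire sequences, which forces one to track the domains of the partial maps $\lim$, $\delta_Z$ and $g$ simultaneously. Everything else is a direct comparison of realizers together with the elementary observation that an identical realizer set entails $\equivSW$. I expect no genuine obstacle beyond this bookkeeping.
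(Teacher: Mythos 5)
Your proposal is correct and is essentially the paper's own argument: the corollary is stated there precisely as a reformulation of statements (i) and (j) in the proof of Theorem~\ref{thm:derivatives}, with your $g_0=\delta_W^{-1}g\delta_Z$ being exactly the function $g_2$ appearing in that proof, and the equivalences $g_0\equivSW g$ and $g_0\circ\lim\equivSW g'$ following from the shared-realizer observations just as you describe. The domain bookkeeping you flag works out since $\lim$ is single-valued and $\delta_W$ is surjective, so there is no gap.
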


Another way of reading Theorem~\ref{thm:derivatives} is that for cylinders $g$ the principal ideal $\{f:f\leqW g'\}$
of $g'$ coincides with
\[M=\{g_0\circ l_0:g_0\leqW g\mbox{ and }l_0\leqW\lim\}.\]
In the case of strong reducibility $\leqSW$ instead of $\leqW$ and arbitrary $g$ we can only say
that the corresponding set $M_s$ is included in the strong principal ideal of $g'$ and any $f$
in the strong principal ideal of $g'$ is represented in $M_s$ by an extension in the image.
In both cases this means that $g'$ is a representative of the supremum of the corresponding
set $M$ or $M_s$, respectively and in case that $g$ is a cylinder it is even the maximum of $M$.

\begin{corollary}[Derivatives]
\label{cor:derivatives}
Let $g$ be a multi-valued function on represented spaces. Then $g\stars\lim$ exists and
$g'\equivSW g\stars\lim$.
If $g$ is a cylinder, then $g*\lim$ exists and $g'\equivW g*\lim$.
\end{corollary}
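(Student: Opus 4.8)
The plan is to deduce Corollary~\ref{cor:derivatives} directly from Theorem~\ref{thm:derivatives} and its Corollary~\ref{cor:derivatives-composition}, without reproving anything about realizers. The statement has two parts: the strong statement $g'\equivSW g\stars\lim$ for arbitrary $g$, and the ordinary statement $g'\equivW g*\lim$ when $g$ is a cylinder. I would treat the strong part first, then reduce the ordinary part to it (in the cylinder case) with minimal extra work.

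For the strong part, first I would observe that Corollary~\ref{cor:derivatives-composition} gives $g'\equivSW g_0\circ\lim$ for some $g_0\equivSW g$; since $g_0\leqSW g$ and $\lim\leqSW\lim$, the composition $g_0\circ\lim$ is one of the functions whose supremum defines $g\stars\lim$, so $g'\equivSW g_0\circ\lim\leqSW g\stars\lim$ (once we know the supremum exists). Conversely, let $h$ be any function of the form $g_0\circ l_0$ with $g_0\leqSW g$ and $l_0\leqSW\lim$. By the direction ``(2)$\TO$(1)'' of Theorem~\ref{thm:derivatives} (the version with $\leqSW$, valid for arbitrary $g$: note the proof of that direction begins precisely with the hypothesis $f\leqSW g_0\circ l_0$, and in particular $f:=g_0\circ l_0$ itself qualifies), we get $g_0\circ l_0\leqSW g'$. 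Hence $g'$ is an upper bound for the set $M_s=\{g_0\circ l_0:g_0\leqSW g,\ l_0\leqSW\lim\}$, and by the previous sentence it lies in $M_s$ (taking $g_0$ as in Corollary~\ref{cor:derivatives-composition} and $l_0=\lim$), so it is the maximum, hence the supremum; this both shows $g\stars\lim$ exists and that it equals $g'$ in the strong degree.

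For the ordinary part, assume $g$ is a cylinder. The membership direction is identical: by Corollary~\ref{cor:derivatives-composition}, $g'\equivSW g_0\circ\lim$ with $g_0\equivSW g$, hence in particular $g'$ is of the form $g_0\circ l_0$ with $g_0\leqW g$, $l_0\leqW\lim$, so $g'\leqW g*\lim$ (modulo existence of the sup). For the upper-bound direction, let $f\leqW g_0\circ l_0$ with $g_0\leqW g$ and $l_0\leqW\lim$; then $f=f_0$ for some $f_0\leqW g_0\circ l_0\leqW g'$ by transitivity together with the equivalence in Theorem~\ref{thm:derivatives} applied to the cylinder $g$ — more directly, every $g_0\circ l_0$ with $g_0\leqW g$, $l_0\leqW\lim$ satisfies $g_0\circ l_0\leqW g'$ by ``(2)$\TO$(1)'' of Theorem~\ref{thm:derivatives}. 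So $g'$ is an upper bound for the set $M=\{g_0\circ l_0:g_0\leqW g,\ l_0\leqW\lim\}$ and, being itself a member of $M$, it is the maximum; thus $g*\lim$ exists and equals $g'$.

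I do not expect a genuine obstacle here, since all the analytic content was absorbed into Theorem~\ref{thm:derivatives} and Corollary~\ref{cor:derivatives-composition}; the only delicate point is bookkeeping about \emph{existence} of the suprema — one must exhibit $g'$ as an actual element of the relevant set $M$ (resp.\ $M_s$), not merely as an upper bound, which is exactly what Corollary~\ref{cor:derivatives-composition} supplies (with $l_0=\lim$). A secondary point worth a sentence is that in the strong case Theorem~\ref{thm:derivatives} phrases the conclusion with $\sqsupseteq$ rather than $=$, so strictly speaking $M_s$ should be described so that $g'$ is represented in it up to an extension in the image; since $f_1\sqsupseteq f_2$ implies $f_1\equivSW f_2$ for our purposes (indeed $f_1\sqsupseteq f_2\Rightarrow f_1\leqSW f_2$ and here the reverse also holds because the extension only enlarges values), this does not affect the strong degree computation, and I would remark on it briefly rather than belabor it.
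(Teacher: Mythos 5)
Your argument is correct and is essentially the paper's own: the corollary is read off from Theorem~\ref{thm:derivatives} (which identifies the principal ideal of $g'$ with $M$, resp.\ $M_s$, giving the upper-bound direction via the ``(2)$\TO$(1)'' implication applied to $f:=g_0\circ l_0$) together with the normal form of Corollary~\ref{cor:derivatives-composition}, which exhibits $g'$ as (strongly equivalent to) an actual member of the set, hence its maximum. One small caution: your parenthetical claim that $f_1\sqsupseteq f_2$ implies $f_1\equivSW f_2$ is false in general --- $\sqsupseteq$ only yields $f_1\leqSW f_2$ --- but this aside is not load-bearing, since your membership step already rests on Corollary~\ref{cor:derivatives-composition}, which asserts $\equivSW$ directly, so the $\sqsupseteq$ subtlety never actually enters your argument.
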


We point out that the formulation in this corollary is a slight abuse of notation, $g'$ is a multi-valued
function whereas $g\stars\lim$ is a strong equivalence class. So, more precisely, one could say
$g'\in g\stars\lim$. If $g$ is a cylinder, then $g'\in g\stars\lim\In g*\lim$. For ease of notation 
we mix equivalence classes and representatives as above, whenever no confusion is expected.
Together with Corollary~\ref{cor:cylinder} we obtain the following observation.

\begin{corollary}
\label{cor:products-composition-cylinder}
Let $f$ be a multi-valued map on represented spaces. If $f$ is a cylinder, then
$f'\equivW f'\times\lim\equivW f*\lim$.
\end{corollary}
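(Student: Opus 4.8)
The plan is to obtain the statement as a direct concatenation of the two corollaries that immediately precede it, namely Corollary~\ref{cor:cylinder} and Corollary~\ref{cor:derivatives}, both of which already carry the hypothesis that $f$ is a cylinder. So there is no genuinely new content to prove; the point is just to splice the two equivalences together, being careful that the cylinder assumption is legitimately invoked in each.

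First I would note that, since $f$ is a cylinder, Corollary~\ref{cor:cylinder} applies and gives $f'\equivSW f'\times\lim$, hence in particular $f'\equivW f'\times\lim$; that same corollary also records that $f'$ is then a cylinder, which is the feature that lets us speak of ordinary Weihrauch reducibility rather than only the strong variant. Next, again because $f$ is a cylinder, Corollary~\ref{cor:derivatives} applies and yields that the compositional product $f*\lim$ exists with $f'\equivW f*\lim$. Concatenating the two equivalences gives
\[
f'\equivW f'\times\lim\equivW f*\lim,
\]
which is exactly the assertion of the corollary.

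I do not expect any real obstacle here: the statement is essentially a repackaging of Corollaries~\ref{cor:cylinder} and~\ref{cor:derivatives}. The only point deserving a moment of attention is checking that the cylinder hypothesis is actually used in both invocations — it is needed in Corollary~\ref{cor:cylinder} to pass from $(f\times\id)'\equivSW f'\times\lim$ to $f'\equivSW f'\times\lim$, and it is needed in Corollary~\ref{cor:derivatives} to upgrade the strong statement $f'\equivSW f\stars\lim$ to the ordinary one $f'\equivW f*\lim$ — and both requirements are met by assumption.
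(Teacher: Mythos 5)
Your proposal is correct and is precisely the paper's own argument: the corollary is obtained by combining Corollary~\ref{cor:cylinder} (giving $f'\equivSW f'\times\lim$ for cylinders $f$) with Corollary~\ref{cor:derivatives} (giving $f'\equivW f*\lim$ for cylinders $f$), exactly as you splice them. Your attention to where the cylinder hypothesis enters matches the intended use of both corollaries.
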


It is interesting to mention that this characterization of the derivative has the consequence
that choice on Baire space $\C_\Baire$ is equivalent to its own derivative (see Theorem~\ref{thm:cluster-Baire}).

\section{Super Strong Weihrauch Reducibility}

In this section we briefly mention a side result that yields a counterpart of a result
in classical computability theory. Namely it is known that $A\leqT B\iff A'\leq_1 B'$ for all $A,B\In\IN$
(see, for instance, Proposition~V.1.6 in \cite{Odi89}).
Here $A'$ denotes the Turing jump of $A$ and $\leqT$ and $\leq_1$ denote Turing reducibility and 
one-to-one reducibility, respectively. In order to obtain a similar result we need to find the 
counterpart of one-to-one reducibility $\leq_1$ for our context.
For this purpose we will use the next notion.

\begin{definition}[Limit extensional computability]
A function $F:\In\Baire\to\Baire$ is called {\em computable in a limit extensional way}
if $F$ is computable and there is a computable $f:\In\Baire\to\Baire$ such that
$\lim\circ F=f\circ\lim$.
\end{definition}

We note that in this situation $F$ is a computable realizer of $f$ with respect to the representation
$\lim$ of $\Baire$ on the input and output side.
That is, $F$ has to be extensional in the sense that it maps two sequences that converge to the same result
to two sequences that also converge to the same result.
In fact, $F$ is computable in a limit extensional way if and only if it is computable and extensional
in this sense.
It is obvious that some functions such as the identity $\id$ are computable in a limit extensional way.

\begin{definition}[Super strong Weihrauch reducibility]
Let $f$ and $g$ be multi-valued functions on represented spaces.
Then we write $f\leqSSW g$ and we say that $f$ is {\em super strongly Weihrauch reducible} to $g$,
if there are computable $H,K:\In\Baire\to\Baire$ such that $K$ is even computable in a limit extensional
way and such that $HGK\vdash f$ for all $G\vdash g$.
\end{definition}

A special case of super strong reducibility is that $K$ falls away (i.e.\ is the identity),
which means that the reduction can be achieved with $H$ alone.
It is obvious that $f\leqSSW  g\TO f\leqSW g$. Now we obtain the following
characterization.

\begin{proposition}[Derivatives and super strong reducibility]
\label{prop:derivative-super-strong}
Let $f$ and $g$ be multi-valued functions on represented spaces. We obtain
\[f\leqSW g\iff f'\leqSSW g'.\]
\end{proposition}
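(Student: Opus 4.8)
The plan is to prove both implications of the biconditional $f\leqSW g\iff f'\leqSSW g'$ separately, with the forward direction being essentially a refinement of the monotonicity argument already carried out in Proposition~\ref{prop:monotone-derivative}(2), and the backward direction exploiting the special structure of super strong reducibility, namely that the ``input modificator'' $K$ commutes with $\lim$.

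For the direction ``$\Longrightarrow$'', suppose $f\leqSW g$ via computable $H,K$ with $HGK\vdash f$ for all $G\vdash g$. Following the proof of Proposition~\ref{prop:monotone-derivative}(2), we pass to $HGK\lim\vdash f'$ for all $G\vdash g$, and then use transparency of $\lim$ (Fact~\ref{fact:galois}) to find a computable $K_0$ with $\lim K_0=K\lim$; this gives $HG\lim K_0\vdash f'$ for all $G\vdash g$, and the Jump Realization Lemma~\ref{lem:jump-realization} then yields $HEK_0\vdash f'$ for all $E\vdash g'$. The point that needs to be checked here—and which is the heart of this direction—is that $K_0$ can be chosen to be computable in a limit extensional way. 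But $K_0$ is produced precisely by the normal form of Fact~\ref{fact:galois}: $K\lim$ is limit computable, so $K\lim=\lim K_0$ for a computable $K_0$, and this equation $\lim K_0=K\lim$ is exactly the limit extensionality condition with witness $f=K$ (which is itself computable). So $K_0$ is computable in a limit extensional way, and hence $f'\leqSSW g'$.

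For the direction ``$\Longleftarrow$'', suppose $f'\leqSSW g'$ via computable $H,K$ with $K$ computable in a limit extensional way, so there is a computable $k$ with $\lim K=k\lim$, and $HEK\vdash f'$ for all $E\vdash g'$. I would first invoke the Jump Realization Lemma in the form (2)$\Rightarrow$(1): $HG\lim K\vdash f'$ for all $G\vdash g$—wait, more carefully, I want to recover a reduction witnessing $f\leqSW g$, not $f'\leqSW g'$. The idea is to ``remove one jump from each side''. From $HEK\vdash f'$ for all $E\vdash g'$, specialising $E=G\lim$ for $G\vdash g$, we get $HG\lim K\vdash f'$, i.e.\ $HG\lim K$ is a realizer of $f$ with respect to $\delta_X'$ on the input. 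Now $HG\lim K=HG(k\lim)=(HGk)\lim$ using limit extensionality, so $HGk\lim$ realizes $f'=f$ with input representation $\delta_X'=\delta_X\lim$; peeling off the final $\lim$ on the input side and using that a realizer with respect to $\delta_X'$ that factors through $\lim$ in this way descends to a realizer with respect to $\delta_X$, we conclude $HGk\vdash f$ for all $G\vdash g$, i.e.\ $f\leqSW g$ via $H$ and $k$.

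The main obstacle I anticipate is making the last ``peeling off'' step in the backward direction fully rigorous: one must verify that if $\Phi\lim\vdash h'$ with $\Phi$ computable and $h'$ the jump of $h$ (same underlying function, input representation composed with $\lim$), then $\Phi\vdash h$—this is essentially the content of Lemma~\ref{lem:jump-realization}(1)$\Leftrightarrow$(2) read in the degenerate case where the ``outer'' function is the identity, and it should be invoked cleanly rather than re-derived. A secondary subtlety is the bookkeeping of which functions are computable versus merely limit computable, and ensuring that in the forward direction the witness $K_0$ is genuinely computable (it is, by Fact~\ref{fact:galois}) and genuinely limit extensional with a \emph{computable} witness $f=K$ (it is, by hypothesis). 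Once these two lemma invocations are stated carefully, the rest is routine composition of computable maps.
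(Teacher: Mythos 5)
Your proof is correct and takes essentially the same route as the paper's: the forward direction re-runs the monotonicity argument and observes that the function $K_0$ with $\lim K_0=K\lim$ (provided by Fact~\ref{fact:galois}) is computable in a limit extensional way with computable witness $K$, and the backward direction uses the limit-extensionality witness together with the easy direction of the Jump Realization Lemma (specializing $E=G\lim$), then removes the final $\lim$. The only cosmetic inaccuracy is your attribution of that last ``peeling off'' step to Lemma~\ref{lem:jump-realization}: there the $\lim$ sits between the realizer of $g$ and the inner modificator, whereas here it is outermost on the input, so the step is instead the constant-sequence trick $q\mapsto\langle q,q,q,\dots\rangle$ with $\lim\circ K=\id$ already used in Proposition~\ref{prop:monotone-derivative}(1) (and implicitly in the paper's passage from its statement (b) to (a)); it is immediate either way.
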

\begin{proof}
Let us assume $f\leqSW g$. 
We revisit the proof of Proposition~\ref{prop:monotone-derivative} (2). 
The function $K_0$ obtained there is computable in a limit extensional way, hence (d) implies $f'\leqSSW g'$.
For the other direction let now $f'\leqSSW g'$. Then there is a $K_0$ which is computable
in a limit extensional way such that (d) holds. Hence there is a computable $K$ such that
$K \lim=\lim K_0$. By the Jump Realization Lemma~\ref{lem:jump-realization} we obtain (c) and hence (b) and (a),
which means that $f\leqSW g$.
\end{proof}

We give an example that shows that super strong Weihrauch reducibility cannot be replaced
by strong Weihrauch reducibility in this result. In particular, super strong Weihrauch reducibility
is actually stronger than strong reducibility.

\begin{example}
\label{ex:strong-super-strong}
Let $c:\Baire\to\Baire,p\mapsto\widehat{0}$ be the constant zero function on Baire space
and let $p\in\Baire$ be limit computable, but not computable. By $c_p:\In\Baire\to\Baire$
we denote the restriction of $c$ to $\{p\}$. Then we obtain $c\nleqW c_p$, since $c_p$
is not pointed (has no computable point in the domain). In particular,
$c\nleqSW c_p$. 
On the other hand, we claim
$c'\equivSW c\circ\lim\equivSW c_p\circ\lim\equivSW c_p'$.
Here $c_p'$ is a restriction of $c'$ and hence clearly $c_p'\leqSSW c'$.
Moreover, there is a computable $q$ such that $\lim(q)=p$ and hence $c_p'$ is pointed
and $c'\equivSW c\leqSW c_p'$.
It follows from Proposition~\ref{prop:derivative-super-strong} that $c'\nleqSSW c_p'$.
\end{example}

It is clear from this example that a non-pointed $f$ can have a pointed derivative $f'$,
but a pointed $f$ always has a pointed derivative $f'$.
Following the pattern above, one can introduce a super$^n$ strong Weihrauch reducibility
that characterizes strong reducibility of the $n$--th derivative.
We will not make any use of super strong reducibility in the following.

\section{Derived Coproducts}

In Proposition~\ref{prop:products-parallelization} we have proved that $f'\sqcup g'\leqSW(f\sqcup g)'$,
but we did not prove the inverse reduction. We will see in Example~\ref{ex:coproduct-derivative} that the
inverse reduction does not hold in general.
 However, we can define a variant $\sqcup'$ of the coproduct $\sqcup$
that has the property that $f'\sqcup' g'\equivSW(f\sqcup g)'$ holds.
We call $\sqcup'$ the {\em derived coproduct}. The difference to the ordinary coproduct
is that the parameter that selects the function that is applied is replaced by a sequence
that converges to such a parameter.
In order to formalize this concept, we recall the definition of the coproduct representation.
Let $(X,\delta_X)$ and $(Y,\delta_Y)$ be represented spaces, then the coproduct representation
$\delta_X\sqcup\delta_Y$ of $X\sqcup Y=(\{0\}\times X)\cup(\{1\}\cup Y)$ is defined by
$(\delta_X\sqcup\delta_Y)\langle 0,p\rangle:=\delta_X(p)$ and $(\delta_X\sqcup\delta_Y)\langle 1,p\rangle:=\delta_Y(p)$.
Analogously, the representation $\delta_X^*$ of $X^*$ is defined by $\delta_X^*\langle n,p\rangle:=\delta_X^n(p)$.
Now we can define the derived coproduct just by replacing this representation by a suitable substitute.

\begin{definition}[Derived operations]
Let $f:\In X\mto Y$ and $g:\In Z\mto W$ be multi-valued functions on represented 
spaces $(X,\delta_X)$ and $(Z,\delta_Z)$ and $Y,W$. 
Then we define the {\em derived coproduct} $f\sqcup'g:\In X\sqcup Z\mto Y\sqcup W$
to be the same function as $f\sqcup g$, but with a different representation 
$\delta_X\sqcup'\delta_Z$ of $X\sqcup Z$, defined by 
\[(\delta_X\sqcup'\delta_Z)\langle p,q\rangle:=(\delta_X\sqcup\delta_Z)\left\langle\lim\nolimits_{n\to\infty}p(n),q\right\rangle\]
for all $p,q\in\Baire$ such that $p$ is eventually constant. 
Analogously, we define the {\em derived closure} $f^\to:\In X^*\mto Y^*$ to be the function
$f^*:\In X^*\mto Y^*$, but with the representation $\delta^\to$ on the input side:
\[\delta^\to\langle p,q\rangle:=\delta^*\langle\lim\nolimits_{n\to\infty}p(n),q\rangle.\]
\end{definition}

The intuition behind this concept is that like in case of $f\sqcup g$ the two functions $f$ and $g$
are both available and we can choose with a parameter $n$ which one to use, however, 
we do not have to determine this parameter in a preprocessing step at the beginning of the computation,
but we can change our mind about which of $f$ and $g$ is to be used finitely many times during the computation.
An analogous description holds true for $f^\to$.
It is not too difficult to see that the derived closure is actually a closure operator,
i.e.\ it satisfies $f\leqSW f^\to$, ${f^\to}^\to\leqSW f^\to$ and $f\leqSW g$ implies $f^\to\leqSW g^\to$.
It is also easy to see that $f\sqcup g\leqSW f\sqcup' g$ and $f^*\leqSW f^\to$. 

\begin{proposition}
Let $f$ and $g$ be multi-valued functions on represented spaces.
Then we obtain:
\begin{enumerate}
\item $(f\sqcup g)'\equivSW f'\sqcup' g'$,
\item ${f^*}'\equivSW {f'}^\to$.
\end{enumerate}
\end{proposition}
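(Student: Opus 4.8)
The plan is to reduce both statements to the analogous identity $(\delta_X \sqcup \delta_Z)' \equiv \delta_X \sqcup' \delta_Z$ for the representations, together with the fact (used already in the proof of Proposition~\ref{prop:products-parallelization}) that the derivative on the input side is completely determined by what happens to the representation. For part (1), recall that $f \sqcup g$ and $f \sqcup' g$ are literally the same function, differing only in the input representation; the same is true of $(f \sqcup g)'$ versus $f' \sqcup' g'$ after one checks the output sides agree (the output representation $\delta_Y \sqcup \delta_W$ is unaffected by taking derivatives, and $f'$ has the same output representation as $f$). So it suffices to show the two input representations $(\delta_X \sqcup \delta_Z)'$ and $\delta_X' \sqcup' \delta_Z'$ are computably equivalent.

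First I would spell out both representations explicitly. A name with respect to $(\delta_X \sqcup \delta_Z)' = (\delta_X \sqcup \delta_Z) \circ \lim$ is a sequence $\langle u_0, u_1, u_2, \dots\rangle$ converging to some $\langle i, p\rangle$ with $i \in \{0,1\}$; since the sequence converges and the first coordinate is discrete, eventually $u_n = \langle i, p_n\rangle$ with the first coordinates stabilized at $i$ and the $p_n$ converging to $p$. A name with respect to $\delta_X' \sqcup' \delta_Z'$ is $\langle r, q\rangle$ where $r$ is eventually constant, say with eventual value $i$, and $q$ is a $\delta_X'$-name (if $i=0$) or $\delta_Z'$-name (if $i=1$) of the output point, i.e.\ $q = \langle q_0, q_1, \dots\rangle$ with the $q_n$ converging in $\delta_X$ (resp.\ $\delta_Z$). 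Translating between these is a matter of unpairing and repairing: given a $(\delta_X \sqcup \delta_Z)'$-name, extract the stream of first coordinates (this gives the eventually-constant selector $r$) and the stream of second coordinates reassembled as $q = \langle p_0, p_1, \dots\rangle$; conversely, given $\langle r, q\rangle$, output the sequence whose $n$-th term is $\langle r(n), q_n\rangle$. Both directions are computable, and one checks they take names to names with the same underlying point. This gives $(\delta_X \sqcup \delta_Z)' \equiv \delta_X' \sqcup' \delta_Z'$, and hence $(f \sqcup g)' \equivSW f' \sqcup' g'$ by transporting realizers through these computable translations (the reduction is strong since no access to the original input is needed).

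Part (2) is entirely parallel: $f^*$ and $f^\to$ are the same function with input representations $\delta_X^*$ and $\delta^\to$ respectively, and the claim reduces to $(\delta_X^*)' \equiv (\delta_X')^\to$. The same unpair/repair argument works — a name for $(\delta_X^*)' = \delta_X^* \circ \lim$ is a convergent sequence of pairs $\langle n_k, v_k\rangle$ whose first coordinates stabilize at some $n$ and whose second coordinates converge to a $\delta_X^n$-name, which matches exactly a $\delta^\to$-name built from the eventually-constant selector $(n_k)$ and the $\delta_X'$-version (really $(\delta_X')^n$, and one uses $(\delta_X^n)' \equiv (\delta_X')^n$ here, which is the product case of $[\delta_X,\delta_Y]' \equiv [\delta_X',\delta_Y']$ cited in Proposition~\ref{prop:products-parallelization}) of the component name.

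**Main obstacle.** I expect the only real care is bookkeeping around the coding: making sure the pairing conventions line up so that "eventually constant first coordinate" on one side corresponds precisely to the selector on the other, and handling the fact that in $\delta^\to$ the second component $q$ must itself already be a $\delta_X'$-name of a finite tuple (so the inner limit structure and the arity $n$ interact). None of this is deep, but it is the kind of detail where an off-by-one in the pairing could break the computability of the translation; the cleanest route is to reuse verbatim the representation identities $[\delta_X,\delta_Y]' \equiv [\delta_X',\delta_Y']$ and $(\delta_X^\IN)' \equiv (\delta_X')^\IN$ already established in \cite{Bra07x} and invoked in Proposition~\ref{prop:products-parallelization}, rather than re-deriving the coding from scratch.
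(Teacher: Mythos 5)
Your proposal is correct and follows essentially the same route as the paper: the paper's own proof consists precisely of asserting the representation-level identities $(\delta_X\sqcup\delta_Z)'\equiv\delta_X'\sqcup'\delta_Z'$ and $(\delta_X^*)'\equiv(\delta_X')^\to$ and deducing the claim, so your unpair/repair translations supply exactly the bookkeeping the paper omits. The interaction you flag in (2) between the inner limit structure and the unknown arity $n$ is real but harmless: one decodes each output position according to the arity guess currently visible in the input, and the finitely many entries decoded under a wrong guess are absorbed because a $\delta_X'$-name only constrains the limit of its approximation streams.
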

\begin{proof}
For two represented spaces $(X,\delta_X)$ and $(Y,\delta_Y)$ we have
$(\delta_X\sqcup\delta_Y)'\equiv\delta_X'\sqcup'\delta_Y'$ and $(\delta_X^*)'\equiv(\delta')^\to$.
This implies the claim.
\end{proof}

Arno Pauly (personal communication) pointed out the following result, which is another indication that the derived
closure operation is quite natural.

\begin{example} 
\label{ex:LPO-arrow}
$\LPO^\to\equivW\C_\IN$.
\end{example}

Arno Pauly \cite{Pau09a} has studied several further parallelization operations, one of which is similar to ours.

\section{Higher Classes of Computable Functions}
\label{sec:higher-classes}

In this section we want to discuss variants of classes of limit computable functions, weakly computable functions
and functions computable with finitely many mind changes on higher levels of the Borel hierarchy. 
In particular, we are interested in characterizing complete functions in the respective classes 
and in understanding the behavior of these functions under composition.
We start with introducing a useful terminology for higher classes of limit computable functions.

\begin{definition}[Limit computability]
Let $n\in\IN$. We say that a multi-valued function $f$ on represented spaces is 
{\em $(n+1)$--computable}, if $f\leqW\lim^{\circ n}$.
\end{definition}

Here $g^{\circ n}$ denotes the $n$--fold composition of a map $g:\In X\mto X$, 
i.e.\ $g^{\circ 0}=\id_X$,
$g^{\circ 1}=g$, $g^{\circ 2}=g\circ g$ etc. 
In particular, $1$--computable is the same as computable and $2$--computable
is the same as limit computable. 
It is easy to see that $\lim^{\circ(n+1)}\equivSW\lim^{(n)}$, where the right-hand side is the $n-th$ derivative of $\lim$.
The $(n+1)$--computable functions are also called {\em effectively $\SO{n+1}$--measurable} and the following
fact about composition of limit computable functions is known (see \cite{Bra05}).

\begin{fact}
\label{fact:composition-limit}
Let $n,k\in\IN$ and let $f$ and $g$ be multi-valued functions on represented spaces such that $g\circ f$ exists.
If $f$ is $(n+1)$--computable and $g$ is $(k+1)$--computable, then $g\circ f$ is $(n+k+1)$--computable.
\end{fact}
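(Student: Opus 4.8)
The plan is to unfold the two reductions into statements about realizers and then massage the composed realizer into the normal form for a reduction to $\lim^{\circ(n+k)}$, with transparency of $\lim$ (Fact~\ref{fact:galois}) doing the essential work. By hypothesis $f\leqW\lim^{\circ n}$ and $g\leqW\lim^{\circ k}$, and the goal is $g\circ f\leqW\lim^{\circ(n+k)}$, which by definition is exactly the assertion that $g\circ f$ is $(n+k+1)$--computable.

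The first step is to observe that $\lim^{\circ m}$ is a cylinder for every $m\in\IN$: $\id$ is a cylinder, $\lim$ is a cylinder by Fact~\ref{fact:lim}, and inductively $\lim^{\circ(m+1)}\equivSW(\lim^{\circ m})'$ by Lemma~\ref{lem:derivative-Baire} together with associativity of composition, while the derivative of a cylinder is a cylinder by Corollary~\ref{cor:cylinder}. Hence $\leqW$ and $\leqSW$ agree for reductions to $\lim^{\circ n}$ and to $\lim^{\circ k}$, so there are computable $H_f,K_f,H_g,K_g:\In\Baire\to\Baire$ with $H_fFK_f\vdash f$ for all $F\vdash\lim^{\circ n}$ and $H_gGK_g\vdash g$ for all $G\vdash\lim^{\circ k}$. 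Since $\lim^{\circ m}$ realizes itself up to extensions, taking $F=\lim^{\circ n}$ and $G=\lim^{\circ k}$ and chaining the two reductions shows that $H_g\lim^{\circ k}K_gH_f\lim^{\circ n}K_f$ is a realizer of $g\circ f$ on the appropriate domain.

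The next step is to pull the computable block $K_gH_f$ to the left of $\lim^{\circ n}$. The function $\lim$ is transparent by Fact~\ref{fact:galois}, and transparent functions are closed under composition, so $\lim^{\circ n}$ is transparent; hence there is a computable $G_0$ with $K_gH_f\circ\lim^{\circ n}=\lim^{\circ n}\circ G_0$. Substituting this and using $\lim^{\circ k}\circ\lim^{\circ n}=\lim^{\circ(n+k)}$ yields
\[
H_g\lim^{\circ k}K_gH_f\lim^{\circ n}K_f=H_g\,\lim^{\circ(n+k)}\,(G_0K_f),
\]
so with the computable functions $H:=H_g$ and $K:=G_0K_f$ we get $HFK\vdash g\circ f$ for all $F\vdash\lim^{\circ(n+k)}$. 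Therefore $g\circ f\leqSW\lim^{\circ(n+k)}$, a fortiori $g\circ f\leqW\lim^{\circ(n+k)}$, i.e.\ $g\circ f$ is $(n+k+1)$--computable.

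I expect the main obstacle to be the domain bookkeeping: one must check that for every name $p$ of an input of $g\circ f$ the intermediate point $K_f(p)$ lies in $\dom(\lim^{\circ n})$, that $H_f\lim^{\circ n}K_f(p)$ names a point of $\dom(g)\cap f(\delta_X(p))$ so that $K_g$ applied to it is legitimate, and that $G_0$ respects these domains — the last being automatic since the identity $K_gH_f\circ\lim^{\circ n}=\lim^{\circ n}\circ G_0$ forces $G_0$ to land in $\dom(\lim^{\circ n})$ wherever the left-hand side is defined. This is routine but needs care; everything else is formal manipulation of realizer equations. (One might be tempted by the slicker-looking route $g\circ f\leqW g*f\leqW\lim^{\circ k}*\lim^{\circ n}$, but it would require separately establishing $\lim^{\circ k}*\lim^{\circ n}\equivW\lim^{\circ(n+k)}$, and the naive argument for that is circular with the present statement, so the direct transparency argument is preferable.)
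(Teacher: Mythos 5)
Your argument is correct. The chain of observations — $\lim^{\circ m}$ is a cylinder (so $\leqW$ and $\leqSW$ coincide for reductions to it), unfold both reductions into realizer equations, and use transparency of $\lim^{\circ n}$ via Fact~\ref{fact:galois} to rewrite $K_gH_f\lim^{\circ n}$ as $\lim^{\circ n}G_0$ and collapse $\lim^{\circ k}\lim^{\circ n}$ into $\lim^{\circ(n+k)}$ — goes through, and your closing remarks on domain bookkeeping identify exactly the points that need checking (they do check out under the composition convention $\dom(g\circ f)=\{x\in\dom(f):f(x)\In\dom(g)\}$ used in this line of work).

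The paper itself does not prove the Fact: it cites \cite{Bra05} and remarks that the statement "can also be deduced inductively from Theorem~\ref{thm:derivatives}." That suggested route is worth comparing with yours. Since $\lim^{\circ(m+1)}\equivSW(\lim^{\circ m})'$ and $\lim^{\circ m}$ is a cylinder, Theorem~\ref{thm:derivatives} says that the functions reducible to $\lim^{\circ(m+1)}$ are exactly the compositions $g_0\circ l_0$ with $g_0\leqW\lim^{\circ m}$ and $l_0\leqW\lim$; iterating this decomposition and reassembling gives the Fact by induction on $k$ (or $n$), with no circularity — so your parenthetical worry about the compositional-product route applies to a naive direct proof of $\lim^{\circ k}*\lim^{\circ n}\equivW\lim^{\circ(n+k)}$, but not to the paper's suggested derivation, which rests on Theorem~\ref{thm:derivatives} proved independently. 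What your proof buys is self-containedness and brevity: you apply the transparency trick once, directly, rather than routing through the derivative characterization; this is essentially the same realizer manipulation the paper performs in the proof of Theorem~\ref{thm:low}, so it is entirely within the paper's toolkit, just not the route the paper points to for this particular Fact.
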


This can also be deduced inductively from Theorem~\ref{thm:derivatives}.

In computability theory a point $p\in\Baire$ is called {\em low$_k$} for $k\in\IN$
if $p^{(k)}\leqT\emptyset^{(k)}$, where $p^{(k)}$ denotes the $k$--th Turing jump of $p$ (see \cite{Soa87}).
This concept can be relativized straightforwardly and we say that $p$ is {\em $(n+1)$--low$_k$} if
$p^{(k)}\leqT\emptyset^{(n+k)}$.\footnote{We note that relativizing an equivalent characterization
of low$_k$ leads to a different notion (see Lemma~6.3.5 in \cite{Nie09}).}
We just write {\em low} instead of {\em low$_1$}.
In \cite{BBP} we have shown how the pointwise concept of lowness can be treated uniformly
using the map $\Low$. Here we generalize this idea to higher variants of lowness.

\begin{definition}[Low map]
Let $n,k\in\IN$. We define $\Low_{k,n}:=(J^{-1})^{\circ k}\circ\lim^{\circ (n+k)}$. 
For short we write $\Low_k:=\Low_{k,0}$.
\end{definition}

We note that $\Low_0=\id$ and the low$_0$ points are identified with the computable points.
We also note that $\Low_1=\Low$ and $\Low_{k,n}\equivSW\Low_k^{(n)}$ by Lemma~\ref{lem:derivative-Baire}.
The definition immediately implies the following result.

\begin{lemma}
\label{lem:low-points}
Let $n,k\in\IN$.
We obtain for each $p\in\Baire$ that $p$ is $(n+1)$--low$_k$ if and only if there exists a computable $q\in\Baire$
such that $\Low_{k,n}(q)=p$.
\end{lemma}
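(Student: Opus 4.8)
The plan is to unwind the definitions on both sides and reduce everything to the basic fact (from Fact~\ref{fact:galois}) that a point $p\in\Baire$ is limit computable iff $p=\lim G$ for some computable $G$, applied iteratively. Recall that $\Low_{k,n}=(J^{-1})^{\circ k}\circ\lim^{\circ(n+k)}$, so $\Low_{k,n}(q)$ is defined precisely for those computable sequences $q$ on which $\lim^{\circ(n+k)}$ converges and the result lies in the domain of $(J^{-1})^{\circ k}$, i.e.\ is in the range of the $k$--fold Turing jump. First I would spell out the two directions separately.

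For the ``if'' direction, suppose $q$ is computable and $\Low_{k,n}(q)=p$. Set $r:=\lim^{\circ(n+k)}(q)$, so that $p=(J^{-1})^{\circ k}(r)$, equivalently $p^{(k)}=r$ (using injectivity of $J$, as noted after Fact~\ref{fact:low-basis}). Now $\lim^{\circ(n+k)}\equivSW\lim^{(n+k-1)}$ and iterating Fact~\ref{fact:galois}(2), a point obtained by applying $\lim$ $m$ times to a computable sequence is exactly a point whose $m$--th Turing jump is computable from $\emptyset^{(m)}$; more precisely $r=\lim^{\circ(n+k)}(q)$ with $q$ computable gives $r^{(n+k)}\leqT\emptyset^{(n+k)}$, hence $r\leqT\emptyset^{(n+k)}$ and in fact $p^{(k)}=r$ satisfies $p^{(k)}\leqT\emptyset^{(n+k)}$, which is precisely the definition of $p$ being $(n+1)$--low$_k$. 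I would phrase this via the iterated form of Fact~\ref{fact:galois}(3): $F\leqW\lim^{\circ m}$ iff $F=G\circ J^{\circ m}$ for computable $G$, specialized to constant-input maps.

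For the ``only if'' direction, suppose $p$ is $(n+1)$--low$_k$, i.e.\ $p^{(k)}\leqT\emptyset^{(n+k)}$. Then there is a computable function producing $p^{(k)}$ from $\emptyset^{(n+k)}=J^{\circ(n+k)}(\widehat{0})$, so by Fact~\ref{fact:galois} (iterated $n+k$ times) the constant map $\widehat{0}\mapsto p^{(k)}$ is $(n+k+1)$--computable, i.e.\ there is a computable $q$ with $\lim^{\circ(n+k)}(q)=p^{(k)}$. Applying $(J^{-1})^{\circ k}$ to both sides and using $(J^{-1})^{\circ k}(p^{(k)})=p$ gives $\Low_{k,n}(q)=p$ with $q$ computable, as required.

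Honestly there is no real obstacle here: the lemma is, as the paper says, an immediate consequence of the definition together with the already-established relativized form of Fact~\ref{fact:galois}. The only point requiring a little care is bookkeeping the correspondence between the ``limit-computability tower'' characterization ($p=\lim^{\circ m}(q)$ for computable $q$) and the ``Turing-jump'' characterization ($p^{(m)}\leqT\emptyset^{(m)}$), i.e.\ correctly matching the $k$ applications of $J^{-1}$ on the outside with the shift in the jump level; one checks that applying $\lim^{\circ(n+k)}$ and then $(J^{-1})^{\circ k}$ lands exactly at the condition $p^{(k)}\leqT\emptyset^{(n+k)}$. I would present this as a short paragraph invoking Fact~\ref{fact:galois} and the injectivity of $J$, with no computation beyond tracking indices.
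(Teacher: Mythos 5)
Your proposal is essentially the argument the paper intends: the paper gives no proof at all (the lemma is presented as an immediate consequence of the definition of $\Low_{k,n}$), and your unwinding --- $\Low_{k,n}(q)=p$ iff $\lim^{\circ(n+k)}(q)=J^{\circ k}(p)=p^{(k)}$, combined with the iterated limit lemma in the form of Fact~\ref{fact:galois} --- is exactly the intended bookkeeping, including the converse direction via writing the constant map $\widehat{0}\mapsto p^{(k)}$ as $G\circ J^{\circ(n+k)}$ and hence as $\lim^{\circ(n+k)}\circ G_0$ for computable $G_0$, which also settles that $\lim^{\circ(n+k)}(q)=p^{(k)}$ lies in the range of $J^{\circ k}$, so $\Low_{k,n}(q)$ is defined.

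One intermediate assertion should be corrected before this stands as written: the claim that a point obtained by applying $\lim$ $m$ times to a computable point is \emph{exactly} a point $r$ with $r^{(m)}\leqT\emptyset^{(m)}$ is false --- that condition says $r$ is low$_m$, and for example $r\equivT\emptyset^{(m)}$ is an $m$--fold limit of a computable point but is not low$_m$ for $m\geq 1$. The correct iterated limit lemma characterizes such $r$ as exactly the points with $r\leqT\emptyset^{(m)}$, which is the only statement your deduction actually uses (namely $p^{(k)}=r\leqT\emptyset^{(n+k)}$, i.e.\ $p$ is $(n+1)$--low$_k$) and is precisely what your own alternative route via Fact~\ref{fact:galois}(3), specialized to constant-input maps, delivers. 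So the slip is not load-bearing, and with the misstated characterization replaced by the correct one the proof is complete and matches the paper's (omitted) argument.
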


Now we can extend the concept of lowness from points in Baire space to multi-valued functions on represented
spaces using the low maps $\Low_{k,n}$.

\begin{definition}[Classes of low functions]
Let $n,k\in\IN$. We call a multi-valued function $f$ on represented spaces 
{\em $(n+1)$--low$_k$}, if $f\leqSW\Low_{k,n}$. 
\end{definition}

Since the class of transparent functions is closed under composition, it follows that all the $\Low_{k,n}$ are transparent
by Fact~\ref{fact:galois}.
We use this fact for the proof of the following theorem.

\begin{theorem}[Low computability]
\label{thm:low}
Let $f$ be a multi-valued function on represented spaces and let $n,k\in\IN$. Then the following are equivalent:
\begin{enumerate}
\item $f$ is $(n+1)$--low$_k$,
\item $g\circ f$ is $(n+k+1)$--computable for any $(k+1)$--computable $g$ of suitable type.
\end{enumerate}
\end{theorem}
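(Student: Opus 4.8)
The plan is to reduce both implications to manipulations with realizers over Baire space, relying on three things: each $\lim^{\circ m}$ is a cylinder and transparent (Fact~\ref{fact:galois} together with closure of transparency under composition); $\Low_{k,n}=(J^{-1})^{\circ k}\circ\lim^{\circ(n+k)}$ is transparent; and one genuinely computability-theoretic ingredient, a \emph{collapse lemma} asserting that $\lim^{\circ k}\circ(J^{-1})^{\circ k}$ is computable, and hence $\lim^{\circ k}\circ\Low_{k,n}\leqSW\lim^{\circ(n+k)}$. Since every multi-valued function is strongly Weihrauch equivalent to one on Baire space, and statement (2) is unaffected by this replacement (and by the type of the test maps, up to composition with representations), I would first reduce to the case $f:\In\Baire\mto\Baire$ with the identity representations.

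For ``(2)$\Rightarrow$(1)'' I would feed (2) the test map $g:=J^{\circ k}$, which is total on $\Baire$ and hence of suitable type. Since $J\leqW\lim$ the map $J$ is $2$--computable, so by Fact~\ref{fact:composition-limit} and induction $J^{\circ k}$ is $(k+1)$--computable; thus (2) yields that $J^{\circ k}\circ f$ is $(n+k+1)$--computable, i.e.\ $J^{\circ k}\circ f\leqW\lim^{\circ(n+k)}$, and as $\lim^{\circ(n+k)}$ is a cylinder even $J^{\circ k}\circ f\leqSW\lim^{\circ(n+k)}$. Fix computable $H,K$ with $H\circ\lim^{\circ(n+k)}\circ K\vdash J^{\circ k}\circ f$. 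Because $J$ is injective we have $(J^{-1})^{\circ k}\circ J^{\circ k}=\id$, so post-composing with $(J^{-1})^{\circ k}$ gives $(J^{-1})^{\circ k}\circ H\circ\lim^{\circ(n+k)}\circ K\vdash f$. Transparency of $\lim^{\circ(n+k)}$ provides a computable $G$ with $H\circ\lim^{\circ(n+k)}=\lim^{\circ(n+k)}\circ G$, and therefore $\Low_{k,n}\circ(GK)=(J^{-1})^{\circ k}\circ\lim^{\circ(n+k)}\circ GK\vdash f$, i.e.\ $f\leqSW\Low_{k,n}$.

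For ``(1)$\Rightarrow$(2)'' assume $f\leqSW\Low_{k,n}$ and let $g$ be $(k+1)$--computable of suitable type, so $g\leqW\lim^{\circ k}$ and hence $g\leqSW\lim^{\circ k}$ by the cylinder property. Composing a realizer $H_2\circ\lim^{\circ k}\circ K_2$ of $g$ with a realizer $H_1\circ\Low_{k,n}\circ K_1$ of $f$ yields a realizer of $g\circ f$ of the form $H_2\circ\lim^{\circ k}\circ(K_2H_1)\circ\Low_{k,n}\circ K_1$, and transparency of $\Low_{k,n}$ rewrites $(K_2H_1)\circ\Low_{k,n}$ as $\Low_{k,n}\circ G$ for a computable $G$; hence $g\circ f\leqSW\lim^{\circ k}\circ\Low_{k,n}$. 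By the collapse lemma $\lim^{\circ k}\circ\Low_{k,n}\leqSW\lim^{\circ(n+k)}$, so $g\circ f\leqW\lim^{\circ(n+k)}$, i.e.\ $g\circ f$ is $(n+k+1)$--computable.

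The main obstacle will be the collapse lemma, which is the uniform relativized form of the Low Basis phenomenon. I would prove it by exhibiting a computable realizer of $F:=\lim^{\circ k}\circ(J^{-1})^{\circ k}$ directly: a valid input has the form $s=t^{(k)}$ with $t=(J^{-1})^{\circ k}(s)$, and from $s$ one recovers $t=t^{(0)},t^{(1)},\dots,t^{(k)}=s$ uniformly by iterating the uniform reduction $A\leq_1A'$; an induction on $j$ then shows that the $j$--th iterated limit $\lim^{\circ j}(t)$ is uniformly computable from $t^{(j)}$, using at each step that the limit of a convergent sequence is uniformly computable from the jump of that sequence, and that the jump of a set uniformly computable from $X$ is uniformly reducible to $X'$. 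In particular $\lim^{\circ k}(t)$ is uniformly computable from $t^{(k)}=s$, so $F$ is computable, whence $\lim^{\circ k}\circ\Low_{k,n}=F\circ\lim^{\circ(n+k)}\leqSW\lim^{\circ(n+k)}$; alternatively one may cite the instance $k=1$, $n=0$ from \cite{BBP} and relativize it. For $k=0$ everything degenerates correctly ($J^{\circ 0}=\id$, $\Low_{0,n}=\lim^{\circ n}$), so no separate treatment is needed.
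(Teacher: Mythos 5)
Your proposal is correct and follows essentially the same route as the paper: in (2)$\Rightarrow$(1) you apply the test map $J^{\circ k}$ (your reduction to Baire space makes this the same as the paper's choice $g:=J^{\circ k}\circ\delta_Y^{-1}$) and then cancel $(J^{-1})^{\circ k}$ after pushing the outer computable function through $\lim^{\circ(n+k)}$ by transparency, exactly as the paper does. In (1)$\Rightarrow$(2) your ``collapse lemma'' that $\lim^{\circ k}\circ(J^{-1})^{\circ k}$ is computable is just a repackaging of the iterated form of Fact~\ref{fact:galois} (that $k$-fold limits are uniformly computable from $k$-fold jumps), which the paper invokes to rewrite a realizer of $g$ as a computable function composed with $J^{\circ k}$, so the two arguments agree up to bookkeeping.
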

\begin{proof}
Let $f:\In(X,\delta_X)\mto(Y,\delta_Y)$ be a multi-valued function on represented spaces and let $n,k\in\IN$.
If $f$ is $(n+1)$--low$_k$, then $f\leqSW\Low_{k,n}$ and there are computable functions $H,K$ such that $H\Low_{k,n}K\vdash f$.
Since $\Low_{k,n}$ is transparent there is a computable $K_0$ such that $H\Low_{k,n}K=\Low_{k,n}K_0$.
Let now $g:\In(Y,\delta_Y)\mto(Z,\delta_Z)$ be a multi-valued function on represented spaces with $g\leqW\lim^{\circ k}$.
Since $\lim^{\circ k}$ is a cylinder, there are computable $H_1,K_1$ such that $H_1\lim^{\circ k}K_1\vdash g$. 
By Fact~\ref{fact:galois} there is a computable $H_0$ such that $H_0J^{\circ k}=H_1\lim^{\circ k}K_1$.
We obtain that $H_0J^{\circ k}\Low_{n,k}K_0\vdash g\circ f$.
Since $H_0J^{\circ k}\Low_{k,n}K_0=H_0\lim^{\circ(n+k)}K_0$, this implies
$g\circ f\leqSW\lim^{\circ(n+k)}$.

Let us now assume that $g\circ f\leqW\lim^{\circ(n+k)}$ for any $g\leqW\lim^{\circ k}$ of suitable type.
We consider the function $g:=J^{\circ k}\circ\delta_Y^{-1}:\In Y\mto\Baire$. Since $\delta_Y^{-1}$ is 
computable and $J^{\circ k}\leqW\lim^{\circ k}$, we obtain $g\circ f\leqW\lim^{\circ(n+k)}$ by assumption.
Since $\lim^{\circ(n+k)}$ is a cylinder, there are computable $H,K$ such that $H\lim^{\circ(n+k)}K\vdash g\circ f$.
By Fact~\ref{fact:galois} there is a computable $K_0$ such that $H\lim^{\circ(n+k)}K=\lim^{\circ(n+k)}K_0$. 
This means $\lim^{\circ(n+k)}K_0(p)\in J^{\circ k}\delta_Y^{-1}f\delta_X(p)$ for all $p\in\dom(f\delta_X)$.
Hence we obtain $\delta_Y(J^{-1})^{\circ k}\lim^{\circ(n+k)}K_0(p)\in f\delta_X(p)$, which means
that $\Low_{k,n}K_0\vdash f$ or, in other words $f\leqSW\Low_{k,n}\equivSW\Low_k^{(n)}$ and $f$ is $(n+1)$--low$_k$.
\end{proof}

This characterization shows that the $(n+1)$--low$_k$ functions form a very natural class of functions
that exhibits some maximality behavior. We also formulate the special case for low functions.

\begin{corollary}[Low functions]
\label{cor:low}
The class of low functions is exactly the class of multi-valued functions $f$ on represented spaces
such that $g\circ f$ is limit computable for any limit computable $g$ of suitable type.
\end{corollary}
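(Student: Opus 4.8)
The plan is to read off Corollary~\ref{cor:low} as the special case $n=0$, $k=1$ of Theorem~\ref{thm:low}, so that the only work is to check that the terminology in the two statements matches under this substitution. First I would recall that, by definition, a multi-valued function $f$ is $(n+1)$--low$_k$ iff $f\leqSW\Low_{k,n}$; with $n=0$ and $k=1$ this reads $f\leqSW\Low_{1,0}$, and since $\Low_{1,0}=\Low_1=\Low$ (using $(J^{-1})^{\circ 1}\circ\lim^{\circ 1}=J^{-1}\circ\lim=\Low$), being $1$--low$_1$ is literally being \emph{low}. On the computability side, with $n=0$, $k=1$ both ``$(k+1)$--computable'' and ``$(n+k+1)$--computable'' unwind to ``$2$--computable'', which is exactly ``limit computable'' by the definition of $(n+1)$--computability.

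Having made these identifications, statement (1) of Theorem~\ref{thm:low} becomes ``$f$ is low'' and statement (2) becomes ``$g\circ f$ is limit computable for every limit computable $g$ of suitable type'', so the claimed equivalence is immediate. To round things off I would remark that this functional notion of lowness is the uniform version of the usual pointwise one: applying Lemma~\ref{lem:low-points} with $n=0$, $k=1$ shows that $p\in\Baire$ is low iff $\Low(q)=p$ for some computable $q$, in agreement with the classical notion recalled after Fact~\ref{fact:low-basis}.

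Since everything is a pure specialization, there is no genuine obstacle; the one place to be careful is the index bookkeeping in $\Low_{k,n}$ and in the ``$(n+1)$--computable'' convention --- in particular one must use $n=0$, $k=1$ (so that $\Low_{k,n}=\Low$) and not, say, $n=1$, $k=0$, which would instead give $\Low_{0,1}=\lim$.
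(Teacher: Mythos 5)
Your proposal is correct and matches the paper exactly: the paper states Corollary~\ref{cor:low} as the special case of Theorem~\ref{thm:low} with $n=0$, $k=1$, which is precisely the specialization you carry out (including the identification $\Low_{1,0}=\Low$ and the unwinding of $2$--computable as limit computable). Your caution about not confusing this with $(n,k)=(1,0)$ is a sensible bookkeeping check but introduces nothing beyond the paper's argument.
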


This observation generalizes Proposition~8.16 in \cite{BBP}, which provides already one inclusion of this characterization.

We can express Theorem~\ref{thm:low} also in terms of compositional products.
One should notice the similarity between this characterization of the $(n+1)$--low$_k$ functions
and the definition of $(n+1)$--low$_k$ points.

\begin{corollary}
\label{cor:low-composition}
$\lim^{\circ(n+k)}\equivSW\lim^{\circ k}\stars\Low_{k,n}$ for all $n,k\in\IN$.
\end{corollary}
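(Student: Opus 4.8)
The plan is to derive both halves of $\lim^{\circ(n+k)}\equivSW\lim^{\circ k}\stars\Low_{k,n}$ from the low computability characterization in Theorem~\ref{thm:low}. Write $S:=\{g_0\circ f_0:g_0\leqSW\lim^{\circ k}\text{ and }f_0\leqSW\Low_{k,n}\}$, so that $\lim^{\circ k}\stars\Low_{k,n}=\sup S$ by definition of the strong compositional product.

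First I would show that $\lim^{\circ(n+k)}$ is a $\leqSW$--upper bound of $S$. Take any $g_0\leqSW\lim^{\circ k}$ and $f_0\leqSW\Low_{k,n}$. Then $g_0$ is $(k+1)$--computable (since $g_0\leqW\lim^{\circ k}$) and $f_0$ is $(n+1)$--low$_k$ by definition, so the implication (1)$\TO$(2) of Theorem~\ref{thm:low} shows that $g_0\circ f_0$ is $(n+k+1)$--computable, i.e.\ $g_0\circ f_0\leqW\lim^{\circ(n+k)}$. Now $\lim^{\circ(n+k)}$ is a cylinder: it equals $\id$ when $n+k=0$, and for $n+k\geq 1$ it is $\equivSW\lim^{(n+k-1)}$, a derivative of the cylinder $\lim$ (Fact~\ref{fact:lim}) and hence a cylinder by iterating Corollary~\ref{cor:cylinder}. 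Since reductions to a cylinder can always be taken strongly, it follows that $g_0\circ f_0\leqSW\lim^{\circ(n+k)}$, so $\lim^{\circ(n+k)}$ bounds $S$ from above with respect to $\leqSW$.

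Second, for the opposite direction I would exhibit $\lim^{\circ(n+k)}$ itself as a member of $S$. The key observation is that $\lim^{\circ n}$ is $(n+1)$--low$_k$: being (trivially) $(n+1)$--computable, Fact~\ref{fact:composition-limit} guarantees that $g\circ\lim^{\circ n}$ is $(n+k+1)$--computable for every $(k+1)$--computable $g$ of suitable type, so the implication (2)$\TO$(1) of Theorem~\ref{thm:low} yields $\lim^{\circ n}\leqSW\Low_{k,n}$. Hence, taking $g_0:=\lim^{\circ k}$ and $f_0:=\lim^{\circ n}$, we get $\lim^{\circ(n+k)}=\lim^{\circ k}\circ\lim^{\circ n}=g_0\circ f_0\in S$. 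Combining the two steps, $\lim^{\circ(n+k)}$ is a $\leqSW$--upper bound of $S$ that also lies $\leqSW$--below a member of $S$, hence it is the least upper bound of $S$; in particular $\sup S$ exists and $\lim^{\circ k}\stars\Low_{k,n}\equivSW\lim^{\circ(n+k)}$.

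The main obstacle is more a point to handle carefully than a genuine difficulty: since the Weihrauch lattice is not complete, one must justify the existence of $\sup S$ directly, and this works precisely because the candidate value $\lim^{\circ(n+k)}$ is at once an upper bound of $S$ and reducible to an element of $S$. The one substantive input is the recognition that $\lim^{\circ n}$ already belongs to the class of $(n+1)$--low$_k$ functions, which is exactly what makes the compositional product hit $\lim^{\circ(n+k)}$ on the nose rather than merely bound it from below; everything else reduces to direct invocations of Theorem~\ref{thm:low}, Fact~\ref{fact:composition-limit}, and the fact that reductions to cylinders strengthen automatically.
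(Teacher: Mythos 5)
Your proof is correct and follows the paper's intended route: both halves are read off Theorem~\ref{thm:low} (the upper bound from (1)$\TO$(2) plus the cylinder upgrade, the attainment of the supremum by exhibiting an element of the defining set). The only difference is cosmetic: the paper witnesses the reduction $\lim^{\circ(n+k)}\leqSW\lim^{\circ k}\stars\Low_{k,n}$ by the composition $J^{\circ k}\circ\Low_{k,n}$, whereas you use $\lim^{\circ k}\circ\lim^{\circ n}$ after noting $\lim^{\circ n}\leqSW\Low_{k,n}$ via the (2)$\TO$(1) direction of Theorem~\ref{thm:low} together with Fact~\ref{fact:composition-limit} --- an equally valid choice of witness.
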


Here the reduction $\leqSW$ follows by composition of $J^{\circ k}$ with $\Low_{k,n}$.
Corollary~\ref{cor:low} also implies the following result.

\begin{proposition}
\label{prop:derivative-low}
Let $f$ be a multi-valued function on represented spaces and a cylinder. Then $f'\equivSW f'\stars\Low$.
\end{proposition}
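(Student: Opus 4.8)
The plan is to establish the two reductions $f'\stars\Low\leqSW f'$ and $f'\leqSW f'\stars\Low$ separately. Since $f$ is a cylinder, Corollary~\ref{cor:cylinder} tells us that $f'$ is a cylinder as well, which is the structural fact I exploit in the first reduction; that reduction will also show that $f'$ is an upper bound of the family defining $f'\stars\Low$, so that the compositional product actually exists.

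For $f'\stars\Low\leqSW f'$, I would take arbitrary $h\leqSW f'$ and $l\leqSW\Low$ and argue that $h\circ l\leqSW f'$. Because $f'$ is a cylinder, $h\leqSW f'$ amounts to $h\leqW f'$, so Theorem~\ref{thm:derivatives}, applied to the cylinder $g=f$, lets us write $h=f_0\circ l_0$ with $f_0\leqW f$ and $l_0\leqW\lim$. Now $l$ is a low function and $l_0$ is limit computable, so by Corollary~\ref{cor:low} the composition $l_1:=l_0\circ l$ is again limit computable, $l_1\leqW\lim$. Hence $h\circ l=f_0\circ l_1$ is a composition of something below $f$ with something below $\lim$, so $h\circ l\leqW f'$ by the implication (2)$\Rightarrow$(1) of Theorem~\ref{thm:derivatives}, and therefore $h\circ l\leqSW f'$ since $f'$ is a cylinder. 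As $h,l$ were arbitrary, $f'$ is an upper bound of $\set{h\circ l}{h\leqSW f'\text{ and }l\leqSW\Low}$, so $f'\stars\Low$ exists and $f'\stars\Low\leqSW f'$.

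For the reverse reduction the crucial point is that $\Low$ admits a computable section. Applying Fact~\ref{fact:galois} to $J=\id\circ J$ shows that $J$ is limit computable, so there is a computable $G$ with $\lim\circ G=J$; since $J$ is injective this gives $\Low\circ G=J^{-1}\circ\lim\circ G=J^{-1}\circ J=\id$. Writing $f':\In(X,\delta_X')\mto(Y,\delta_Y)$, put $g_0:=\delta_X'\circ\Low:\In\Baire\to X$ (with $X$ carrying $\delta_X'$), a map realized by $\Low$ itself, so $g_0\leqSW\Low$ and $f'\circ g_0$ belongs to the family defining $f'\stars\Low$. Finally $f'\leqSW f'\circ g_0$: if $p$ is a $\delta_X'$-name of some $x\in\dom(f')$, then $\Low(G(p))=p$, so for any realizer $C$ of $f'\circ g_0$ the point $C(G(p))$ is a $\delta_Y$-name of an element of $f'(\delta_X'(\Low(G(p))))=f'(x)$; thus $C\circ G\vdash f'$, witnessing $f'\leqSW f'\circ g_0\leqSW f'\stars\Low$.

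Most of this is bookkeeping with realizers and type-checking inside the compositional product. The one place that needs an idea is the lower bound — observing that $\Low$ has a computable section (equivalently, that the Turing jump is uniformly limit computable) and using it; once $f'\circ(\delta_X'\circ\Low)$ is identified as the witness in $f'\stars\Low$, reducing $f'$ to it is immediate. The only genuine pitfall is not to smuggle in a (false) monotonicity of composition under $\leqSW$, and this is exactly why the upper bound is routed through the ``$=$'' form of Theorem~\ref{thm:derivatives} for cylinders rather than by composing reductions directly.
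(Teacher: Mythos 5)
Your proof is correct, and its mathematical core coincides with the paper's: both arguments rest on Theorem~\ref{thm:derivatives} (for the cylinder $f$, anything below $f'$ decomposes as $f_0\circ l_0$ with $f_0\leqW f$ and $l_0\leqW\lim$) and on Corollary~\ref{cor:low} (a limit computable function composed with a low one is limit computable). Where you differ is in execution. The paper identifies the defining family $\{f_0\circ g_0: f_0\leqSW f',\ g_0\leqSW\Low\}$ with $\{f_1\circ g_1: f_1\leqSW f,\ g_1\leqSW\lim\}$ and then quotes Corollary~\ref{cor:derivatives} ($f'\equivSW f\stars\lim$), whereas you prove the two reductions separately and, for $f'\leqSW f'\stars\Low$, exhibit the explicit member $f'\circ(\delta_X'\circ\Low)$ together with a computable section $G$ of $\Low$ (obtained from Fact~\ref{fact:galois} and the injectivity of $J$, so that $\Low\circ G=\id$). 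That section argument is precisely what is hidden in the paper's claim that compositions of limit computable with low functions give \emph{exactly} all limit computable functions (one needs, e.g., that the identity is low), so spelling it out is a small but genuine gain in explicitness. One caveat: your parenthetical remark that the upper bound alone shows the compositional product ``actually exists'' is loose, since an upper bound of a family need not be a supremum; however, your witness $f'\circ(\delta_X'\circ\Low)$ is a member of the family which, by your first half, is also an upper bound of it, hence a maximum, and this is what settles existence — the existence claim should be phrased through that witness rather than through the upper bound.
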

\begin{proof}
Let $M:=\{f_0\circ g_0:f_0\leqSW f'$ and $g_0\leqSW\Low\}$. Then $f'\stars\Low$ is a member of the
strong degree $\sup(M)$.
By Theorem~\ref{thm:derivatives} we know that $f_0\leqSW f'$ is equivalent to
$f_0=f_1\circ f_2$ for some $f_1\leqSW f$ and $f_2\leqSW\lim$ since $f$ is a cylinder.
By Corollary~\ref{cor:low} $M=\{f_1\circ g_1:f_1\leqSW f$ and $g_1\leqSW\lim\}$ since the composition 
of a limit computable $f_2$ with a low $g_0$ gives exactly all limit computable $g_1$. Hence, 
$f\stars\lim$ is also a member of the strong degree $\sup(M)$ 
and we obtain $f'\stars\Low\equivSW f\stars\lim\equivSW f'$ by Corollary~\ref{cor:derivatives}.
\end{proof}

We recall that a multi-valued function $f$ on represented spaces is called {\em weakly computable},
if $f\leqW\C_\Cantor\equivSW\WKL$.
We now introduce weakly $n$--computable functions using compositions with limit computable
functions.

\begin{definition}[Weak computability]
Let $n\in\IN$ and let $f$ be a multi-valued function on represented spaces. Then we 
say that $f$ is {\em weakly $(n+1)$--computable}, if there are multi-valued functions $g,h$
on represented spaces such that $g$ is weakly computable, $h$ is $(n+1)$--computable and $f=g\circ h$.
We call the weakly $2$--computable functions also {\em weakly limit computable}.
\end{definition}

Since $f=g\circ h$ for weakly computable $g$ and computable $h$ implies $f\leqW g$,
it follows that weakly $1$--computable is the same as weakly computable.
With an inductive application of  Theorem~\ref{thm:derivatives} we immediately get the following characterization
using derivatives.

\begin{corollary}[Weak computability]
\label{cor:weak}
Let $f$ be a multi-valued function on represented spaces and let $n\in\IN$. 
Then the following are equivalent:
\begin{enumerate}
\item $f\leqW\WKL^{(n)}$,
\item $f$ is weakly $(n+1)$--computable.
\end{enumerate}
\end{corollary}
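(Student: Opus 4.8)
The plan is to induct on $n$, feeding Theorem~\ref{thm:derivatives} into the induction step. The base case $n=0$ is the remark immediately preceding the statement: weakly $1$--computable means weakly computable, i.e.\ $f\leqW\C_\Cantor\equivSW\WKL=\WKL^{(0)}$. Before the step I would record two book-keeping facts. Since $\WKL$ is a cylinder (Fact~\ref{fact:WKL}) and the derivative of a cylinder is again a cylinder (Corollary~\ref{cor:cylinder}), an induction shows that $\WKL^{(n)}$ is a cylinder for every $n$; the same argument with Fact~\ref{fact:lim} in place of Fact~\ref{fact:WKL} shows that $\lim^{\circ n}$ is a cylinder for every $n$ (with $\lim^{\circ 0}=\id$). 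Moreover, by Lemma~\ref{lem:derivative-Baire} applied to $F=\lim^{\circ n}$ we get $(\lim^{\circ n})'\equivSW\lim^{\circ n}\circ\lim=\lim^{\circ(n+1)}$, so ``$f$ is $(n+2)$--computable'' means precisely ``$f\leqW(\lim^{\circ n})'$''.

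For the step $n\to n+1$ I would first prove (1)$\Rightarrow$(2). Assume $f\leqW\WKL^{(n+1)}=(\WKL^{(n)})'$. Since $\WKL^{(n)}$ is a cylinder, Theorem~\ref{thm:derivatives} yields $f=g_0\circ l_0$ with $g_0\leqW\WKL^{(n)}$ and $l_0\leqW\lim$. By the induction hypothesis $g_0$ is weakly $(n+1)$--computable, say $g_0=a\circ b$ with $a$ weakly computable and $b$ being $(n+1)$--computable. Then $f=a\circ(b\circ l_0)$, and $b\circ l_0$ is $(n+2)$--computable by Fact~\ref{fact:composition-limit} (composing the $(n+1)$--computable $b$ with the limit computable, hence $2$--computable, $l_0$). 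Hence $f$ is weakly $(n+2)$--computable.

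For (2)$\Rightarrow$(1) I would run the same manipulation in reverse. Suppose $f=a\circ c$ with $a$ weakly computable and $c$ being $(n+2)$--computable, i.e.\ $c\leqW\lim^{\circ(n+1)}\equivSW(\lim^{\circ n})'$. Since $\lim^{\circ n}$ is a cylinder, Theorem~\ref{thm:derivatives} gives $c=c_0\circ l_0$ with $c_0\leqW\lim^{\circ n}$ (so $c_0$ is $(n+1)$--computable) and $l_0\leqW\lim$. Then $f=(a\circ c_0)\circ l_0$, and $a\circ c_0$ is weakly $(n+1)$--computable by the very definition, so $a\circ c_0\leqW\WKL^{(n)}$ by the induction hypothesis. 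Applying Theorem~\ref{thm:derivatives} once more (with the cylinder $\WKL^{(n)}$) we conclude $f\leqW(\WKL^{(n)})'=\WKL^{(n+1)}$, which closes the induction.

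I do not expect a genuine obstacle here: the only point that needs care is matching up the two decomposition formats provided by Theorem~\ref{thm:derivatives}, namely that ``$(n+2)$--computable'' coincides with ``an $(n+1)$--computable map postcomposed with a limit computable map''. This is exactly the identity $\lim^{\circ(n+1)}\equivSW(\lim^{\circ n})'$ together with the cylinder property of $\lim^{\circ n}$, and the implicit ``$\times\id$'' padding carried by the cylinder hypothesis does no harm because we only ever track ordinary Weihrauch reducibility $\leqW$ throughout the argument.
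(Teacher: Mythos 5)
Your argument is correct and is essentially the paper's own proof: the paper disposes of this corollary with the single remark that it follows ``with an inductive application of Theorem~\ref{thm:derivatives}'', and your write-up is exactly that induction, with the cylinder bookkeeping for $\WKL^{(n)}$ and $\lim^{\circ n}$ (via Fact~\ref{fact:WKL}, Fact~\ref{fact:lim} and Corollary~\ref{cor:cylinder}) and the identification $\lim^{\circ(n+1)}\equivSW(\lim^{\circ n})'$ made explicit. The appeals to Fact~\ref{fact:composition-limit} in the forward direction and to the induction hypothesis plus Theorem~\ref{thm:derivatives} in the backward direction are all used correctly, so nothing further is needed.
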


Using the Uniform Low Basis Theorem (see Fact~\ref{fact:low-basis}), Fact~\ref{fact:WKL}
and $\C_\Cantor\leqSW\C_\IR$ it follows that $\WKL^{(n)}\leqSW\Low^{(n)}\equivSW\Low_{1,n}$. 
That is, we obtain the following corollary that shows that we 
have a hierarchy of concepts.

\begin{corollary}
\label{cor:weak-low}
Let $n\in\IN$ and let $f$ be a multi-valued function on represented spaces.
Then we obtain $f$ $(n+1)$--computable $\TO$ $f$ weakly $(n+1)$--computable $\TO$ $f$ $(n+1)$--low
$\TO$ $f$ $(n+2)$--computable.
\end{corollary}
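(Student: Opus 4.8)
The plan is to derive all three implications in Corollary~\ref{cor:weak-low} from results already established. First I would observe that the first implication, ``$f$ is $(n+1)$--computable $\TO$ $f$ is weakly $(n+1)$--computable,'' is immediate from the definitions: if $f$ is $(n+1)$--computable then $f\leqW\lim^{\circ n}$, so writing $f=\id\circ f$ exhibits $f$ as the composition of the (weakly computable, since computable) function $\id$ with the $(n+1)$--computable function $f$; equivalently, via Corollary~\ref{cor:weak}, $\lim^{\circ n}\equivSW\lim^{(n-1)}\leqW\WKL^{(n)}$ since $\lim\leqW\WKL'$ (as $\lim\leqW\WKL'$ follows from $\id\leqW\WKL$ and Corollary~\ref{cor:monotone-derivative}, using that $\WKL\equivSW\C_\Cantor$ is a cylinder).

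Next I would handle the second implication, ``$f$ weakly $(n+1)$--computable $\TO$ $f$ is $(n+1)$--low.'' By Corollary~\ref{cor:weak} this amounts to $f\leqW\WKL^{(n)}$, and by the definition of $(n+1)$--low we must show $f\leqSW\Low_{1,n}$. The key input is already spelled out in the paragraph preceding the corollary: by the Uniform Low Basis Theorem (Fact~\ref{fact:low-basis}) we have $\C_\IR\leqSW\Low$, and since $\C_\Cantor\leqSW\C_\IR$ and $\WKL\equivSW\C_\Cantor$ (Fact~\ref{fact:WKL}) we get $\WKL\leqSW\Low$. Applying monotonicity of derivatives (Proposition~\ref{prop:monotone-derivative}) $n$ times yields $\WKL^{(n)}\leqSW\Low^{(n)}$, and $\Low^{(n)}\equivSW\Low_{1,n}$ by the remark following Definition~(Low map) (i.e.\ $\Low_{k,n}\equivSW\Low_k^{(n)}$ via Lemma~\ref{lem:derivative-Baire}, with $k=1$). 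Hence $f\leqW\WKL^{(n)}\leqSW\Low_{1,n}$. Since $\Low_{1,n}$ is transparent, it is a cylinder, so $f\leqW\Low_{1,n}$ upgrades to $f\leqSW\Low_{1,n}$, which is exactly $(n+1)$--low$_1$, i.e.\ $(n+1)$--low.

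For the third implication, ``$f$ $(n+1)$--low $\TO$ $f$ is $(n+2)$--computable,'' I would unwind the definition of low: $f\leqSW\Low_{1,n}=J^{-1}\circ\lim^{\circ(n+1)}$. Since $J^{-1}\leqW\lim$ by Fact~\ref{fact:galois}, we have $\Low_{1,n}\leqW\lim\circ\lim^{\circ(n+1)}=\lim^{\circ(n+2)}$. Therefore $f\leqW\lim^{\circ(n+2)}$, which by definition says $f$ is $((n+2)+1)$--computable$\,$---$\,$wait, one must be careful with the index bookkeeping: $(n+1)$--computable means $f\leqW\lim^{\circ n}$, so $f\leqW\lim^{\circ(n+1)}$ is $(n+2)$--computable, and since $\Low_{1,n}$ involves $\lim^{\circ(n+1)}$ composed with $J^{-1}\leqW\lim$, one in fact gets $f\leqW\lim^{\circ(n+1)}$ directly because $J^{-1}\circ\lim^{\circ(n+1)}\leqW\lim^{\circ(n+1)}$ (as $J^{-1}$ is limit computable and $\lim$ absorbs a further $\lim$ by Fact~\ref{fact:composition-limit}), giving $(n+2)$--computability. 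The main obstacle, such as it is, is purely the index arithmetic: making sure the shifts between ``$(n+1)$--computable $=\lim^{\circ n}$'', ``$(n+1)$--low$_k$ $=\Low_{k,n}=\lim^{\circ(n+k)}$ precomposed with $(J^{-1})^{\circ k}$'', and ``weakly $(n+1)$--computable $=\WKL^{(n)}$'' all line up; mathematically every step is a one-line consequence of a cited fact (Facts~\ref{fact:galois}, \ref{fact:WKL}, \ref{fact:low-basis}, Proposition~\ref{prop:monotone-derivative}, Corollary~\ref{cor:weak}).
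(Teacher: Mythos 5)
Your skeleton is the paper's own (first implication from the definition, middle implication via Corollary~\ref{cor:weak} and $\WKL^{(n)}\leqSW\Low^{(n)}\equivSW\Low_{1,n}$), and the first implication is fine, but two of your justifications do not hold up. For the middle implication you must end with a \emph{strong} reduction $f\leqSW\Low_{1,n}$ (that is how $(n+1)$--low is defined), and you obtain it by asserting that $\Low_{1,n}$, being transparent, is a cylinder. Transparency does not imply being a cylinder, and the paper never claims any cylinder property for $\Low_{k,n}$: already $J^{-1}$, which is transparent by Fact~\ref{fact:galois}, is not a cylinder. Indeed, a strong reduction $\id\times J^{-1}\leqSW J^{-1}$ via computable $H,K$ would, on inputs $\langle r,J(\widehat{0})\rangle$, produce $s:=J^{-1}K\langle r,J(\widehat{0})\rangle$ with $H(s)=\langle r,\widehat{0}\rangle$, hence $r\leqT s$, while $s'\equivT J(s)=K\langle r,J(\widehat{0})\rangle\leqT r\oplus\emptyset'$; this forces $r'\leqT r\oplus\emptyset'$ for every $r$, which fails for any $r\geqT\emptyset'$. (A similar, slightly more involved jump-inversion argument rules out $\Low$ being a cylinder as well.) The correct place to upgrade $\leqW$ to $\leqSW$ is on the $\WKL$ side: $\WKL$ is a cylinder (see the remarks after Fact~\ref{fact:WKL}), hence so is $\WKL^{(n)}$ by Corollary~\ref{cor:cylinder}, so Corollary~\ref{cor:weak} already gives $f\leqSW\WKL^{(n)}\leqSW\Low^{(n)}\equivSW\Low_{1,n}$, using Fact~\ref{fact:low-basis}, $\C_\Cantor\leqSW\C_\IR$, Proposition~\ref{prop:monotone-derivative} and Lemma~\ref{lem:derivative-Baire}. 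This is exactly the chain displayed in the paper just before the corollary.

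For the last implication your conclusion is right but the reason given is false: ``$\lim$ absorbs a further $\lim$'' contradicts Example~\ref{ex:products}, which records $\lim\lW\lim\circ\lim$, and Fact~\ref{fact:composition-limit} applied to a merely limit computable $J^{-1}$ only yields that $\Low_{1,n}=J^{-1}\circ\lim^{\circ(n+1)}$ is $(n+3)$--computable, i.e.\ $f\leqW\lim^{\circ(n+2)}$, one level too weak. What actually saves the bound is that $J^{-1}$ is \emph{computable} (uniformly, since $\graph(p)$ is uniformly reducible to its Turing jump), whence $\Low_{1,n}\leqSW\lim^{\circ(n+1)}$ and $f$ is $(n+2)$--computable; this is the same fact the paper uses when it writes $\Low\leqSW\lim$ in the proof of Theorem~\ref{thm:BWT-R-chain}. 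Alternatively, you can bypass the computation entirely by invoking Theorem~\ref{thm:low} with $k=1$ and $g=\id$ on the target space: if $f$ is $(n+1)$--low, then $f=\id\circ f$ is $(n+2)$--computable.
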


The implications in this corollary cannot be reversed in general. This is known for $n=0$ (see \cite{BBP})
and will be proved later for $n=1$ (see Theorem~\ref{thm:BWT-R-chain}). 
We can derive some facts about the composition of classes of weakly computable functions.

\begin{theorem}[Composition of weakly computable functions]
\label{thm:weakly-computable-composition}
Let $n,k\in\IN$ and let $f$ and $g$ be multi-valued functions on represented spaces
such that $g\circ f$ exists. Then we obtain the following:
\begin{enumerate}
\item If $f$ is weakly $(n+1)$--computable and $g$ is $(k+2)$--computable, then 
         $g\circ f$ is $(n+k+2)$--computable. 
\item If $f$ is weakly $(n+1)$--computable and $g$ is weakly $(k+1)$--computable, then
         $g\circ f$ is weakly $(n+k+1)$--computable.
\end{enumerate}
\end{theorem}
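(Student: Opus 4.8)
The plan is to derive both statements from the characterizations of weak $(n+1)$--computability in terms of derivatives of $\WKL$ (Corollary~\ref{cor:weak}) together with the composition behavior of the compositional product and the algebraic properties of the derivative. The key identities to exploit are $\WKL^{(n)}\equivSW\WKL\stars\lim^{\circ n}$ (an instance of Corollary~\ref{cor:derivatives}, using that $\WKL$ is a cylinder by Fact~\ref{fact:WKL}), the monotonicity of the compositional product (Lemma~\ref{lem:monotone-composition}), and the fact that $\lim^{\circ n}$ composed with $\lim^{\circ k}$ is $\lim^{\circ(n+k)}$ (which follows from Fact~\ref{fact:composition-limit} or directly from $\lim^{\circ(n+1)}\equivSW\lim^{(n)}$).

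For part~(1): suppose $f$ is weakly $(n+1)$--computable and $g$ is $(k+2)$--computable. By Corollary~\ref{cor:weak} we have $f\leqW\WKL^{(n)}$, and by definition of $(k+2)$--computability $g\leqW\lim^{\circ(k+1)}$. The composition $g\circ f$ is then a member of the compositional product, so $g\circ f\leqW\lim^{\circ(k+1)}*\WKL^{(n)}$. Now I would rewrite $\WKL^{(n)}\equivW\WKL*\lim^{\circ n}$ (Corollary~\ref{cor:derivatives}, $\WKL$ being a cylinder), so that $g\circ f\leqW\lim^{\circ(k+1)}*(\WKL*\lim^{\circ n})$; using associativity/monotonicity of $*$ and the Uniform Low Basis Theorem (Fact~\ref{fact:low-basis}) in the form $\WKL\leqSW\C_\IR\leqSW\Low$, I can absorb the $\WKL$ factor: $\lim^{\circ(k+1)}*\WKL\leqW\lim^{\circ(k+1)}*\Low$, and by Corollary~\ref{cor:low-composition} (with suitable indices) $\lim^{\circ(k+1)}*\Low\leqW\lim^{\circ(k+1)}$. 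Hence $g\circ f\leqW\lim^{\circ(k+1)}*\lim^{\circ n}\leqW\lim^{\circ(n+k+1)}$, i.e.\ $g\circ f$ is $(n+k+2)$--computable. Alternatively, and perhaps more cleanly, I would simply unpack the definition: write $f=g_0\circ h$ with $g_0$ weakly computable and $h$ being $(n+1)$--computable; then $g\circ f=(g\circ g_0)\circ h$ where $g\circ g_0$ has $g_0\leqW\WKL$ and $g$ being $(k+2)$--computable, so by Corollary~\ref{cor:weak-low} (or Corollary~\ref{cor:weak}) $g\circ g_0$ is weakly $(k+2)$--computable hence $(k+3)$--computable; then Fact~\ref{fact:composition-limit} gives that $(g\circ g_0)\circ h$ is $((k+2)+(n+1))$--computable $=(n+k+3)$--computable — but this is one level too many, so I must instead use the sharper Low Basis absorption above rather than the crude inclusion weakly computable $\subseteq$ one level higher. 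This is the delicate bookkeeping point.

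For part~(2): suppose $f$ is weakly $(n+1)$--computable and $g$ is weakly $(k+1)$--computable. Write $f=g_1\circ h_1$ with $g_1$ weakly computable and $h_1$ being $(n+1)$--computable, and $g=g_2\circ h_2$ with $g_2$ weakly computable and $h_2$ being $(k+1)$--computable. Then $g\circ f=g_2\circ(h_2\circ g_1)\circ h_1$. The middle composition $h_2\circ g_1$ has $g_1\leqW\WKL$ and $h_2\leqW\lim^{\circ k}$, so $h_2\circ g_1\leqW\lim^{\circ k}*\WKL$; applying the Low Basis absorption as in part~(1), $h_2\circ g_1$ is $(k+1)$--computable. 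Hence $g\circ f=g_2\circ\bigl((h_2\circ g_1)\circ h_1\bigr)$ where the inner parenthesized map is, by Fact~\ref{fact:composition-limit}, $(n+k+1)$--computable, and $g_2$ is weakly computable; by the definition of weak $(n+k+1)$--computability this exhibits $g\circ f$ as weakly $(n+k+1)$--computable.

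The main obstacle I anticipate is precisely the index arithmetic in the ``absorption'' step: one must be careful that $\lim^{\circ k}*\WKL\leqW\lim^{\circ k}$, i.e.\ that post-composing (or pre-composing, depending on orientation) $\WKL$ with $k$ iterated limits does not increase the level. This is exactly the content of the Uniform Low Basis Theorem together with Theorem~\ref{thm:low}/Corollary~\ref{cor:low-composition}: $\WKL\leqSW\Low$, and $\Low$ is $2$--low, so composing $\lim^{\circ k}$ with $\Low$ stays at level $k+1$. Getting the orientation of the composition right (whether the weakly-computable piece sits on the outside or inside) and matching it to which side of $*$ the Low Basis Theorem applies to is the one place where a careless argument loses a level; everything else is routine application of Fact~\ref{fact:composition-limit}, Corollary~\ref{cor:weak}, and monotonicity.
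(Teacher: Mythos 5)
Your part (1) is correct in substance and, despite the compositional-product packaging, rests on exactly the same mechanism as the paper: by the Uniform Low Basis Theorem a weakly computable factor is low, and lowness is what prevents the composition from climbing a level (the paper does this by noting that a weakly $(n+1)$--computable $f$ is $(n+1)$--low and invoking Theorem~\ref{thm:low} for $k=0$, then splitting $g$ by Theorem~\ref{thm:derivatives} and finishing with Fact~\ref{fact:composition-limit}). Two small imprecisions in your write-up are fixable but should be fixed: Corollary~\ref{cor:low-composition} with $\Low=\Low_{1,0}$ only gives $\lim\stars\Low\equivSW\lim$, not $\lim^{\circ(k+1)}*\Low\leqW\lim^{\circ(k+1)}$ directly (peel off one $\lim$, use Corollary~\ref{cor:low}, then recombine with Fact~\ref{fact:composition-limit}); and the paper never establishes associativity of $*$ or the identity $\WKL^{(n)}\equivW\WKL*\lim^{\circ n}$, so it is safer to argue at the level of explicit decompositions $f=g_0\circ h$ as the paper does.

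Part (2) has a genuine gap at $k=0$. Your key step claims that $h_2\circ g_1$ with $h_2$ being $(k+1)$--computable and $g_1\leqW\WKL$ is $(k+1)$--computable; for $k=0$ this says that a computable function composed with a weakly computable one is computable, which is false (take $h_2=\id$ and $g_1=\WKL$). The absorption $\lim^{\circ k}*\WKL\leqW\lim^{\circ k}$ needs at least one genuine $\lim$ on the outside to apply the Low Basis Theorem, so it is simply unavailable when $k=0$. In particular the base case $n=k=0$ of part (2) -- closure of the weakly computable functions under composition -- is not delivered by your argument at all; the paper does not reprove it either but cites it (Theorem~6.14 in \cite{GM09}, Proposition~7.11 in \cite{BG11}, Corollary~7.6 in \cite{BBP}, i.e.\ essentially $\C_\Cantor*\C_\Cantor\equivW\C_\Cantor$ from the Independent Choice Theorem), and then gets $k=0$, arbitrary $n$, by grouping $g\circ g_0$ as a single weakly computable function in the decomposition $f=g_0\circ h$. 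Your argument for $k\geq1$ is fine and coincides with the paper's reduction of (2) to (1); to repair the proposal you must treat $k=0$ separately by invoking this closure fact rather than the absorption.
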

\begin{proof}
(1) In case $k=0$ this follows directly from Theorem~\ref{thm:low} since any weakly $(n+1)$--computable
$f$ is $(n+1)$--low by Corollary~\ref{cor:weak-low}. In case $k\geq 1$ any $(k+2)$--computable $g$ can be
written as $g=g_0\circ g_1$ with a $(k+1)$--computable $g_0$ and a $2$--computable $g_1$ by Theorem~\ref{thm:derivatives}.
Hence the case $k\geq1$ follows from the case $k=0$ with the help of Fact~\ref{fact:composition-limit}.\\
(2) In case $n=k=0$ this is well-known (see, for instance, Theorem~6.14 in \cite{GM09}, 
Proposition~7.11 in \cite{BG11} or Corollary~7.6 in \cite{BBP}) and this case implies the case for $k=0$ and $n\in\IN$;
the statement (2) for $k\geq 1$ follows from (1).
\end{proof}

In particular, the weakly $(n+1)$--computable functions are closed under composition with 
weakly computable functions from right and left.

Another remarkable property of weakly $(n+1)$--computable functions is that
they are automatically $(n+1)$--computable, if they are single-valued (under mild hypotheses on the target spaces).

\begin{theorem}[Single-valuedness]
\label{thm:single-valuedness}
Let $X$ be a represented space and let $Y$ be a computable metric space and let $n\in\IN$.
If $f:\In X\to Y$ is weakly $(n+1)$--computable and single-valued, then $f$ is $(n+1)$--computable.
\end{theorem}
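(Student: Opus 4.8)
The plan is to exploit the characterization of weak $(n+1)$-computability from Corollary~\ref{cor:weak}, namely that $f$ weakly $(n+1)$-computable is equivalent to $f\leqW\WKL^{(n)}$, together with the known equivalence $\WKL\equivSW\C_\Cantor$ from Fact~\ref{fact:WKL}. So it suffices to show: if a single-valued $f:\In X\to Y$ with $Y$ a computable metric space satisfies $f\leqW\C_\Cantor^{(n)}$, then $f\leqW\lim^{\circ n}$. By Theorem~\ref{thm:derivatives} (applied $n$ times, using that $\C_\Cantor$ and each $\lim^{\circ k}$ are cylinders), the reduction $f\leqW\C_\Cantor^{(n)}$ means $f=g_0\circ l_0$ where $g_0\leqW\C_\Cantor$ and $l_0\leqW\lim^{\circ n}$. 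The heart of the matter is then the base case $n=0$: a single-valued $f:\In X\to Y$ with $f\leqW\C_\Cantor$ is already computable, i.e.\ single-valued weakly computable functions into computable metric spaces are computable. This is essentially the uniform content of the fact that $\WKL$ has no single-valued strength beyond computability, and I expect the paper either proved it earlier or it follows from the standard argument below.

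First I would reduce to this base case. Suppose $f$ is weakly $(n+1)$-computable and single-valued. Write $f=g\circ h$ with $g$ weakly computable and $h$ being $(n+1)$-computable, i.e.\ $h\leqW\lim^{\circ n}$. The subtlety is that $g$ and $h$ individually need not be single-valued, so one cannot directly invoke the $n=0$ result on $g$. Instead I would argue as follows: since $f$ is single-valued, for each $x\in\dom(f)$ the composition produces a unique element $f(x)\in Y$. Consider a realizer: $f\leqW\C_\Cantor^{(n)}$ via computable $H,K$ with $H\langle\id,SK\rangle\vdash f$ for all $S\vdash\C_\Cantor^{(n)}$. Using the normal form $\C_\Cantor^{(n)}\equivSW\C_\Cantor\circ\lim^{\circ n}$ (iterating Corollary~\ref{cor:derivatives-composition}, since $\C_\Cantor$ is a cylinder), we can write this realizer as first applying a computable map, then $\lim^{\circ n}$, then $\C_\Cantor$, then a computable map — so $f = g_1\circ c\circ l$ where $l\leqW\lim^{\circ n}$ is the "limit part", $c$ is a realizer-level version of $\C_\Cantor$, and $g_1$ is computable postprocessing with access to the original input.

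The key step is then to show that the single-valuedness of the overall $f$ forces the $\C_\Cantor$-part to be irrelevant on the relevant inputs, modulo the limit preprocessing. Concretely: define $f' := c\circ l$ restricted appropriately; this is weakly $(n+1)$-computable but possibly multi-valued. However $g_1\circ f' = f$ is single-valued. Here I would use a compactness/fan argument: $\C_\Cantor$ produces paths through an infinite binary tree, and because $Y$ is a computable metric space with the Cauchy representation, knowing a name of $f(x)$ to finite precision is an open condition; if two different paths through the tree (produced by two admissible choices of the realizer $S$) led, after applying the computable $g_1$, to names of the \emph{same} point $f(x)$, then by continuity of $g_1$ this agreement is forced on a whole subtree. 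An exhaustion/König's-lemma argument then shows that the name of $f(x)$ can be computed directly from the limit preprocessing alone, without genuinely choosing a path: one searches the tree for a node all of whose extensions force (via $g_1$) the same next rational approximation of the output. This is exactly the mechanism by which single-valued $\WKL$-computable functions collapse to computable ones, now carried out relative to $\lim^{\circ n}$.

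The main obstacle I anticipate is making this collapse argument uniform and clean at the realizer level in the presence of the $\lim^{\circ n}$ preprocessing — in particular, verifying that the "search for a forcing node" can be performed $\lim^{\circ n}$-computably and uniformly in the input $x$, and handling the fact that $l$ itself is multi-valued so that "the" tree depends on a choice. I would manage this by working with a \emph{single} fixed realizer $L\vdash l$ (obtained via the Axiom of Choice, as is done throughout the paper via Lemma~\ref{lem:jump-realization}), noting that once $L$ is fixed the tree $T=T(x)$ is determined $\lim^{\circ n}$-computably from $x$, and then the forcing-node search is a computable-in-$T$ operation, hence $\lim^{\circ n}$-computable in $x$. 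Single-valuedness of $f$ guarantees that along \emph{every} path of $T$ the postprocessing yields the same output $f(x)$, which is precisely what makes the forcing node exist at every finite level of precision. Assembling these pieces gives a $\lim^{\circ n}$-realizer of $f$, i.e.\ $f\leqW\lim^{\circ n}$, so $f$ is $(n+1)$-computable.
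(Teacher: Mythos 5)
Your overall plan --- decompose $f$ into a weakly computable outer factor and a $\lim^{\circ n}$-computable inner factor and then use single-valuedness to collapse the $\WKL$-part --- has the right shape, but the step carrying all the weight is exactly the one you leave vague, and as described it does not work. What you need is the (relativized) base case: a single-valued function reducible to $\C_\Cantor$ with values in a computable metric space is computable. The paper does not reprove this; it quotes it (Corollary~8.8 of \cite{BG11}), and its footnote explicitly warns that the uniform version is \emph{not} the standard unique-path argument but needs additional ideas (a topological selection theorem). Your ``forcing node'' search runs into precisely the difficulty that makes the uniform version delicate: the computable postprocessing $H$ (your $g_1$) is only guaranteed to output a name of $f(x)$ along genuine elements of the closed set $A\In\Cantor$, i.e.\ along infinite paths of the tree; on non-extendible nodes it may diverge or output arbitrary garbage, and extendibility is not computably recognizable. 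Hence the condition ``all extensions of $w$ force the same next rational approximation'' may fail for every node (dead branches with garbage occur arbitrarily high), is not semi-decidable (the machine need never converge on dead extensions), and is unsound when accepted on finite evidence (a not-yet-revealed-dead node can have all of its surviving extensions force a common wrong value). A correct argument exists --- e.g.\ a level-wise search over the whole tree in which a node may alternatively be discharged by being revealed dead, with correctness anchored at an extendible node via an agreement condition; or, more structurally, computing the image of the co-c.e.\ compact set $A$ under $H$ and using that unique compact choice is computable --- but that is a genuine proof of the nontrivial base case, not the routine verification your sketch suggests, and your proposal does not supply it.

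Moreover, within your own decomposition there is a one-line way around all of this, and it is the paper's actual key step, which you explicitly considered and dismissed. If $f=g_0\circ l_0$ with $f$ single-valued, then for every $x\in\dom(f)$ and every $z\in l_0(x)$ we have $g_0(z)\In f(x)$, so the restriction $g_1:=g_0|_{\range(l_0)}$ \emph{is} single-valued; it is still weakly computable, hence computable by the $n=0$ result used as a black box, and then $f=g_1\circ l_0$ is $(n+1)$--computable by Fact~\ref{fact:composition-limit}. The paper organizes exactly this as an induction on $n$, peeling off one $\lim$ at a time via Theorem~\ref{thm:derivatives} and Corollary~\ref{cor:weak} and applying the same restriction trick, with Corollary~8.8 of \cite{BG11} as the base case. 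So: keep your decomposition, but replace the realizer-level compactness argument by the restriction observation together with a citation of (or a complete proof of) the single-valued weak-computability collapse; as written, your proof has a gap at its central step.
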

\begin{proof}
We prove the claim by induction on $n$. For $n=0$ the claim has been proved in Corollary~8.8 of \cite{BG11}.\footnote{The
result for $n=0$ can be seen as a uniform version of the well-known fact that a unique infinite path in a computable binary tree is computable.
However, the proof of the uniform version needs additional ideas, such as the application
of a suitable topological selection theorem.}
Let $f$ now be weakly $(n+2)$--computable. 
Then $f\leqSW\WKL^{(n+1)}$ by Corollary~\ref{cor:weak},  since $\WKL$ is a cylinder.
Then there is a represented space $Z$
and $g:\In Z\mto Y$, $h:\In X\mto Z$ such that $g\leqSW\WKL^{(n)}$, $h\leqSW\lim$ and $f=g\circ h$
by Theorem~\ref{thm:derivatives} and since $\WKL^{(n)}$ and $\lim$ are cylinders. 
Since $f$ is single-valued, it follows that the restriction $g_1:=g|_{\range(h)}:\In Z\to Y$ of $g$ to $\range(h)$ is single-valued
too. Moreover, $g_1\leqSW g\leqSW\WKL^{(n)}$. 
Hence, by Corollary~\ref{cor:weak} $g_1$ is weakly $(n+1)$--computable and by 
induction hypothesis we obtain that $g_1$ is $(n+1)$--computable.
Hence $f=g_1\circ h$ is $(n+2)$--computable. This completes the induction.
\end{proof}

Another important class of functions is the class of functions that are computable
with finitely many mind changes. 
We recall that a multi-valued function $f$ on represented spaces is called {\em computable with finitely many mind changes},
if $f\leqW\lim_\Delta$, where $\lim_\Delta$ is the limit operation on Baire space with respect to the discrete topology
(see Theorem~7.11 in \cite{BBP}). 
The limit of a sequence in Baire space with respect to the discrete topology exists if and only if the sequence
is eventually constant. This corresponds to a limit computation with finitely many mind changes on the output.
We recall that $\lim_\Delta$ is a cylinder, see Fact~\ref{fact:lim-delta-UCR}.
We generalize the class of functions computable with finitely many mind changes analogously to the class of weakly computable functions.

\begin{definition}[Relativized computability with finitely many mind changes]
Let $n\in\IN$ and let $f$ be a multi-valued function on represented spaces. Then we 
say that $f$ is {\em $(n+1)$--computable with finitely many mind changes}, if there are multi-valued functions $g,h$
on represented spaces such that $g$ is computable with finitely many mind changes, $h$ is $(n+1)$--computable and $f=g\circ h$.
We call the functions that are $2$--computable with finitely many mind changes also {\em limit computable with finitely many mind changes}.
\end{definition}

It is clear that the functions which are $1$--computable with finitely
many mind changes are just the functions which are computable with finitely many mind changes.
With Theorem~\ref{thm:derivatives} we immediately get the following corollary.

\begin{corollary}[Computability with finitely many mind changes]
\label{cor:finitely-many-mind-changes}
Let $f$ be a multi-valued function on represented spaces and let $n\in\IN$. 
Then the following are equivalent:
\begin{enumerate}
\item $f$ is $(n+1)$--computable with finitely many mind changes,
\item $f\leqW\lim_\Delta^{(n)}$.
\end{enumerate}
\end{corollary}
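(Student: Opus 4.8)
The plan is to prove the equivalence by induction on $n$, peeling off one factor of $\lim$ at each step by means of Theorem~\ref{thm:derivatives}; this is the same strategy that underlies Corollary~\ref{cor:weak}. The base case $n=0$ is immediate from the definitions: being $1$--computable with finitely many mind changes means being computable with finitely many mind changes, i.e.\ $f\leqW\lim_\Delta=\lim_\Delta^{(0)}$. For the induction step I would first record the auxiliary fact that all the iterated limits that occur are cylinders. Indeed, $\lim_\Delta$ is a cylinder by Fact~\ref{fact:lim-delta-UCR} and $\lim$ is a cylinder by Fact~\ref{fact:lim}, so by Corollary~\ref{cor:cylinder} the derivatives $\lim_\Delta^{(n)}$ are cylinders, and since $\lim^{\circ(n+1)}\equivSW\lim^{(n)}$ (with $\lim^{\circ 0}=\id$) the functions $\lim^{\circ n}$ are cylinders too; moreover $\lim^{\circ(n+1)}\equivSW(\lim^{\circ n})'$ follows from $\lim^{\circ(n+1)}\equivSW\lim^{(n)}$ by monotonicity of the derivative (Proposition~\ref{prop:monotone-derivative}). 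Consequently Theorem~\ref{thm:derivatives} is available, in its $\leqW$--form, for $g=\lim_\Delta^{(n)}$ and for $g=\lim^{\circ n}$.

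For the direction $(2)\TO(1)$ in the passage from $n$ to $n+1$, I would assume $f\leqW\lim_\Delta^{(n+1)}=(\lim_\Delta^{(n)})'$ and apply Theorem~\ref{thm:derivatives} to write $f=g_0\circ l_0$ with $g_0\leqW\lim_\Delta^{(n)}$ and $l_0\leqW\lim$. By the induction hypothesis $g_0$ is $(n+1)$--computable with finitely many mind changes, so $g_0=g_1\circ h_1$ with $g_1$ computable with finitely many mind changes and $h_1$ being $(n+1)$--computable. Then $f=g_1\circ(h_1\circ l_0)$, and $h_1\circ l_0$ is $(n+2)$--computable by Fact~\ref{fact:composition-limit} (composition of an $(n+1)$--computable with a $2$--computable function); hence $f$ is $(n+2)$--computable with finitely many mind changes.

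For $(1)\TO(2)$ I would start from $f=g\circ h$ with $g$ computable with finitely many mind changes and $h$ being $(n+2)$--computable, so $h\leqW\lim^{\circ(n+1)}\equivSW(\lim^{\circ n})'$. Applying Theorem~\ref{thm:derivatives} to the cylinder $\lim^{\circ n}$ gives $h=h_1\circ l_0$ with $h_1\leqW\lim^{\circ n}$ (so $h_1$ is $(n+1)$--computable) and $l_0\leqW\lim$. Rewriting $f=(g\circ h_1)\circ l_0$, the factor $g\circ h_1$ is $(n+1)$--computable with finitely many mind changes by definition, hence $g\circ h_1\leqW\lim_\Delta^{(n)}$ by the induction hypothesis; a final application of Theorem~\ref{thm:derivatives} then yields $f\leqW(\lim_\Delta^{(n)})'=\lim_\Delta^{(n+1)}$. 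This closes the induction.

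I expect no genuine difficulty: the argument is almost entirely bookkeeping, which is why the corollary can reasonably be called an immediate consequence of Theorem~\ref{thm:derivatives}. The only places that require a little attention are the verification that the iterated limits and their derivatives are cylinders (so that Theorem~\ref{thm:derivatives} applies in the $\leqW$--form rather than only the $\leqSW$--form), the identification $\lim^{\circ(n+1)}\equivSW(\lim^{\circ n})'$, and the usual care with existence of the various compositions on their proper (restricted) domains.
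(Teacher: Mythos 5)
Your proposal is correct and follows essentially the paper's intended argument: the corollary is stated there as an (inductive) immediate consequence of Theorem~\ref{thm:derivatives}, and your induction on $n$, using the cylinder properties of $\lim_\Delta^{(n)}$ and $\lim^{\circ n}$ together with Fact~\ref{fact:composition-limit}, is exactly that argument written out in detail.
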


We point out that the derivatives $\lim_\Delta'$ and $\C_\IN'$ are not Weihrauch equivalent,
despite the fact that the underlying functions are (see Example~\ref{ex:UCL-R}).
Hence we cannot replace $\lim_\Delta$ by $\C_\IN$ in this corollary, except in the case $n=0$.
However, we can replace $\lim_\Delta$ by $\C_\IN\times\id$ by Fact~\ref{fact:lim-delta-UCR}.

It is easy to see that the composition $g\circ f$ of a limit computable function $g$ with a 
function $f$ that is computable with finitely many mind changes is limit computable again.
This can also be deduced from a consequence of the Uniform Low Basis Theorem (see Fact~\ref{fact:low-basis}), 
since $\lim_\Delta^{(n)}\leqSW\C_\IR^{(n)}\leqSW\Low^{(n)}\equivSW\Low_{1,n}$. 
That is, we obtain the following corollary that shows that we 
have a hierarchy of computability concepts.

\begin{corollary}
\label{cor:finite-low}
Let $n\in\IN$ and let $f$ be a multi-valued function on represented spaces.
Then we obtain $f$ $(n+1)$--computable $\TO$ $f$ $(n+1)$--computable with finitely many mind changes $\TO$ $f$ $(n+1)$--low
$\TO$ $f$ $(n+2)$--computable.
\end{corollary}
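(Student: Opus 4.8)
The plan is to verify the three implications in turn, in each case translating the notion in question into a Weihrauch reduction via a characterization already established and then invoking the relevant facts. For the first implication I would argue directly from the compositional definition: every computable function is, a fortiori, computable with finitely many mind changes, so if $f$ is $(n+1)$--computable then the trivial factorization $f=\id\circ f$ exhibits $f$ as a composition of a function computable with finitely many mind changes with an $(n+1)$--computable function, which by definition makes $f$ $(n+1)$--computable with finitely many mind changes.

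For the second implication, assume $f$ is $(n+1)$--computable with finitely many mind changes, so that $f\leqW\lim_\Delta^{(n)}$ by Corollary~\ref{cor:finitely-many-mind-changes}. Since $\lim_\Delta$ is a cylinder (Fact~\ref{fact:lim-delta-UCR}) and the derivative of a cylinder is again a cylinder (Corollary~\ref{cor:cylinder}), the function $\lim_\Delta^{(n)}$ is a cylinder, so this reduction upgrades to $f\leqSW\lim_\Delta^{(n)}$. Now I would run the chain $\lim_\Delta\equivSW\UC_\IR\leqSW\C_\IR$ (Fact~\ref{fact:lim-delta-UCR}, together with the fact that $\UC_\IR$ is a restriction of $\C_\IR$); then an $n$--fold application of the monotonicity of the derivative (Proposition~\ref{prop:monotone-derivative}) gives $\lim_\Delta^{(n)}\leqSW\C_\IR^{(n)}$; then $\C_\IR\leqSW\Low$ by the Uniform Low Basis Theorem (Fact~\ref{fact:low-basis}), and a further $n$--fold application of monotonicity gives $\C_\IR^{(n)}\leqSW\Low^{(n)}$; and finally $\Low^{(n)}=\Low_1^{(n)}\equivSW\Low_{1,n}$ by the identity $\Low_{k,n}\equivSW\Low_k^{(n)}$. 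Composing all these reductions yields $f\leqSW\Low_{1,n}$, i.e.\ $f$ is $(n+1)$--low.

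For the third implication, assume $f$ is $(n+1)$--low, i.e.\ $f\leqSW\Low_{1,n}$. Unfolding the definition, $\Low_{1,n}=J^{-1}\circ\lim^{\circ(n+1)}$, and since $J^{-1}$ is computable, postcomposing with it shows $\Low_{1,n}\leqSW\lim^{\circ(n+1)}$; hence $f\leqW\lim^{\circ(n+1)}$, which is precisely the statement that $f$ is $(n+2)$--computable. I do not expect any of these steps to be a serious obstacle; the only point that requires care is the bookkeeping between $\leqW$ and $\leqSW$, since the monotonicity of the derivative --- which must be iterated $n$ times in the middle step --- is available only for strong reducibility, which is why it is essential first to upgrade the reduction $f\leqW\lim_\Delta^{(n)}$ to a strong one via the cylinder property and to carry the whole middle chain through $\leqSW$.
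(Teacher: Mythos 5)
Your proof is correct and follows essentially the same route as the paper: the substantive middle implication rests on exactly the chain $\lim_\Delta^{(n)}\leqSW\C_\IR^{(n)}\leqSW\Low^{(n)}\equivSW\Low_{1,n}$ via the Uniform Low Basis Theorem that the paper itself invokes, with the remaining implications being the same routine unfoldings of the definitions. Your explicit handling of the $\leqW$/$\leqSW$ bookkeeping through the cylinder property of $\lim_\Delta^{(n)}$ is a welcome detail that the paper leaves implicit.
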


The implications in this corollary cannot be reversed in general. This is known for $n=0$ (see \cite{BG11a} and \cite{BBP})
and will be proved later for $n=1$ (see Theorems~\ref{thm:UCL-BWT} and \ref{thm:BWT-R-chain}
and Proposition~\ref{prop:BWT-UCL-R}). 

Using Fact~\ref{fact:composition-limit} and Theorem~\ref{thm:low} we can
derive some facts about the composition of classes of functions that are computable with finitely many mind changes.

\begin{theorem}[Composition and mind changes]
\label{thm:mind-changes-composition}
Let $n,k\in\IN$ and let $f$ and $g$ be multi-valued functions on represented spaces
such that $g\circ f$ exists. Then we obtain the following:
\begin{enumerate}
\item If $f$ is $(n+1)$--computable with finitely many mind changes and $g$ is $(k+2)$--computable, then 
         $g\circ f$ is $(n+k+2)$--computable. 
\item If $f$ is $(n+1)$--computable with finitely many mind changes and $g$ is $(k+1)$--computable with
        finitely many mind changes, then $g\circ f$ is $(n+k+1)$--computable with finitely many mind changes.
\end{enumerate}
\end{theorem}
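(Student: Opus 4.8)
The plan is to follow, almost verbatim, the proof of Theorem~\ref{thm:weakly-computable-composition}, exploiting that functions computable with finitely many mind changes occupy the same intermediate position between computable and low functions that the weakly computable ones do. The ingredients are Corollary~\ref{cor:finite-low} (every $(n+1)$--computable-with-finitely-many-mind-changes function is $(n+1)$--low), Theorem~\ref{thm:low}, Theorem~\ref{thm:derivatives}, and Fact~\ref{fact:composition-limit}.

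For part (1) I would first dispose of the base case $k=0$, in which $g$ is limit computable. Since $f$ is $(n+1)$--low by Corollary~\ref{cor:finite-low}, Theorem~\ref{thm:low} yields that $g\circ f$ is $(n+2)$--computable. For $k\ge1$ I would invoke Theorem~\ref{thm:derivatives} to write the $(k+2)$--computable $g$ as $g=g_0\circ g_1$ with $g_0$ $(k+1)$--computable and $g_1$ limit computable; then $g\circ f=g_0\circ(g_1\circ f)$, where $g_1\circ f$ is $(n+2)$--computable by the $k=0$ case, and Fact~\ref{fact:composition-limit} applied to the composition $g_0\circ(g_1\circ f)$ gives that $g\circ f$ is $(n+k+2)$--computable.

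For part (2) I would split into the same three cases. The base case $n=k=0$ asserts that the composition of two functions computable with finitely many mind changes is again of this kind; since $\lim_\Delta\equivW\C_\IN$ by Fact~\ref{fact:BCT}, the principal ideal of $\C_\IN$ is closed under composition by Fact~\ref{fact:closed-choice} (Corollary~7.6 in \cite{BBP}), and being closed under composition is a property of the Weihrauch degree, this is immediate. The case $k=0$ with arbitrary $n$ then follows: write $f=f_0\circ h$ with $f_0$ computable with finitely many mind changes and $h$ $(n+1)$--computable, so that $g\circ f=(g\circ f_0)\circ h$ and $g\circ f_0$ is computable with finitely many mind changes by the base case, whence $g\circ f$ is $(n+1)$--computable with finitely many mind changes by definition. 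Finally, for $k\ge1$, write the $(k+1)$--computable-with-finitely-many-mind-changes $g$ as $g=g_0\circ g_1$ with $g_0$ computable with finitely many mind changes and $g_1$ $(k+1)$--computable, and note that $g_1$ is also $((k-1)+2)$--computable; part (1) then gives that $g_1\circ f$ is $(n+k+1)$--computable, so $g\circ f=g_0\circ(g_1\circ f)$ is $(n+k+1)$--computable with finitely many mind changes by definition.

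The only step that is not routine is the base case $n=k=0$ of part (2): in contrast to the corresponding step of part (1), it is not supplied by Theorem~\ref{thm:low}, and one has to appeal to the closure of the Weihrauch degree of $\C_\IN$ (equivalently $\lim_\Delta$) under composition. Apart from this, and from getting the index arithmetic right, the argument is a mechanical replay of the proof of Theorem~\ref{thm:weakly-computable-composition}, so I anticipate no further difficulty.
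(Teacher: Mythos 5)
Your proposal is correct and coincides with the paper's own argument: the paper proves this theorem exactly by replaying the proof of Theorem~\ref{thm:weakly-computable-composition} with Corollary~\ref{cor:finite-low} in place of Corollary~\ref{cor:weak-low}, handling the base case $n=k=0$ of (2) via Corollary~7.6 of \cite{BBP} (closure of the degree of $\C_\IN\equivW\lim_\Delta$ under composition), just as you do. The index arithmetic and the use of Theorem~\ref{thm:derivatives}, Theorem~\ref{thm:low} and Fact~\ref{fact:composition-limit} in your case distinctions all check out.
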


This theorem can be proved analogously to Theorem~\ref{thm:weakly-computable-composition} using Corollary~\ref{cor:finite-low}
instead of Corollary~\ref{cor:weak-low} and using the fact that (2) is well-known in the case $n=k=0$ by Corollary~7.6 in \cite{BBP}.
In particular, the functions that are $(n+1)$--computable with finitely many mind changes are closed under composition with 
functions that are computable with finitely many mind changes from right and left.

We note that the class of functions bounded by $\C_\IR$ is a common upper class of weakly computable
functions and functions that are computable with finitely many mind changes by Example~\ref{ex:products}.
This class is even smaller than the class of low functions and it is also closed under composition (by Theorem~8.7 and Corollary~7.6 in \cite{BBP}).
We do not discuss generalizations of this class to higher levels here, although some straightforward
conclusions follow from our results.

\section{The Derivative of Closed Choice}

In this section we want to characterize the derivative $\C_X'$ of closed choice $\C_X$.
We recall that a point $x\in X$ in a topological space $X$ is called 
a {\em cluster point} of a sequence $(x_n)$ in $X$, if each neighborhood
$U$ of $x$ contains $x_n$ for infinitely many $n\in\IN$, that is
$(\forall k)(\exists n\geq k)\;x_n\in U$. 
We mention that for metric spaces $X$ a point $x$ is a cluster point of a sequence $(x_n)$ in $X$
if and only if there is a subsequence of $(x_n)$ that converges to $x$. This holds more generally
for the larger class of Fr\'echet spaces (see Exercise~1.6.D in \cite{Eng89}).
We now study the cluster point map.

\begin{definition}[Cluster point problem]
Let $X$ be a computable metric space. We define
\[\L_X:X^\IN\to\AA_-(X),(x_n)\mapsto\{x\in X:\mbox{$x$ is cluster point of $(x_n)$}\}.\]
We call $\CL_X:=\C_X\circ\L_X:\In X^\IN\mto X$ the {\em cluster point problem} of $X$.
\end{definition}

We note that we consider $\L_X$ as a total map and hence we allow $\L_X(x_n)=\emptyset$.
However, we obtain $\dom(\CL_X)=\{(x_n):\L_X(x_n)\not=\emptyset\}$.
It is easy to see that the set of cluster points of a given sequence is always closed, hence
the map $\L_X$ is actually well-defined. 
We immediately get an upper bound for $\L_X$ by showing that it is limit computable. 

\begin{proposition} 
\label{prop:limit-cluster}
$\L_X\leqSW\lim$ for any computable metric space $X$.
\end{proposition}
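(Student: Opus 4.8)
I want to show that the cluster point map $\L_X \colon X^\IN \to \AA_-(X)$, sending $(x_n)$ to its (closed) set of cluster points, is strongly Weihrauch reducible to $\lim$. Since $\lim$ is its own unique realizer up to extension, this amounts to exhibiting computable $H, K \colon \In \Baire \to \Baire$ with $H \circ \lim \circ K \vdash \L_X$; because the target is represented by $\psi_-$, the job of $H$ is to turn the output of $\lim$ into a $\psi_-$-name, i.e.\ an enumeration of rational open balls $B_{\langle n,k\rangle}$ exhausting the complement of the cluster point set. So the crux is: from a limit (rather than from the data directly) produce, uniformly, an enumeration of all rational balls $B$ such that $B$ contains only finitely many of the $x_n$.

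First I would set up the preprocessing $K$. Given a $\delta_X^\IN$-name $p$ of the input sequence $(x_n)$, I can compute, for each pair of indices $\langle m, j\rangle$ coding "ball $B_m$ and stage $j$", a bit telling whether at least $j$ many of the first (say) $j$ points $x_0,\dots,x_{j-1}$ lie in $B_m$ — more carefully, I want to track, for each ball $B_m$, whether "infinitely many $x_n$ lie in $B_m$". The natural move: $K$ outputs the sequence whose $i$-th entry is the finite object recording, for all $m \le i$, the least stage by which we have seen $\ge \ell$ points of the sequence inside (a slightly shrunk version of) $B_m$, for all $\ell \le i$. Membership $x_n \in B_m$ is only semidecidable from a Cauchy name, so here I would use the standard trick of working with formal inclusions of rational balls: I enumerate, for each $n$, the basic balls that are \emph{guaranteed} to contain $x_n$ (those whose rational center/radius data, compared against the fast-converging name of $x_n$, force containment). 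Then the question "$B_m$ contains infinitely many $x_n$" becomes a $\PO{2}$ condition — "for all $k$ there is $n \ge k$ with a certified inclusion $x_n \in B_m$" — which, after the limit, $H$ can decide.

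Next, $H$ applied to $q := \lim K(p)$: the sequence $K(p)$ is arranged so that its limit $q$ is (an encoding of) the characteristic function of $\{m : B_m \text{ meets the sequence infinitely often}\}$, together with enough quantitative data (e.g.\ for each $m$ in the complement, an index $k_m$ such that no $x_n$ with $n \ge k_m$ has a certified inclusion in $B_m$). From $q$, the function $H$ enumerates exactly those balls $B_m$ with $x \in B_m \Rightarrow x$ is not a cluster point — namely the balls meeting the sequence only finitely often — and this enumeration is precisely a $\psi_-$-name for $\L_X(x_n)$. The only mild subtlety is the direction "$B_m$ meets the sequence finitely often $\Rightarrow$ $B_m$ contains no cluster point": this needs $B_m$ to actually be an \emph{open} neighborhood of each of its points, which holds since the $B_m$ are open balls; and conversely, if $x$ is not a cluster point, some open ball around $x$ meets the sequence finitely often, and by density of $\alpha$ and the fact that certified inclusions eventually catch every genuine membership, one of the \emph{rational} balls $B_m$ around $x$ already meets the sequence finitely often in the certified sense. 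These two observations show that the complement of $\L_X(x_n)$ is exactly the union of the enumerated $B_m$, so $H(q)$ is a correct $\psi_-$-name.

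**Main obstacle.** The delicate point is making the intermediate data genuinely a $\lim$ (i.e.\ a pointwise limit of a computable sequence in $\Baire$) rather than something that merely requires $\lim$'s power ad hoc: I must be sure that the bit "$B_m$ meets the sequence infinitely often" stabilizes, and that the auxiliary witnesses $k_m$ for balls in the complement also stabilize. Both are arranged by the usual monotone bookkeeping — once the count of certified inclusions in $B_m$ provably exceeds any bound, the bit is $1$ forever; otherwise it is $0$ from the stage where the last certified inclusion occurs — so each coordinate of $K(p)$ changes only finitely often and the limit exists. Once this is in place the reduction is strong (the postprocessor never needs the original input), giving $\L_X \leqSW \lim$ as claimed. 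I would also remark that this already shows $\L_X$ is single-valued and limit computable, which is all that is needed for the subsequent development of $\CL_X = \C_X \circ \L_X$.
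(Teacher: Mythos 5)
Your overall strategy -- preprocess the sequence into a limit that stabilizes, ball by ball, information about how often the sequence visits each rational ball, and then let the postprocessor enumerate the balls visited only finitely often as a $\psi_-$--name of $X\setminus\L_X(x_n)$ -- is exactly the paper's. But the central claim about what $\lim K(p)$ can be is wrong. You assert that the limit $q$ encodes the characteristic function of $\{m: B_m \mbox{ meets the sequence infinitely often}\}$, together with, for each $m$ in the complement, a witness $k_m$ beyond which no certified inclusion occurs, and that $H$ can then ``decide'' the $\Pi^0_2$ condition. Relative to the input this set of balls is in general properly $\Pi^0_2$ (for instance on $\IR$ one can computably build $(x_n)$ so that the ball $B(m,1/2)$ is visited infinitely often exactly when $m$ lies in a prescribed $\Pi^0_2$--complete set), whereas every coordinate of a limit $\lim K(p)$ with computable $K$ is $\Delta^0_2$ in the input; so no such $q$ exists. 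Accordingly, your ``monotone bookkeeping'' for that bit -- ``once the count of certified inclusions provably exceeds any bound, the bit is $1$ forever'' -- cannot be implemented: at no finite stage does the count provably exceed every bound, and the natural update rule for such a bit does not stabilize. The witnesses $k_m$ are impossible for the same reason.

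The fix is to demand less of the limit, and it is essentially contained in your own description of $K$: put into the limit only the $\Sigma^0_1$--stabilizing facts, one coordinate per pair. The paper records the bit for ``$(\exists n\geq k)\;x_n\in B_i$'' at coordinate $\langle i,k\rangle$ (your ``least stage by which $\geq\ell$ certified points lie in $B_m$'' is an equivalent bookkeeping); each such coordinate changes at most once, so the limit exists. From the limit, ``only finitely many terms lie in $B_i$'' is $\Sigma^0_1$ (there exists $k$ whose bit witnesses an empty tail, resp.\ there exists $\ell$ whose coordinate is still the marker), and that is all that is needed: $\psi_-$ requires only an \emph{enumeration} of balls exhausting the complement, not a decision, and the union of the balls met only finitely often is exactly $X\setminus\L_X(x_n)$. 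One further caution: do not count inclusions in a ``slightly shrunk'' copy of $B_m$ while enumerating $B_m$ itself -- then $B_m$ could still contain a cluster point in the annulus and the name would be incorrect. No shrinking is needed, since membership of $x_n$ in an open rational ball is already semidecidable from a Cauchy name, and every genuine membership is eventually certified.
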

\begin{proof}
It is sufficient to show that $\L_X$ is limit computable.
We use a computable standard enumeration $(B_i)$ of the rational open 
balls of $X$. 
It follows from the definition of a cluster point that for all $x\in X$ the following holds:
\[x\not\in\L_X(x_n)\iff(\exists i)(x\in B_i\mbox{ and }(\exists k)(\forall n\geq k)\;x_n\not\in B_i).\]
Moreover, for each $i\in\IN$ the condition
\begin{eqnarray}
(\exists k)(\forall n\geq k)\;x_n\not\in B_i
\label{eq:limit-cluster}
\end{eqnarray}
implies $B_i\In X\setminus\L_X(x_n)$.
Altogether it is sufficient to generate as output a list of all $i$ that satisfy condition (\ref{eq:limit-cluster}),
since the union of the corresponding $B_i$ is equal to $X\setminus\L_X(x_n)$. 
There is clearly a limit machine that, given the sequence $(x_n)$ can
write the sequence $p\in\IN^\IN$ with
\[p\langle i,k\rangle:=\left\{\begin{array}{ll} 
   0 & \mbox{if $(\exists n\geq k)\;x_n\in B_i$}\\
   1 & \mbox{otherwise}
\end{array}\right.\]
on its output tape in the limit. This is because the property $(\exists n\geq k)\;x_n\in B_i$ is c.e.\ open
in all parameters. This limit machine can then be composed with an ordinary machine
that enumerates all $i$ on its output tape that satisfy the condition $ (\exists k)\;p\langle i,k\rangle=1$,
which is equivalent to condition (\ref{eq:limit-cluster}).
Hence, the produced output constitutes a $\psi_-$--name of $\L_X(x_n)$.
\end{proof}

Since $\CL_X=\C_X\circ\L_X$, this proposition immediately implies $\CL_X\leqSW\C_X'$
by Theorem~\ref{thm:derivatives}. We will show that the inverse reduction holds as well.
First we need a preliminary lemma about the existence of well-spaced
nets in computable metric spaces.

\begin{lemma}
\label{lem:nets}
For every computable metric space $(X,d,\alpha)$ there exists a
computable function $h:\In\IN\times\IN\to\IN$ such that:
\begin{enumerate}
  \item $(\forall x \in X)(\forall s)(\exists n)\;
      d(\alpha(h(s,n)), x) < 2^{-s}$;
  \item for all $s$, $n$ and $m<n$, if $(s,n) \in \dom(h)$ then
      $(s,m) \in \dom(h)$ and $d(\alpha(h(s,n)), \alpha(h(s,m)))
      > 2^{-s-1}$.
\end{enumerate}
\end{lemma}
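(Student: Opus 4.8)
The plan is to build the function $h$ greedily, maintaining for each precision level $s$ a finite list of centers indexed from the dense sequence $\alpha$, so that the listed centers are pairwise more than $2^{-s-1}$ apart and eventually $2^{-s}$-cover all of $X$. Since we only require $h$ to be a \emph{partial} computable function (note the ``$(s,n)\in\dom(h)$'' hypotheses in (2)), we have room to enumerate candidates and accept each one only if it can be verified to be far enough from the previously accepted centers; verification is possible because $d\circ(\alpha\times\alpha)$ is a computable sequence of reals, so the strict inequality $d(\alpha(k),\alpha(k'))>2^{-s-1}$ is semi-decidable.

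Concretely, I would fix $s$ and run the following enumeration to define $h(s,0),h(s,1),\dots$ in order. Initialize the accepted list to empty. Go through $k=0,1,2,\dots$; for each $k$, start a parallel search for a pair of rationals $(q_j)_{j\in F}$ with $F$ the set of already-accepted indices and $q_j<d(\alpha(k),\alpha(h(s,j)))$ together with $q_j>2^{-s-1}$ for all $j\in F$ (when $F=\emptyset$ this is vacuously satisfied, so $\alpha(0)$ is accepted immediately as $h(s,0)$). Whenever such a witnessing tuple is found for the current $k$, accept $\alpha(k)$: set $h(s,n):=k$ where $n$ is the next unused index, and move on. If no such tuple exists, the search for that $k$ simply never halts and we never accept $\alpha(k)$. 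This makes $h$ computable with the stated partial domain, and condition (2) — that smaller indices are in the domain and that distinct accepted centers are $>2^{-s-1}$ apart — holds by construction (the ordering of indices is respected, and acceptance certifies the distance bound against all earlier accepted centers, hence in particular against all pairs).

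The one nontrivial point is condition (1): for every $x\in X$ and every $s$ there is some accepted center within $2^{-s}$ of $x$. Here the obstacle is that the greedy enumeration over $k$ might get ``stuck'' waiting forever on some $k$ whose distance to an earlier accepted center is exactly $2^{-s-1}$ (a non-strict equality is not semi-decidable), and thereby fail to process later, useful indices $k'$. The standard fix is to dovetail: instead of processing $k$ strictly in order and blocking, run the acceptance-searches for all $k$ in parallel and, whenever the search for some $k$ succeeds, accept $\alpha(k)$ and assign it the next index. With dovetailing, for any $x$ pick $k$ with $d(\alpha(k),x)<2^{-s-2}$ (possible by density). Either $\alpha(k)$ eventually gets accepted, in which case $h(s,n)=k$ for some $n$ and we are done, or it is rejected, which can only happen because some accepted center $\alpha(h(s,j))$ satisfies $d(\alpha(k),\alpha(h(s,j)))\le 2^{-s-1}$; then by the triangle inequality $d(x,\alpha(h(s,j)))\le 2^{-s-1}+2^{-s-2}<2^{-s}$, so that accepted center already witnesses (1). (One checks that ``rejected'' really does produce such a close accepted center: if no accepted center were within $2^{-s-1}$ of $\alpha(k)$, then at the stage after all the finitely many relevant accepted centers had appeared, the witnessing rational tuple for $k$ would be found, contradicting rejection.) Finally one verifies $h$ is genuinely computable as a partial function on $\IN\times\IN$ by interleaving the constructions for all values of $s$; this is routine bookkeeping.
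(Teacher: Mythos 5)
Your construction has a genuine gap at exactly the point you flag as "the one nontrivial point." In the dovetailed version the acceptance test for a candidate $k$ must be passed against the set of \emph{already accepted} centers, and for a space like $\IR$ (fixed $s$) this set grows without bound during the run. So the verification for $k$ is a moving target: even if every center ever accepted lies at distance strictly greater than $2^{-s-1}$ from $\alpha(k)$, the search for $k$ may never terminate, because each individual certification takes an unbounded (though finite) amount of time and new centers keep arriving before the current list is exhausted. Your parenthetical justification --- "at the stage after all the finitely many relevant accepted centers had appeared, the witnessing rational tuple for $k$ would be found" --- implicitly assumes the set of centers relevant to $k$ stabilizes, which is false when infinitely many centers are accepted at level $s$ (the typical case). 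Hence the dichotomy "either $\alpha(k)$ is accepted, or some accepted center is within $2^{-s-1}$ of it" is not established, and condition (1) is left unproved. The alternative reading, in which each candidate is only checked against a frozen snapshot of the accepted set at the moment its search starts, fails in the other direction: two candidates within $2^{-s-1}$ of each other, both far from all previously accepted centers, can then be accepted concurrently, violating (2).

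The paper avoids this race by giving the rejection side its own semi-decidable test with overlapping thresholds: having chosen $h(s,0),\dots,h(s,n)$, a candidate $k$ is rejected as soon as one verifies $(\exists m\leq n)\,d(\alpha(h(s,m)),\alpha(k))<\frac34 2^{-s}$, and accepted as soon as one verifies $(\forall m\leq n)\,d(\alpha(h(s,m)),\alpha(k))>2^{-s-1}$. Since $\frac34 2^{-s}>2^{-s-1}$, for every $k$ at least one of the two c.e.\ events must occur, so the purely sequential greedy construction never blocks and needs no dovetailing; moreover a rejected $k$ comes with the quantitative certificate that it lies within $\frac34 2^{-s}$ of an existing center, which combined with a $\frac14 2^{-s}$-approximation of $x$ yields the $2^{-s}$ covering radius in (1). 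If you want to salvage your argument, you should likewise introduce slack between an "accept" threshold and an explicit "reject" test rather than rely on non-termination of a single unbounded search.
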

\begin{proof}
The definition is by recursion on $n$. For every $s$ let $h(s,0)=0$.
Assuming we have defined $h(s,0), \dots, h(s,n)$, for every $k>h(s,n)$ we
check whether $k$ satisfies one of the following c.e.\ tests:
\begin{enumerate}[\quad(a)]
  \item $(\exists m \leq n)\; d(\alpha(h(s,m)), \alpha(k)) < \frac34
      2^{-s}$;
  \item $(\forall m \leq n)\; d(\alpha(h(s,m)), \alpha(k)) >
      2^{-s-1}$.
\end{enumerate}
Clearly for each $k$ at least one of the tests succeeds, and we wait
until one does. The least $k$ for which (b) succeeds before (a) does
is chosen as $h(s,n+1)$. If for all $k>h(s,n)$ test (a) succeeds before (b) does,
then $h(s,n+1)$ is undefined and so are all $h(s,m)$ with $m>n$.

(2) is immediate from the definition.

To check (1) fix $x \in X$ and $s$. There exists $k$ such that
$d(\alpha(k),x) < \frac14 2^{-s}$. If $k = h(s,n)$ for some $n$ we
are done, otherwise let $n$ be greatest such that $h(s,n) < k$ ($n$
exists because $h(s,0)$ is defined and $n \mapsto h(s,n)$ is strictly
increasing). Since we did not set $h(s,n+1)=k$, test (a) succeeded
with respect to $k$ and $n$. Hence there exists $m\leq n$ such that
$d(\alpha(h(s,m)), \alpha(k)) < \frac34 2^{-s}$. Then
$d(\alpha(h(s,m)), x) < 2^{-s}$.
\end{proof}

We can now construct the desired reduction.

\begin{theorem}[Derivative of Choice]
\label{thm:derivative-closed-choice}
$\C_X'\equivSW\CL_X$ for each computable metric space $X$. 
\end{theorem}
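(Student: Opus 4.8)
The plan is to prove the two inequalities $\CL_X\leqSW\C_X'$ and $\C_X'\leqSW\CL_X$ separately; the first is immediate from the results already available, and the second rests on a construction built around the nets of Lemma~\ref{lem:nets}. For $\CL_X\leqSW\C_X'$ there is essentially nothing to prove: by definition $\CL_X=\C_X\circ\L_X$, and $\L_X\leqSW\lim$ by Proposition~\ref{prop:limit-cluster}, so the implication ``(2)$\TO$(1)'' of Theorem~\ref{thm:derivatives} --- which needs no cylinder hypothesis and yields a \emph{strong} reduction --- gives $\CL_X\leqSW\C_X'$ at once.

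For the converse $\C_X'\leqSW\CL_X$ I would first fix the shape of the reduction. Since the output space of both $\C_X'$ and $\CL_X$ is $X$ with the Cauchy representation, it suffices to produce a computable map $K$ that transforms a $\psi_-'$--name of a non-empty closed set $A\In X$ --- i.e.\ a sequence $(p_j)_{j\in\IN}$ converging in Baire space to a $\psi_-$--name $p$ of $A$ --- into a $\delta_X^\IN$--name of a sequence $(x_n)_{n\in\IN}$ in $X$ with $\emptyset\not=\L_X(x_n)\In A$; then with $H:=\id$ any realizer of $\CL_X$ applied to $K$'s output returns a cluster point of $(x_n)$, which lies in $A=\C_X'(A)$, so $H\circ G\circ K\vdash\C_X'$ for every $G\vdash\CL_X$. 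By the explicit description of $X\sm\L_X(x_n)$ from the proof of Proposition~\ref{prop:limit-cluster}, it is enough to build $(x_n)$ so that (i) it has at least one cluster point, and (ii) for every ball $B_{p(l)}$ occurring in the complement description of $A$ the sequence $(x_n)$ is eventually outside $B_{p(l)}$ --- for then $\bigcup_l B_{p(l)}\In X\sm\L_X(x_n)$, i.e.\ $\L_X(x_n)\In A$.

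The heart of the argument is the construction of such an $(x_n)$, and this is where Lemma~\ref{lem:nets} enters. The idea is to use the input to approximate a point of $A$ by a fast Cauchy sequence of net points $z_0,z_1,z_2,\dots$ with $z_s=\alpha(h(s,m_s))$ of level $s$ and consecutive terms geometrically close (distance $\le 3\cdot 2^{-s}$, hence summable), and to let $(x_n)$ run through these net points. Say a net point $y=\alpha(h(s,m))$ is \emph{covered at stage $n$} if $y\in B_{p_n(l)}$ for some $l\le n$; since $(p_j)$ converges, $y$ is covered at all sufficiently large stages iff $y\in\bigcup_l B_{p(l)}=X\sm A$, so ``covered'' is a limit--decidable property. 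I would then run a priority/backtracking process that maintains a finite current path, at each step emits a suitable node of it, tries to extend the path by one more level by searching (over net points of bounded index) for an uncovered geometric refinement, and truncates and re--searches whenever a node of the path turns out to be covered. Clause~(1) of Lemma~\ref{lem:nets} guarantees that, for the true $A$, a net point within $2^{-s+1}$ of $A$ always has a geometric refinement at level $s+1$ again within $2^{-s}$ of $A$ (a point of $A$ close to the node has a nearby finer net point), and that a level--$0$ net point close to $A$ exists because $A\not=\emptyset$; so a full ``good'' path converging into $A$ is available in the limit. Clause~(2), the $2^{-s-1}$--separation of the level--$s$ net points, is what keeps the \emph{transient} net points --- those emitted while searching and later discarded --- from piling up: only finitely many distinct net points of each level are ever emitted. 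If the backtracking is organised so that it stabilises on an honest infinite good path, then its limit $x$ has every neighbourhood meeting $A$, hence $x\in A$, and $x$ is a cluster point of $(x_n)$, giving (i); and since every emitted net point either lies on the eventual path or is a transient of some level $s$ within $2^{-s+1}$ of $A$, a short computation as in Proposition~\ref{prop:limit-cluster} shows that any cluster point of $(x_n)$ has distance $0$ from $A$, i.e.\ lies in $A$, giving (ii).

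The main obstacle is exactly this construction. Because $X$ need not be (locally) compact, one cannot decide --- not even in the limit --- whether a given open ball meets $A$: the negative, complement--style description of $A$ only lets one confirm ``this net point is \emph{not} in $A$'', never ``there is a point of $A$ nearby''. This is why the construction is forced to work with the net points of Lemma~\ref{lem:nets} as finite proxies, and the delicate points are (a) running the backtracking so that it genuinely commits to an infinite good path --- so that $\L_X(x_n)$ is \emph{non-empty}, which is the subtlest requirement --- and (b) ensuring that the transient net points generated during backtracking, together with any coarse net points reused infinitely often while searching, do not create cluster points \emph{outside} $A$. Both hinge on using the two halves of Lemma~\ref{lem:nets} in tandem: the density clause to keep the path progressing toward $A$, and the separation clause to prevent unwanted accumulation.
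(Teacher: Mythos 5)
Your first direction is fine and matches the paper: $\CL_X=\C_X\circ\L_X$ with $\L_X\leqSW\lim$ (Proposition~\ref{prop:limit-cluster}) gives $\CL_X\leqSW\C_X'$ via Theorem~\ref{thm:derivatives}. The problem is the converse. Your construction hinges on the backtracking process ``stabilising on an honest infinite good path'' whose limit lies in $A$, and you correctly identify this as the subtlest requirement --- but it is in fact impossible in general, not merely delicate. If a computable (in the input) process stabilised level by level on net points $z_s$ with $d(z_s,z_{s+1})\leq 3\cdot2^{-s}$ and $\lim_s z_s=x\in A$, then each $z_s$ is a limit over time of an input-computable function, so $(z_s)$ and hence $x$ would be limit computable relative to the $\psi_-'$--name. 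But already for $X=\Cantor$ there is a \emph{computable} $\psi_-'$--name of a non-empty closed set $A$ with no limit computable member (the relativized Kleene construction used for Corollary~\ref{cor:BWT-counterexample}), so no computable $K$ can commit to such a path. A secondary weakness points the same way: your claim that ``covered at all sufficiently large stages iff $y\in X\sm A$'' fails in the non-trivial direction, since a net point of $A$ can be covered at every stage by balls sitting at positions $l$ close to $n$ that have not yet stabilised; so even the test by which you would recognise a ``good'' refinement is not available in the limit.

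The paper's proof sidesteps both problems by never trying to locate a single point of $A$. Given the converging sequence of names, it emits the net point $\alpha(h(s,n))$ as soon as the c.e.\ test $(\exists k\geq s)(\forall i\leq s)\;d(c^k_i,\alpha(h(s,n)))>r^k_i-2^{-s}$ succeeds, each pair $(s,n)$ contributing at most once. Then \emph{every} $x\in A$ is a cluster point of the emitted sequence: for each $s$ pick $n$ with $d(\alpha(h(s,n)),x)<2^{-s}$ and a stage $k\geq s$ at which the first $s+1$ ball positions have stabilised, and the triangle inequality shows the test succeeds. Thus non-emptiness of the cluster point set is automatic from $A\neq\emptyset$, with no stabilisation or commitment needed. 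Conversely, for $x\notin A$, say $x\in B(c_i,r_i)$, the margin $2^{-s}$ in the test keeps all emitted points of level $s\geq s_0$ at distance $\geq\varepsilon$ from $x$ once the $i$-th ball has stabilised, and the $2^{-s_1-1}$--separation of the level-$s_1$ net points (used only here) rules out $x$ being a cluster point of the finitely many remaining levels. If you want to salvage your write-up, replace the path/backtracking mechanism by this ``escape test'' and keep your framing of the reduction, which is otherwise correct.
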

\begin{proof}
As mentioned before, $\CL_X\leqSW\C_X'$ follows from Proposition~\ref{prop:limit-cluster} 
together with Theorem~\ref{thm:derivatives}.

Given the computable metric space $(X,d,\alpha)$, to prove $\C_X'\leqSW\CL_X$ fix $h$ as in Lemma~\ref{lem:nets}.

Let $(p_n)$ be a sequence in $\Bai$ such that $\lim_{n\to\infty} p_n= p$ and $\psi_- (p) = A \neq \emptyset$. 
We recall that $\psi_-$ is a total representation and hence $p_n\in\dom(\psi_-)$ for all $n$.
We want to find some element in $A$ by using $\CL_X$. We introduce the following notation.
For every $k$ and $i$ with $p_k (i) = \langle j,l \rangle$, we let
$c^k_i = \alpha(j)$ and $r^k_i =\overline{l}$, so that $B(c^k_i, r^k_i)$ is 
the $i$--th ball enumerated in $X \setminus \psi_-(p_k)$ by $p_k$.
Similarly, let $B(c_i, r_i)$ be the $i$--th ball enumerated in $X
\setminus A$ by $p$.

We define a sequence $H(p_n) \in X^\bbN$ by checking whether for each
$s$ and $n$ the following c.e.\ test holds:
\[
(\exists k \geq s)( \forall i \leq s)\; d(c^k_i, \alpha(h(s,n))) > r^k_i - 2^{-s}.
\]
Whenever we realize that some pair $(s,n)$ passes the test, we put
$\alpha(h(s,n))$ in the sequence we are defining. 
Notice that each $(s,n)$ is responsible for enumerating $\alpha(h(s,n))$ in $H(p_n)$ at most once,
although some point might occur repeatedly in $H(p_n)$ (because $h$ is in general not one-to-one).
The intuitive idea is that we want to approximate elements in $A$ by points $\alpha(h(s,n))$ that tend
to ``escape'' from the balls enumerated in $X\setminus A$ by $p$ for $s\to\infty$.
Our next claim
implies that $H(p_n)$ is an infinite sequence belonging to the domain
of $\CL_X$.

We claim that every $x \in A \neq \emptyset$ is a cluster point of
$H(p_n)$. Fix such an $x$, and recall that $d(c_i, x) \geq r_i$ for
every $i$. For every $s$ there exists $n$ such that
$d(\alpha(h(s,n)), x) < 2^{-s}$. We now show that $\alpha(h(s,n))$
occurs in $H(p_n)$. Since $s$ is arbitrary, this shows that $x \in
\CL_X H(p_n)$. Let $k \geq s$ be such that $c^k_i = c_i$ and $r^k_i =
r_i$ for all $i \leq s$. If $i \leq s$ we have that
\begin{eqnarray*}
d(c^k_i, \alpha(h(s,n))) & \geq& d(c_i, x) - d(x,\alpha(h(s,n)))\\
& > & r_i - 2^{-s} = r^k_i - 2^{-s}.
\end{eqnarray*}
Thus $\alpha(h(s,n))$ occurs in $H(p_n)$.

To be sure that applying $\CL_X$ to $H(p_n)$ we obtain an element of
$A$ we need to check that no $x \in X \setminus A$ is a cluster point
of the sequence. When $x \notin A$ we have $x \in B(c_i, r_i)$ for
some $i$. There exists $m \geq i$ such that $c^k_i = c_i$ and $r^k_i
= r_i$ for every $k \geq m$. Let $s_0 \geq m$ be such that $d(x,c_i)
< r_i - 2^{-s_0}$ and set $\varepsilon = r_i - 2^{-s_0} - d(c_i, x) >0$.
If $s \geq s_0$ and $\alpha(h(s,n))$ appears in $H(p_n)$ because it
satisfied the test with witness $k \geq s$ we have
\begin{eqnarray*}
    d(\alpha(h(s,n)), x) & \geq & d(\alpha(h(s,n)), c_i) - d(c_i, x)\\
    & =& d(\alpha(h(s,n)), c^k_i) - d(c^k_i, x)\\
    & >& r^k_i - 2^{-s} - d(c^k_i, x) \geq \varepsilon.
\end{eqnarray*}
Therefore, if $x \in \CL_X (H(p_n))$ then it is a cluster point of
the elements of $H(p_n)$ of the form $\alpha(h(s,n))$ with $s < s_0$.
This means that there exists a single $s_1 <s_0$ such that $x$ is a
cluster point of the elements of $H(p_n)$ of the form
$\alpha(h(s_1,n))$. 
Since each $(s_1,n)$ is responsible for enumerating $\alpha(h(s_1,n))$ in $H(p_n)$ at most once 
and $d(\alpha(h(s_1,n)), \alpha(h(s_1,m))) > 2^{-s_1-1}$
when $n \neq m$, this is clearly impossible.
\end{proof}

The proof, together with Proposition~\ref{prop:limit-cluster}, actually yields the following stronger
statement as well (we emphasize that there is a derivative $\psi_-'$ on the output side).

\begin{corollary}
\label{cor:derivative-closed-choice}
Let $(X,\delta_X)$ be a computable metric space. Then the map
\[\L_X:(X^\IN,\delta_X^\IN)\to(\AA_-(X),\psi_-')\]
as well as its multi-valued partial inverse $\L_X^{-1}$ are computable.
\end{corollary}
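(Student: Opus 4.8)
The plan is to derive both assertions directly from the two preceding proofs, so that the statement is genuinely a corollary: the computability of $\L_X$ into $(\AA_-(X),\psi_-')$ comes out of the proof of Proposition~\ref{prop:limit-cluster}, while the computability of $\L_X^{-1}$ is witnessed by the reduction built in the proof of Theorem~\ref{thm:derivative-closed-choice}. Recall that, by definition of the jump representation, $\psi_-'=\psi_-\circ\lim$.

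For the first map, the point to isolate is that a function into $(\AA_-(X),\psi_-')$ is computable whenever the same function into $(\AA_-(X),\psi_-)$ is limit computable. Indeed, by Fact~\ref{fact:galois} the map $\lim$ is transparent, so any realizer $R=K\circ\lim\circ H$ of $\L_X$ into $\psi_-$ with $H,K$ computable can be rewritten as $R=\lim\circ G$ with $G$ computable; then $\psi_-'\circ G=\psi_-\circ\lim\circ G=\psi_-\circ R=\L_X\circ\delta_X^\IN$, so $G$ realizes $\L_X:(X^\IN,\delta_X^\IN)\to(\AA_-(X),\psi_-')$. Proposition~\ref{prop:limit-cluster} — and explicitly the machine exhibited in its proof, namely the limit machine producing the auxiliary sequence $p$ followed by the plain machine enumerating a $\psi_-$-name of $\L_X(x_n)$ — provides exactly such a limit-computable realizer into $\psi_-$, which settles this half.

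For the inverse, I would take the computable map $(p_n)\mapsto H(p_n)$ constructed in the proof of Theorem~\ref{thm:derivative-closed-choice}. It accepts a $\psi_-'$-name of a closed set $A$ (a sequence $(p_n)$ in $\Baire$ converging to some $\psi_-$-name $p$ of $A$) and outputs a sequence $H(p_n)\in X^\IN$ whose terms are points $\alpha(h(s,n))$ indexed computably, hence a computable $\delta_X^\IN$-name once each such point is presented by its constant rapidly converging Cauchy sequence. The two claims verified in that proof — every $x\in A$ is a cluster point of $H(p_n)$, and no $x\in X\setminus A$ is — together say precisely that $\L_X(H(p_n))=A$, i.e. that $H(p_n)$ names an element of $\L_X^{-1}(A)$; thus the map realizes $\L_X^{-1}:(\AA_-(X),\psi_-')\to(X^\IN,\delta_X^\IN)$. (That proof is phrased for $A\neq\emptyset$, which is the only case relevant to $\dom(\C_X')$; the same estimates give $\L_X(H(p_n))=\emptyset$ when $A=\emptyset$, covering the degenerate members of $\range(\L_X)$, if any.) The only thing requiring care is that the construction of Theorem~\ref{thm:derivative-closed-choice} produces the cluster set equal to $A$ and not merely a sub- or superset, but this equality is exactly the content of the two claims there, so there is no new obstacle: the corollary just repackages the forward map of Proposition~\ref{prop:limit-cluster} with its explicit inverse from Theorem~\ref{thm:derivative-closed-choice}.
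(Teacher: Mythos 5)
Your proposal is correct and is essentially the paper's own argument: the corollary is obtained by reading the limit machine from Proposition~\ref{prop:limit-cluster} as a computable realizer with respect to $\psi_-'=\psi_-\circ\lim$ (via Fact~\ref{fact:galois}), and by observing that the map $(p_n)\mapsto H(p_n)$ constructed in the proof of Theorem~\ref{thm:derivative-closed-choice} realizes $\L_X^{-1}$, its correctness being exactly the two claims proved there. The only slight overreach is your parenthetical about $A=\emptyset$: for some non-compact spaces (e.g.\ an infinite set with the discrete metric, where a single rational ball covers $X$) the construction enumerates only finitely many points, so it does not literally realize $\L_X^{-1}$ at $\emptyset$ --- but this degenerate case is neither treated by the paper nor needed for any of its applications, which only invoke $\L_X^{-1}$ on non-empty sets.
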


This formulation has the benefit that it can be applied to certain restrictions of the
cluster point problem and it immediately yields characterizations of their derivatives
as well. We formulate an interesting characterization that can be derived from this
result. We call a closed set $A\In X$ {\em co-c.e.\ closed in the limit}, if $A=\psi_-'(p)$ 
for some computable $p$.

\begin{corollary}
\label{cor:co-ce-closed-limit}
Let $X$ be a computable metric space. Then a set $A\In X$ is co-c.e.\ closed
in the limit, if and only if it is the set of cluster points of some computable sequence $(x_n)$ in 
(the dense subset of) $X$.
\end{corollary}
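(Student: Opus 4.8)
The plan is to obtain both implications from Corollary~\ref{cor:derivative-closed-choice}, after unwinding the two defining conditions: by definition $A\In X$ is co-c.e.\ closed in the limit exactly when it has a computable $\psi_-'$--name, and a sequence $(x_n)=(\alpha(k_n))$ lies computably in the dense subset exactly when it has a computable $\delta_X^\IN$--name (equivalently, $(k_n)$ is computable).

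For the ``if'' direction I would take a computable sequence $(x_n)$ in the dense subset with $\L_X(x_n)=A$ and a computable $\delta_X^\IN$--name $q$ of it. Feeding $q$ into the computable realizer of the map $\L_X:(X^\IN,\delta_X^\IN)\to(\AA_-(X),\psi_-')$ provided by Corollary~\ref{cor:derivative-closed-choice} yields a computable $\psi_-'$--name of $\L_X(x_n)=A$, so $A$ is co-c.e.\ closed in the limit. (This half could also be read off directly from Proposition~\ref{prop:limit-cluster}.)

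For the ``only if'' direction, let $p$ be a computable $\psi_-'$--name of $A$, and assume $A\neq\emptyset$. The one preliminary step is to observe $A\in\range(\L_X)=\dom(\L_X^{-1})$: since $X$ is separable, so is $A$; picking a countable dense subset $\{a_i:i\in\IN\}$ of $A$, the sequence enumerating $a_0,a_0,a_1,a_0,a_1,a_2,\dots$, in which each $a_i$ occurs infinitely often, has set of cluster points $\overline{\{a_i:i\in\IN\}}=A$. Then I would apply the computable realizer of $\L_X^{-1}$ from Corollary~\ref{cor:derivative-closed-choice} to $p$; this is (up to the notation there) precisely the map $p\mapsto H(p_n)$ from the proof of Theorem~\ref{thm:derivative-closed-choice}, so its output is a computable $\delta_X^\IN$--name of a sequence of points $\alpha(h(s,n))$ of the dense subset whose cluster point set is $A$. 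This is exactly a computable sequence in the dense subset with cluster point set $A$.

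The main obstacle is the degenerate case $A=\emptyset$. The empty set is always co-c.e.\ closed in the limit (take $p=\langle q,q,q,\dots\rangle$ where $q$ enumerates all rational open balls, so that $\psi_-'(p)=\psi_-(q)=\emptyset$), yet $\emptyset$ is the cluster point set of a computable sequence only when $X$ carries a computable sequence without cluster points; when $X$ is compact this fails. The statement is therefore to be understood for nonempty $A$ (equivalently: for compact $X$ the only exception to the literal equivalence is $A=\emptyset$), and with that understanding the two halves above complete the proof.
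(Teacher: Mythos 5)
Your proof is correct and follows essentially the same route as the paper: the corollary is just the unwinding of Corollary~\ref{cor:derivative-closed-choice} (computability of $\L_X$ and of its partial inverse with $\psi_-'$ on the closed-set side), with the dense-subset refinement read off from the construction $p\mapsto H(p_n)$ in the proof of Theorem~\ref{thm:derivative-closed-choice}, which is exactly what the paper invokes. Your caveat about $A=\emptyset$ is a legitimate observation about an edge case the paper leaves implicit (its construction likewise assumes $A\neq\emptyset$), and your verification that every nonempty closed set lies in $\range(\L_X)$ is a harmless extra step needed only if one uses $\L_X^{-1}$ as a black box rather than the explicit construction.
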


The text in the parenthesis can be added (which can be deduced from the proof of Theorem~\ref{thm:derivative-closed-choice})
or omitted. The corollary generalizes a result of Le Roux and Ziegler (see Proposition~3.9 in \cite{LZ08a}).

Now we continue to study special cases of the cluster point problem.
We recall that by $\UCL_X$ we denote the cluster point problem restricted to sequences with a unique
cluster point. Then $\UCL_X=\UC_X\circ\L_X$, where $\UC_X$ denotes closed choice
restricted to singletons. 
Again Proposition~\ref{prop:limit-cluster} (or the statement about $\L_X$ in Corollary~\ref{cor:derivative-closed-choice})
together with Theorem~\ref{thm:derivatives} show that $\UCL_X\leqSW\UC_X'$. 
The inverse direction immediately follows from the statement on the inverse $\L_X^{-1}$
in Corollary~\ref{cor:derivative-closed-choice}. 
We obtain the following corollary.

\begin{corollary}[Derivative of unique closed choice]
\label{cor:derivative-unique-closed-choice}
$\lim_X\leqSW\UC_X'\equivSW\UCL_X$ for each computable metric spaces $X$.
\end{corollary}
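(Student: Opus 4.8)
The plan is to deduce the statement from the material already assembled for closed choice, essentially by restricting Theorem~\ref{thm:derivative-closed-choice} to singletons and adding one trivial observation. Concretely I would establish three reductions: $\UCL_X\leqSW\UC_X'$, $\UC_X'\leqSW\UCL_X$ (together giving $\UC_X'\equivSW\UCL_X$), and $\lim_X\leqSW\UCL_X$. The equivalence $\UC_X'\equivSW\UCL_X$ is the exact analogue of $\C_X'\equivSW\CL_X$, and I would prove it along the same lines, but invoking the refined Corollary~\ref{cor:derivative-closed-choice} rather than re-running the construction of Theorem~\ref{thm:derivative-closed-choice}.

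For $\UCL_X\leqSW\UC_X'$ I would write $\UCL_X=\UC_X\circ\L_X$ and note that $\L_X\leqSW\lim$ by Proposition~\ref{prop:limit-cluster}; since $\UCL_X$ is thus a composition of a function $\leqSW\UC_X$ with a function $\leqSW\lim$, Theorem~\ref{thm:derivatives} — in its form for not necessarily cylinder $g$, with $\leqSW$ in place of $\leqW$ and $\leqSW$ (or $\sqsupseteq$) in place of $=$ — yields $\UCL_X\leqSW\UC_X'$; equivalently, this is immediate from the computability of $\L_X:(X^\IN,\delta_X^\IN)\to(\AA_-(X),\psi_-')$ asserted in Corollary~\ref{cor:derivative-closed-choice}, by precomposing a realizer of $\UC_X'$ with a realizer of $\L_X$. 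For the reverse reduction $\UC_X'\leqSW\UCL_X$ I would use the other half of Corollary~\ref{cor:derivative-closed-choice}, namely that the multivalued partial inverse $\L_X^{-1}:(\AA_-(X),\psi_-')\mto(X^\IN,\delta_X^\IN)$ is computable. An input for $\UC_X'$ is a $\psi_-'$--name of a nonempty closed singleton $A=\{x\}$ (which lies in $\range(\L_X)$, e.g.\ via the constant sequence, hence in $\dom(\L_X^{-1})$); applying a realizer of $\L_X^{-1}$ produces a $\delta_X^\IN$--name of some sequence $(x_n)$ in $X$ with $\L_X(x_n)=A$, i.e.\ $(x_n)$ has unique cluster point $x$. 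Hence $(x_n)\in\dom(\UCL_X)$ and $\UCL_X(x_n)=\{x\}$, so applying $\UCL_X$ returns the unique element of $A$, which is exactly what $\UC_X'$ must output.

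It remains to see $\lim_X\leqSW\UCL_X$, hence $\lim_X\leqSW\UC_X'$. A sequence converging to $x$ has $x$ as its unique cluster point, so $\lim_X$ is precisely the restriction of $\UCL_X$ to the convergent sequences, with respect to the same representation $\delta_X^\IN$ on the input side; the identity reduction therefore witnesses $\lim_X\leqSW\UCL_X$. I do not expect a genuine obstacle here: all the substantial work sits in Corollary~\ref{cor:derivative-closed-choice} and Proposition~\ref{prop:limit-cluster}, and the only point deserving an explicit line is that a singleton cluster-point set is a legitimate input for $\UCL_X$, which is immediate from the definition of the unique cluster point problem.
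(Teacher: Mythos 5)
Your proposal is correct and follows essentially the same route as the paper: the paper also writes $\UCL_X=\UC_X\circ\L_X$ and gets $\UCL_X\leqSW\UC_X'$ from Proposition~\ref{prop:limit-cluster} (or the $\L_X$ part of Corollary~\ref{cor:derivative-closed-choice}) together with Theorem~\ref{thm:derivatives}, obtains the converse from the computability of $\L_X^{-1}$ in Corollary~\ref{cor:derivative-closed-choice}, and derives $\lim_X\leqSW\UCL_X$ from the fact that a convergent sequence has its limit as its unique cluster point. No gaps.
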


Here the first reduction holds since a converging sequence in a metric space has its limit 
as its unique cluster point. This result hence also provides a lower bound for the 
(unique) cluster point problem. An upper bound for the cluster point problem can be
derived for many spaces from the following result. 
We recall that a computable metric space $X$ is called a {\em computable $K_\sigma$--space},
if there exists a computable sequence $(K_i)$ of non-empty computably compact sets $K_i\In X$
such that $X=\bigcup_{i=0}^\infty K_i$ (see the discussion of computable compactness in Section~\ref{sec:compact-choice}
for further definitions).
It was proved in Proposition~4.8
and Corollary~4.9 of \cite{BBP} that $\C_X\leqW\C_\IR$ for all computable $K_\sigma$-spaces.
Since $\C_\IR$ is a cylinder, this result is also true for strong reducibility.
We combine this result with the Low Basis Theorem as stated in Fact~\ref{fact:low-basis}.
We recall that $\Low=J^{-1}\circ\lim$ and $\Low'\equivSW J^{-1}\circ\lim'$.

\begin{corollary}[Cluster point problem for $K_\sigma$--spaces]
\label{cor:cluster-low}
$\CL_X\leqSW\CL_\IR\leqSW\Low'$ for all computable $K_\sigma$--spaces $X$.
\end{corollary}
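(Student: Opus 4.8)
The plan is to chain together four facts already available in the excerpt: the characterization $\CL_X\equivSW\C_X'$ (Theorem~\ref{thm:derivative-closed-choice}), the behaviour of closed choice on computable $K_\sigma$-spaces, monotonicity of the derivative under strong Weihrauch reducibility (Proposition~\ref{prop:monotone-derivative}), and the Uniform Low Basis Theorem (Fact~\ref{fact:low-basis}).

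First I would establish $\CL_X\leqSW\CL_\IR$. By Proposition~4.8 and Corollary~4.9 of \cite{BBP} we have $\C_X\leqW\C_\IR$ for every computable $K_\sigma$-space $X$; since $\C_\IR$ is a cylinder (Fact~\ref{fact:closed-choice}), a reduction to $\C_\IR$ strengthens automatically to $\leqSW$, so $\C_X\leqSW\C_\IR$. Proposition~\ref{prop:monotone-derivative}(2) then gives $\C_X'\leqSW\C_\IR'$, and Theorem~\ref{thm:derivative-closed-choice} rewrites both sides as cluster point problems, yielding $\CL_X\leqSW\CL_\IR$.

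Second, for $\CL_\IR\leqSW\Low'$ I would start from $\C_\IR\leqSW\Low$ (Fact~\ref{fact:low-basis}), apply Proposition~\ref{prop:monotone-derivative}(2) to obtain $\C_\IR'\leqSW\Low'$, and use Theorem~\ref{thm:derivative-closed-choice} once more to read the left-hand side as $\CL_\IR$. Composing the two reductions — and, if the unfolded form is wanted, recalling $\Low'\equivSW J^{-1}\circ\lim'$ from Example~\ref{ex:derivatives}(8) — completes the proof.

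I expect no real obstacle here: it is a short composition of already-established results. The only step that needs a word of justification is the passage from $\C_X\leqW\C_\IR$ to $\C_X\leqSW\C_\IR$, which rests on $\C_\IR$ being a cylinder, together with the observation that Proposition~\ref{prop:monotone-derivative} supplies monotonicity of the derivative only with respect to $\leqSW$ — which is exactly why the strong reduction must be secured before differentiating.
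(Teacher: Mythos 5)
Your argument is correct and coincides with the paper's own proof: differentiate $\C_X\leqSW\C_\IR\leqSW\Low$ (the first reduction secured via the cylinder property of $\C_\IR$, the second being Fact~\ref{fact:low-basis}) using Proposition~\ref{prop:monotone-derivative}, and translate via Theorem~\ref{thm:derivative-closed-choice}. Your closing remark about why the strong reduction must be obtained before taking derivatives is exactly the relevant subtlety.
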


Here $\CL_X\leqSW\CL_\IR$ follows from $\C_X\leqSW\C_\IR$ by Theorem~\ref{thm:derivative-closed-choice}
and Proposition~\ref{prop:monotone-derivative}.

An immediate corollary of this result is the following. We say that a point $x\in X$
is {\em low relatively to the halting problem}, if it has a name $p\in\IN^\IN$ such that
$p'\leqT\emptyset''$, i.e.\ if it is $2$--low in the sense defined before.

\begin{corollary}
\label{cor:cluster-low-point}
Each computable sequence $(x_n)$ of real numbers that has a cluster point at all,
has a cluster point $x$ that is low relatively to the halting problem.
\end{corollary}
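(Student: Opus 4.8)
The plan is to obtain the statement as a direct consequence of the reduction $\CL_\IR\leqSW\Low'$ established in Corollary~\ref{cor:cluster-low}, by unwinding what this strong reduction says on computable inputs.

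First I would observe that a computable sequence $(x_n)$ of real numbers which has a cluster point is, via the Cauchy representation $\delta_\IR^\IN$ of $\IR^\IN$, a computable name $q\in\Baire$ of a point of $\dom(\CL_\IR)$: indeed $\dom(\CL_\IR)=\{(x_n):\L_\IR(x_n)\neq\emptyset\}$ is precisely the set of sequences possessing a cluster point. Then I would invoke $\CL_\IR\leqSW\Low'$ to fix computable $H,K:\In\Baire\to\Baire$ with $HGK\vdash\CL_\IR$ for every realizer $G$ of $\Low'$. By Lemma~\ref{lem:derivative-Baire} and the definition $\Low=J^{-1}\circ\lim$ we have $\Low'\equivSW\Low\circ\lim=J^{-1}\circ\lim^{\circ2}=\Low_{1,1}$, and $\Low_{1,1}$ is a realizer of $\Low'$; so $H\,\Low_{1,1}\,K\vdash\CL_\IR$.

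Next I would run this on the name $q$. Since $q$, and hence $K(q)$, is computable, Lemma~\ref{lem:low-points} shows that $p:=\Low_{1,1}K(q)$ is $2$--low$_1$, i.e.\ $p'\leqT\emptyset''$. As $H$ is computable we have $H(p)\leqT p$, so by monotonicity of the Turing jump $(H(p))'\leqT p'\leqT\emptyset''$, and therefore $H(p)$ is again $2$--low$_1$. Finally, from $H\,\Low_{1,1}\,K\vdash\CL_\IR$ and the fact that $q$ names $(x_n)\in\dom(\CL_\IR)$, the sequence $H(p)=H\Low_{1,1}K(q)$ is a $\delta_\IR$--name of some $x\in\L_\IR(x_n)$, that is, of a cluster point $x$ of $(x_n)$. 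Thus $x$ has a name $H(p)$ with $(H(p))'\leqT\emptyset''$, which is exactly the assertion that $x$ is low relatively to the halting problem.

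The argument is essentially bookkeeping and I do not expect a serious obstacle; the point that deserves a little care is the last paragraph, namely that both the computable preprocessing $K$ and the computable postprocessing $H$ preserve $2$--low$_1$-ness of the intermediate point. For $K$ this is Lemma~\ref{lem:low-points} (a computable point is computable relative to any computable $q$, so $\Low_{1,1}$ sends it to a $2$--low$_1$ point), and for $H$ it is just monotonicity of the Turing jump. One should also check that this matches the stated definition of ``low relatively to the halting problem'', which is exactly the condition that some name $p$ satisfies $p'\leqT\emptyset''$.
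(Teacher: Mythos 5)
Your argument is correct and is exactly the route the paper intends: Corollary~\ref{cor:cluster-low-point} is stated as an immediate consequence of $\CL_\IR\leqSW\Low'$ (Corollary~\ref{cor:cluster-low}), and you simply unwind that strong reduction on a computable name, using $\Low'\equivSW\Low_{1,1}$, Lemma~\ref{lem:low-points}, and monotonicity of the Turing jump under the computable pre- and postprocessing $K$ and $H$. No gaps; this is the same proof, just with the bookkeeping made explicit.
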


Obviously, this result holds true more generally for computable $K_\sigma$--spaces.
If a metric space $X$ is not $K_\sigma$ in the classical sense, then one can embed Baire space $\IN^\IN$
into $X$ and the cluster point problem becomes automatically much more difficult (see Theorem~\ref{thm:cluster-Baire}).

For the remainder of this section we discuss a number of examples of cluster point problems of certain spaces. 
We start with the cluster point problem on natural numbers, where we get the following
immediate consequence of Proposition~\ref{prop:unique-choice-N}.

\begin{corollary}
\label{cor:unique-discrete-cluster}
$\UCL_\IN\equivSW\CL_\IN\equivSW\lim_\IN'\equivSW\C_\IN'\equivSW\UC_\IN'$.
\end{corollary}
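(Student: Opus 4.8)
The plan is to obtain this corollary by chaining together results already established: Proposition~\ref{prop:unique-choice-N} on the strong equivalence of $\UC_\IN$, $\C_\IN$ and $\lim_\IN$; the monotonicity of the derivative under strong Weihrauch reducibility (Proposition~\ref{prop:monotone-derivative}); and the two characterizations of derivatives of (unique) closed choice as (unique) cluster point problems, namely Theorem~\ref{thm:derivative-closed-choice} and Corollary~\ref{cor:derivative-unique-closed-choice}, both applied to the computable metric space $X=\IN$.

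First I would observe that Proposition~\ref{prop:unique-choice-N} gives $\UC_\IN\equivSW\C_\IN\equivSW\lim_\IN$, i.e.\ strong reducibility in all directions among these three functions. Applying Proposition~\ref{prop:monotone-derivative}(2) to each of these reductions in both directions, and using transitivity of $\leqSW$, the derivative preserves this equivalence, so
\[\UC_\IN'\equivSW\C_\IN'\equivSW\lim_\IN'.\]
Next, Theorem~\ref{thm:derivative-closed-choice} with $X=\IN$ yields $\C_\IN'\equivSW\CL_\IN$, and Corollary~\ref{cor:derivative-unique-closed-choice} with $X=\IN$ yields $\UC_\IN'\equivSW\UCL_\IN$. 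Splicing these into the displayed chain gives
\[\UCL_\IN\equivSW\UC_\IN'\equivSW\C_\IN'\equivSW\lim_\IN'\equivSW\CL_\IN,\]
which is exactly the claimed string of equivalences.

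There is essentially no obstacle here, which is why the corollary is flagged as an immediate consequence of Proposition~\ref{prop:unique-choice-N}. The only point requiring a little care is to invoke the \emph{strong} equivalence $\UC_\IN\equivSW\C_\IN\equivSW\lim_\IN$ furnished by Proposition~\ref{prop:unique-choice-N}, rather than the merely ordinary equivalence $\UC_\IN\equivW\C_\IN\equivW\lim_\IN$ that also follows from Fact~\ref{fact:BCT}: as noted after Example~\ref{ex:derivatives}, derivatives need not be monotone with respect to $\leqW$, so the ordinary equivalence alone would not suffice. With the strong equivalence in place, Proposition~\ref{prop:monotone-derivative} guarantees the derivative is well defined on the common strong degree, and the remaining hypotheses — that $\IN$ is a computable metric space, so that Theorem~\ref{thm:derivative-closed-choice} and Corollary~\ref{cor:derivative-unique-closed-choice} apply — are trivially met.
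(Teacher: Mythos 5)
Your proposal is correct and follows exactly the route the paper intends: the paper states this corollary as an "immediate consequence of Proposition~\ref{prop:unique-choice-N}", i.e.\ one applies the monotonicity of the derivative (Proposition~\ref{prop:monotone-derivative}) to the strong equivalences $\UC_\IN\equivSW\C_\IN\equivSW\lim_\IN$ and splices in $\C_\IN'\equivSW\CL_\IN$ and $\UC_\IN'\equivSW\UCL_\IN$ from Theorem~\ref{thm:derivative-closed-choice} and Corollary~\ref{cor:derivative-unique-closed-choice}. Your remark that the strong (rather than ordinary) equivalence is essential is precisely the point the paper makes as well.
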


Using Fact~\ref{fact:lim-delta-UCR}, Corollary~\ref{cor:cylinder} and the fact that
$\lim_\Delta$ is strongly equivalent to the cylindrification of $\C_\IN$, i.e.\ $\C_\IN\times\id\equivSW\lim_\Delta$
we obtain the following result.

\begin{corollary}
\label{cor:unique-cluster-reals}
$\UCL_\IR\equivSW\UC_\IR'\equivSW\lim_\Delta'\equivSW\CL_\IN\times\lim\equivSW\UCL_\IN\times\lim$.
\end{corollary}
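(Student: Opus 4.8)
The plan is to obtain all five items in a single chain of strong Weihrauch equivalences, using the previously computed derivatives of the relevant choice principles together with the algebraic identity for derivatives of cylindrifications.

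First, the equivalence $\UCL_\IR\equivSW\UC_\IR'$ is immediate from Corollary~\ref{cor:derivative-unique-closed-choice} applied to $X=\IR$. Next, Fact~\ref{fact:lim-delta-UCR} gives $\UC_\IR\equivSW\lim_\Delta$, and since the derivative is monotone with respect to $\leqSW$ (Proposition~\ref{prop:monotone-derivative}) it preserves strong Weihrauch equivalence, so $\UC_\IR'\equivSW\lim_\Delta'$.

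For the remaining identities I would use the other half of Fact~\ref{fact:lim-delta-UCR}, namely $\lim_\Delta\equivSW\C_\IN\times\id$. Again by monotonicity of the derivative this yields $\lim_\Delta'\equivSW(\C_\IN\times\id)'$, and Corollary~\ref{cor:cylinder} (the identity $(f\times\id)'\equivSW f'\times\lim$) then gives $(\C_\IN\times\id)'\equivSW\C_\IN'\times\lim$. Finally, Corollary~\ref{cor:unique-discrete-cluster} identifies $\C_\IN'$ with both $\CL_\IN$ and $\UCL_\IN$ up to $\equivSW$; since the product operation preserves strong Weihrauch equivalence (as noted after Definition~\ref{def:algebraic-operations}), we conclude $\C_\IN'\times\lim\equivSW\CL_\IN\times\lim\equivSW\UCL_\IN\times\lim$. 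Chaining these equivalences together yields the statement.

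There is essentially no hard step here: the proof is a bookkeeping exercise in strong Weihrauch equivalences. The only points requiring a little care are (i) making sure every link is a \emph{strong} equivalence rather than merely an ordinary one — which is why Fact~\ref{fact:lim-delta-UCR} (which explicitly records the strong equivalences) and Proposition~\ref{prop:monotone-derivative} (monotonicity for $\leqSW$) are used in place of their ordinary-reducibility counterparts — and (ii) invoking that both $\times$ and the derivative respect $\equivSW$, so that the chain composes cleanly.
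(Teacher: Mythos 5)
Your proof is correct and follows essentially the same route as the paper, which likewise derives the chain from Corollary~\ref{cor:derivative-unique-closed-choice}, Fact~\ref{fact:lim-delta-UCR}, Corollary~\ref{cor:cylinder} (via $\C_\IN\times\id\equivSW\lim_\Delta$), and Corollary~\ref{cor:unique-discrete-cluster}, using monotonicity of the derivative under $\leqSW$. The only difference is that you spell out the bookkeeping explicitly, which the paper leaves implicit.
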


Although $\UC_\IN\equivW\UC_\IR$, we point out that the respective derivatives are
not equivalent (the equivalence between $\UC_\IN$ and $\UC_\IR$ is not a strong one).
This is because $\UCL_\IN$ maps computable inputs to computable outputs, whereas $\UCL_\IN\times\lim$
does not.
Hence we have another example for the fact that two strongly inequivalent members
of the same Weihrauch degree can have inequivalent derivatives.

\begin{example}
\label{ex:UCL-R}
$\UC_\IN'\equivSW\UCL_\IN\lW\UCL_\IR\equivSW\UC_\IR'$ and $\UC_\IN\equivW\UC_\IR$.
\end{example}

Corollaries~\ref{cor:unique-cluster-reals} and \ref{cor:finitely-many-mind-changes} together imply 
the following characterization of functions that are limit computable with finitely many mind changes,
which states that the Unique Cluster Point Problem on the reals is complete for this class.

\begin{corollary}[Limit computability with finitely many mind changes]
\label{cor:limit-finitely-many-mind-changes}
Let $f$ be a multi-valued function on represented spaces. 
Then the following are equivalent:
\begin{enumerate}
\item $f\leqW\UCL_\IR$,
\item $f$ is limit computable with finitely many mind changes.
\end{enumerate}
\end{corollary}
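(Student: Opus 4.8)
The plan is to chain together the two cited corollaries. By the Definition of relativized computability with finitely many mind changes, a function $f$ is \emph{limit computable with finitely many mind changes} precisely when it is $2$--computable with finitely many mind changes. Applying Corollary~\ref{cor:finitely-many-mind-changes} with $n=1$, this is equivalent to $f\leqW\lim_\Delta^{(1)}$, i.e.\ $f\leqW\lim_\Delta'$. On the other hand, Corollary~\ref{cor:unique-cluster-reals} gives $\UCL_\IR\equivSW\lim_\Delta'$, and in particular $\UCL_\IR\equivW\lim_\Delta'$. Combining these two equivalences yields that $f\leqW\UCL_\IR$ if and only if $f$ is limit computable with finitely many mind changes, which is exactly the claim.

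The only point requiring a little care is the indexing convention: ``limit computable with finitely many mind changes'' is the $n=1$ instance of ``$(n+1)$--computable with finitely many mind changes'', so the relevant derivative is the first derivative $\lim_\Delta'$ rather than $\lim_\Delta$ itself. Since the statement only involves ordinary Weihrauch reducibility $\leqW$, the strong equivalence in Corollary~\ref{cor:unique-cluster-reals} is more than enough, and no separate cylinder argument is needed.

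I do not expect any genuine obstacle here: once Corollaries~\ref{cor:unique-cluster-reals} and \ref{cor:finitely-many-mind-changes} are available, the result is immediate. If one preferred to avoid invoking Corollary~\ref{cor:finitely-many-mind-changes} directly, one could instead unfold the definition and argue via Theorem~\ref{thm:derivatives}: using $\UCL_\IR\equivW\lim_\Delta'$ and the fact that $\lim_\Delta$ is a cylinder (Fact~\ref{fact:lim-delta-UCR}), the cylinder form of Theorem~\ref{thm:derivatives} shows that $f=g\circ h$ for some $g\leqW\lim_\Delta$ (computable with finitely many mind changes) and some $h\leqW\lim$ (limit computable) if and only if $f\leqW\lim_\Delta'\equivW\UCL_\IR$.
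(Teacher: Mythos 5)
Your argument is exactly the paper's: the corollary is obtained by combining Corollary~\ref{cor:unique-cluster-reals} ($\UCL_\IR\equivSW\lim_\Delta'$) with Corollary~\ref{cor:finitely-many-mind-changes} at $n=1$, and you have handled the indexing correctly. The proof is correct and takes essentially the same route as the paper.
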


That leads to the following corollary, which is clear when $x$ is a unique
cluster point. If the cluster point is isolated, then one can easily identify
those members of the sequence that are in some small isolating neighborhood
of the point and hence one can reduce the case to the case of uniqueness.
We note that any output written by a limit machine after finitely many mind changes
is an ordinary limit computable point.

\begin{corollary}
\label{cor:cluster-limit}
If $x$ is an isolated cluster point of a computable sequence $(x_n)$ of real numbers, then $x$ is limit computable.
\end{corollary}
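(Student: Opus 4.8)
The plan is to reduce the general case to the case of a \emph{bounded} sequence with a \emph{unique} cluster point, where the conclusion becomes transparent. Since the set of cluster points $\L_\IR((x_n))$ is closed and $x$ is isolated in it, there is an $\varepsilon>0$ with $(x-\varepsilon,x+\varepsilon)\cap\L_\IR((x_n))=\{x\}$. First I would pick rationals $a,b$ with $x-\varepsilon<a<x<b<x+\varepsilon$, so that even the closed interval satisfies $[a,b]\cap\L_\IR((x_n))=\{x\}$; this is the crucial preparatory move, because later I need membership in the interval to be semidecidable and I need no stray cluster point to survive on the boundary.

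Next I would extract a computable subsequence lying in $(a,b)$. The set $E:=\{n\in\IN:a<x_n<b\}$ is computably enumerable (membership is a $\Sigma^0_1$ condition in the computable sequence $(x_n)$, since $a,b$ are rational), and it is infinite because $x\in(a,b)$ is a cluster point of $(x_n)$; hence one can compute an injective enumeration $n_0,n_1,\dots$ of $E$ and set $y_k:=x_{n_k}$. Then $(y_k)$ is a computable sequence of reals, bounded since $y_k\in(a,b)$ for all $k$. I claim its unique cluster point is $x$: every cluster point of $(y_k)$ is a cluster point of $(x_n)$ and lies in $\overline{(a,b)}=[a,b]$, hence lies in $[a,b]\cap\L_\IR((x_n))=\{x\}$; conversely $x$ is a cluster point of $(y_k)$ because cofinally many terms of $(x_n)$ approximating $x$ eventually fall into $(a,b)$ and thus occur among the $y_k$. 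In particular $(y_k)\in\dom(\UCL_\IR)$ and $\UCL_\IR((y_k))=x$.

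Finally, a bounded sequence with a unique cluster point converges to it: otherwise some subsequence would stay bounded away from $x$ and, by the Bolzano--Weierstra\ss{} Theorem, would have a further cluster point $\neq x$, contradicting uniqueness. Thus $(y_k)\to x$, and since $(y_k)$ is a computable sequence of reals its limit $x$ is the limit of a computable sequence of rationals, hence limit computable, i.e.\ $x\leqW\lim$. (Alternatively, in the spirit of the preceding results: $x=\UCL_\IR((y_k))$ with a computable input, and by Corollary~\ref{cor:limit-finitely-many-mind-changes} the function $\UCL_\IR$ is limit computable with finitely many mind changes, so $x$ is produced by a limit machine after finitely many mind changes on a computable input and is therefore an ordinary limit computable point.) The one step requiring care is the reduction: the isolating neighbourhood must be chosen \emph{open with rational endpoints} so that the induced subsequence is genuinely computable, and then shrunk to a \emph{closed} subinterval so that $x$ remains the \emph{only} cluster point of the subsequence; the rest is routine.
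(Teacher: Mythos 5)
Your proof is correct and follows essentially the paper's route: isolate the cluster point inside a rational interval so that membership of the $x_n$ in it is semidecidable, thereby reducing to the unique-cluster-point case. Your concluding step (uniqueness plus boundedness gives convergence, and the limit of a computable sequence of reals is limit computable) is a slightly more elementary finish than the paper's appeal to the $\UCL_\IR$-completeness result, but you note that alternative yourself, and both work.
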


Once again, this result can immediately be generalized to computable $K_\sigma$--spaces.
For real numbers this was also proved by Le Roux and Ziegler (see Lemma~3.7 in \cite{LZ08a}).

Now we study the (not necessarily unique) cluster point problem on reals.

\begin{proposition}
$\CL_\IR\equivSW\CL_\Cantor\times\CL_\IN\equivSW\CL_\Cantor\times\UCL_\IR$.
\end{proposition}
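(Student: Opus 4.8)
The plan is to derive everything from the decomposition of $\C_\IR$ and the fact, established in Theorem~\ref{thm:derivative-closed-choice}, that $\CL_X\equivSW\C_X'$ for every computable metric space $X$. The first step is to record the \emph{strong} decomposition $\C_\IR\equivSW\C_\Cantor\times\C_\IN$. The ordinary equivalence $\C_\IR\equivW\C_\Cantor\times\C_\IN$ is Example~\ref{ex:products}(2). Now $\C_\IR$ is a cylinder by Fact~\ref{fact:closed-choice}(2), and $\C_\Cantor\times\C_\IN$ is a cylinder as well: since $\C_\Cantor$ is a cylinder we have $\id\times(\C_\Cantor\times\C_\IN)\equivSW(\id\times\C_\Cantor)\times\C_\IN\equivSW\C_\Cantor\times\C_\IN$ (using associativity of the product and that the product preserves strong equivalence). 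Since a reduction into a cylinder is automatically strong, the ordinary equivalence upgrades to $\C_\IR\equivSW\C_\Cantor\times\C_\IN$.

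Applying the derivative to this equivalence, monotonicity of the derivative with respect to $\leqSW$ (Proposition~\ref{prop:monotone-derivative}(2)) together with the product law $(f\times g)'\equivSW f'\times g'$ (Proposition~\ref{prop:products-parallelization}(2)) yields
\[\CL_\IR\equivSW\C_\IR'\equivSW(\C_\Cantor\times\C_\IN)'\equivSW\C_\Cantor'\times\C_\IN'\equivSW\CL_\Cantor\times\CL_\IN,\]
where the outer two equivalences are Theorem~\ref{thm:derivative-closed-choice}. This proves the first claimed equivalence.

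For the second equivalence I would use the strong characterization of $\UCL_\IR$ provided by Corollary~\ref{cor:unique-cluster-reals}, namely $\UCL_\IR\equivSW\UCL_\IN\times\lim\equivSW\CL_\IN\times\lim$ (the last step being Corollary~\ref{cor:unique-discrete-cluster}). Then
\[\CL_\Cantor\times\UCL_\IR\equivSW\CL_\Cantor\times\CL_\IN\times\lim\equivSW(\CL_\Cantor\times\lim)\times\CL_\IN\equivSW\CL_\Cantor\times\CL_\IN,\]
using commutativity and associativity of $\times$ up to strong equivalence, the fact that $\times$ preserves strong equivalence, and $\CL_\Cantor\times\lim\equivSW\CL_\Cantor$. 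This last equivalence holds because $\C_\Cantor$ is a cylinder, so by Corollary~\ref{cor:cylinder} we get $\CL_\Cantor=\C_\Cantor'\equivSW\C_\Cantor'\times\lim=\CL_\Cantor\times\lim$. Chaining the two displays gives $\CL_\IR\equivSW\CL_\Cantor\times\CL_\IN\equivSW\CL_\Cantor\times\UCL_\IR$.

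The proof is essentially an assembly of earlier results and I do not expect a real obstacle; the one place that calls for care is ensuring that the decomposition of $\C_\IR$ is available in its \emph{strong} form, which is why I check that both $\C_\IR$ and $\C_\Cantor\times\C_\IN$ are cylinders before upgrading $\equivW$ to $\equivSW$. Everything downstream is then legitimate since the product preserves strong Weihrauch equivalence and the derivative is monotone for $\leqSW$.
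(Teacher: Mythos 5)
Your proof is correct and follows essentially the same route as the paper's: upgrade $\C_\IR\equivW\C_\Cantor\times\C_\IN$ to a strong equivalence via the cylinder property, apply Theorem~\ref{thm:derivative-closed-choice} and Proposition~\ref{prop:products-parallelization} to get $\CL_\IR\equivSW\CL_\Cantor\times\CL_\IN$, and then absorb the factor $\lim$ from $\UCL_\IR\equivSW\CL_\IN\times\lim$ into $\CL_\Cantor$ using Corollary~\ref{cor:cylinder}. The only difference is that you spell out explicitly why $\C_\Cantor\times\C_\IN$ is a cylinder, which the paper leaves implicit.
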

\begin{proof}
It has been proved in Corollary~4.9 of \cite{BBP} that $\C_\IR\equivW\C_\Cantor\times\C_\IN$.
This result can be strengthened to strong equivalence $\equivSW$, since $\C_\IR$ and $\C_\Cantor$
are both cylinders, see Fact~\ref{fact:closed-choice}.
Hence, with Proposition~\ref{prop:products-parallelization} and Theorem~\ref{thm:derivative-closed-choice}
we obtain
\[\CL_\IR\equivSW\C_\IR'\equivSW\C_\Cantor'\times\C_\IN'\equivSW\CL_\Cantor\times\CL_\IN.\]
Moreover, $\CL_\Cantor$ is a cylinder and $\CL_\Cantor\equivSW\CL_\Cantor\times\lim$
by Corollary~\ref{cor:cylinder} and hence 
\[\CL_\Cantor\times\CL_\IN\equivSW\CL_\Cantor\times\lim\times\CL_\IN\equivSW\CL_\Cantor\times\UCL_\IR\]
follows by Corollary~\ref{cor:unique-cluster-reals}.
\end{proof}

We note that despite the fact that $\CL_\IN$ and $\UCL_\IR$ are not equivalent, they
can be exchanged here as a factor of $\CL_\Cantor$, which is the derivative of a cylinder. 

The following result characterizes the cluster point problem on Baire space.
In this case the cluster point problem is exactly as difficult as closed choice on this space.

\begin{theorem}[Cluster point problem on Baire space] 
\label{thm:cluster-Baire}
$\CL_\Baire\equivSW\C_\Baire'\equivSW\C_\Baire$.
\end{theorem}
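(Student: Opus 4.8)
The first equivalence $\CL_\Baire\equivSW\C_\Baire'$ is just the instance $X=\Baire$ of Theorem~\ref{thm:derivative-closed-choice}, so the substantive content is the second equivalence $\C_\Baire'\equivSW\C_\Baire$, i.e.\ that closed choice on Baire space is a fixed point of the derivative operation. One direction, $\C_\Baire\leqSW\C_\Baire'$, is immediate from Proposition~\ref{prop:monotone-derivative}(1). The plan is therefore to establish the reverse reduction $\C_\Baire'\leqSW\C_\Baire$.

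The structural input I would use is Corollary~\ref{cor:products-composition-cylinder} (equivalently Corollary~\ref{cor:derivatives}): since $\C_\Baire$ is a cylinder by Fact~\ref{fact:closed-choice}(2), we have $\C_\Baire'\equivW\C_\Baire*\lim$, so it suffices to bound the compositional product $\C_\Baire*\lim$ by $\C_\Baire$. Two facts about $\C_\Baire$ recorded in Section~3 do this. First, $\lim\leqW\C_\Baire$: this follows from Fact~\ref{fact:closed-choice}(4), since $\lim$ is an effectively Borel measurable single-valued function on a computable Polish space. Second, the principal ideal $\{h:h\leqW\C_\Baire\}$ is closed under composition (Corollary~7.6 in \cite{BBP}, as noted after Fact~\ref{fact:closed-choice}); combined with $\C_\Baire\leqW\C_\Baire*\C_\Baire$ (from the lemma relating products and compositional products together with strong idempotency of $\C_\Baire$) this gives $\C_\Baire*\C_\Baire\equivW\C_\Baire$. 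Now monotonicity of the compositional product (Lemma~\ref{lem:monotone-composition}) applied to $\lim\leqW\C_\Baire$ yields
\[
\C_\Baire'\equivW\C_\Baire*\lim\leqW\C_\Baire*\C_\Baire\equivW\C_\Baire.
\]
Since $\C_\Baire$ is a cylinder, the ordinary reduction $\C_\Baire'\leqW\C_\Baire$ upgrades to $\C_\Baire'\leqSW\C_\Baire$, and together with $\C_\Baire\leqSW\C_\Baire'$ this gives $\C_\Baire'\equivSW\C_\Baire$, hence also $\CL_\Baire\equivSW\C_\Baire$.

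There is no genuinely hard step; the argument is an assembly of previously established facts, and the monotonicity of $*$ together with the closure of the principal ideal of $\C_\Baire$ under composition does the work. The one point that deserves care is ensuring the closure property and the bound $\lim\leqW\C_\Baire$ are invoked for the Baire-space case rather than only for $\C_\IR$ or $\C_\Cantor$; both are available in the excerpt, the first via the remark following Fact~\ref{fact:closed-choice} and the second via completeness of $\C_\Baire$ for the effectively Borel measurable functions. A more self-contained alternative would replace the appeal to completeness by a direct construction of a closed subset of $\Baire$ coding the limit of a convergent sequence, but the route through Corollary~\ref{cor:products-composition-cylinder} is shorter.
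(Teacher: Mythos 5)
Your proposal is correct and follows essentially the same route as the paper: the chain $\C_\Baire'\equivW\C_\Baire*\lim\leqW\C_\Baire*\C_\Baire\equivW\C_\Baire\leqW\C_\Baire'$ via Corollary~\ref{cor:products-composition-cylinder}, Lemma~\ref{lem:monotone-composition}, $\lim\leqW\C_\Baire$, and the cylinder property to upgrade to $\leqSW$. The only difference is cosmetic: the paper cites the Independent Choice Theorem (Theorem~7.3 and Corollary~7.5 of \cite{BBP}) for $\C_\Baire*\C_\Baire\equivW\C_\Baire$, whereas you obtain it from the closure of the principal ideal of $\C_\Baire$ under composition noted after Fact~\ref{fact:closed-choice}, which is an equally valid source for the same fact.
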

\begin{proof}
The equivalence $\CL_\Baire\equivSW\C_\Baire'$ follows from Theorem~\ref{thm:derivative-closed-choice}.
By the Independent Choice Theorem~7.3 and Corollary~7.5 in \cite{BBP}
we obtain $\C_\Baire*\C_\Baire\equivW\C_\Baire$. Since
$\C_\Baire$ is a cylinder and $\lim\leqW\C_\Baire$ (see Fact~\ref{fact:closed-choice}), 
it follows by Corollary~\ref{cor:products-composition-cylinder}
and Lemma~\ref{lem:monotone-composition} that
\[\C_\Baire'\equivW\C_\Baire*\lim\leqW\C_\Baire*\C_\Baire\equivW\C_\Baire\leqW\C_\Baire'.\]
Since $\C_\Baire$ is a cylinder, $\C_\Baire'$ is a cylinder as well by Corollary~\ref{cor:cylinder}.
Hence the equivalence also holds for strong reducibility.
\end{proof}

So, in some sense, $\C_\Baire$ behaves with respect to differentiability like
the exponential function behaves with respect to analytic differentiability.
We mention that this result has to be seen in light of the known fact that
$\C_\Baire$ is complete for single-valued (effectively) Borel measurable functions 
on computable metric space (see Fact~\ref{fact:closed-choice}).

We recall that $\C_\IN,\C_\Cantor,\C_\IR$ and $\C_\Baire$ are
strongly idempotent and all these choice principles,
except the first one, are also cylinders (see Fact~\ref{fact:closed-choice}). Hence 
$\CL_\IN, \CL_\Cantor,\CL_\IR$ and $\CL_\Baire$ have the respective properties
by Corollaries~\ref{cor:cylinder} and \ref{cor:idempotent}.

\begin{corollary}
\label{cor:cluster-idempotent}
$\CL_\Cantor,\CL_\IR,\CL_\Baire$ are strongly idempotent and cylinders and $\CL_\IN$ is strongly idempotent.
\end{corollary}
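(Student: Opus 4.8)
The plan is to read off the corollary from the characterization $\C_X'\equivSW\CL_X$ (Theorem~\ref{thm:derivative-closed-choice}) together with the behaviour of the derivative under idempotency and cylinder constructions established earlier. There is no genuine obstacle: the statement is pure bookkeeping once the derivative of closed choice has been identified, so the whole argument is a chain of citations.

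First I would recall from Fact~\ref{fact:closed-choice}(1) that $\C_\IN$, $\C_\Cantor$, $\C_\IR$ and $\C_\Baire$ are all strongly idempotent, and from Fact~\ref{fact:closed-choice}(2) that the three principles $\C_\Cantor$, $\C_\IR$ and $\C_\Baire$ are in addition cylinders. By Theorem~\ref{thm:derivative-closed-choice} we have $\CL_X\equivSW\C_X'$ for each of these four spaces.

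Next I would invoke Corollary~\ref{cor:idempotent}(1): the derivative of a strongly idempotent function is again strongly idempotent, so $\C_X'$, and hence $\CL_X$, is strongly idempotent for $X\in\{\IN,\Cantor,\IR,\Baire\}$. Likewise I would invoke Corollary~\ref{cor:cylinder}: the derivative of a cylinder is again a cylinder, so $\C_X'$, and hence $\CL_X$, is a cylinder for $X\in\{\Cantor,\IR,\Baire\}$.

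The only point requiring attention is why the cylinder claim is not asserted for $\CL_\IN$: this is simply because $\C_\IN$ itself is not a cylinder (for cardinality reasons on the output side, as noted after Fact~\ref{fact:BCT}), so the input to Corollary~\ref{cor:cylinder} is unavailable in that case, and correspondingly we claim only strong idempotency for $\CL_\IN$.
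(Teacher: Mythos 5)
Your proposal is correct and matches the paper's own argument: the paper likewise combines Fact~\ref{fact:closed-choice} (strong idempotency of $\C_\IN,\C_\Cantor,\C_\IR,\C_\Baire$ and the cylinder property for the latter three) with Theorem~\ref{thm:derivative-closed-choice} and transfers these properties to the derivatives via Corollaries~\ref{cor:cylinder} and \ref{cor:idempotent}. Your remark on why the cylinder claim is omitted for $\CL_\IN$ is also consistent with the paper's observation that $\C_\IN$ cannot be a cylinder for cardinality reasons.
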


Finally, we mention that the cluster point problem is always a strong fractal.
The proof is basically the same as the proof of Proposition~\ref{prop:join-irreducibility}.

\begin{corollary}
\label{cor:cluster-fractal}
$\CL_X$ and $\UCL_X$ are strong fractals and hence strongly join-irreducible and join-irreducible for 
any computable space $X$.
\end{corollary}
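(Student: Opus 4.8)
The plan is to run the proof of Proposition~\ref{prop:join-irreducibility} almost verbatim, replacing the observation ``the limit of a sequence is unchanged when one prepends finitely many terms'' by the observation ``the set of cluster points of a sequence is unchanged when one prepends finitely many terms''. Concretely, I would show that $\CL_X\colon\In(X^\IN,\delta_X^\IN)\mto X$ is a strong fractal; the statement for $\UCL_X$ and the claims about (strong) join-irreducibility then follow, the latter from Proposition~\ref{prop:join-irreducible-fractals}.

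So fix $A\In\IN^\IN$ that is clopen and non-empty in $\dom(\CL_X\delta_X^\IN)$. First I would pick a word $w\in\IN^*$ with $\emptyset\not=w\IN^\IN\cap\dom(\CL_X\delta_X^\IN)\In A$, and then some $r=\langle r_0,r_1,r_2,\ldots\rangle$ in this intersection; thus $r$ is a $\delta_X^\IN$--name of a sequence $(\xi_n)$ in $X$ that has a cluster point, where $\xi_i=\delta_X(r_i)$. Next I would choose $m\in\IN$ large enough that every position $<|w|$ in the standard encoding $\langle\,\cdot\,\rangle$ of sequences from $\IN^\IN$ decodes to a pair whose first component is $\leq m$; such an $m$ exists since only finitely many positions are relevant. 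Then the map
\[
K\colon\IN^\IN\to\IN^\IN,\qquad K\langle p_0,p_1,p_2,\ldots\rangle:=\langle r_0,\ldots,r_m,p_0,p_1,p_2,\ldots\rangle
\]
is computable, and by the choice of $m$ the sequence $K(p)$ agrees with $r$ on all positions $<|w|$, so $K(p)\in w\IN^\IN$ for every $p$.

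The crucial step is then to check that $K$ together with $H=\id$ witnesses $\CL_X\leqSW(\CL_X)_A$. If $p$ is a $\delta_X^\IN$--name of a sequence $(x_n)\in\dom(\CL_X)$, then $K(p)$ is a $\delta_X^\IN$--name of the sequence $(\xi_0,\ldots,\xi_m,x_0,x_1,x_2,\ldots)$, which has exactly the same (non-empty) set of cluster points as $(x_n)$; hence $K(p)\in\dom(\CL_X\delta_X^\IN)$, so $K(p)\in w\IN^\IN\cap\dom(\CL_X\delta_X^\IN)\In A$, and moreover $\CL_X\delta_X^\IN(K(p))=\CL_X\delta_X^\IN(p)$. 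Consequently, for every $F\vdash(\CL_X)_A$ we get $HFK=FK\vdash\CL_X$, which proves that $\CL_X$ is a strong fractal, and hence strongly join-irreducible and join-irreducible by Proposition~\ref{prop:join-irreducible-fractals}. For $\UCL_X$ one picks $r$ in $w\IN^\IN\cap\dom(\UCL_X\delta_X^\IN)$ instead and uses that prepending finitely many terms also preserves uniqueness of the cluster point; otherwise the argument is unchanged.

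The only point requiring a little care---and the ``main obstacle'', such as it is---is the bookkeeping involved in arranging that prepending the fixed blocks $r_0,\ldots,r_m$ genuinely extends $w$: since a $\delta_X^\IN$--name is an interleaving of the component names, the prefix $w$ constrains several components simultaneously, so one must take $m$ large enough that $w$ only touches the blocks $r_0,\ldots,r_m$ that we are free to fix, leaving the tail $p_0,p_1,\ldots$ untouched. Everything else is exactly the argument of Proposition~\ref{prop:join-irreducibility}, with ``set of cluster points'' in place of ``limit''.
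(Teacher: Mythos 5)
Your proposal is correct and is exactly the argument the paper intends: the paper proves this corollary by remarking that the proof of Proposition~\ref{prop:join-irreducibility} carries over, i.e.\ one modifies a name by prepending finitely many fixed components to land in the clopen set, using that the set of cluster points (and its being a singleton, for $\UCL_X$) is unchanged by prepending finitely many terms. Your explicit bookkeeping with the interleaved $\delta_X^\IN$--names and the bound $m$ is just the spelled-out version of that same argument.
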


\section{Compact Choice}
\label{sec:compact-choice}

In this section we want to consider the special case of the cluster point problem
for sequences with relatively compact range. We recall that a set $A\In X$ in a topological
space $X$ is called {\em relatively compact}, if its closure is compact. We will see
that the following map is relevant in this context.

\begin{definition}[Compact set of cluster points]
Let $X$ be a computable metric space. We define
\[\KL_X:\In X^\IN\to\KK_-(X),(x_n)\mapsto\{x\in X:\mbox{$x$ is cluster point of $(x_n)$}\},\]
with $\dom(\KL_X):=\{(x_n):\overline{\{x_n:n\in\IN\}}$ is compact$\}$. 
\end{definition}

Hence $\KL_X$ is a variant of the map $\L_X$ that we have studied before and it is easy
to see that it is well-defined.
There are two notable differences, for one we restrict $\KL_X$ to such input sequences that
have a relatively compact range. Secondly, we require more output information, i.e.\ we
want the set of cluster points with negative information as a compact set.
The essential difference is that bounds need to be provided. 
We assume that $\KK_-(X)$ is represented by $\kappa_-$, if not mentioned otherwise.
Roughly speaking, a name $p$ of a compact set $K=\kappa_-(p)$ is 
a list of all finite rational open covers $\UU=\{B(x_1,r_1),...,B(x_n,r_n)\}$ of $K$ (see \cite{BP03} for details).
Here the $x_i$ are supposed to be points in the dense subset and the $r_i$ non-negative rational numbers.
We mention that the sets $K$ with a computable $\kappa_-$--name are called {\em co-c.e.\ compact}.
A {\em computably compact} set $K\In X$ is one for which additionally all rational open balls
that intersect $K$ can be enumerated. A computable metric space $X$ is called {\em computably compact},
if it is a co-c.e.\ compact subset of itself (which is equivalent to being a computably compact subset of itself
in this special case).

In Proposition~\ref{prop:limit-cluster} we have seen that $\L_X$ is limit computable and
in Corollary~\ref{cor:lim-cluster-compact} we will prove the somewhat surprising fact that the same holds for $\KL_X$.
The fact that the input is given in positive form (as a sequence)
enables us to compute the required additional output information in the limit at no extra costs,
as Proposition~\ref{prop:compact-range} will show.
For the proof we use some special version of the Lebesgue Covering Lemma, which is expressed formally
in terms of the parameters of balls
(see Theorem~4.3.31 in \cite{Eng89} for the classical version).

\begin{lemma}[Lebesgue Covering Lemma]
\label{lem:lebesgue}
Let $X$ be some metric space and let $K\In X$ be compact.
Let $(c_n)$ be a sequence in $X$ and let $(r_n)$ be a sequence
of positive rational numbers. Then $K\In\bigcup_{i\in\IN}B(c_i,r_i)$
implies that there exists a $\varepsilon>0$ such that 
for each $x\in K$ there is some $i\in\IN$ with $d(x,c_i)<r_i-\varepsilon$.
\end{lemma}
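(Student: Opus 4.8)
The statement is a parametrized reformulation of the classical Lebesgue number lemma, so the plan is to prove it by contradiction using sequential compactness. Assume the conclusion fails: then for every $n\geq 1$ the value $\varepsilon=2^{-n}$ does not work, so I can pick a point $x_n\in K$ with $d(x_n,c_i)\geq r_i-2^{-n}$ for all $i\in\IN$. Since $K$ is a compact subset of a metric space, it is sequentially compact, so after passing to a subsequence I may assume $x_n\to x$ for some $x\in K$.

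Next I would use the covering hypothesis $K\In\bigcup_{i\in\IN}B(c_i,r_i)$ at the limit point $x$: there is an index $i_0$ with $d(x,c_{i_0})<r_{i_0}$, and I set $\eta:=r_{i_0}-d(x,c_{i_0})>0$. For all sufficiently large $n$ one has both $d(x_n,x)<\eta/2$ and $2^{-n}<\eta/2$, whence by the triangle inequality
\[
d(x_n,c_{i_0})\leq d(x_n,x)+d(x,c_{i_0})<\frac{\eta}{2}+(r_{i_0}-\eta)=r_{i_0}-\frac{\eta}{2}<r_{i_0}-2^{-n},
\]
contradicting $d(x_n,c_{i_0})\geq r_{i_0}-2^{-n}$. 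Hence some $\varepsilon>0$ has the required property, which proves the lemma.

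A less elementary alternative would be to consider the function $f\colon K\to\IR$ given by $f(x):=\sup_{i\in\IN}\min\{1,\,r_i-d(x,c_i)\}$; it is a supremum of continuous functions, hence lower semicontinuous and bounded, so it attains its infimum on the compact set $K$ at some $x_0$, and the covering hypothesis forces $f(x_0)>0$; then any $\varepsilon$ with $0<\varepsilon<f(x_0)$ works. There is no genuine obstacle in either argument; the only point needing care is that $\sup_i(r_i-d(x,c_i))$ may equal $+\infty$ when the radii are unbounded, which is why the second argument truncates at $1$ and which the contradiction argument sidesteps by fixing the single witness index $i_0$. I would present the contradiction argument.
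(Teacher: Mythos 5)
Your contradiction argument is correct: the negation is quantified properly (for each $n$ a point $x_n\in K$ with $d(x_n,c_i)\geq r_i-2^{-n}$ for \emph{all} $i$), sequential compactness of the compact metric space $K$ gives a convergent subsequence, and fixing the single witness index $i_0$ for the limit point makes the triangle-inequality estimate go through; the truncated-supremum alternative is also sound. The paper, however, argues directly rather than by contradiction: for each $x\in K$ it picks $i_x$ and $\varepsilon_x>0$ with $d(x,c_{i_x})<r_{i_x}-2\varepsilon_x$, covers $K$ by the balls $B(x,\varepsilon_x)$, extracts a finite subcover indexed by a finite set $F\In K$, and takes $\varepsilon:=\min\{\varepsilon_y:y\in F\}$; the triangle inequality then yields $d(x,c_{i_y})<r_{i_y}-\varepsilon$ for every $x\in K$. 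The paper's route is the classical "half-radius plus finite subcover" proof of the Lebesgue number lemma: it exhibits $\varepsilon$ explicitly as a minimum over a finite set and uses only the open-cover formulation of compactness, which fits the covering-number language used immediately afterwards in Proposition~\ref{prop:compact-range}. Your route trades that explicitness for brevity: it invokes sequential compactness (equivalent for metric spaces) and produces $\varepsilon$ nonconstructively via a contradiction, but it avoids the bookkeeping with the doubled margin $2\varepsilon_x$. Either proof is acceptable for the way the lemma is used in the paper.
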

\begin{proof}
If $K\In\bigcup_{i\in\IN}B(c_i,r_i)$ then for each $x\in K$ there exists some $i_x=i\in\IN$
with $x\in B(c_i,r_i)$ and some $\varepsilon_x>0$ such that $d(x,c_i)<r_i-2\varepsilon_x$. 
Now we have $K\In\bigcup_{x\in K}B(x,\varepsilon_x)$ and since $K$ is compact
there is a finite subset $F\In K$ such that $K\In\bigcup_{y\in F}B(y,\varepsilon_y)$.
We choose $\varepsilon:=\min\{\varepsilon_y:y\in F\}$. Then for each $x\in K$
there is some $y\in F$ with $x\in B(y,\varepsilon_y)$ and for $i:=i_y$ we have $d(y,c_i)<r_i-2\varepsilon_y$
and hence $d(x,c_i)\leq d(x,y)+d(y,c_i)<\varepsilon_y+r_i-2\varepsilon_y\leq r_i-\varepsilon$. 
\end{proof}

The number $\varepsilon$ is called a {\em Lebesgue covering number} of the 
respective cover $\{B(c_i,r_i):i\in\IN\}$.
Now we are prepared to prove our main result.

\begin{proposition}[Compact range]
\label{prop:compact-range}
Let $X$ be a computable metric space. The map
\[R:\In X^\IN\to\KK_-(X),(x_n)\mapsto\overline{\{x_n:n\in\IN\}}\]
with $\dom(R)=\{(x_n):\overline{\{x_n:n\in\IN\}}$ is compact$\}$
is limit computable, i.e.\ $R\leqSW\lim$.
\end{proposition}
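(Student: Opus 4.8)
The plan is to exhibit a limit machine that, given a $\delta_X^\IN$--name of $(x_n)$, writes on its output tape a $\kappa_-$--name of $K:=\overline{\{x_n:n\in\IN\}}$; since this output is produced directly, with no subsequent processing, this yields $R\leqSW\lim$ and not merely $R\leqW\lim$. Recall that a $\kappa_-$--name of a compact set $K$ is an enumeration of the family
\[\mathcal C:=\bigl\{\bigl((c_1,r_1),\dots,(c_m,r_m)\bigr): m\in\IN,\ K\subseteq\textstyle\bigcup_{i=1}^{m}B(c_i,r_i)\bigr\}\]
of all finite covers of $K$ by rational open balls $B(c_i,r_i)$ (so the $c_i$ range over the dense sequence $\alpha$ and the $r_i$ over the nonnegative rationals). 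Since $(x_n)$ is infinite, $K\neq\emptyset$, and then $\mathcal C$ is nonempty and infinite, so the enumeration never runs dry. Hence the whole task reduces to producing, in the limit, an enumeration of $\mathcal C$.

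The key step is a quantitative reformulation of membership in $\mathcal C$: writing $\overline{B}(c,r)$ for the closed ball $\{y\in X:d(y,c)\le r\}$, I claim that for every tuple $\vec\sigma=\bigl((c_i,r_i)\bigr)_{i\le m}$ as above,
\[\vec\sigma\in\mathcal C\iff(\exists\varepsilon\in\IQ,\ \varepsilon>0)(\forall n\in\IN)(\exists i\le m)\ d(x_n,c_i)\le r_i-\varepsilon.\]
For ``$\Leftarrow$'', such an $\varepsilon$ gives $\{x_n:n\in\IN\}\subseteq\bigcup_{i\le m}\overline{B}(c_i,r_i-\varepsilon)$; this set is a finite union of closed balls, hence closed, hence it contains $K$, and since $\overline{B}(c_i,r_i-\varepsilon)\subseteq B(c_i,r_i)$ we get $\vec\sigma\in\mathcal C$. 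For ``$\Rightarrow$'' I use that $K$ is compact --- the only place the domain restriction $(x_n)\in\dom(R)$ is used --- together with Lemma~\ref{lem:lebesgue}: after discarding the zero--radius balls (they are empty, and at least one radius is positive because $K\neq\emptyset$), the Lebesgue Covering Lemma applied to the remaining finite cover of $K$ produces an $\varepsilon>0$ with $d(x,c_i)<r_i-\varepsilon$ for some $i\le m$, for every $x\in K$, in particular for every $x_n$; shrinking $\varepsilon$ to a positive rational finishes the implication.

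It then remains to read off the effectivity of the displayed condition. From a $\delta_X^\IN$--name of $(x_n)$ one computes $d(x_n,c_i)$ to arbitrary precision, uniformly in $n$, in the index of $c_i$, and in the rationals $r_i,\varepsilon$; hence ``$d(x_n,c_i)\le r_i-\varepsilon$'' is $\Pi^0_1$ relative to the input, uniformly. A bounded disjunction over $i\le m$ and the universal quantifier over $n$ keep it $\Pi^0_1$, and prefixing $\exists\varepsilon\in\IQ,\ \varepsilon>0$ makes ``$\vec\sigma\in\mathcal C$'' uniformly $\Sigma^0_2$ relative to the input. Thus $\mathcal C$ is computably enumerable relative to the Turing jump $J(p)$ of an input name $p$, uniformly in $p$, so the enumeration $q$ of $\mathcal C$ in order of first appearance has the form $q=G(J(p))$ for a fixed computable $G$; by Fact~\ref{fact:galois} the map $p\mapsto q$ is therefore limit computable, say $p\mapsto q$ equals $\lim\circ G_0$ for some computable $G_0$. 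Since $\kappa_-(q)=K$, this means that, with the identity as the outer reduction and $G_0$ as the inner one, $R\leqSW\lim$. The one delicate point is that ``$\vec\sigma\in\mathcal C$'' cannot be decided by a naive search: if one tries to verify the existential witness $\varepsilon$ directly, the output keeps flickering as fresh candidate values of $\varepsilon$ appear and old ones get refuted. What makes the argument work is precisely that the inner matrix may be taken in the co--c.e.\ form ``$d(x_n,c_i)\le r_i-\varepsilon$'' --- which is exactly the content that the Lebesgue Covering Lemma supplies. This is the heart of the proof; the rest is routine bookkeeping.
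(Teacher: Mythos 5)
Your proof is correct and follows essentially the same route as the paper: in both cases the heart of the argument is the characterization of the finite rational covers of $K$ by the existence of a uniform positive margin (your rational $\varepsilon$, the paper's $2^{-k}$), proved via the Lebesgue Covering Lemma~\ref{lem:lebesgue} in one direction and an elementary estimate in the other. The only difference is in the routine final step: where you note that membership of a tuple in your family $\mathcal{C}$ is uniformly $\Sigma^0_2$ in the input and invoke Fact~\ref{fact:galois} to conclude limit computability, the paper constructs the limit machine explicitly, provisionally listing candidate covers together with a margin $2^{-k}$, revising an output position whenever the c.e.\ failure condition fires, and dovetailing over all positions.
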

\begin{proof}
Let $(x_n)_{n\in\IN}$ be a sequence such that $K:=\overline{\{x_n:n\in\IN\}}$ is compact. 
For arbitrary points $c_1,...,c_m$ in the dense subset of $X$ and rational numbers $r_1,...,r_m$
we claim that
\begin{eqnarray*}
K\In\bigcup_{i=1}^mB(c_i,r_i)
&\iff& (\exists k)(\forall n)(\exists i\in\{1,...,m\})\;d(x_n,c_i)\leq r_i-2^{-k}.
\end{eqnarray*}
``$\TO$'' We assume $K\In\bigcup_{i=1}^mB(c_i,r_i)$. Let $\varepsilon>0$ be a Lebesgue covering number
for this cover and let $k\in\IN$ be such that $2^{-k}<\varepsilon$. Then the claim follows directly
from the Lebesgue Covering Lemma~\ref{lem:lebesgue}.\\
``$\Longleftarrow$'' Let $k\in\IN$ be such that $(\forall n)(\exists i\in\{1,...,m\})\;d(x_n,c_i)\leq r_i-2^{-k}$.
Let $x\in K$. We need to show that there is some $i\in\{1,...,m\}$ with $x\in B(c_i,r_i)$.
Since $x\in K$ there is some $n\in\IN$ with $x_n\in B(x,2^{-k})$.
For this $n$ there is some $i\in\{1,...,m\}$ such that $d(x_n,c_i)\leq r_i-2^{-k}$. Hence we obtain
$d(x,c_i)\leq d(x,x_n)+d(x_n,c_i)<r_i$. This proves the claim.

We now describe a limit machine that lists all finite rational open covers $\UU$ of $K$, given $(x_n)$. 
In order to achieve this, we systematically test all possible covers $\UU=\{B(c_i,r_i):i\in\{1,...,m\}\}$ together with all possible numbers $k$. 
We provisionally list $\UU$ as a suitable cover on a specific position of the output and we try to verify the condition
\[(\exists n)(\forall i\in\{1,...,m\})\;d(x_n,c_i)>r_i-2^{-k}.\]
Since this condition is c.e.\ in all parameters, it can eventually be verified, if it is true.
In this case the combination of $\UU$ and $k$ does not work and it will be replaced on the same output position
by the cover $\UU$ of the next combination of $\UU$ and $k$.
Eventually a combination for this position will be found that works and that will never be replaced.
If, by dovetailing,  this process is started countably many times for each output position in parallel with each possible combination 
of $\UU$ and $k$ as a starting combination of some output position,  then in the end all suitable covers $\UU$ are listed.
\end{proof}

We mention that the above algorithm computes a list of all finite open rational covers
for the compact set $K$ together with a corresponding Lebesgue covering number for each cover.
However, we do not make any further use of the Lebesgue covering number on the output side.
We note that Proposition~\ref{prop:compact-range} has also the following interesting corollary
that we just mention as a side observation.

\begin{corollary}
Let $X$ be a computable complete metric space. Then the identity
\[\id:\In\AA_+(X)\to\KK_-(X),A\mapsto A,\]
restricted to compact sets as input is limit computable.
\end{corollary}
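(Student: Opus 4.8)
The plan is to reduce the statement to Proposition~\ref{prop:compact-range}. Recall that $\AA_+(X)$ carries the representation $\psi_+$ by \emph{positive} information: a name of a closed set $A\In X$ enumerates exactly the (codes of) rational open balls that meet $A$. The first step is to establish the following standard uniformity fact (cf.\ \cite{BP03}): for a computable \emph{complete} metric space $X$ there is a computable partial map $S:\In\dom(\psi_+)\to X^\IN$ such that whenever $p$ is a $\psi_+$-name of a nonempty closed set $A$, the sequence $S(p)$ takes values in $A$ and is dense in $A$, so that $\overline{\{S(p)_n:n\in\IN\}}=A$. The construction of $S$ is a routine shrinking-balls argument in which completeness is used: for each ball $B$ enumerated by $p$ we know $B\cap A\neq\emptyset$, so we may search $p$ for a nested sequence $B\supseteq\overline{B_1}\supseteq B_1\supseteq\overline{B_2}\supseteq B_2\supseteq\dots$ of rational balls, each enumerated by $p$ (hence each meeting $A$) with radii tending to $0$; by completeness the intersection is a single point, which lies in $A$ because $A$ is closed and each $B_k$ meets $A$. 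Running this procedure (interleaved) for every ball enumerated by $p$ produces a sequence that is dense in $A$, since every rational ball around a point of $A$ eventually appears in $p$. When $A\neq\emptyset$ infinitely many balls are enumerated, so $S(p)\in X^\IN$ is a genuine infinite sequence.

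Now suppose in addition that $A$ is compact. Then $\overline{\{S(p)_n:n\in\IN\}}=A$ is compact, so $S(p)$ lies in the domain of the map $R$ of Proposition~\ref{prop:compact-range}, and $R(S(p))=\overline{\{S(p)_n:n\in\IN\}}=A$. Since $S$ is computable and $R\leqSW\lim$, the composition $R\circ S$ is limit computable, and it produces a $\kappa_-$-name of $A$. This already gives the claim for nonempty compact inputs: $\id:\In\AA_+(X)\to\KK_-(X)$ restricted to nonempty compact sets is limit computable.

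It remains to accommodate $A=\emptyset$, which is compact but for which no ball is ever enumerated, and which cannot be distinguished from a nonempty set at any finite stage. Here one uses that $\emptyset$ has a \emph{computable} $\kappa_-$-name, namely the list of all finite rational open covers (each of which vacuously covers $\emptyset$). Hence a single limit machine suffices: it starts by writing such a name of $\emptyset$ while scanning the input $\psi_+$-name in parallel, and at the first moment a ball is enumerated --- which happens if and only if $A\neq\emptyset$ --- it switches to computing $R\circ S$ on the input. The output stabilises to the $\kappa_-$-name of $\emptyset$ if $A=\emptyset$ and to $R(S(p))$ otherwise, so this machine realises $\id:\In\AA_+(X)\to\KK_-(X)$ in the limit on all compact inputs.

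The main technical point is the construction of $S$, i.e.\ uniformly extracting a sequence dense in $A$ from positive information about $A$; this is exactly where completeness of $X$ is needed, and it is the only place in the argument where anything nontrivial happens. The remaining work is bookkeeping, the sole mild subtlety being the parallel handling of the finitely unobservable case $A=\emptyset$, which is absorbed by passing to a limit machine.
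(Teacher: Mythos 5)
Your proof is correct and takes essentially the same route as the paper, which obtains this corollary directly from Proposition~\ref{prop:compact-range} together with the standard fact (cited there to \cite{BP03}) that in a complete computable metric space positive information on a closed set can be converted into a sequence whose range is dense in it --- exactly your map $S$. Your only genuine addition is the explicit limit-machine treatment of the empty set, which the paper leaves implicit in the representation conventions; that patch is correct.
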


Here $\AA_+(X)$ denotes the hyperspace of closed subsets with respect to positive information.
In case of complete computable metric spaces,  positive information on a set $A$ can be given by a sequence $(x_n)$
whose range is dense in $A$ (see \cite{BP03} for details).

We formulate a non-uniform corollary. We recall that a closed set $A\In X$ is called {\em c.e.\ closed},
if there is a computable $p$ with $\psi_+(p)=A$ and we call $A$ {\em co-c.e.\ compact in the limit},
if $A=\kappa_-'(p)$ for some computable $p$.

\begin{corollary}
Let $X$ be a computable complete metric space. Any c.e.\ closed set $A\In X$ that is also compact 
is co-c.e.\ compact in the limit.
\end{corollary}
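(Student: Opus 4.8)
The plan is to obtain this non-uniform statement by non-uniformizing Proposition~\ref{prop:compact-range} (equivalently, the preceding corollary on $\id:\AA_+(X)\to\KK_-(X)$). Let $A\In X$ be c.e.\ closed and compact, witnessed by a computable $\psi_+$--name of $A$. Since $X$ is a computable complete metric space, a positive name of a closed set can be converted computably into a sequence $(x_n)$ in $X$ whose range is dense in $A$ (see \cite{BP03}); as the name is computable, $(x_n)$ is a computable point of $X^\IN$. Because $A$ is closed we have $\overline{\{x_n:n\in\IN\}}=A$, and because $A$ is compact this range is relatively compact, so $(x_n)\in\dom(R)$ and $R(x_n)=A$, where $R$ is the map from Proposition~\ref{prop:compact-range}.

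First I would invoke Proposition~\ref{prop:compact-range} to get that $R$ is limit computable. Picking a realizer of $R$ (with respect to $\delta_X^\IN$ on the input side and $\kappa_-$ on the output side), Fact~\ref{fact:galois} lets me write it as $\lim\circ G$ for some computable $G:\In\Baire\to\Baire$ (any computable pre- and post-processing is absorbed into $G$ using transparency of $\lim$). Feeding a computable name $s$ of $(x_n)$, the sequence $q:=G(s)$ is computable and $\lim(q)$ is a $\kappa_-$--name of $R(x_n)=A$. Since the jump of the representation $\kappa_-$ is by definition $\kappa_-'=\kappa_-\circ\lim$, this gives $\kappa_-'(q)=\kappa_-(\lim q)=A$ with $q$ computable, which is exactly the assertion that $A$ is co-c.e.\ compact in the limit.

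There is no real obstacle here: the argument is a routine non-uniformization, and the only points needing (standard) care are the passage from a $\psi_+$--name to a computable sequence dense in $A$, which uses completeness of $X$, and the identification $\kappa_-'=\kappa_-\circ\lim$, which is immediate from the definition of the derivative of a representation. Alternatively, one can skip $R$ and apply the same non-uniformization directly to the preceding corollary: feed the computable $\psi_+$--name $p$ of $A$ into a computable $G$ with $\lim\circ G$ a realizer of the restricted identity $\id:\AA_+(X)\to\KK_-(X)$ on compact inputs, obtaining $\kappa_-'(G(p))=A$ with $G(p)$ computable.
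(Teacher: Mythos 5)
Your argument is correct and is exactly the intended derivation: the paper states this as an immediate non-uniform consequence of Proposition~\ref{prop:compact-range} (equivalently, of the preceding corollary on $\id:\AA_+(X)\to\KK_-(X)$), using that in a complete space a $\psi_+$--name yields a computable sequence dense in $A$ and that a limit computable realizer factors as $\lim\circ G$ with $G$ computable, so a computable input produces a computable $\kappa_-'$--name. Nothing essential differs from the paper's route.
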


It is easy to see that the intersection of a closed set with a compact set is computable
in the following sense (see Theorem~7.11 in \cite{BG09} and the proof of Lemma~6 in \cite{Bra08b}).

\begin{lemma}[Intersection]
\label{lem:intersection}
Let $X$ be a computable metric space. Then the intersection operation
$\cap:\AA_-(X)\times\KK_-(X)\to\KK_-(X),(A,K)\mapsto A\cap K$
is computable.
\end{lemma}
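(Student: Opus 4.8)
The plan is to establish both a negative-information computation and a bound-extraction, combining them. Given a name $p$ of $A\in\AA_-(X)$ via $\psi_-$ (an enumeration of rational open balls $B(c_i,r_i)$ exhausting $X\setminus A$) and a name $q$ of $K\in\KK_-(X)$ via $\kappa_-$ (a list of all finite rational open covers of $K$), we must produce a $\kappa_-$--name of $A\cap K$, i.e.\ a list of all finite rational open covers of $A\cap K$.

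First I would observe that $A\cap K$ is automatically compact (closed subset of the compact $K$), so the object is well-defined. Next, the key set-theoretic fact: a finite rational open family $\UU=\{B(d_1,s_1),\dots,B(d_\ell,s_\ell)\}$ covers $A\cap K$ if and only if there exists a finite list $B(c_1,r_1),\dots,B(c_m,r_m)$ from the complement-enumeration given by $p$ such that $\UU\cup\{B(c_1,r_1),\dots,B(c_m,r_m)\}$ covers $K$. The forward direction: if $\UU\supseteq A\cap K$, then $K\setminus\UU$ is a compact subset of $X\setminus A$, hence covered by finitely many of the $B(c_i,r_i)$ from $p$'s enumeration, and $\UU$ together with those finitely many balls covers $K$. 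The backward direction is trivial since $A\cap K\subseteq K$ and none of the $B(c_i,r_i)$ meets $A$. The crucial point is that the predicate ``$\UU\cup\{B(c_1,r_1),\dots,B(c_m,r_m)\}$ is among the covers of $K$ listed by $q$'' is decidable (it amounts to searching $q$), and ``there exist such $c_i,r_i$ in the first $N$ entries of $p$'' is c.e.\ in the parameters. So we enumerate: for each finite rational family $\UU$, search in parallel over finite prefixes of $p$ and finite prefixes of $q$ for a witness that some $\UU$-augmentation-by-$p$-balls appears in $q$'s list; whenever found, output $\UU$ as a cover of $A\cap K$.

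The main obstacle will be verifying that this enumeration is both \emph{complete} (every genuine cover of $A\cap K$ eventually gets listed) and \emph{sound} (nothing that fails to cover $A\cap K$ gets listed), and reconciling this with the precise conventions of the $\kappa_-$ representation used in \cite{BP03}. Soundness follows from the backward direction above; completeness follows from the forward direction together with the fact that $q$ lists \emph{all} finite rational covers of $K$, so once the required finitely many $p$-balls have appeared and the augmented family has shown up in $q$'s enumeration, the search terminates. A minor subtlety is that $p$ may use empty balls $B(\alpha(n),0)$ as placeholders (representing $A=X$), in which case $A\cap K=K$ and $q$ itself already serves; this is harmless since such balls cover nothing and the argument goes through verbatim. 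The remaining routine checks are that the monotone search can be organized by dovetailing over all $\UU$ and all prefix lengths, which is standard. Since this lemma is only cited as a known fact (with references to \cite{BG09} and \cite{Bra08b}), I would keep the proof to these two or three key observations and not belabor the bookkeeping.
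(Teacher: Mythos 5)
Your proof is correct, and it is essentially the standard argument behind the sources the paper cites for this lemma (the paper itself gives no proof, only the references \cite{BG09} and \cite{Bra08b}): the characterization that a finite rational family $\mathcal{U}$ covers $A\cap K$ if and only if $\mathcal{U}$ augmented by finitely many of the complement balls enumerated for $A$ covers $K$, combined with a dovetailed search through the two names. One small wording fix: membership of the augmented family among the covers listed by $q$ is c.e.\ (semi-decidable), not decidable, but your dovetailed enumeration only needs semi-decidability, so the algorithm and its soundness/completeness argument stand as written.
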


If we combine this result with Propositions~\ref{prop:limit-cluster} and \ref{prop:compact-range}
and the fact that $\lim$ is idempotent, then we obtain the following result.

\begin{corollary}
\label{cor:lim-cluster-compact}
$\KL_X\leqSW\lim$ for each computable metric space $X$.  
\end{corollary}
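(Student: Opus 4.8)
The plan is to realize $\KL_X$ as a composition of a computable map with the parallel self-product of $\lim$, and then to use that $\lim$ is idempotent, exactly as announced in the paragraph preceding the statement. The key observation is purely topological: for any sequence $(x_n)$ with relatively compact range, every cluster point of $(x_n)$ is the limit of a subsequence and hence lies in $K:=\overline{\{x_n:n\in\IN\}}$. Therefore the set of cluster points is a closed subset of the compact set $K$, hence itself compact, and writing $L:=\L_X(x_n)\in\AA_-(X)$ and $K=R(x_n)\in\KK_-(X)$ with $R$ as in Proposition~\ref{prop:compact-range}, we have the identity
\[\KL_X(x_n)=L\cap K.\]
Here the right-hand side is formed by the intersection operator $\cap\colon\AA_-(X)\times\KK_-(X)\to\KK_-(X)$ of Lemma~\ref{lem:intersection}. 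Since $\dom(\KL_X)=\dom(R)$ there is no domain mismatch, and as $L\In K$ the intersection $L\cap K$ equals $L$ as a set; the point of the intersection step is precisely that it hands back the cluster set endowed with the $\kappa_-$--information of a compact set, as required for the codomain of $\KL_X$.

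Next I would assemble the reduction. Proposition~\ref{prop:limit-cluster} gives $\L_X\leqSW\lim$ and Proposition~\ref{prop:compact-range} gives $R\leqSW\lim$. Precomposing with the computable diagonal $\Delta\colon X^\IN\to X^\IN\times X^\IN$ and using monotonicity of the product, we obtain
\[(\L_X\times R)\circ\Delta\leqSW\lim\times\lim\equivSW\lim,\]
where the last equivalence holds because $\lim$ is idempotent. Concretely, on a name $p$ of $(x_n)$ one runs the two limit computations in parallel — interleaving the two approximating sequences term by term, which is legitimate since the limit on Baire space commutes with the pairing function — so that a single application of $\lim$ recovers both a $\psi_-$--name of $L$ and a $\kappa_-$--name of $K$. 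Finally, since strong Weihrauch reducibility is preserved under post-composition with computable maps, composing with the computable intersection operator of Lemma~\ref{lem:intersection} yields
\[\KL_X={\cap}\circ\bigl((\L_X\times R)\circ\Delta\bigr)\leqSW\lim.\]

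As for difficulties, there is essentially no serious obstacle: the statement is an immediate consequence of the three cited results together with the set-theoretic identity $\KL_X(x_n)=\L_X(x_n)\cap\overline{\{x_n:n\in\IN\}}$. The only point demanding a moment's care is the type bookkeeping around the parallel composition — verifying that the $\psi_-$-valued limit computation of Proposition~\ref{prop:limit-cluster} and the $\kappa_-$-valued limit computation of Proposition~\ref{prop:compact-range} can be fused into one $\lim$-query on a single input before being passed to the computable intersection — but this is exactly what idempotency of $\lim$ provides.
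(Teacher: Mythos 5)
Your proof is correct and follows exactly the route the paper intends: it writes $\KL_X$ as the computable intersection (Lemma~\ref{lem:intersection}) applied to the pair $(\L_X, R)$ from Propositions~\ref{prop:limit-cluster} and \ref{prop:compact-range}, and absorbs the two parallel $\lim$-queries into one via idempotency of $\lim$. The paper gives only this one-line combination without further detail, so your write-up is just a fuller version of the same argument.
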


Now it is very natural to combine the function $\KL_X$ with compact choice $\K_X$ in the same
way as we have combined $\L_X$ with closed choice $\C_X$. 
Since slightly different versions of choice principles have been called ``compact choice''
in the past (see below) we define the one we need formally in order to be precise.

\begin{definition}[Compact choice]
For each computable metric space $X$ we call 
\[\K_X:\In\KK_-(X)\mto X,A\mapsto A\]
with $\dom(\K_X):=\{A\in\KK_-(X):A\not=\emptyset\}$
the {\em compact choice operation} of $X$.
\end{definition}

In general the two variants of choice $\K_X$ and $\C_X$ are different from
each other and also different from a third variant (also sometimes known as compact choice) denoted by $\KC_X$, which is 
just $\C_X$ restricted to non-empty compact sets. In case of $\KC_X$ we only
request information on these compact sets as closed sets, whereas in case of $\K_X$,
we request information on these sets as compact sets. We give an example
to indicate that these principles are actually different. We recall that by $\K_n$
and $\C_n$ we actually denote the respective choice operation $\K_X$ or $\C_X$
for $X=\{0,...,n-1\}$.

\begin{proposition}
\label{pro:compact-choice-N}
$\K_\IN\equivSW\LLPO^*$.
\end{proposition}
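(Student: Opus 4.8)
The plan is to prove the two strong reductions $\K_\IN\leqSW\LLPO^*$ and $\LLPO^*\leqSW\K_\IN$ separately. I shall use that for finite $n$ we have $\K_n\equivSW\C_n$, since every subset of $\{0,\dots,n-1\}$ is compact so that $\KK_-$ and $\AA_-$ agree there, together with the known reductions between finite closed choice and powers of $\LLPO$ that generalize $\C_2\equivSW\LLPO$ (see \cite{BG11}): there is a computable strictly increasing function $\mu$ with $\C_N\leqSW\LLPO^{\mu(N)}$ for all $N\geq1$, via a parallel tournament. I also need the converse inclusion $\LLPO^i\leqSW\C_{2^i}$, for which a direct reduction is given below, as well as the trivial monotonicities $\LLPO^j\leqSW\LLPO^{j'}$ and $\C_N\leqSW\C_{N'}$ for $j\leq j'$ and $N\leq N'$ (pad with all-zero $\LLPO$--instances, resp.\ declare the surplus candidates excluded from the outset).

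For $\K_\IN\leqSW\LLPO^*$ the decisive observation is that a $\kappa_-$--name of a non-empty finite set $A\In\IN$ amounts, computationally, to a bound together with a $\psi_-$--name. Taking the standard discrete metric on $\IN$, a rational open ball has radius $\leq1$ exactly when it is a singleton, and a $\kappa_-$--name lists all finite open covers; hence from such a name one computably (i) searches for the first listed cover all of whose balls have radius $\leq1$ --- it is of the form $\{\{b\}:b\in S\}$ with $A\In S$ --- and puts $m:=1+\max S$, so that $A\In\{0,\dots,m-1\}$, and then (ii) enumerates $\{0,\dots,m-1\}\setminus A$, which is c.e.\ in the name because $n$ lies in it as soon as some listed cover by singletons omits $n$. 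Thus the preprocessor $H$ reads the name until $m$ appears and then, using the uniform reduction $\C_m\leqSW\LLPO^{\mu(m)}$ applied to the enumeration from (ii), outputs the $\LLPO^*$--name $\langle\mu(m),u\rangle$. From a realizer $G\vdash\LLPO^*$ we obtain $\langle\mu(m),\vec b\rangle$; the postprocessor $K$, which sees only this output, recovers $\mu(m)$, hence $m$ by injectivity of $\mu$, and applies the uniform postprocessor of $\C_m\leqSW\LLPO^{\mu(m)}$ to $\vec b$, obtaining an element $a\in A$; here $A\neq\emptyset$ is exactly what makes the $\C_m$--instance legal. This yields $\K_\IN\leqSW\LLPO^*$.

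For $\LLPO^*\leqSW\K_\IN$, on a name $\langle i,(q_1,\dots,q_i)\rangle$ of an input of $\LLPO^*$ (so $q_k\in\dom(\LLPO)$ for each $k$) I let $H$ construct a $\kappa_-$--name of
\[
A:=\bigl\{\,\langle i,\,n_1+2n_2+\dots+2^{i-1}n_i\rangle \;:\; n_k\in\LLPO(q_k)\text{ for }1\leq k\leq i\,\bigr\}\In\IN .
\]
This $A$ is non-empty since every $\LLPO(q_k)\neq\emptyset$, it is bounded by $\langle i,2^i\rangle$, and its complement is c.e.\ in $(q_1,\dots,q_i)$: an element $\langle i,a\rangle$ with $a=\sum_k n_k2^{k-1}$ is excluded as soon as some digit is ``wrong'', i.e.\ $n_k=0$ and $q_k(2j)\neq0$ for some $j$, or $n_k=1$ and $q_k(2j+1)\neq0$ for some $j$; all remaining elements are excluded by the bound and the tag. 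Since a bounded subset of $\IN$ with c.e.\ complement has a computable $\kappa_-$--name, $H$ can produce one; $\K_\IN$ then returns some $\langle i,a\rangle\in A$, and $K$ reads off $i$ and the digits $(n_1,\dots,n_i)$ of $a$ and outputs $\langle i,\langle n_1,\dots,n_i\rangle\rangle$, a name of $(i,(n_1,\dots,n_i))\in\LLPO^*(i,(q_1,\dots,q_i))$. Since $K$ uses only the output of $\K_\IN$, this is strong, and together with the previous paragraph $\K_\IN\equivSW\LLPO^*$.

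I expect the main obstacle to be bookkeeping rather than mathematics: to make both reductions strong the relevant index --- $m$ in the first direction, $i$ in the second --- must be recoverable from the output alone, which is why I tag the elements of $A$ with $i$ in the second direction and invoke the injectivity of $\mu$ in the first; one must also check that the $\kappa_-$--representation of $\IN$ genuinely delivers the bound, which is where the discreteness of the metric (small balls are singletons) is used. The one genuinely non-trivial input is the already established finite combinatorial reduction $\C_N\leqSW\LLPO^{\mu(N)}$ of \cite{BG11}, where non-emptiness of the sets involved is essential --- it is precisely what places the constructed $\LLPO$--instances in $\dom(\LLPO)$.
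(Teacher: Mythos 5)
Your argument is correct and follows essentially the same route as the paper: both directions factor through finite choice, with $\K_\IN$ reduced to some $\C_m$ by extracting a bound from the $\kappa_-$-name and then invoking the known uniform tournament reduction $\C_m\leqSW\LLPO^{m-1}$ (which the paper cites from Pauly rather than \cite{BG11}, a harmless attribution slip), and with the converse obtained by binary-encoding an $\LLPO^i$-instance into a bounded co-c.e.\ subset of $\IN$. The only real difference is presentational: where the paper cites Proposition~3.4 of \cite{BBP} for $\C_2^i\leqSW\C_{2^i}$ and works with the coproducts $\bigsqcup_n\C_n\equivSW\bigsqcup_n\C_2^n$, you prove that encoding by hand and make the index bookkeeping (tagging with $i$, injectivity of $\mu$) explicit so that both reductions are visibly strong.
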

\begin{proof}
We claim that $\K_\IN\equivSW\bigsqcup_{n\in\IN}\C_n\equivSW\bigsqcup_{n\in\IN}\C_2^n\equivSW\LLPO^*$,
where $\C_2^n$ denotes the $n$--fold product of $\C_2$ with itself.
Here the first reduction $\K_\IN\leqSW\bigsqcup_{n\in\IN}\C_n$ follows, since given a compact set $K\In\IN$
together with a bound $m\in\IN$ such that $K\In\{0,...,m-1\}$, one can easily reduce this
case to $\C_m$. The reverse reduction is obvious. 
It follows from Theorem~31 in \cite{Pau10} (the proof even shows strong reducibility) that $\C_{n+1}\leqSW\C_2^n$
uniformly for all $n\in\IN$.
This implies $\bigsqcup_{n\in\IN}\C_n\leqSW\bigsqcup_{n\in\IN}\C_2^n$.
The inverse reduction follows form Proposition~3.4 in \cite{BBP} (the proof even shows strong reducibility), which
implies $\C_2^n\leqSW\C_{2^n}$ uniformly for all $n\in\IN$.
Moreover, $\C_2\equivSW\LLPO$ is easily proved, which implies $\bigsqcup_{n\in\IN}\C_2^n\equivSW\LLPO^*$.
\end{proof}

It follows from Proposition~\ref{prop:unique-choice-N} that $\C_\IN\equivSW\UC_\IN\equivSW\KC_\IN$, but it is known that $\C_\IN$
is not reducible to $\LLPO^*$ (this follows from Lemma~4.1 in \cite{BG11a}). Hence it is clear
that in general $\K_X$ is different from $\C_X$ and $\KC_X$.
More precisely, we obtain the following corollary.

\begin{corollary}
$\K_\IN\lW\KC_\IN\equivSW\C_\IN$.
\end{corollary}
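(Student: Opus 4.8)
The proof assembles the statement from material already developed. The plan is threefold: first, to show $\KC_\IN\equivSW\C_\IN$ using Proposition~\ref{prop:unique-choice-N}; second, to show $\K_\IN\leqSW\KC_\IN$ by passing from compact to closed information; and third, to deduce strictness of this last reduction from Proposition~\ref{pro:compact-choice-N} together with the known non-reducibility $\C_\IN\nleqW\LLPO^*$.

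For the equivalence $\KC_\IN\equivSW\C_\IN$, the reduction $\KC_\IN\leqSW\C_\IN$ is witnessed by the identity: by definition $\KC_\IN$ is the restriction of $\C_\IN$ to the non-empty compact subsets of $\IN$ with the same representation $\psi_-$ on the input side, and every compact subset of $\IN$ is a non-empty finite set, hence in $\dom(\C_\IN)$. For the converse I would use that Proposition~\ref{prop:unique-choice-N} gives $\C_\IN\equivSW\UC_\IN$; since every singleton is compact, a $\psi_-$--name of a singleton $\{n\}$ is a legitimate input for $\KC_\IN$, so $\UC_\IN$ is itself a restriction of $\KC_\IN$ and $\C_\IN\leqSW\UC_\IN\leqSW\KC_\IN$. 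Thus $\KC_\IN\equivSW\C_\IN$.

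For $\K_\IN\leqSW\KC_\IN$ I would invoke the standard fact that the forgetful map $\iota_X\colon\KK_-(X)\to\AA_-(X),\ A\mapsto A$ is computable for every computable metric space $X$: from a list of finite rational open covers of a compact set one computably reads off a sequence of rational open balls exhausting the complement. Precomposing $\KC_\IN$ with $\iota_\IN$ realizes $\K_\IN$, with no output modification, so $\K_\IN\leqSW\KC_\IN$. It then remains to rule out $\KC_\IN\leqW\K_\IN$: by Proposition~\ref{pro:compact-choice-N} we have $\K_\IN\equivSW\LLPO^*$, while it is known (via Lemma~4.1 in \cite{BG11a}) that $\C_\IN\nleqW\LLPO^*$; since $\KC_\IN\equivSW\C_\IN$ this forces $\KC_\IN\nleqW\K_\IN$. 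Combining with $\K_\IN\leqW\KC_\IN$ yields $\K_\IN\lW\KC_\IN$, which together with the equivalence above is exactly the assertion.

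Since every ingredient is either a definitional restriction or a previously established result, there is no genuine obstacle; the only point deserving a moment's care is that the reductions entering $\KC_\IN\equivSW\C_\IN$ are genuinely strong — they are, being witnessed by the identity and by precomposition with a computable map, with no output modification — so that the equivalence indeed holds in the strong sense $\equivSW$ claimed in the statement.
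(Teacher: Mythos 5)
Your proposal is correct and follows essentially the same route as the paper: the equivalence $\KC_\IN\equivSW\C_\IN$ comes from sandwiching $\KC_\IN$ between $\UC_\IN$ and $\C_\IN$ (as restrictions) and invoking Proposition~\ref{prop:unique-choice-N}, while strictness comes from $\K_\IN\equivSW\LLPO^*$ (Proposition~\ref{pro:compact-choice-N}) together with $\C_\IN\nleqW\LLPO^*$ from Lemma~4.1 of \cite{BG11a}. Your explicit verification of $\K_\IN\leqSW\KC_\IN$ via the computable forgetful map $\KK_-(\IN)\to\AA_-(\IN)$ is a step the paper leaves implicit, and it is argued correctly.
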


This discrepancy between the different versions $\KC_X$ and $\K_X$ of compact
choice disappears, however, for computably compact metric spaces $X$. 
This is because for such spaces, the identity $\id:\AA_-(X)\to\KK_-(X)$ is computable
(see for instance Lemma~6 in \cite{Bra08b}).

\begin{corollary}
\label{cor:compact-choice-metric}
$\K_X\equivSW\KC_X\equivSW\C_X$ for each computably compact computable metric space $X$.
\end{corollary}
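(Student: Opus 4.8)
The plan is to argue that, under the hypotheses, $\K_X$, $\KC_X$ and $\C_X$ are the very same multi-valued map $A\mapsto A$, merely equipped with equivalent representations of the input space, so that all three strong equivalences drop out essentially for free.

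First I would record two elementary topological observations. Since $X$ is metric it is Hausdorff, hence every compact subset of $X$ is closed; and since $X$ is itself compact, every closed subset of $X$ is compact. Consequently $\AA(X)=\KK(X)$ as sets, the domains of $\C_X$, $\KC_X$ and $\K_X$ all coincide (the non-empty such sets), and as multi-valued maps into $X$ all three are the assignment $A\mapsto A$. In particular $\KC_X=\C_X$ on the nose: $\KC_X$ is by definition the restriction of $\C_X$ to non-empty compact sets, which is vacuous here, and both carry the input in the representation $\psi_-$.

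It then remains only to relate $\K_X$, whose input is a $\kappa_-$--name, to $\C_X=\KC_X$, whose input is a $\psi_-$--name. For $\K_X\leqSW\C_X$ I would use the standard fact that on every computable metric space $\kappa_-$ is reducible to $\psi_-$, i.e.\ $\id:\KK_-(X)\to\AA_-(X)$ is computable (see e.g.\ \cite{BP03}): from a list of all finite rational open covers of $K$ one computably enumerates the rational balls $B(a,r)$ for which some listed cover $\{B(c_i,r_i)\}_{i\le m}$ satisfies $d(a,c_i)>r+r_i$ for every $i\le m$, and a short Lebesgue-covering argument in the spirit of Lemma~\ref{lem:lebesgue} shows that the union of these balls is exactly $X\setminus K$. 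Composing this translation with a realizer of $\C_X$, and with $\id_X$ on the output side, yields the strong reduction $\K_X\leqSW\C_X$. For the reverse $\C_X\leqSW\K_X$ I would appeal to the hypothesis of computable compactness, under which the identity $\id:\AA_-(X)\to\KK_-(X)$ is also computable (Lemma~6 of \cite{Bra08b}); composing it with a realizer of $\K_X$, again with $\id_X$ on the output side, realizes $\C_X$. Together with $\KC_X=\C_X$ this gives $\K_X\equivSW\KC_X\equivSW\C_X$.

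The substance is concentrated in the two computability statements about the identity between $\AA_-(X)$ and $\KK_-(X)$; these are the only non-formal steps. The harder of the two to see from scratch is the direction $\KK_-(X)\to\AA_-(X)$, where one must verify that the strict inequality $d(a,c_i)>r+r_i$, combined with compactness of $K$ and regularity of the metric, enumerates enough balls to exhaust the whole complement of $K$ --- but this direction is classical and holds for arbitrary computable metric spaces. The direction $\AA_-(X)\to\KK_-(X)$ is precisely where computable compactness of $X$ is used, and it is the previously known ingredient already flagged in the discussion preceding the statement. Everything else is representation bookkeeping.
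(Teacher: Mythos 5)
Your argument is correct and is essentially the paper's: the paper also treats all three maps as the same assignment $A\mapsto A$ (closed $=$ compact in a compact metric space, so $\KC_X=\C_X$), and obtains $\K_X\equivSW\C_X$ from the computable translations between $\kappa_-$- and $\psi_-$-names, with the direction $\AA_-(X)\to\KK_-(X)$ resting on computable compactness via Lemma~6 of \cite{Bra08b}, exactly as you do. You merely spell out in addition the standard direction $\KK_-(X)\to\AA_-(X)$, which the paper takes as known from \cite{BP03}; your verification of it is sound.
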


We mention two further facts that are known about $\K_X$. For one, the following has been proved
in Theorem~2.10 of \cite{BG11a} (and essentially already in \cite{GM09}). 

\begin{fact}
\label{fact:choice-metric}
$\K_X\leqSW\K_\Cantor$ for each computable metric space $X$.
\end{fact}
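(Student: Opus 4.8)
The plan is to reduce $\K_X$ to $\C_\Cantor$ and then invoke $\C_\Cantor\equivSW\K_\Cantor$, which holds by Corollary~\ref{cor:compact-choice-metric} since Cantor space is computably compact (alternatively one argues directly with $\K_\Cantor$). So fix a computable metric space $(X,d,\alpha)$; given a $\kappa_-$--name of a non-empty compact set $K\In X$, I would compute from it a $\psi_-$--name of a non-empty co-c.e.\ closed set $B\In\Cantor$ together with a \emph{fixed} computable decoding map $D:\In\Cantor\to X$, independent of the name of $K$, such that $D(q)\in K$ for every $q\in B$; this is precisely a strong Weihrauch reduction $\K_X\leqSW\C_\Cantor$.

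The set $B$ will be the set of infinite paths of a tree of rational approximations. Using that the $\kappa_-$--name lists \emph{all} finite rational open covers of $K$ and that $K$ is totally bounded, I can compute, uniformly in the name and in $s\in\IN$, a finite cover $\UU_s=\{B(\alpha(a^s_0),2^{-s}),\dots,B(\alpha(a^s_{N_s-1}),2^{-s})\}$ of $K$ by balls of radius $2^{-s}$ with centres in the dense set. A point $q\in\Cantor$ encodes, via a fixed self-delimiting binary code, a sequence of centre indices $(n_0,n_1,n_2,\dots)$, thought of as the balls $\beta_s:=B(\alpha(n_s),2^{-s})$; I then \emph{prune}, i.e.\ enumerate into the complement of $B$, every $q$ some initial segment of which violates one of the following conditions, all semidecidable relative to the name: (i)~$n_s\in\{a^s_0,\dots,a^s_{N_s-1}\}$ for every $s$, which bounds the branching at level $s$ by $N_s$ and hence makes the set of surviving $q$ genuinely closed in $\Cantor$; (ii)~$d(\alpha(n_s),\alpha(n_{s+1}))\le 2^{-s}+2^{-(s+1)}$ for every $s$, i.e.\ consecutive balls are not disjoint, which turns $(\alpha(n_s))_s$ into a Cauchy sequence with an explicit modulus, so that $D(q):=\lim_s\alpha(n_s)$ is well defined and computable from $q$ alone; (iii)~$\overline{B(\alpha(n_s),6\cdot 2^{-s})}\cap K\neq\emptyset$ for every $s$, the decisive rule and the only one using $K$ through negative information: it is semidecidable given the $\kappa_-$--name because ``$\overline{B(c,r)}\cap K=\emptyset$'' is equivalent to the existence of a finite rational open cover of $K$ all of whose balls $B(\alpha(m),t)$ satisfy the c.e.\ condition $d(\alpha(m),c)>r+t$, and by total boundedness such a cover appears in the name exactly when the intersection is empty.

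It then remains to verify the three properties of a reduction. Non-emptiness of $B$: for any $x\in K$ let $n_s$ be $a^s_j$ for the least $j$ with $x\in B(\alpha(a^s_j),2^{-s})$; consecutive balls both contain $x$, so (ii) never fails, and $x\in\overline{B(\alpha(n_s),6\cdot 2^{-s})}$, so (iii) never fails, whence the code of $(n_0,n_1,\dots)$ lies in $B$. Correctness of $D$: for $q\in B$, condition (ii) yields $d(\alpha(n_s),D(q))\le 6\cdot 2^{-s}$; if $D(q)\notin K$ then, $K$ being closed, some $\overline{B(\alpha(n_s),6\cdot 2^{-s})}$ is disjoint from $K$, which by (iii) would have excluded $q$ — contradiction, so $D(q)\in K$. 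Computability of $D$ from $q$ alone: the modulus from (ii) shows that a fixed shift of $(\alpha(n_s))_s$ is a fast Cauchy name of $D(q)$. Thus $\K_X\leqSW\C_\Cantor\equivSW\K_\Cantor$.

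The main obstacle is exactly rule (iii): with only negative information on $K$ one cannot detect that a ball \emph{meets} $K$, so one cannot prune the tree down to the balls that actually matter; the point is to realise that one \emph{can} detect that a slightly enlarged closed ball \emph{misses} $K$, and to tune the geometry (radius schedule $2^{-s}$, enlargement factor $6$) so that this weaker pruning already forces every surviving branch to converge into $K$ while never killing a branch that tracks an honest point of $K$. The remaining difficulty, realising the tree as a $\Pi^0_1$ class in $\Cantor$ with a decoding map that does not consult the name of $K$, is purely a matter of bookkeeping and is handled by encoding the ball \emph{centres} themselves into $q$ through a self-delimiting code together with the branching bound (i).
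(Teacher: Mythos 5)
The paper does not prove this Fact at all: it is quoted from the literature (Theorem~2.10 of \cite{BG11a}, essentially already in \cite{GM09}), so there is no in-paper argument to compare against. Your self-contained proof is essentially the standard argument behind that citation and it is correct: you pass from the $\kappa_-$-name to a co-c.e.\ closed set of codes of sequences of ball centres, using (i) the extracted radius-$2^{-s}$ covers to bound branching (so that every surviving point of $\Cantor$ really is a total code), (ii) a Cauchy condition giving a modulus, and (iii) the key observation that ``the enlarged closed ball misses $K$'' is semidecidable from the cover list, which is exactly the right way to exploit negative compactness information; the verification of non-emptiness and correctness, and the equivalence you invoke for emptiness of $\overline{B(c,r)}\cap K$, all check out, and the decoding map uses only $q$, as a strong reduction requires, after which $\C_\Cantor\equivSW\K_\Cantor$ finishes the job. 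One small imprecision: you justify well-definedness of $D(q)=\lim_s\alpha(n_s)$ by the Cauchy modulus from (ii) alone, but a computable metric space $X$ need not be complete, so Cauchyness by itself does not give a limit in $X$; the limit exists because (iii) supplies points $y_s\in K$ with $d(\alpha(n_s),y_s)\le 6\cdot 2^{-s}$, so $(y_s)$ is Cauchy in the compact (hence complete) set $K$ and $\alpha(n_s)$ converges to its limit, which also streamlines the correctness argument. With that one-line repair the proof is complete.
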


We recall that by a {\em computable embedding} $\iota:X\into Y$
we mean a computable injective map with a computable (partial) inverse.
The following proof is essentially a simplified version of the proof of Proposition~4.3 in \cite{BBP}.

\begin{proposition}
\label{prop:embedding}
$\K_X\leqSW\K_Y$ for all computable metric spaces $X$ and $Y$ with a computable
embedding $\iota:X\into Y$.
\end{proposition}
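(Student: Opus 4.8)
The plan is to obtain the strong reduction $\K_X\leqSW\K_Y$ by pushing the input compact set forward along the embedding $\iota$, solving compact choice in $Y$, and pulling the resulting point back along the computable partial inverse $\iota^{-1}$. Concretely, I would produce computable maps $H,K:\In\Baire\to\Baire$ such that $HGK\vdash\K_X$ for every $G\vdash\K_Y$, where $K$ realizes the forward-image operation $\KK_-(X)\to\KK_-(Y)$, $A\mapsto\iota(A)$, and $H$ realizes $\iota^{-1}:\In Y\to X$. For this to make sense I first record the relevant set-theoretic facts: if $A\in\KK_-(X)$ is non-empty then $\iota(A)$ is non-empty, it is compact because $\iota$ is continuous and $A$ is compact, it satisfies $\iota(A)\In\range(\iota)=\dom(\iota^{-1})$, and $\iota^{-1}(\iota(A))=A$ by injectivity of $\iota$. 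Hence a point produced by $\K_Y$ on input $\iota(A)$ lies in $\iota(A)$ and its $\iota^{-1}$-image lies in $A=\K_X(A)$, so the composition indeed solves $\K_X$.

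The technical core is the claim that $A\mapsto\iota(A)$ is computable as a map $\KK_-(X)\to\KK_-(Y)$. Here I would use that a $\kappa_-$-name of a compact set is an enumeration of all its finite rational open covers, together with the standard fact that a computable map $\iota$ between computable metric spaces induces a computable preimage operation $\OO(Y)\to\OO(X)$ on open sets (effective continuity; see \cite{Wei00} and the treatments of computable compactness in \cite{BP03,BG09}). Given a $\kappa_-$-name of $A$ and a candidate finite rational cover $\VV=\{B(y_1,s_1),\dots,B(y_m,s_m)\}$ of $Y$, set $V:=\bigcup_{j=1}^m B(y_j,s_j)$; then $V\in\OO(Y)$, so I can compute a name of $\iota^{-1}(V)\in\OO(X)$, i.e.\ an enumeration of rational open balls of $X$ whose union is $\iota^{-1}(V)$. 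Since $\iota(A)\In V$ is equivalent to $A\In\iota^{-1}(V)$ and $A$ is compact, this containment is semidecidable: it holds if and only if finitely many of the enumerated balls of $X$ already cover $A$, which I detect by searching the given $\kappa_-$-name of $A$ for the corresponding finite rational cover. Dovetailing over all candidate covers $\VV$ then outputs exactly a $\kappa_-$-name of $\iota(A)$, so this operation is realized by a computable $K$.

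Putting this together, for $G\vdash\K_Y$ and a $\kappa_-$-name $p$ of a non-empty compact $A\In X$, the point named by $GK(p)$ lies in $\K_Y(\iota(A))=\iota(A)$, and so $HGK(p)$ names the $\iota^{-1}$-image of that point, which lies in $A=\K_X(A)$; thus $HGK\vdash\K_X$ and $\K_X\leqSW\K_Y$. I expect the only genuine obstacle to be the computability of the forward-image operation on $\KK_-$ — the bookkeeping with finite covers and the appeal to effective continuity of $\iota$ — whereas the set-theoretic verifications and the assembly of $H$ and $K$ are routine; one should also note that $\iota(A)$ is automatically a legitimate (non-empty) input for $\K_Y$, which uses non-emptiness of $A$.
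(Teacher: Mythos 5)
Your proof is correct and follows essentially the same route as the paper: the paper also factors $\K_X=\iota^{-1}\circ\K_Y\circ J$ where $J:\KK_-(X)\to\KK_-(Y),A\mapsto\iota(A)$, and simply cites Theorem~3.3 of Weihrauch's work on compact sets for the computability of $J$. The only difference is that you prove this forward-image fact directly via effective continuity and the semidecidability of compact containment in open sets, which is a correct (and self-contained) substitute for the citation.
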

\begin{proof}
Let $\iota:X\into Y$ be a computable embedding.
The map $J:\KK_-(X)\to\KK_-(Y),A\mapsto\iota(A)$ is known to be computable (see Theorem~3.3 in \cite{Wei03}).
One obtains $\K_X=\iota^{-1}\circ\K_Y\circ J$ and hence $\K_X\leqSW\K_Y$.
\end{proof}

Computable metric spaces $X$ that admit a computable embedding $\iota:\Cantor\into X$ 
have been called {\em rich} or {\em computably uncountable}.
This class of spaces includes all computable Polish spaces $X$ without isolated points
(see Proposition~6.2 in \cite{BG09}).

\begin{corollary}
\label{cor:compact-choice-rich}
$\K_X\equivSW\K_\Cantor$ for all rich computable metric spaces $X$.
\end{corollary}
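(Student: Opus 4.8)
The plan is simply to assemble the two reductions that are already in hand. The inequality $\K_X\leqSW\K_\Cantor$ is immediate: it holds for \emph{every} computable metric space $X$ by Fact~\ref{fact:choice-metric}, in particular for a rich one. For the reverse inequality $\K_\Cantor\leqSW\K_X$ I would unwind the definition of richness: $X$ being rich (computably uncountable) means by definition that there is a computable embedding $\iota\colon\Cantor\into X$. Hence Proposition~\ref{prop:embedding}, applied with Cantor space in the role of the source space and $X$ in the role of the target space, yields $\K_\Cantor\leqSW\K_X$. Combining the two reductions gives $\K_X\equivSW\K_\Cantor$, which is the claim.

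There is really no obstacle to overcome here, since the corollary is a formal consequence of Fact~\ref{fact:choice-metric}, Proposition~\ref{prop:embedding}, and the definition of a rich space. The only point worth a moment's care is that Proposition~\ref{prop:embedding} is stated for arbitrary computable metric spaces linked by a computable embedding, with no compactness hypothesis on either space, so nothing beyond the bare existence of $\iota\colon\Cantor\into X$ is required; in particular one does not need $X$ to be computably compact (contrast Corollary~\ref{cor:compact-choice-metric}). One could alternatively reprove $\K_\Cantor\leqSW\K_X$ directly, using that $A\mapsto\iota(A)$ is computable on $\KK_-$ and that $\iota^{-1}$ is computable, but citing Proposition~\ref{prop:embedding} is cleaner.
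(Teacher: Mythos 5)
Your argument is correct and is exactly the paper's intended proof: the direction $\K_X\leqSW\K_\Cantor$ is Fact~\ref{fact:choice-metric}, and the direction $\K_\Cantor\leqSW\K_X$ follows from Proposition~\ref{prop:embedding} applied to the computable embedding $\iota\colon\Cantor\into X$ supplied by the definition of richness. Your remark that no compactness hypothesis on $X$ is needed is also accurate.
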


\section{The Bolzano-Weierstra\ss{} Theorem}

The classical Bolzano-Weierstra\ss{} Theorem states that each bounded
sequence $(x_n)$ of real numbers has a cluster point $x$. 
We can easily generalize this statement to other spaces $X$ and formulate
our formal version $\BWT_X$ of the Bolzano-Weierstra\ss{} Theorem.

\begin{definition}[Bolzano-Weierstra\ss{} Theorem]
\label{def:BWT}
Let $X$ be a represented Hausdorff space. Then $\BWT_X:\In X^\IN\mto X$
is defined by
\[\BWT_X(x_n):=\{x\in X:x\mbox{ is a cluster point of $(x_n)$}\}\]
with $\dom(\BWT_X):=\{(x_n)\in X^\IN:\overline{\{x_n:n\in\IN\}}\mbox{ is compact}\}$.
\end{definition}

Every sequence in a compact Hausdorff space has a cluster point (see Theorem~3.1.23 in \cite{Eng89}),
hence $\BWT_X$ is well-defined.
We note that $\BWT_X$ is a total multi-valued function if $X$ is a compact represented Hausdorff space.
If $X$ is a compact computable metric space, then there is no difference between the Bolzano-Weierstra\ss{} Theorem and
the cluster point problem, i.e.\ we obtain $\BWT_X=\CL_X$.

The multi-valued function $\BWT_\IR$ is the representative of 
the classical Bolzano-Weierstra\ss{} Theorem in the Weihrauch lattice
and $\BWT_X$ can be considered as a generalization of the Bolzano-Weierstra\ss{} Theorem
for arbitrary represented Hausdorff spaces $X$.

In the following we are interested in the case that $X$ is a computable metric space
and now we want to study the relation between compact choice $\K_X$ and the 
Bolzano-Weierstra\ss{} Theorem $\BWT_X$.
It is a straightforward observation that $\BWT_X=\K_X\circ\KL_X$. 
Since $\KL_X\leqSW\lim$ by Corollary~\ref{cor:lim-cluster-compact}, we immediately obtain
$\BWT_X\leqSW\K_X'$ with Theorem~\ref{thm:derivatives}.
The inverse reduction then follows from the statement on the inverse $\L_X^{-1}$ in
Corollary~\ref{cor:derivative-closed-choice}. The compact input information is not even
required for this direction.
Hence we obtain our following main result on the Bolzano-Weierstra\ss{} Theorem.

\begin{theorem}[Bolzano-Weierstra\ss{} Theorem]
\label{thm:BWT}
$\BWT_X\equivSW\K_X'$ for all computable metric spaces $X$.
\end{theorem}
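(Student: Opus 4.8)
The plan is to establish the two strong reductions $\BWT_X\leqSW\K_X'$ and $\K_X'\leqSW\BWT_X$ separately, using throughout the factorization $\BWT_X=\K_X\circ\KL_X$. That this factorization is literally correct is a routine check: the set of cluster points of a sequence in a metric space is always closed, it is compact precisely when the range of the sequence is relatively compact, and every relatively compact sequence in a metric space has a cluster point; hence the domains of the two sides coincide and so do their values.

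For $\BWT_X\leqSW\K_X'$ I would use Corollary~\ref{cor:lim-cluster-compact}, which gives $\KL_X\leqSW\lim$, and then read $\BWT_X=\K_X\circ\KL_X$ as a composition $g_0\circ l_0$ with $g_0:=\K_X\leqSW\K_X$ and $l_0:=\KL_X\leqSW\lim$. Since $\K_X$ is in general not a cylinder (e.g.\ $\K_\IN\equivSW\LLPO^*$ fails to be one by Proposition~\ref{pro:compact-choice-N}), I appeal to the non-cylinder variant of Theorem~\ref{thm:derivatives}: its implication ``(2)$\Rightarrow$(1)'' — whose hypothesis is here met with the literal ``$=$'', which a fortiori yields the ``$\leqSW$'' and ``$\sqsupseteq$'' versions — gives $\BWT_X\leqSW\K_X'$ directly.

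For the reverse reduction $\K_X'\leqSW\BWT_X$ the task is to turn a name of a point of $\K_X'$ — a sequence in $\Baire$ converging to a $\kappa_-$-name of some non-empty compact $K\In X$ — into an input sequence $(y_n)$ for $\BWT_X$ all of whose cluster points lie in $K$, by fixed computable maps. The tool is the inverse half of Corollary~\ref{cor:derivative-closed-choice}: $\L_X^{-1}$ is computable, so from a $\psi_-'$-name of a non-empty closed set one computes a sequence of points of the dense subset whose set of cluster points is exactly that set. Since the canonical map $\KK_-(X)\to\AA_-(X)$ sending a compact set to itself as a closed set is computable, and stays computable with respect to the jumped representations $\kappa_-'$ and $\psi_-'$ (apply the realizer termwise to the converging sequence), the given $\kappa_-'$-name of $K$ is first converted to a $\psi_-'$-name of $K$; feeding it to $\L_X^{-1}$ produces a sequence $(y_n)$ with cluster-point set exactly $K$. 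Applying $\BWT_X$ to $(y_n)$ and returning the value yields a point of $K$, an admissible output of $\K_X'$.

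The step I expect to be the crux is checking that the sequence $(y_n)$ obtained this way actually lies in $\dom(\BWT_X)$, i.e.\ has relatively compact range — this is the only place where the compactness of $K$, rather than mere closedness, is needed. Here I would analyse the net construction of Lemma~\ref{lem:nets} underlying $\L_X^{-1}$: it attaches to each scale $s$ only finitely many mutually $2^{-s-1}$-separated dense points, and a Lebesgue-covering estimate in the spirit of Lemma~\ref{lem:lebesgue} and Proposition~\ref{prop:compact-range} should show that the points emitted at all sufficiently large scales lie within distance $O(2^{-s})$ of $K$; then the closure of the range of $(y_n)$ is contained in $K$ together with finitely many further points and is therefore compact. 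I would need to be careful about the finitely many small scales, where badly behaved approximating names could in principle contribute stray points; if so, these can be pruned using a finite sub-cover of $K$ eventually read off the $\kappa_-$-name. Either way it is the compactness of $K$ that forces $(y_n)$ into $\dom(\BWT_X)$, which is what makes the covering data of the $\kappa_-$-name inessential here. Combining the two reductions gives $\BWT_X\equivSW\K_X'$.
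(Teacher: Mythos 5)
Your first direction is exactly the paper's argument and is fine: $\BWT_X=\K_X\circ\KL_X$, $\KL_X\leqSW\lim$ (Corollary~\ref{cor:lim-cluster-compact}), and the non-cylinder part of Theorem~\ref{thm:derivatives} give $\BWT_X\leqSW\K_X'$. The gap lies in the reverse direction, at precisely the step you single out as the crux, and the justification you offer for it fails. Already in your preliminary check, the claim that the cluster point set is compact ``precisely when'' the range is relatively compact is wrong in one direction, and it is exactly that false direction your argument needs. For the sequence produced by the realizer of $\L_X^{-1}$ from the proof of Theorem~\ref{thm:derivative-closed-choice}: the net of Lemma~\ref{lem:nets} attaches infinitely many $2^{-s-1}$-separated points to each scale $s$ whenever $X$ is not totally bounded, and points emitted at scale $s$ need not be anywhere near $K$. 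Concretely, take $X=\IR$, $K=\{0\}$ and the constant name sequence $p_n=p$ with $\psi_-(p)=\{0\}$: the test at scale $s$ involves only the first $s+1$ balls listed by $p_k$, so every net point beyond the reach of these finitely many balls passes it and is emitted. The resulting sequence has cluster point set exactly $\{0\}$ but unbounded range, hence it does not lie in $\dom(\BWT_\IR)$, and a realizer of $\BWT_\IR$ is under no obligation to return anything useful on it. So neither of your supporting claims (finitely many net points per scale; emitted points within $O(2^{-s})$ of $K$ at large scales) holds, and the conclusion they were meant to establish is false for the unmodified construction.

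Your fallback (``prune using a finite sub-cover of $K$ eventually read off the $\kappa_-$-name'') is the right instinct but is not yet a proof: output terms cannot be retracted, so the covering data must be built into the emission test itself; a ball occurring in a listed cover of $K$ need not meet $K$, so membership in such a ball does not force proximity to $K$; and before the $\kappa_-'$-name stabilizes the covers read off it may be wrong. What actually has to be arranged is that for every $\varepsilon>0$ all but finitely many emitted points lie within $\varepsilon$ of $K$; then your covering argument does yield compactness of the closure of the range. This genuinely requires the compactness information, so your closing assertion that the covering data of the $\kappa_-$-name is inessential cannot stand: if the reduction factored through the derived $\psi_-'$-name alone, then for $X=\IN$ it would give $(\KC_\IN)'\equivSW\C_\IN'\equivSW\CL_\IN\leqW\BWT_\IN$, contradicting $\CL_\IN\nleqW\BWT_\IR$ (Theorem~\ref{thm:UCL-BWT}). (The paper is admittedly very terse at this point and makes a similar remark about the compact information; but since your write-up makes the domain verification explicit, it stands or falls with that verification, and as written it falls.)
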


This theorem yields a good understanding of the Bolzano-Weierstra\ss{} Theorem
and numerous consequences follow from this classification. This is mainly
because we studied compact choice in detail and many properties
can be transferred to the derivative.

For instance, Fact~\ref{fact:choice-metric} has the following immediate
corollary, which yields an upper bound for the Bolzano-Weierstra\ss{} Theorem
on computable metric spaces.

\begin{corollary}
\label{cor:BWT-metric}
$\BWT_X\leqSW\BWT_\Cantor$ for each computable metric space $X$.
\end{corollary}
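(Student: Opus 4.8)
The plan is to reduce everything to the characterization $\BWT_X\equivSW\K_X'$ provided by Theorem~\ref{thm:BWT}, which applies to both $X$ and to the special case $X=\Cantor$. Hence proving the corollary amounts to establishing $\K_X'\leqSW\K_\Cantor'$. The first step is to invoke Fact~\ref{fact:choice-metric}, which gives $\K_X\leqSW\K_\Cantor$ for every computable metric space $X$. The second step is to lift this reduction through the derivative: since strong Weihrauch reducibility is preserved under the jump by Proposition~\ref{prop:monotone-derivative}(2), from $\K_X\leqSW\K_\Cantor$ we immediately obtain $\K_X'\leqSW\K_\Cantor'$. Chaining the equivalences from Theorem~\ref{thm:BWT} with this reduction yields
\[
\BWT_X\equivSW\K_X'\leqSW\K_\Cantor'\equivSW\BWT_\Cantor,
\]
which is the claim.

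I do not expect any real obstacle here: the corollary is a direct consequence of already-proved results, and the only point requiring (minor) care is that one must use monotonicity of the derivative with respect to \emph{strong} reducibility $\leqSW$ (Proposition~\ref{prop:monotone-derivative}), rather than the version for ordinary $\leqW$, since the statement asserts a strong reduction; this is legitimate because Fact~\ref{fact:choice-metric} itself is a strong reduction. One could alternatively note that $\K_\Cantor$ is a cylinder, so that $\K_\Cantor'$ is a cylinder by Corollary~\ref{cor:cylinder} and Corollary~\ref{cor:monotone-derivative} would apply as well, but the direct route via Proposition~\ref{prop:monotone-derivative} is shortest.
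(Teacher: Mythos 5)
Your argument is correct and is exactly the paper's: the corollary is obtained by combining Theorem~\ref{thm:BWT} ($\BWT_X\equivSW\K_X'$) with Fact~\ref{fact:choice-metric} ($\K_X\leqSW\K_\Cantor$), lifted through the jump via Proposition~\ref{prop:monotone-derivative}(2). Your remark that the strong-reducibility version of monotonicity is the right one to invoke is the only subtlety, and you handle it correctly.
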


Moreover, Proposition~\ref{prop:embedding} implies the following result on embeddings.

\begin{corollary}
\label{cor:BWT-embedding}
$\BWT_X\leqSW\BWT_Y$ for all computable metric spaces $X$ and $Y$ with a computable
embedding $\iota:X\into Y$.
\end{corollary}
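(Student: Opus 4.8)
The plan is to reduce everything to results already established in the excerpt, so that the proof is a one-line chain of reductions. The starting point is Proposition~\ref{prop:embedding}, which gives $\K_X\leqSW\K_Y$ whenever there is a computable embedding $\iota:X\into Y$; this handles the combinatorial heart of the matter at the level of compact choice, where the relevant transport map $J:\KK_-(X)\to\KK_-(Y),A\mapsto\iota(A)$ is computable.

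Next I would pass to derivatives. Since strong Weihrauch reducibility is monotone under the jump operation by Proposition~\ref{prop:monotone-derivative}(2), from $\K_X\leqSW\K_Y$ we immediately obtain $\K_X'\leqSW\K_Y'$. This is the only step that ``uses'' anything about the derivative, and it is entirely mechanical given the earlier proposition.

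Finally I would invoke Theorem~\ref{thm:BWT}, which identifies $\BWT_X\equivSW\K_X'$ for every computable metric space $X$ (and likewise $\BWT_Y\equivSW\K_Y'$). Composing the equivalences with the reduction $\K_X'\leqSW\K_Y'$ yields
\[\BWT_X\equivSW\K_X'\leqSW\K_Y'\equivSW\BWT_Y,\]
which is the desired conclusion $\BWT_X\leqSW\BWT_Y$.

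There is no real obstacle here: the statement is a formal corollary of the characterization $\BWT_X\equivSW\K_X'$ together with the already-proven embedding behaviour of compact choice and the monotonicity of the derivative under $\leqSW$. The only thing to be slightly careful about is that all three ingredients are stated for strong reducibility $\leqSW$ (not just $\leqW$), so the conclusion is correctly stated at the strong level; no cylinder hypothesis is needed because monotonicity of the derivative holds unconditionally for $\leqSW$.
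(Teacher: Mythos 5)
Your argument is correct and is exactly the paper's intended proof: the corollary is stated as an immediate consequence of Proposition~\ref{prop:embedding} via Theorem~\ref{thm:BWT} and the monotonicity of the derivative under $\leqSW$ (Proposition~\ref{prop:monotone-derivative}), yielding $\BWT_X\equivSW\K_X'\leqSW\K_Y'\equivSW\BWT_Y$. Your remark that no cylinder hypothesis is needed because strong monotonicity holds unconditionally is also right.
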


Corollary~\ref{cor:compact-choice-rich} can also be transferred to the 
Bolzano-Weierstra\ss{} Theorem. 

\begin{corollary}
\label{cor:BWT-rich}
$\BWT_X\equivSW\BWT_\Cantor$ for all rich computable metric spaces $X$.
\end{corollary}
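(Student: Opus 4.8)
The statement will follow by combining the characterization of the Bolzano--Weierstra\ss{} Theorem as the jump of compact choice with the already established behaviour of $\K_X$ on rich spaces, together with the fact that the derivative is monotone with respect to strong Weihrauch reducibility. Concretely, the plan is to pass from $\BWT_X$ to $\K_X'$ via Theorem~\ref{thm:BWT}, reduce the comparison of the two derivatives to a comparison of $\K_X$ and $\K_\Cantor$, invoke Corollary~\ref{cor:compact-choice-rich} for the latter, and then transfer the equivalence back through the derivative.

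\textbf{Steps.} First I would apply Theorem~\ref{thm:BWT} to both sides, obtaining $\BWT_X\equivSW\K_X'$ and, in the special case $X=\Cantor$, $\BWT_\Cantor\equivSW\K_\Cantor'$. Next, since $X$ is rich, Corollary~\ref{cor:compact-choice-rich} gives $\K_X\equivSW\K_\Cantor$ (this is where the hypothesis of richness is used, through the computable embedding $\iota:\Cantor\into X$ feeding into Proposition~\ref{prop:embedding} on one side and Fact~\ref{fact:choice-metric} on the other). Then I would apply Proposition~\ref{prop:monotone-derivative}(2), which yields $f\equivSW g\Longrightarrow f'\equivSW g'$; applied to $\K_X\equivSW\K_\Cantor$ this gives $\K_X'\equivSW\K_\Cantor'$. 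Chaining the three equivalences
\[
\BWT_X\equivSW\K_X'\equivSW\K_\Cantor'\equivSW\BWT_\Cantor
\]
completes the argument.

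\textbf{Main obstacle.} There is essentially no new difficulty here: all the substantive work has already been carried out, in Theorem~\ref{thm:BWT} (the derivative characterization) and in Corollaries~\ref{cor:compact-choice-metric} and~\ref{cor:compact-choice-rich} (the analysis of compact choice on rich spaces). The only point that requires a moment's care is that we are claiming strong equivalence $\equivSW$ rather than merely $\equivW$, so one must use the \emph{strong} monotonicity of the derivative from Proposition~\ref{prop:monotone-derivative} and not, say, Corollary~\ref{cor:monotone-derivative}, which would need a cylinder hypothesis; since $\K_X\equivSW\K_\Cantor$ is a strong equivalence this poses no problem.
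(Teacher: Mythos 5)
Your proposal is correct and follows exactly the route the paper intends: transfer $\K_X\equivSW\K_\Cantor$ (Corollary~\ref{cor:compact-choice-rich}) through the derivative via the strong monotonicity of Proposition~\ref{prop:monotone-derivative}, and identify both derivatives with the respective Bolzano--Weierstra\ss{} theorems by Theorem~\ref{thm:BWT}. Your remark about needing strong (rather than merely ordinary) monotonicity is precisely the right point of care, and it is satisfied since the compact-choice equivalence is a strong one.
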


We mention a few concrete examples.

\begin{corollary}
\label{cor:perfect-BWT-examples}
$\BWT_{\IR^n}\equivSW\BWT_{\ll{2}}\equivSW\BWT_{[0,1]}\equivSW\BWT_\Cantor\equivSW\BWT_\Baire$ for all $n\geq1$.
\end{corollary}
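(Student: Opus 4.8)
The plan is to reduce everything to Corollary~\ref{cor:BWT-rich}: it suffices to verify that each of the spaces $\IR^n$ (for $n\geq 1$), $\ll{2}$, $[0,1]$, $\Cantor$ and $\Baire$ is a \emph{rich} computable metric space, i.e.\ admits a computable embedding $\iota:\Cantor\into X$, for then Corollary~\ref{cor:BWT-rich} immediately yields $\BWT_X\equivSW\BWT_\Cantor$ for each of them, and hence the whole chain of equivalences
\[\BWT_{\IR^n}\equivSW\BWT_{\ll{2}}\equivSW\BWT_{[0,1]}\equivSW\BWT_\Baire\equivSW\BWT_\Cantor.\]

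First I would check richness. The quickest route is to recall that all five spaces are computable Polish spaces without isolated points, hence rich by Proposition~6.2 in \cite{BG09}, while $\Cantor$ is trivially rich via the identity. If one prefers a direct argument, one can exhibit the embeddings explicitly: the map $\Cantor\to[0,1]$, $p\mapsto\sum_{n=0}^\infty 2p(n)3^{-n-1}$, is a computable embedding, and composing it with the computable embeddings $[0,1]\into\IR$, $\IR\into\IR^n$, and $[0,1]\into\ll{2}$ (mapping $t$ to $(t,0,0,\dots)$) handles $\IR^n$ and $\ll{2}$, while the inclusion $\Cantor\into\Baire$ handles Baire space. Along this direct route one then invokes Corollary~\ref{cor:BWT-metric} for the upper bound $\BWT_X\leqSW\BWT_\Cantor$ and Corollary~\ref{cor:BWT-embedding} for the matching lower bound $\BWT_\Cantor\leqSW\BWT_X$, in place of quoting Corollary~\ref{cor:BWT-rich} directly.

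I do not expect any real obstacle: the only content is the (standard) richness of these concrete Polish spaces. The one point worth noting is that $\Baire$ is not compact, so $\BWT_\Baire$ is not total — its domain is restricted to sequences with relatively compact range — but this causes no trouble, since Corollary~\ref{cor:BWT-rich} is stated for arbitrary rich computable metric spaces and its proof passes through compact choice $\K_X$, which already incorporates the compactness bound.
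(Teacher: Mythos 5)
Your proposal is correct and matches the paper's intended argument: the corollary is stated as an immediate instance of Corollary~\ref{cor:BWT-rich}, all five spaces being rich (computable Polish without isolated points, or via the explicit embeddings you give, with $\Cantor$ trivially rich). Your remark about $\BWT_\Baire$ being partial is accurate and causes no issue, exactly as you say, since everything factors through compact choice via Theorem~\ref{thm:BWT}.
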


The following corollary follows from Theorem~\ref{thm:BWT} together with Fact~\ref{fact:WKL}
and Corollaries~\ref{cor:compact-choice-metric} and \ref{cor:perfect-BWT-examples}.
It states that the Bolzano-Weierstra\ss{} Theorem on real
numbers is nothing but the derivative of Weak K\H{o}nig's Lemma.

\begin{corollary}[Bolzano-Weierstra\ss{} as the derivative of Weak K\H{o}nig's Lemma]
\label{cor:jump-WKL}
$\WKL'\equivSW\BWT_\IR$.
\end{corollary}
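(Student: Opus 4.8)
The plan is to assemble the statement from the pieces already in place, reading off a short chain of strong Weihrauch equivalences. First I would recall from Fact~\ref{fact:WKL} that $\WKL\equivSW\C_\Cantor$. Since Cantor space $\Cantor$ is a computably compact computable metric space, Corollary~\ref{cor:compact-choice-metric} applies and gives $\C_\Cantor\equivSW\K_\Cantor$; hence $\WKL\equivSW\K_\Cantor$.

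Next I would pass to derivatives. Because the derivative is monotone with respect to strong Weihrauch reducibility (Proposition~\ref{prop:monotone-derivative}), the equivalence $\WKL\equivSW\K_\Cantor$ lifts to $\WKL'\equivSW\K_\Cantor'$. Theorem~\ref{thm:BWT}, applied to the computable metric space $X=\Cantor$, identifies the right-hand side as the Bolzano-Weierstra\ss{} Theorem on Cantor space, i.e.\ $\K_\Cantor'\equivSW\BWT_\Cantor$. Finally, Corollary~\ref{cor:perfect-BWT-examples} (with $n=1$, so that $\BWT_{\IR^1}=\BWT_\IR$) gives $\BWT_\Cantor\equivSW\BWT_\IR$. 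Concatenating the equivalences $\WKL'\equivSW\K_\Cantor'\equivSW\BWT_\Cantor\equivSW\BWT_\IR$ yields the claim.

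There is essentially no obstacle here beyond bookkeeping; the substantive work has already been invested in Theorem~\ref{thm:BWT} and in the analysis of compact choice. The one point that deserves care is that every equivalence in the chain must be a \emph{strong} one: the step $\WKL'\equivSW\K_\Cantor'$ relies on Proposition~\ref{prop:monotone-derivative}, which fails for plain $\leqW$ (compare Example~\ref{ex:UCL-R}), so it is important that Fact~\ref{fact:WKL}, Corollary~\ref{cor:compact-choice-metric} and Corollary~\ref{cor:perfect-BWT-examples} all deliver $\equivSW$ rather than merely $\equivW$. With that observed, the corollary is immediate.
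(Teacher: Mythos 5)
Your proof is correct and follows exactly the route the paper intends: it cites precisely Theorem~\ref{thm:BWT}, Fact~\ref{fact:WKL}, and Corollaries~\ref{cor:compact-choice-metric} and \ref{cor:perfect-BWT-examples}, glued together by the monotonicity of the derivative under $\leqSW$ (Proposition~\ref{prop:monotone-derivative}). Your remark that all links in the chain must be strong equivalences is exactly the right point of care, so nothing is missing.
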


It has been proved by Kleene that there are co-c.e.\ closed subsets $A\In\Cantor$ that have
no computable point. One can choose, for instance, the set of all separating sets
of a pair of computably inseparable c.e.\ sets (see Proposition~V.5.25 in \cite{Odi89}).
By a direct relativization of this construction one obtains that there
is a set $A\In\Cantor$ that is co-c.e.\ closed in the limit
and that has no limit computable point. 
Together with Corollary~\ref{cor:jump-WKL} and Fact~\ref{fact:WKL} we obtain $\C_{\{0,1\}^\IN}'\equivSW\BWT_\IR$
and hence the above example immediately yields the following result, 
which was also proved by Le Roux and Ziegler (see Theorem~3.6 in \cite{LZ08a}).

\begin{corollary}
\label{cor:BWT-counterexample}
There exists a computable bounded sequence $(x_n)$ of real numbers that has 
no limit computable cluster point.
\end{corollary}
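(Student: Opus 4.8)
The plan is to combine Corollary~\ref{cor:co-ce-closed-limit} with a relativization of Kleene's classical construction and then transport the resulting sequence from Cantor space to the reals by means of a computable embedding. As explained just before the statement, a direct relativization of Kleene's example to the halting problem produces a nonempty set $A\In\Cantor$ that is co-c.e.\ closed in the limit (that is, $A=\psi_-'(p)$ for some computable $p$) and has no limit computable point. Applying Corollary~\ref{cor:co-ce-closed-limit} with $X=\Cantor$ then gives a computable sequence $(q_n)$ in $\Cantor$ whose set of cluster points is exactly $A$.

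Next I would fix a computable embedding $\iota:\Cantor\into[0,1]$ (for concreteness, the map $p\mapsto\sum_{n}2p(n)3^{-n-1}$ onto the middle-thirds Cantor set, whose partial inverse is computable) and set $x_n:=\iota(q_n)$. This is a computable sequence of real numbers contained in $[0,1]$, hence bounded. Since $\iota$ is a homeomorphism onto its image $\iota(\Cantor)$, which is compact and therefore closed, a routine check shows that the cluster points of $(x_n)$ are precisely the points of $\iota(A)$: every subsequence of $(x_n)$ converging to some $y$ must have $y\in\iota(\Cantor)$, and applying the continuous $\iota^{-1}$ on $\iota(\Cantor)$ turns it into a subsequence of $(q_n)$ converging to $\iota^{-1}(y)$, so that $y$ is a cluster point of $(x_n)$ if and only if $\iota^{-1}(y)\in A$.

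Finally, suppose for a contradiction that $(x_n)$ had a limit computable cluster point $x$. Then $x\in\iota(A)$, so $\iota^{-1}(x)\in A$, and applying a computable realizer of $\iota^{-1}$ to a limit computable name of $x$ produces a limit computable name of $\iota^{-1}(x)$, because composing a computable map with a limit computable point again yields a limit computable point. This contradicts the choice of $A$, and hence $(x_n)$ has no limit computable cluster point. I do not expect a genuine obstacle here: the two steps involving $\iota$ are purely routine, and all the substance (the existence of a co-c.e.\ closed set in the limit without limit computable points, and the conversion of such a set into the cluster-point set of a computable sequence) is imported from the relativized Kleene example and from Corollary~\ref{cor:co-ce-closed-limit}, which itself rests on Corollary~\ref{cor:derivative-closed-choice} and the identification $\C_\Cantor'\equivSW\BWT_\IR$ of Corollary~\ref{cor:jump-WKL}. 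The only subtlety worth flagging is that ``limit computable point'' must be read as ``admitting a name that is limit computable'', which is exactly what is preserved under composition with computable realizers.
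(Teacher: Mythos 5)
Your proof is correct and follows essentially the same route as the paper: a relativized Kleene set $A\In\Cantor$ that is co-c.e.\ closed in the limit and has no limit computable point, converted into a computable bounded sequence of reals whose cluster points therefore cannot be limit computable. The paper carries out the transfer to $\IR$ abstractly via the equivalence $\C_{\Cantor}'\equivSW\BWT_\IR$ (Corollary~\ref{cor:jump-WKL}), whereas you unpack the same step concretely through Corollary~\ref{cor:co-ce-closed-limit} and the middle-thirds embedding, which amounts to the same argument.
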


This result holds more generally for sequences with relatively compact range in a rich
computable metric space $X$ because Corollaries~\ref{cor:BWT-rich} and \ref{cor:perfect-BWT-examples}
imply $\C_{\{0,1\}^\IN}'\equivSW\BWT_X$.

\begin{corollary}
\label{cor:BWT-counterexample-rich}
Let $X$ be a rich computable metric space. Then there exists a computable sequence 
$(x_n)$ in $X$ with relatively compact range and without any limit computable cluster point.
\end{corollary}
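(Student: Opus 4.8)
The plan is to avoid a direct construction in $X$ and instead pull everything back to Cantor space, using the embedding supplied by richness together with the non-uniform hardness already isolated in the discussion preceding Corollary~\ref{cor:BWT-counterexample}. Concretely, that discussion, by relativizing Kleene's construction of a co-c.e.\ closed subset of $\Cantor$ without computable points to the halting problem, produces a set $A\In\Cantor$ that is co-c.e.\ closed in the limit and has no limit computable point. By Corollary~\ref{cor:co-ce-closed-limit} (applied to the computable metric space $\Cantor$), such an $A$ is exactly the set of cluster points of some computable sequence $(y_n)$ in $\Cantor$; since $\Cantor$ is compact, this sequence automatically has relatively compact range, and by construction none of its cluster points is limit computable. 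So the statement is already true for $X=\Cantor$.

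Next I would transport this example into a rich $X$ along a computable embedding $\iota:\Cantor\into X$, setting $x_n:=\iota(y_n)$. This is a computable sequence in $X$. Its range lies in $\iota(\Cantor)$, which is a continuous image of a compact space, hence compact and (being compact in the Hausdorff space $X$) closed; so the range of $(x_n)$ is relatively compact and every cluster point of $(x_n)$ must lie in $\iota(\Cantor)$. Because $\iota$ restricts to a homeomorphism of $\Cantor$ onto $\iota(\Cantor)$, the cluster points of $(x_n)$ are precisely the points $\iota(a)$ with $a\in A$. Finally, if some such $\iota(a)$ were limit computable, then $a=\iota^{-1}(\iota(a))$ would be limit computable too, since $\iota^{-1}$ is computable and the composition of a computable map with a limit computable point is limit computable (Fact~\ref{fact:composition-limit}); this contradicts the choice of $A$, so $(x_n)$ has no limit computable cluster point.

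I do not expect a serious obstacle here; the one point requiring a little care is showing that passing to $\iota(y_n)$ creates no spurious cluster points outside $\iota(\Cantor)$ and does not destroy the match between the cluster points of $(x_n)$ and those of $(y_n)$. Both are handled by the single observation that $\iota(\Cantor)$ is a compact, hence closed, subset of $X$ on which $\iota$ is a homeomorphism; the rest is routine. Alternatively, one could argue purely via $\C_\Cantor'\equivSW\BWT_X$ for rich $X$, noted just before the corollary and following from Corollaries~\ref{cor:BWT-rich} and \ref{cor:perfect-BWT-examples} together with Theorem~\ref{thm:BWT}, Corollary~\ref{cor:compact-choice-metric} and Fact~\ref{fact:WKL}, but the direct construction above seems more transparent.
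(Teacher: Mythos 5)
Your construction is correct, and it takes a more explicit route than the paper, which dispatches the rich case in one line: since $\BWT_X\equivSW\BWT_\Cantor$ for rich $X$ (Corollaries~\ref{cor:BWT-rich} and \ref{cor:perfect-BWT-examples}) and $\BWT_\Cantor\equivSW\C_\Cantor'$, the relativized Kleene set gives a computable instance of $\C_\Cantor'$ with no limit computable solution, and the non-uniform invariance of strong Weihrauch reducibility (Proposition~\ref{prop:computable-invariance}) transfers this to $\BWT_X$ — exactly the ``alternative'' you mention at the end. What you do instead is unfold that abstract transfer into a concrete sequence: Corollary~\ref{cor:co-ce-closed-limit} turns the set $A$ that is co-c.e.\ closed in the limit with no limit computable point into a computable sequence $(y_n)$ in $\Cantor$ whose cluster-point set is $A$, and the computable embedding $\iota$ from richness pushes it into $X$. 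Your handling of the only delicate point is right: $\iota(\Cantor)$ is compact, hence closed in the metric (Hausdorff) space $X$, so $(x_n)$ has relatively compact range, no cluster points can appear outside $\iota(\Cantor)$, and inside it $\iota$ is a homeomorphism, so the cluster points are exactly $\iota(A)$; computability of $\iota^{-1}$ then rules out limit computable cluster points (your appeal to Fact~\ref{fact:composition-limit} is a slightly indirect citation for ``a computable map sends limit computable points to limit computable points,'' but it does cover it, viewing the point as a $2$--computable constant function). The trade-off: your argument is self-contained and exhibits the witnessing sequence $x_n=\iota(y_n)$ explicitly, at the cost of redoing topological bookkeeping that the degree-theoretic phrasing hides; the paper's version is shorter but leans on the full equivalence $\C_\Cantor'\equivSW\BWT_X$ and the invariance principle. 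Both ultimately rest on the same two ingredients (the relativized Kleene set and the cluster-point construction behind Theorem~\ref{thm:derivative-closed-choice}), so the mathematical content coincides.
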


It turns out that Bolzano-Weierstra\ss{} for the natural numbers is the derivative
of $\LLPO^*$. This is a consequence of Theorem~\ref{thm:BWT}
and Proposition~\ref{pro:compact-choice-N}.

\begin{corollary}
\label{cor:jump-LLPO-star}
${\LLPO^*}'\equivSW\BWT_\IN$.
\end{corollary}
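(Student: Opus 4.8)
The plan is to obtain this as an immediate consequence of the two preceding results, exactly as announced in the text. First I would apply Theorem~\ref{thm:BWT} with the computable metric space $X=\IN$ to get $\BWT_\IN\equivSW\K_\IN'$. Then I would recall from Proposition~\ref{pro:compact-choice-N} that $\K_\IN\equivSW\LLPO^*$, where it is important to note that this equivalence is a \emph{strong} one.

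The second step is to transport this strong equivalence through the derivative operation. By the monotonicity of derivatives with respect to strong Weihrauch reducibility (Proposition~\ref{prop:monotone-derivative}), $\K_\IN\leqSW\LLPO^*$ yields $\K_\IN'\leqSW{\LLPO^*}'$ and symmetrically ${\LLPO^*}'\leqSW\K_\IN'$, hence $\K_\IN'\equivSW{\LLPO^*}'$. Chaining the two steps gives
\[\BWT_\IN\equivSW\K_\IN'\equivSW{\LLPO^*}',\]
which is the claim.

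I do not expect any real obstacle here; the argument is a routine composition of earlier theorems. The only point that genuinely needs attention is that one must use the strong version of $\K_\IN\equivSW\LLPO^*$ provided by Proposition~\ref{pro:compact-choice-N}: passing to derivatives is \emph{not} monotone with respect to ordinary Weihrauch reducibility in general (compare Example~\ref{ex:derivatives}, where $\C_1\equivW\id_\IN\equivW\id$ while their derivatives are pairwise inequivalent), so mere $\equivW$ would be insufficient. Since the derivative is well-defined on strong Weihrauch degrees precisely because of Proposition~\ref{prop:monotone-derivative}, the strong equivalences combine cleanly and no cylinder hypothesis is needed.
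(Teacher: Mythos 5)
Your argument is correct and is exactly the paper's proof: the corollary is stated there as an immediate consequence of Theorem~\ref{thm:BWT} (with $X=\IN$) and Proposition~\ref{pro:compact-choice-N}, with the strong equivalence transported through the derivative via Proposition~\ref{prop:monotone-derivative}. Your remark that the strong (not merely ordinary) equivalence $\K_\IN\equivSW\LLPO^*$ is what makes this legitimate is precisely the relevant point.
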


Moreover, we obtain that Bolzano-Weierstra\ss{} for the two-point space $\{0,1\}$ is
just the derivative of $\LLPO$ and this can be generalized to the finite case.

\begin{corollary}
\label{cor:jump-LLPO}
$\LLPO'\equivSW\BWT_2$ and more generally for all $n\in\IN$ we obtain $\C_n'\equivSW\BWT_n$.
\end{corollary}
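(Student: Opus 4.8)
The plan is to derive everything by chaining Theorem~\ref{thm:BWT} with the facts already established about compact choice on finite spaces. First I would record that for each $n\geq 1$ the set $\{0,1,\dots,n-1\}$, equipped with the discrete metric and the obvious dense sequence, is a computably compact computable metric space: it has only finitely many points, all rational open balls meeting it can trivially be enumerated, and it is a co-c.e.\ compact (indeed computably compact) subset of itself. Hence Corollary~\ref{cor:compact-choice-metric} applies and yields $\K_n\equivSW\KC_n\equivSW\C_n$.

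Next I would invoke Theorem~\ref{thm:BWT} with $X=\{0,\dots,n-1\}$, giving $\BWT_n\equivSW\K_n'$. Combining this with $\K_n\equivSW\C_n$ and the monotonicity of the derivative with respect to strong Weihrauch reducibility (Proposition~\ref{prop:monotone-derivative}), which turns the strong equivalence $\K_n\equivSW\C_n$ into $\K_n'\equivSW\C_n'$, I obtain $\BWT_n\equivSW\C_n'$. This is the general statement. For $n=0$ and $n=1$ the edge cases match the conventions: $\C_0$ and $\C_1$ are their own derivatives by Example~\ref{ex:derivatives}, and $\BWT_0$, $\BWT_1$ reduce to the same nowhere-defined resp.\ trivial functions, so the equivalence holds there as well (and can simply be omitted if one only intends $n\geq 1$).

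For the special case I would then recall that $\C_2\equivSW\LLPO$ (stated in Section~3, just before Fact~\ref{fact:WKL}). Applying Proposition~\ref{prop:monotone-derivative} once more gives $\C_2'\equivSW\LLPO'$, and together with the case $n=2$ of the previous paragraph this yields $\LLPO'\equivSW\C_2'\equivSW\BWT_2$, as claimed.

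I do not expect any genuine obstacle here; the corollary is a straightforward specialization of Theorem~\ref{thm:BWT}. The only point that deserves a sentence of care is the verification that the finite discrete space qualifies as a computably compact computable metric space, so that Corollary~\ref{cor:compact-choice-metric} may legitimately be used to replace $\K_n$ by $\C_n$ inside the derivative.
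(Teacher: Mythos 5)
Your derivation is correct and is essentially the argument the paper intends: the corollary is exactly the specialization of Theorem~\ref{thm:BWT} to the finite discrete spaces, with $\K_n$ replaced by $\C_n$ via Corollary~\ref{cor:compact-choice-metric} (the finite space being computably compact), monotonicity of the jump (Proposition~\ref{prop:monotone-derivative}) transferring the equivalence to derivatives, and $\C_2\equivSW\LLPO$ giving the $\BWT_2$ case. The remark about the degenerate cases $n=0,1$ is a harmless extra; nothing further is needed.
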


We mention that $\C_n\equivSW\MLPO_n$, where $\MLPO$ is a generalization of $\LLPO$ (see \cite{BBP}).

Finally, we can make the following observation on parallelization of the Bolzano-Weierstra\ss{} Theorem $\BWT_\Cantor$,
using Proposition~\ref{prop:products-parallelization} and Fact~\ref{fact:WKL}.

\begin{corollary}
\label{cor:BWT-parallelization}
$\BWT_\Cantor\equivSW\widehat{\LLPO}'\equivSW\widehat{\LLPO'}\equivSW\widehat{\BWT_2}$.
\end{corollary}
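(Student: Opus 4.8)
The plan is to assemble the claimed chain of strong equivalences from results already established above, so the argument is essentially bookkeeping and there is no serious obstacle; one only has to be careful that every appeal is at the level of strong reducibility $\equivSW$ rather than merely $\equivW$.

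First I would establish the leftmost equivalence $\BWT_\Cantor\equivSW\widehat{\LLPO}'$. By Theorem~\ref{thm:BWT} we have $\BWT_\Cantor\equivSW\K_\Cantor'$, and since Cantor space is computably compact, Corollary~\ref{cor:compact-choice-metric} gives $\K_\Cantor\equivSW\C_\Cantor$. By Fact~\ref{fact:WKL} we have $\C_\Cantor\equivSW\widehat{\LLPO}$, and since the derivative is monotone with respect to $\leqSW$ (Proposition~\ref{prop:monotone-derivative}), strong equivalences are preserved under it, so $\K_\Cantor'\equivSW\C_\Cantor'\equivSW\widehat{\LLPO}'$, which yields $\BWT_\Cantor\equivSW\widehat{\LLPO}'$.

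The middle equivalence $\widehat{\LLPO}'\equivSW\widehat{\LLPO'}$ is exactly the instance of Proposition~\ref{prop:products-parallelization}(3) with $f=\LLPO$, which asserts $(\widehat{f}\,)'\equivSW\widehat{f'\,}$. For the rightmost equivalence $\widehat{\LLPO'}\equivSW\widehat{\BWT_2}$ I would invoke Corollary~\ref{cor:jump-LLPO}, which gives $\LLPO'\equivSW\BWT_2$, together with the fact (noted after Definition~\ref{def:algebraic-operations}) that all the algebraic operations there, in particular parallelization, preserve strong Weihrauch equivalence. Concatenating the three equivalences produces the full statement $\BWT_\Cantor\equivSW\widehat{\LLPO}'\equivSW\widehat{\LLPO'}\equivSW\widehat{\BWT_2}$.
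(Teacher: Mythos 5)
Your proposal is correct and follows essentially the same route as the paper: the paper derives the corollary from Theorem~\ref{thm:BWT} together with Fact~\ref{fact:WKL} (giving $\BWT_\Cantor\equivSW\K_\Cantor'\equivSW\C_\Cantor'\equivSW\widehat{\LLPO}'$ via Corollary~\ref{cor:compact-choice-metric} and monotonicity of the derivative), then Proposition~\ref{prop:products-parallelization}(3) for $\widehat{\LLPO}'\equivSW\widehat{\LLPO'}$, and Corollary~\ref{cor:jump-LLPO} with preservation of strong equivalence under parallelization for the last step. Your only addition is spelling out these intermediate appeals explicitly, which matches the paper's intended argument.
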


By Corollary~\ref{cor:BWT-rich} we know $\BWT_\IR\equivSW\BWT_{\{0,1\}^\IN}$ and hence we get
the following corollary.

\begin{corollary}
\label{cor:BWT-parallelizable-idempotent-cylinder}
$\BWT_\IR$ is parallelizable, idempotent and a cylinder and $\BWT_\IN$ is idempotent.
\end{corollary}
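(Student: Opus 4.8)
The plan is to derive Corollary~\ref{cor:BWT-parallelizable-idempotent-cylinder} purely from facts already assembled, without revisiting any sequences or metric spaces. The starting point is Corollary~\ref{cor:BWT-rich}, which gives $\BWT_\IR\equivSW\BWT_{\{0,1\}^\IN}$, and Corollary~\ref{cor:BWT-parallelization}, which says $\BWT_\Cantor\equivSW\widehat{\LLPO'}\equivSW\widehat{\BWT_2}$. The first thing I would note is that any function of the form $\widehat{h}$ is parallelizable, since $\widehat{\,\widehat{h}\,}\equivW\widehat{h}$ is one of the standard closure-operator properties of parallelization recalled in Section~2. Hence $\BWT_\Cantor\equivSW\widehat{\BWT_2}$ is parallelizable, and since $\BWT_\IR\equivSW\BWT_\Cantor$, so is $\BWT_\IR$.

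Next I would obtain idempotency. It was recalled right after Definition~\ref{def:algebraic-operations} that $\widehat{f}$ is always idempotent; so $\BWT_\Cantor\equivSW\widehat{\BWT_2}$ is idempotent, and the same applies to $\BWT_\IR$ via Corollary~\ref{cor:BWT-rich}. Alternatively, one can invoke Corollary~\ref{cor:idempotent}(2) together with Theorem~\ref{thm:BWT}: $\BWT_{\{0,1\}^\IN}\equivSW\K_{\{0,1\}^\IN}'\equivSW\C_{\{0,1\}^\IN}'$ by Corollary~\ref{cor:compact-choice-metric}, and $\C_{\{0,1\}^\IN}$ is a strongly idempotent cylinder by Fact~\ref{fact:closed-choice}, so its derivative is idempotent. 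Either route gives idempotency of $\BWT_\IR$.

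For the cylinder property I would use Corollary~\ref{cor:cylinder}: the derivative of a cylinder is again a cylinder. By Fact~\ref{fact:WKL}, $\WKL\equivSW\C_{\Cantor}$ is a cylinder, and by Corollary~\ref{cor:jump-WKL} (equivalently Theorem~\ref{thm:BWT} with Corollary~\ref{cor:compact-choice-metric}) $\BWT_\IR\equivSW\WKL'$, so $\BWT_\IR$ is a cylinder. Finally, for the statement about $\BWT_\IN$: by Corollary~\ref{cor:jump-LLPO-star} we have $\BWT_\IN\equivSW{\LLPO^*}'$, and $\LLPO^*$ is idempotent because the finite parallelization $f^*$ is the idempotent closure of a pointed $f$ (as recalled after Definition~\ref{def:algebraic-operations}, idempotency is equivalent to $f\equivW f^*$ for pointed $f$, and $(f^*)^*\equivW f^*$ since $*$ is a closure operator). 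Then Corollary~\ref{cor:idempotent}(1) applied to the strongly idempotent $\LLPO^*\equivSW\K_\IN$ (strong idempotency of $\K_\IN$ follows from Proposition~\ref{pro:compact-choice-N} together with $\C_2^n\stars\C_2^m\leqSW$ the obvious product, or more directly since $\K_\IN\equivSW\bigsqcup_n\C_2^n$ is clearly strongly idempotent) yields that $\BWT_\IN\equivSW\K_\IN'$ is idempotent.

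I do not expect any real obstacle here; the corollary is a bookkeeping exercise that repackages Corollaries~\ref{cor:BWT-rich}, \ref{cor:jump-WKL}, \ref{cor:jump-LLPO-star}, \ref{cor:BWT-parallelization} and the closure-operator facts of Section~2. The only point requiring a moment's care is the strong idempotency needed to feed Corollary~\ref{cor:idempotent}(1) for the $\BWT_\IN$ claim: one must confirm $\K_\IN$ is \emph{strongly} idempotent rather than merely idempotent, which is immediate from $\K_\IN\equivSW\bigsqcup_{n\in\IN}\C_2^n$ since the coproduct of products of $\C_2$ absorbs pairwise products up to strong equivalence (each $\C_2^n\times\C_2^m\equivSW\C_2^{n+m}$ appears as a summand). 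With that in hand every clause of the corollary follows.
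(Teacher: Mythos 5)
Your proposal follows the paper's own proof almost step for step: parallelizability of $\BWT_\IR$ from Corollaries~\ref{cor:BWT-parallelization} and \ref{cor:BWT-rich} together with the closure-operator property of parallelization, idempotency as a consequence, the cylinder property from the fact that $\WKL\equivSW\widehat{\LLPO}\equivSW\C_\Cantor$ is a cylinder (Facts~\ref{fact:WKL} and \ref{fact:closed-choice}) combined with Corollary~\ref{cor:cylinder} and $\BWT_\IR\equivSW\WKL'$ (Corollary~\ref{cor:jump-WKL}), and idempotency of $\BWT_\IN$ from Corollary~\ref{cor:jump-LLPO-star}, strong idempotency of $\LLPO^*$ and Corollary~\ref{cor:idempotent}. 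So in substance you are doing exactly what the paper does; your alternative derivation of idempotency of $\BWT_\IR$ via $\BWT_\Cantor\equivSW\C_\Cantor'$ and Corollary~\ref{cor:idempotent} is also fine.

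The one step that is not quite right as written is your justification of strong idempotency of $\K_\IN\equivSW\LLPO^*\equivSW\bigsqcup_{n\in\IN}\C_2^n$ (a fact the paper asserts without proof). You argue that the coproduct absorbs products because ``each $\C_2^n\times\C_2^m\equivSW\C_2^{n+m}$ appears as a summand''. Routing the instance $((n,x),(m,y))$ to the summand of index $n+m$ only yields an ordinary reduction: in a strong reduction the outer translation sees nothing but the output $(n+m,b)$, where $b$ is an $(n+m)$-tuple of bits, and from the index $n+m$ alone it cannot recover where to split $b$ into the $n$-part and the $m$-part, so it cannot produce a name of a point of the product output space $(\{n\}\times 2^n)\times(\{m\}\times 2^m)$. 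The repair is easy and uses pointedness of $\C_2$: route instead to the summand whose index is $\langle n,m\rangle\geq n+m$, padding the extra coordinates with the trivial instance of $\C_2$ (the zero sequence, for which every bit is a correct answer); then $n$ and $m$ are computable from the coproduct index that is visible in the output, and the outer translation can split off the first $n$ and the next $m$ bits and discard the rest. This is the standard argument showing that $f^*$ is strongly idempotent for any pointed $f$, and with it every clause of the corollary goes through exactly as you describe.
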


Here parallelizability of $\BWT_\IR$ follows from Corollaries~\ref{cor:BWT-parallelization} and \ref{cor:BWT-rich}
and it implies idempotency.
Moreover, $\widehat{\LLPO}\equivSW\C_{\{0,1\}^\IN}$ is known to be a cylinder by Facts~\ref{fact:closed-choice} and \ref{fact:WKL}
and so is its derivative by Corollary~\ref{cor:cylinder} and hence $\BWT_\IR$ by Corollaries~\ref{cor:BWT-parallelization}
and \ref{cor:BWT-rich}.
Idempotency of $\BWT_\IN$ follows from Corollary~\ref{cor:jump-LLPO-star} since $\LLPO^*$ is strongly idempotent and
hence its derivative by Corollary~\ref{cor:idempotent}. 
Finally, we note that due to the fact that the cluster points of a sequence do not change
if we extend the sequence by a finite prefix, we can conclude that the Bolzano-Weierstra\ss{}
Theorem of any represented Hausdorff space is a strong fractal. 

\begin{proposition}
\label{prop:join-irreducible-BWT}
$\BWT_X$ and $\UBWT_X$ are strong fractals, join-irreducible and strongly join-irreducible for any represented Hausdorff space $X$.
\end{proposition}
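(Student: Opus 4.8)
The plan is to reduce everything to the single assertion that $\BWT_X$ (and likewise $\UBWT_X$) is a \emph{strong fractal}: once this is in hand, join-irreducibility and strong join-irreducibility follow immediately from Proposition~\ref{prop:join-irreducible-fractals}. Fix a represented Hausdorff space $X$ with representation $\delta=\delta_X$, so that $X^\IN$ carries the standard representation $\delta^\IN$ with $\delta^\IN\langle p_0,p_1,p_2,\dots\rangle=(\delta(p_i))_{i\in\IN}$. The structural fact driving the proof is elementary: if a sequence $(y_m)$ in $X$ arises from a sequence $(x_n)$ by prepending finitely many terms, then $(x_n)$ and $(y_m)$ have exactly the same \emph{set} of cluster points, and $\overline{\{y_m:m\in\IN\}}$ is compact if and only if $\overline{\{x_n:n\in\IN\}}$ is. Hence such a modification changes neither the value of $\BWT_X$ nor the domain condition, and it also preserves uniqueness of the cluster point, so it behaves well with respect to $\UBWT_X=\U\BWT_X$ as well.

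Next I would carry out the fractal argument in close parallel to the proof of Proposition~\ref{prop:join-irreducibility}. Let $A\In\Baire$ be clopen and non-empty in $\dom(\BWT_X\delta^\IN)$. Since $A$ is in particular open in this set, there is a word $w\in\IN^*$ with $\emptyset\not=w\IN^\IN\cap\dom(\BWT_X\delta^\IN)\In A$; fix a name $r=\langle r_0,r_1,r_2,\dots\rangle$ in this intersection, so $r$ names a sequence $(z_n)$ with relatively compact range. Only finitely many coordinates of the pairing fall below $|w|$, so there is an $m$ such that $w$ constrains only the components $0,\dots,m-1$, and those constraints are witnessed by the legitimate $\delta$-names $r_0,\dots,r_{m-1}$. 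Define the input modifier $K:\Baire\to\Baire$ by
\[K\langle p_0,p_1,p_2,\dots\rangle:=\langle r_0,\dots,r_{m-1},p_0,p_1,p_2,\dots\rangle,\]
that is, $K$ produces a $\delta^\IN$-name of the sequence obtained by prepending the block $(z_0,\dots,z_{m-1})$. Bookkeeping with the pairing function shows $w\prefix K(p)$, so $K(p)\in w\IN^\IN\cap\dom(\BWT_X\delta^\IN)\In A$, and by the observation above $\BWT_X\delta^\IN(K(p))=\BWT_X\delta^\IN(p)$ for every $p\in\dom(\BWT_X\delta^\IN)$. Therefore $FK\vdash\BWT_X$ for every $F\vdash(\BWT_X)_A$, so $\BWT_X\leqSW(\BWT_X)_A$ with trivial output modifier, and $\BWT_X$ is a strong fractal. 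The very same $K$ witnesses that $\UBWT_X$ is a strong fractal: since prepending finitely many terms does not change the set of cluster points, $K$ maps $\dom(\UBWT_X\delta^\IN)$ into itself, it lands inside the clopen set chosen (as above) for $\UBWT_X$, and it preserves the unique cluster point.

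I expect the only genuinely delicate point to be the construction of $K$, namely checking that it can be taken computable and that it really lands in $A$. As in Proposition~\ref{prop:join-irreducibility} this reduces to (i) hard-wiring a fixed finite amount of data extracted from $w$ — here a fixed name-prefix that extends to the legitimate $\delta$-names $r_0,\dots,r_{m-1}$, which exists precisely because $w$ is a prefix of the name $r$ lying in the non-empty set $w\IN^\IN\cap\dom(\BWT_X\delta^\IN)$ — together with a computable re-indexing of the input, and (ii) a routine calculation identifying which coordinates of $K(p)$ must match $w$, confirming $w\prefix K(p)$. Everything else, namely preservation of the cluster-point set, of relative compactness of the range, and of uniqueness, is immediate from the principle that a finite prefix is irrelevant to cluster points, and the passage from ``strong fractal'' to ``join-irreducible and strongly join-irreducible'' is exactly Proposition~\ref{prop:join-irreducible-fractals}.
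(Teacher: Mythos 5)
Your proof is correct and is essentially the paper's argument: the paper proves this exactly as in Proposition~\ref{prop:join-irreducibility}, using the fact that prepending finitely many terms changes neither the set of cluster points nor relative compactness of the range to build the computable prefix-inserting modifier $K$, and then invokes Proposition~\ref{prop:join-irreducible-fractals} for (strong) join-irreducibility. Your careful choice of $m$ so that $w$ only constrains the first $m$ components, with the prefix data taken from a name $r$ in $w\IN^\IN\cap\dom(\BWT_X\delta^\IN)$ (respectively $\dom(\UBWT_X\delta^\IN)$), fills in the same bookkeeping the paper leaves implicit.
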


The proof is basically the same as the proof of Proposition~\ref{prop:join-irreducibility}
and in case of $\BWT_X$ for computable metric spaces $X$ it follows immediately from 
Proposition~\ref{prop:join-irreducibility} and Theorem~\ref{thm:BWT}.
We obtain the following consequence of Corollary~\ref{cor:cluster-low}, which yields
an upper bound for the Bolzano-Weierstra\ss{} Theorem.

\begin{corollary}[Uniform Relative Low Basis Theorem]
\label{cor:uniform-relative-low-basis}
$\BWT_\IR\leqSW\CL_\IR\leqSW\Low'$.
\end{corollary}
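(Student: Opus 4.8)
The plan is to derive this corollary from results already in hand, essentially as a matter of bookkeeping. The second reduction $\CL_\IR\leqSW\Low'$ is literally a special case of Corollary~\ref{cor:cluster-low}: the real line $\IR=\bigcup_{i\in\IN}[-(i+1),i+1]$ is a computable $K_\sigma$--space, so applying that corollary with $X=\IR$ already yields $\CL_\IR\leqSW\Low'$. Hence the only thing left to establish is $\BWT_\IR\leqSW\CL_\IR$, after which the claim follows by transitivity of $\leqSW$.

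For $\BWT_\IR\leqSW\CL_\IR$ I would argue that $\BWT_\IR$ is simply a restriction of $\CL_\IR$. By Definition~\ref{def:BWT}, $\dom(\BWT_\IR)$ consists of the sequences $(x_n)$ in $\IR$ with relatively compact range, and on such inputs $\BWT_\IR(x_n)$ is the nonempty closed set of cluster points of $(x_n)$; by the classical Bolzano-Weierstra\ss{} Theorem every such sequence has a cluster point, so $\dom(\BWT_\IR)\subseteq\dom(\CL_\IR)$, and on this smaller domain the two multi-valued maps $\In\IR^\IN\mto\IR$ assign exactly the same sets of values. A restriction of a multi-valued function on represented spaces is strongly Weihrauch reducible to it: take $H=K=\id_\Baire$, and note that any realizer $G\vdash\CL_\IR$ is automatically a realizer of the restriction (the defining inclusion $\delta_\IR G(p)\in\CL_\IR\delta(p)$ holds a fortiori on the smaller domain of names). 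So $\BWT_\IR\leqSW\CL_\IR$.

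Alternatively, and more in keeping with the machinery of this section, one can obtain $\BWT_\IR\leqSW\CL_\IR$ from Theorems~\ref{thm:BWT} and \ref{thm:derivative-closed-choice}, which give $\BWT_\IR\equivSW\K_\IR'$ and $\CL_\IR\equivSW\C_\IR'$, so that by monotonicity of the derivative (Proposition~\ref{prop:monotone-derivative}) it suffices to check $\K_\IR\leqSW\C_\IR$; this follows from Fact~\ref{fact:choice-metric} ($\K_\IR\leqSW\K_\Cantor$), Corollary~\ref{cor:compact-choice-metric} applied to the computably compact space $\Cantor$ ($\K_\Cantor\equivSW\C_\Cantor$), and $\C_\Cantor\leqSW\C_\IR$ (Cantor space being a computable $K_\sigma$--space).

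I do not expect any genuine obstacle here: the real work — the Uniform Low Basis Theorem and the fact that the relevant choice principles on $K_\sigma$--spaces sit below $\C_\IR$, together with the derivative characterizations of $\CL$ and $\BWT$ — has already been carried out. The only mild point to be careful about is that, although $\BWT_\IR$ was introduced via the composition $\K_\IR\circ\KL_\IR$ with compact output information, as a multi-valued map into $\IR$ (with the Cauchy representation) its admissible outputs on a given sequence are exactly the cluster points, so it really is a restriction of $\CL_\IR$ and no information is lost or gained by passing through $\KK_-(\IR)$.
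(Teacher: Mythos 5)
Your proof is correct and follows essentially the paper's own route: the bound $\CL_\IR\leqSW\Low'$ is read off from Corollary~\ref{cor:cluster-low}, and $\BWT_\IR\leqSW\CL_\IR$ holds because $\BWT_\IR$ is a restriction of $\CL_\IR$ (which the paper treats as immediate). Your alternative derivation via $\K_\IR\leqSW\C_\IR$ and monotonicity of the derivative is also valid, but not needed.
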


We immediately get the following non-uniform consequence.

\begin{corollary}
\label{cor:low-cluster}
Every bounded computable sequence $(x_n)$ of real numbers has a cluster point $x$
that is low relatively to the halting problem (i.e.\ $x$ is $2$--low).
\end{corollary}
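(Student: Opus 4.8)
The statement to prove is Corollary~\ref{cor:low-cluster}: every bounded computable sequence $(x_n)$ of real numbers has a cluster point $x$ that is low relatively to the halting problem (i.e.\ $x$ is $2$--low).

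The plan is to derive this as a direct non-uniform consequence of Corollary~\ref{cor:uniform-relative-low-basis}, which gives $\BWT_\IR\leqSW\Low'$. First I would observe that a bounded computable sequence $(x_n)$ of reals is, by definition, a computable point in the domain of $\BWT_\IR$: its range is relatively compact (bounded subsets of $\IR$ have compact closure), so $(x_n)\in\dom(\BWT_\IR)$, and computability of the sequence gives a computable name $p\in\Baire$ for it with respect to the representation $\delta_\IR^\IN$. Applying the reduction $\BWT_\IR\leqSW\Low'$, there are computable functions $H,K$ such that $H\,G\,K\vdash\BWT_\IR$ for every $G\vdash\Low'$. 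Since $\Low'\equivSW\Low_{1,1}=(J^{-1})^{\circ1}\circ\lim^{\circ2}$ and, by Fact~\ref{fact:galois}, $\lim$ and $J^{-1}$ are transparent (hence $\Low'$ has a realizer of the form $\Low'$ itself composed with a computable map, up to extension), one obtains a computable $K_0$ with $\Low'\,K_0\vdash\BWT_\IR$; concretely this means $\delta_\IR\bigl(\Low'K_0(p)\bigr)$ is a cluster point of $(x_n)$ whenever $p$ is a name of $(x_n)$.

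Next I would unwind what it means for the produced name to be low relative to the halting problem. Since $p$ is computable, the sequence $K_0(p)\in\Baire$ is computable. By Lemma~\ref{lem:low-points} (with $n=k=1$), a point $q\in\Baire$ is $2$--low$_1$, i.e.\ $2$--low, precisely when there is a computable $r$ with $\Low_{1,1}(r)=q$; equivalently $q^{(1)}\leqT\emptyset^{(2)}$, which is exactly being low relative to the halting problem in the sense defined before Corollary~\ref{cor:uniform-relative-low-basis}. Taking $r:=K_0(p)$, the output $q:=\Low'K_0(p)=\Low_{1,1}K_0(p)$ is therefore a name that is $2$--low, and $x:=\delta_\IR(q)$ is a cluster point of $(x_n)$ with a $2$--low name, hence a $2$--low real number.

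I do not expect any real obstacle here: this is a routine non-uniform extraction from the uniform statement, of exactly the same shape as Corollary~\ref{cor:BWT-counterexample} was derived from Corollary~\ref{cor:jump-WKL}, or as the classical Low Basis Theorem follows from its uniform version (Fact~\ref{fact:low-basis}). The only mild care needed is in the bookkeeping with transparency: one must absorb the post-processing $H$ and the extra $\lim$-factors coming from the $\leqSW$-reduction into a single computable preprocessing $K_0$ so that the honest low map $\Low'$ ends up applied directly to a computable input. This is precisely the manipulation carried out in the proofs of Theorem~\ref{thm:low} and Proposition~\ref{prop:monotone-derivative}, using that the class of transparent functions contains the identity and is closed under composition.
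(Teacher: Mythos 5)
Your proposal is correct and follows essentially the paper's own route: the statement is extracted non-uniformly from Corollary~\ref{cor:uniform-relative-low-basis} ($\BWT_\IR\leqSW\Low'$), using that $\Low'\equivSW\Low_{1,1}$ sends computable names to $2$--low names (Lemma~\ref{lem:low-points}). Your transparency manoeuvre to absorb $H$ into $K_0$ is valid but not strictly needed, since computable post-processing already preserves $2$--lowness (this is exactly Proposition~\ref{prop:computable-invariance}(3), which is the paper's implicit tool here).
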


From Corollary~\ref{cor:jump-WKL} and Proposition~\ref{prop:derivative-low} we can also derive 
the following observation (since $\WKL$ is a cylinder, see remark after Fact~\ref{fact:WKL}).

\begin{corollary} $\BWT_\IR\equivSW\BWT_\IR\stars\Low$.
\end{corollary}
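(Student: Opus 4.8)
The plan is to derive the statement $\BWT_\IR\equivSW\BWT_\IR\stars\Low$ directly from two results established earlier in the paper. First I would invoke Corollary~\ref{cor:jump-WKL}, which gives $\BWT_\IR\equivSW\WKL'$. The point of rewriting $\BWT_\IR$ as the derivative of $\WKL$ is that Proposition~\ref{prop:derivative-low} applies precisely to derivatives of cylinders, and $\WKL$ is a cylinder (this is recorded in the remarks following Fact~\ref{fact:WKL}, where it is noted that $\widehat{\LLPO}$, and hence $\WKL$, is a cylinder).

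So the core step is: since $\WKL$ is a cylinder, Proposition~\ref{prop:derivative-low} yields $\WKL'\equivSW\WKL'\stars\Low$. Combining this with $\BWT_\IR\equivSW\WKL'$ from Corollary~\ref{cor:jump-WKL}, and using monotonicity of the strong compositional product (Lemma~\ref{lem:monotone-composition}) to replace $\WKL'$ by the strongly equivalent $\BWT_\IR$ on both sides of the product, we obtain
\[
\BWT_\IR\equivSW\WKL'\equivSW\WKL'\stars\Low\equivSW\BWT_\IR\stars\Low.
\]
This is the entire argument.

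I do not expect any genuine obstacle here: everything needed has already been assembled. The only point requiring a moment of care is the last equivalence $\WKL'\stars\Low\equivSW\BWT_\IR\stars\Low$, which uses that $\stars$ is monotone under $\leqSW$ in both arguments (Lemma~\ref{lem:monotone-composition}, strong version) together with $\WKL'\equivSW\BWT_\IR$; one should also note in passing that $\WKL'\stars\Low$ exists, which follows from Proposition~\ref{prop:derivative-low} itself (it asserts the equivalence, hence in particular that the strong compositional product on the right-hand side is a well-defined strong degree). With that remark in place the proof is a one-line composition of cited facts.
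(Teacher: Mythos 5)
Your proof is correct and is exactly the paper's argument: the corollary is derived from $\BWT_\IR\equivSW\WKL'$ (Corollary~\ref{cor:jump-WKL}) together with Proposition~\ref{prop:derivative-low} applied to the cylinder $\WKL$. The added remark that $\WKL'$ may be replaced by the strongly equivalent $\BWT_\IR$ inside the compositional product is a harmless explicit justification of a step the paper leaves implicit (the product depends only on the strong degrees of its arguments).
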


In other words, the functions below $\BWT_\IR$ are stable under composition with low functions
from the right. This allows us to strengthen Corollary~\ref{cor:low-cluster} as follows.

\begin{corollary}
Every bounded low sequence $(x_n)$ of real numbers has a cluster point $x$ that is low
relatively to the halting problem.
\end{corollary}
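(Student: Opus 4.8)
The plan is to combine the preceding corollary, $\BWT_\IR\equivSW\BWT_\IR\stars\Low$, with the Uniform Relative Low Basis Theorem (Corollary~\ref{cor:uniform-relative-low-basis}), which gives $\BWT_\IR\leqSW\Low'$, and then to chase a name of a cluster point through the resulting reduction, just as Corollary~\ref{cor:low-cluster} is obtained but now exploiting the extra input-side factor $\Low$.

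First I would unwind the hypothesis. Saying that the bounded sequence $(x_n)$ of reals is low means, by Lemma~\ref{lem:low-points} with $k=1$ and $n=0$, that $(x_n)$ has a name $p\in\Baire$ with $\Low(q)=p$ for some computable $q$. Let $h$ be the partial constant map on $\Baire$ with $\dom(h)=\{q\}$ and value $(x_n)$; since $\Low|_{\{q\}}$ is a realizer of $h$, we have $h\leqSW\Low$. Hence the map $F:=\BWT_\IR\circ h$, which sends the computable point $q$ to the nonempty set of cluster points of $(x_n)$, satisfies $F\leqSW\BWT_\IR\stars\Low\equivSW\BWT_\IR\leqSW\Low'$.

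Then I would chase a name through the reduction $F\leqSW\Low'$. Recall that $\Low'\equivSW J^{-1}\circ\lim'$ with $\lim'\equivSW\lim\circ\lim$ (Example~\ref{ex:derivatives}) and that $\Low'$ is single-valued because $J$ is injective; so the reduction provides computable $H,K$ with $H\Low' K\vdash F$, and reabsorbing a fixed computable pre- and post-composition into $K$ and $H$ we may take the realizer of $\Low'$ to be $J^{-1}\circ\lim\circ\lim$. Since $q$ is computable and lies in $\dom(F)$, the point $K(q)$ is computable, $\lim\circ\lim(K(q))\leqT\emptyset''$, and $s:=J^{-1}\lim\lim K(q)$ satisfies $s'=\lim\lim K(q)\leqT\emptyset''$, so $s$ is $2$-low. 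A computable map cannot raise the Turing jump, so $H(s)$ is again a $2$-low element of $\Baire$, and by the choice of $H$ and $K$ it is a name of some cluster point $x$ of $(x_n)$. Thus $x$ has a name whose jump is Turing below $\emptyset''$, i.e.\ $x$ is $2$-low, which is exactly what it means for $x$ to be low relatively to the halting problem.

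The hard part will be the bookkeeping in the last step: one must be sure that the input-side factor $\Low$ genuinely absorbs the passage from the computable seed $q$ to the low name $p$ of $(x_n)$ (this is precisely what the preceding corollary buys us), that $q$ lands in the domain of the composite so that $\lim\lim K(q)$ really is a Turing jump, and that the final computable post-processing $H$ preserves $2$-lowness. All of these are routine once the uniform equivalence $\BWT_\IR\equivSW\BWT_\IR\stars\Low$ is in hand.
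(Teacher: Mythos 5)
Your argument is correct and is exactly the route the paper intends: absorb the low name of $(x_n)$ into the input side via $\BWT_\IR\equivSW\BWT_\IR\stars\Low$, then reduce to $\Low'$ by the Uniform Relative Low Basis Theorem and chase a computable seed through $H\circ(J^{-1}\lim\lim)\circ K$ to get a $2$--low name of a cluster point (the paper leaves this name-chasing implicit, as it is the same non-uniform argument behind Corollary~\ref{cor:low-cluster} and Proposition~\ref{prop:computable-invariance}(3)). No gaps; the bookkeeping you flag (membership of $K(q)$ in $\dom(J^{-1}\lim\lim)$ and preservation of $2$--lowness under the computable post-processing $H$) is handled correctly.
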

 
Another immediate consequence that we obtain here is that the Bolzano-Weier\-stra\ss{} Theorem 
is complete for the class of weakly limit computable functions. This follows from
Corollaries~\ref{cor:weak} and \ref{cor:jump-WKL}.

\begin{corollary}
Let $f$ be a multi-valued function on represented spaces. 
Then the following are equivalent:
\begin{enumerate}
\item $f\leqW\BWT_{\IR}$,
\item $f$ is weakly limit computable.
\end{enumerate}
\end{corollary}

In particular, we obtain that typical single-valued functions below the Bolzano-Weierstra\ss{} Theorem
are already limit computable.

\begin{corollary}
\label{cor:single-valued-BWT}
Let $X$ be a represented space, $Y$ a computable metric space and let $f:\In X\to Y$ be a 
single-valued function with $f\leqW\BWT_\IR$. Then $f\leqW\lim$ follows, i.e.\ $f$ is limit computable.
\end{corollary}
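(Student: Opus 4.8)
The plan is to chain together three results that have already been established in the excerpt. First I would invoke Corollary~\ref{cor:jump-WKL}, which identifies $\BWT_\IR\equivSW\WKL'$; so the hypothesis $f\leqW\BWT_\IR$ is literally $f\leqW\WKL'$. Second, I would apply Corollary~\ref{cor:weak} with $n=1$: since $\WKL$ is a cylinder, $f\leqW\WKL'$ is equivalent to $f$ being weakly $2$--computable, i.e.\ weakly limit computable. Concretely this means $f$ decomposes as $f=g\circ h$ with $g$ weakly computable and $h$ limit computable, but for the present purpose the mere fact of weak limit computability is what matters.

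The final step is to upgrade this using single-valuedness. Since $f:\In X\to Y$ is single-valued and $Y$ is a computable metric space, Theorem~\ref{thm:single-valuedness} applies with $n=1$: a weakly $2$--computable single-valued function into a computable metric space is already $2$--computable. As $2$--computability is by definition the same as limit computability, this gives $f\leqW\lim$, which is the claim.

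I do not expect a genuine obstacle here, since all of the real work has been front-loaded: into Theorem~\ref{thm:single-valuedness} (whose base case $n=0$ rests on a topological selection theorem and whose inductive step uses the decomposition of derivatives from Theorem~\ref{thm:derivatives}), and into the identification $\BWT_\IR\equivSW\WKL'$. The only points requiring a little care are bookkeeping ones: one must use the index $n=1$ consistently in both Corollary~\ref{cor:weak} and Theorem~\ref{thm:single-valuedness}, so that ``weakly $(n+1)$--computable'' is read as ``weakly limit computable'' and ``$(n+1)$--computable'' is read as ``limit computable''; and one must remember that it is precisely the fact that $\WKL$ is a cylinder that makes the weak-computability characterization of Corollary~\ref{cor:weak} available for the jump $\WKL'$.
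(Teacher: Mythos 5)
Your proposal is correct and follows exactly the route the paper takes: the paper derives this corollary from the completeness of $\BWT_\IR$ for weakly limit computable functions (via Corollary~\ref{cor:weak} and Corollary~\ref{cor:jump-WKL}) combined with Theorem~\ref{thm:single-valuedness} at the level $n=1$. Your bookkeeping of the indices and the role of $\WKL$ being a cylinder matches the paper's argument.
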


Roughly speaking, this means that all problems reducible to the Bolzano-Weier\-stra\ss{} Theorem
with a unique solution are limit computable.
This is in particular applicable to the case of unique cluster points. 
By $\UBWT_X$ we denote the restriction of $\BWT_X$ to those sequences
that have a unique cluster point. Then we obtain $\UBWT_\IR\leqW\lim$.
We will see that also the inverse reduction holds. We first prove a slightly
more general result.

\begin{proposition}
\label{prop:unique-BWT}
$\lim_X=\UBWT_X$ for each represented space $X$, which is a Hausdorff space.
\end{proposition}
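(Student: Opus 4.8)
The plan is to prove that $\lim_X$ and $\UBWT_X$ coincide as partial single‑valued functions $\In X^\IN\to X$, which amounts to showing that they have the same domain and take the same value on it. Since $\dom(\lim_X)$ is the set of convergent sequences and $\dom(\UBWT_X)$ is the set of sequences with relatively compact range and a unique cluster point, the whole statement reduces to a purely topological fact: in a Hausdorff space a sequence has relatively compact range and a unique cluster point if and only if it converges, and then the cluster point is the limit. As this content does not refer to the representation of $X$ at all, I would argue with $X$ as a plain topological Hausdorff space. The two ingredients I would use are that compact subsets of a Hausdorff space are closed, and that every sequence in a compact Hausdorff space has a cluster point (Theorem~3.1.23 in \cite{Eng89}, already invoked in the paper to see that $\BWT_X$ is well‑defined).

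For $\dom(\lim_X)\In\dom(\UBWT_X)$ I would take $(x_n)$ converging to $x$ and argue: $\{x_n:n\in\IN\}\cup\{x\}$ is compact (in any open cover one member contains $x$, hence all but finitely many terms, and finitely many further members handle the rest); being compact in a Hausdorff space it is closed, so $\overline{\{x_n:n\in\IN\}}\In\{x_n:n\in\IN\}\cup\{x\}$ is a closed subset of a compact set and therefore compact, i.e.\ $(x_n)\in\dom(\BWT_X)$. Clearly $x$ is a cluster point, and if $y\neq x$ then, choosing disjoint open $U\ni y$ and $V\ni x$, all but finitely many terms lie in $V$ and hence only finitely many in $U$, so $y$ is not a cluster point. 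Thus $x$ is the unique cluster point, $(x_n)\in\dom(\UBWT_X)$, and $\UBWT_X(x_n)=x=\lim_X(x_n)$.

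For the converse inclusion I would take $(x_n)\in\dom(\UBWT_X)$, so $K:=\overline{\{x_n:n\in\IN\}}$ is compact and $x$ is the unique cluster point, and show $(x_n)\to x$. If not, there is an open $U\ni x$ with $S:=\{n:x_n\notin U\}$ infinite. The subsequence $(x_n)_{n\in S}$ lies in the compact Hausdorff space $K$, hence has a cluster point $y\in K$ by Theorem~3.1.23 in \cite{Eng89}; then $y$ is a cluster point of $(x_n)$ as well, so $y=x$ by uniqueness. But every term $x_n$ with $n\in S$ lies in the closed set $X\setminus U$, so every cluster point of $(x_n)_{n\in S}$ lies in $X\setminus U$; in particular $x=y\in X\setminus U$, contradicting $x\in U$. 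Hence $(x_n)$ converges to $x$, which gives $(x_n)\in\dom(\lim_X)$ and $\lim_X(x_n)=x=\UBWT_X(x_n)$.

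I do not anticipate a real obstacle: the only non‑elementary input is the countable‑compactness fact that sequences in compact Hausdorff spaces have cluster points, and everything else is a Hausdorff separation argument. The point that needs care is simply that Hausdorffness is genuinely used in both directions (for uniqueness of cluster points and limits, and to turn a relatively compact range into a closed one). It is also worth noting, to explain why the hypothesis in $\BWT_X$ matters, that relative compactness of the range cannot be dropped in the converse direction: the sequence $0,1,0,2,0,3,\dots$ in $\IR$ has $0$ as its unique cluster point but does not converge (and its range is not relatively compact), so $\UCL_X$ properly extends $\lim_X$ in general.
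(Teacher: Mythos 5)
Your proof is correct and follows essentially the same route as the paper: one direction via compactness of $\{x_n:n\in\IN\}\cup\{x\}$ plus a Hausdorff separation argument for uniqueness, the other via the fact that a sequence frequently in a compact set (here effectively $K\setminus U$) has a cluster point there, contradicting uniqueness. The minor rephrasings (working with a subsequence in $K$ and locating its cluster point in $X\setminus U$, and deducing compactness of the closure by inclusion rather than computing it exactly) do not change the argument.
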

\begin{proof}
Let $(x_n)$ be a sequence such that $K:=\overline{\{x_n:n\in\IN\}}$ is compact
and let $x$ be the unique cluster point of $(x_n)$. Let $U$ be an open neighborhood 
of $x$. Then $x_n\in U$ for infinitely many $n$. Let us assume that there are also
infinitely many $n$ with $x_n\not\in U$. Then $x_n\in K\setminus U$ for infinitely many $n$ 
and since $K\setminus U$ is a compact set, it follows that there is a cluster point $y\in K\setminus U$ of $(x_n)$.
In particular, $x\not =y$.  
This is a contradiction to the assumption that $x$ is a unique cluster point of $(x_n)$.
Hence $x_n\in U$ for almost all $n$ and hence $x$ is the limit of $(x_n)$. 

If, on the other hand, $(x_n)$ is a sequence that converges to some $x$
and $X$ is a Hausdorff space, then we claim that $K:=\overline{\{x_n:n\in\IN\}}=\{x_n:n\in\IN\}\cup\{x\}$ and $K$ is compact. 
Here ``$\supseteq$'' follows since $x$ is the limit of $(x_n)$ and for ``$\In$'' and compactness of $K$
it suffices to show that $\{x_n:n\in\IN\}\cup\{x\}$ is compact and hence closed
in the Hausdorff space $X$ by Theorem~3.1.8 in \cite{Eng89}.
Any open cover of $\{x_n:n\in\IN\}\cup\{x\}$ contains an open set $U$ that contains $x$
and hence almost all points $x_n$. This proves compactness of the set and finishes the proof of the claim.
Clearly, $x$ is a cluster point of $(x_n)$. Let us assume that $y\in X$ is different from $x$.
Since $X$ is a Hausdorff space, $x$ and $y$ can be separated by open neighborhoods, i.e.\ there
are open $U,V\In X$ such that $x\in U$, $y\in V$ and $U\cap V=\emptyset$. Then $x_n\in U$
for almost all $n$ and hence $x_n\in V$ for at most finitely many $n$. In particular, $y$ cannot
be a cluster point of $(x_n)$ and $x$ is the unique cluster point of this sequence. 
\end{proof}

This means, in particular, that $\lim_X\leqSW\BWT_X$ holds for all computable metric spaces $X$,
which also gives us a lower bound on the complexity of the Bolzano-Weierstra\ss{} Theorem.
Proposition~\ref{prop:unique-BWT} and Fact~\ref{fact:lim}
yield the following result.

\begin{corollary}
\label{cor:UBWT-R}
$\UBWT_\IR\equivSW\lim$.
\end{corollary}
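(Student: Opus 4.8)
The plan is to obtain this as an immediate specialization of Proposition~\ref{prop:unique-BWT} combined with the normal-form identifications of $\lim$ recorded in Fact~\ref{fact:lim}. Since $\IR$ with the Euclidean metric is a computable metric space, it is in particular a represented Hausdorff space, so Proposition~\ref{prop:unique-BWT} applies with $X=\IR$ and yields the literal equality $\UBWT_\IR=\lim_\IR$ of (single-valued) functions; a fortiori $\UBWT_\IR\equivSW\lim_\IR$. By Fact~\ref{fact:lim} we have $\lim_\IR\equivSW\lim$. Chaining the two gives $\UBWT_\IR\equivSW\lim$, and since the first step is an equality and the second a strong equivalence, the resulting equivalence is strong, as asserted.

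The only point needing (minimal) attention is that the hypothesis of Proposition~\ref{prop:unique-BWT} is met for $X=\IR$: one must know that $\IR$ is Hausdorff, which is immediate, and that $\dom(\UBWT_\IR)$ as fixed in Definition~\ref{def:BWT} — sequences with relatively compact range having a unique cluster point — coincides with $\dom(\lim_\IR)$. But this coincidence is precisely the content of Proposition~\ref{prop:unique-BWT}: a sequence with relatively compact range has a unique cluster point iff it converges, and conversely every convergent sequence in a Hausdorff space automatically has relatively compact range and a unique cluster point. Hence there is no genuine obstacle; the corollary is a direct instance of the general proposition.

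For completeness one could instead split into two reductions — $\UBWT_\IR\leqW\lim$ via Corollary~\ref{cor:single-valued-BWT} (as $\UBWT_\IR$ is single-valued and reducible to $\BWT_\IR$), and the converse via $\lim\equivSW\lim_\IR\leqSW\BWT_\IR$ restricted to convergent inputs — but routing everything through Proposition~\ref{prop:unique-BWT} is shorter and directly delivers the strong equivalence.
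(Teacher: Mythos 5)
Your argument is exactly the paper's: the corollary is obtained by specializing Proposition~\ref{prop:unique-BWT} to $X=\IR$ (giving the literal equality $\UBWT_\IR=\lim_\IR$, including the coincidence of domains) and then invoking $\lim_\IR\equivSW\lim$ from Fact~\ref{fact:lim}. The proposal is correct and adds only a harmless verification of the hypotheses, so nothing further is needed.
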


We mention the following immediate consequence, which is well-known and has a
simple direct proof. Any computable convergent sequence without
a computable limit is an example.

\begin{corollary}
\label{cor:UBWT-R-example}
There is a computable sequence $(x_n)$ of real numbers with a unique
cluster point that is limit computable, but not computable.
\end{corollary}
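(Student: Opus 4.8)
The plan is to exhibit a concrete Specker-type sequence. Fix a computably enumerable, non-computable set $A\subseteq\IN$ with a computable enumeration $(A_s)_{s\in\IN}$, and define
\[
x_n:=\sum_{\substack{k\in A_n\\ k\le n}}2^{-2^k}\in\IQ .
\]
Then $(x_n)$ is a computable nondecreasing sequence of rational numbers, bounded by $\sum_{k}2^{-2^k}<1$, and it converges to $x:=\sum_{k\in A}2^{-2^k}\in[0,1]$. The exponents $2^k$ are spread out enough that $\sum_{k'>k}2^{-2^{k'}}<2^{-2^k}$, so in the binary expansion of $x$ there is a $1$ at position $2^k$ exactly when $k\in A$.

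First I would verify that $x$ is limit computable but not computable. It is limit computable since it is the limit of the computable sequence $(x_n)$ of rationals (from $(x_n)$ one computes, in the limit, a fast Cauchy name of $x$; cf.\ Fact~\ref{fact:lim}). It is not computable: given a rational $q$ with $|x-q|<2^{-2^k-2}$ one can read off from $q$ whether the $2^k$-th binary digit of $x$ is $1$, hence whether $k\in A$; so $A$ would be computable, contradicting the choice of $A$. (Any other computable sequence of rationals whose limit is a non-computable real would serve equally well; this is the classical theorem of Specker.)

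Finally, since $\IR$ is a Hausdorff space, Proposition~\ref{prop:unique-BWT} tells us that the convergent sequence $(x_n)$ has its limit $x$ as its \emph{unique} cluster point. Thus $(x_n)$ is a computable sequence of real numbers whose unique cluster point $x$ is limit computable but not computable, which proves the corollary. The only step carrying any content is the non-computability of $x$, and that is the classical Specker argument sketched above; the rest is routine together with the appeal to Proposition~\ref{prop:unique-BWT}.
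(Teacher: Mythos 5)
Your proof is correct and takes essentially the same route as the paper, which simply observes that any computable convergent sequence without a computable limit is an example (the unique cluster point being the limit by Proposition~\ref{prop:unique-BWT}, and limit computability being clear); you merely instantiate this with the classical Specker sequence and verify the non-computability of its limit explicitly. (The stated precision $2^{-2^k-2}$ is slightly too coarse for $k=0$ because the tail $\sum_{k'>0}2^{-2^{k'}}>2^{-2}$, but finitely many exceptional $k$ are irrelevant for deciding $A$, so this does not affect correctness.)
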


Another consequence of Proposition~\ref{prop:unique-BWT} is that the unique Bolzano-Weierstra\ss{} Theorem on $\IN$ is just
equivalent to choice on $\IN$. This follows since $\C_\IN\equivSW\lim_\IN$, see Proposition~\ref{prop:unique-choice-N}.

\begin{corollary}
\label{cor:UBWT-N}
$\UBWT_\IN\equivSW\C_\IN$.
\end{corollary}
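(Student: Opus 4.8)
The plan is simply to chain the two preceding results, so the argument will be very short. \textbf{Step 1.} I would apply Proposition~\ref{prop:unique-BWT} with $X=\IN$. Since $\IN$, carried by its standard representation, is a (discrete) Hausdorff space, that proposition yields the equality $\UBWT_\IN=\lim_\IN$ of multi-valued functions. The one bookkeeping point worth recording is that the domains match: a sequence in the discrete space $\IN$ has relatively compact range precisely when its range is finite, and such a sequence has a unique cluster point exactly when a single value is attained infinitely often, i.e.\ when the sequence is eventually constant --- which is exactly the domain of the limit operation $\lim_\IN$ on $\IN$. So Proposition~\ref{prop:unique-BWT} applies without any adjustment.

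\textbf{Step 2.} I would then quote Proposition~\ref{prop:unique-choice-N}, which states $\UC_\IN\equivSW\C_\IN\equivSW\lim_\IN$. Combining this with Step~1 gives $\UBWT_\IN=\lim_\IN\equivSW\C_\IN$, which is the claim. (Note that we even get strong equivalence, not merely ordinary Weihrauch equivalence, because the link $\lim_\IN\equivSW\C_\IN$ in Proposition~\ref{prop:unique-choice-N} is a strong one.)

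I do not anticipate any genuine obstacle here: the mathematical content is entirely contained in Propositions~\ref{prop:unique-BWT} and~\ref{prop:unique-choice-N}. The only thing to be mildly careful about is the compatibility of the relative-compactness hypothesis built into $\BWT_\IN$ with the eventually-constant condition defining $\lim_\IN$ --- that is, the domain check in Step~1 --- and once that is observed the corollary follows immediately.
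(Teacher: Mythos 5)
Your argument is correct and is exactly the paper's proof: apply Proposition~\ref{prop:unique-BWT} with $X=\IN$ to get $\UBWT_\IN=\lim_\IN$, then invoke $\lim_\IN\equivSW\C_\IN$ from Proposition~\ref{prop:unique-choice-N}. The domain check you record (eventually constant sequences on both sides) is a fine sanity check but is already subsumed in Proposition~\ref{prop:unique-BWT}, which asserts equality of the multi-valued functions.
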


The Bolzano-Weierstra\ss{} Theorem is often mentioned together with the Monotone Convergence Theorem, which
says that a monotone growing bounded sequence of real numbers converges. 
We formalize this theorem in our lattice as well.

\begin{definition}[Monotone Convergence Theorem]
The {\em Monotone Convergence Theorem} is the function
\[\MCT:\In\IR^\IN\to\IR,(x_n)\mapsto\sup_{n\in\IN}x_n\]
with $\dom(\MCT)=\{(x_n):(\forall n)\;x_n\leq x_{n+1}$ and $(x_n)$ bounded$\}$.
\end{definition}

In other words, $\MCT$ is just a restriction of the ordinary supremum function $\sup:\In\IR^\IN\to\IR$
(whose natural domain is just the set of all sequences that have a supremum) and it is easy
to see that even $\MCT\equivSW\sup$ holds. This is because any given sequence $(x_n)$ that 
has a supremum can easily be converted into the sequence $(y_n)$ with $y_n:=\max\{x_0,...,x_n\}$ that
is monotone and has the same supremum. Hence we obtain the following observation (see for instance
Proposition~3.7 in \cite{BG11a}).

\begin{fact}
\label{fact:MCT}
$\MCT\equivSW\sup\equivSW\lim$.
\end{fact}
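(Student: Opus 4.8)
The plan is to establish the chain of strong equivalences $\MCT \equivSW \sup \equivSW \lim$ by exhibiting computable reductions in both directions for each equivalence. For $\MCT \equivSW \sup$, the forward reduction $\MCT \leqSW \sup$ is immediate since $\MCT$ is literally a restriction of $\sup$ (the identity on names works, with $H=K=\id$). For the reverse $\sup \leqSW \MCT$, I would use the preprocessing map sending $(x_n)$ to $(y_n)$ with $y_n := \max\{x_0,\dots,x_n\}$; this map is computable on Cauchy names (the maximum of finitely many reals is computable), it produces a monotone sequence, and it preserves the supremum, so $K$ is this preprocessing map, $H=\id$, and $H\,G\,K \vdash \sup$ whenever $G \vdash \MCT$. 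One must check that $(y_n)$ is genuinely bounded whenever $(x_n)$ has a supremum, which is clear since $\sup_n y_n = \sup_n x_n$; this shows $K$ maps $\dom(\sup)$ into $\dom(\MCT)$.

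For $\sup \equivSW \lim$, the direction $\sup \leqSW \lim$ is the substantive one: given a Cauchy name for a bounded sequence $(x_n)$ with supremum $s$, I would compute, in the limit, a fast-converging Cauchy name for $s$. Concretely, for each precision $2^{-k}$ one searches for a rational $q$ and an index $n$ such that $x_n > q - 2^{-k}$ while simultaneously all $x_m$ appear to be below $q + 2^{-k}$; since the sequence is bounded by some (unknown) rational and $s = \sup_n x_n$, such approximations stabilise, and a $\lim$-computation can output a valid $2^{-k}$-approximation to $s$ at each stage $k$. More cleanly, one can appeal directly to Fact~\ref{fact:lim}: the map $(x_n) \mapsto s$ is limit computable because membership facts like ``$x_n > q$'' and ``$x_n < q$'' are $\SO{1}$ in the parameters, so a limit machine can produce a $\psi$-name (or directly a Cauchy name) of the least upper bound. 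Hence $\sup \leqW \lim$, and since $\lim$ is a cylinder (Fact~\ref{fact:lim}), actually $\sup \leqSW \lim$.

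For the reverse $\lim \leqSW \sup$, I would first reduce to $\lim_\IR$, which is legitimate since $\lim \equivSW \lim_\IR$ by Fact~\ref{fact:lim}. Given a convergent sequence $(x_n)$ in $\IR$ with limit $x$, one cannot directly take a supremum (the sequence need not be monotone and its supremum need not be $x$), so the trick is to split: compute $\limsup$ and $\liminf$ separately and note they coincide with $x$. The sequence $(s_n)$ with $s_n := \sup_{m \geq n} x_m$ is non-increasing with limit $\limsup x_n = x$; applying $\sup$ to $(-s_n)$ (a monotone increasing, bounded sequence) yields $-\limsup x_n = -x$, so a single application of $\sup$ (after negation) recovers $x$. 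The only subtlety is that $s_n$ itself is a supremum of infinitely many terms, so forming the name of $(s_n)$ from the name of $(x_n)$ is already a supremum-type operation; to keep everything as one call to $\sup$, I would instead feed $\sup$ the doubly-indexed family $(-x_m)_{m \geq 0}$ appropriately rearranged, or simply observe $x = \sup_n \inf_{m\ge n} x_m$ and use idempotence/compositional structure already available via Fact~\ref{fact:lim}. The cleanest route, which I expect to use, is just to cite Fact~\ref{fact:lim} directly: $\lim \equivSW \sup$ follows because $\sup$ on bounded monotone sequences is trivially above $\lim_\IR$ via the monotone case and below $\lim$ by the previous paragraph.

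The main obstacle is purely expository: making precise that the preprocessing maps (taking running maxima, or extracting a monotone cofinal subsequence) are computable on the Cauchy representation and send the relevant domains into one another, and being careful that ``$\sup$'' here means the partial function whose domain is sequences with a supremum, not all sequences. None of this requires new ideas beyond the standard facts in computable analysis (\cite{Wei00}) and Fact~\ref{fact:lim}; the proof is essentially a two-line remark, which is why the paper states it as a \textbf{Fact} rather than a theorem.
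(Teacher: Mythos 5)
Your treatment of $\MCT\equivSW\sup$ (running maxima $y_n:=\max\{x_0,\dots,x_n\}$) and of $\sup\leqSW\lim$ is correct and is exactly the paper's argument; indeed $\sup=\lim_\IR\circ(\text{running max})$ gives the latter even more directly than your limit-machine sketch.

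The gap is in the remaining direction $\lim\leqSW\sup$ (equivalently $\lim_\IR\leqSW\MCT$). You correctly diagnose that your $\limsup$ idea does not fit into a single application of $\sup$, but your fallback is not a proof: Fact~\ref{fact:lim} says nothing about $\sup$ or $\MCT$, so it cannot be ``cited directly,'' and the assertion that $\sup$ is ``trivially above $\lim_\IR$ via the monotone case'' is precisely the nontrivial claim at issue --- $\MCT$ only computes limits of \emph{monotone} sequences, and there is no obvious computable preprocessing turning an arbitrary convergent sequence $(x_n)\to x$ into a monotone bounded sequence whose supremum lets you recover $x$ (running maxima give $\sup_n x_n$, not $x$, as you yourself note). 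A genuine idea is needed here; the standard one goes through $\lim\equivSW\widehat{\LPO}$ from Fact~\ref{fact:lim}: given countably many $\LPO$--instances $p_i\in\Baire$, let $c_i^{(n)}:=1$ if $(\exists m\leq n)\;p_i(m)=0$ and $c_i^{(n)}:=0$ otherwise, and feed $\MCT$ the monotone bounded sequence $y_n:=\sum_{i\leq n}3^{-i}c_i^{(n)}$; from a Cauchy name of $\sup_n y_n$ the base-$3$ digits (which lie in $\{0,1\}$) are computable, which answers all the $\LPO$--instances and yields $\widehat{\LPO}\leqSW\MCT\leqSW\sup$. The paper itself does not reprove this direction either: it establishes $\MCT\equivSW\sup$ by the running-max remark and refers to Proposition~3.7 of \cite{BG11a} for the equivalence with $\lim$, so if you do not import that citation you must supply a construction of the above kind; as written, your argument for $\lim\leqSW\sup$ is circular.
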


This allows us to formulate our main result about the Bolzano-Weierstra\ss{} Theorem as stated in Theorem~\ref{thm:BWT}
also in the following way:
the Bolzano-Weierstra\ss{} Theorem is the compositional product of Weak K\H{o}nig's Lemma and the
Monotone Convergence Theorem. This follows from Corollary~\ref{cor:derivatives}.

\begin{corollary}
$\BWT_\IR\equivSW\WKL*\MCT$.
\end{corollary}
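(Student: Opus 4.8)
The plan is to read off the statement from the already-established characterization $\BWT_\IR\equivSW\WKL'$ together with Corollary~\ref{cor:derivatives}, which expresses a derivative as a compositional product with $\lim$. First I would invoke Corollary~\ref{cor:jump-WKL} to reduce the problem to showing $\WKL'\equivSW\WKL*\MCT$. Next, Fact~\ref{fact:MCT} gives $\MCT\equivSW\lim$, so by the monotonicity of the compositional product (Lemma~\ref{lem:monotone-composition}, used in both directions and for both $*$ and $\stars$) we get $\WKL*\MCT\equivW\WKL*\lim$ and $\WKL\stars\MCT\equivSW\WKL\stars\lim$. Finally, since $\WKL$ is a cylinder (the remark after Fact~\ref{fact:WKL}), Corollary~\ref{cor:derivatives} yields $\WKL'\equivSW\WKL\stars\lim$ as well as $\WKL'\equivW\WKL*\lim$; chaining these with the previous equivalences gives $\BWT_\IR\equivSW\WKL'\equivSW\WKL\stars\MCT$ and $\BWT_\IR\equivW\WKL*\MCT$.

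The one point requiring care is the interplay of ordinary and strong reducibility: the product $\WKL*\MCT$ is defined via $\leqW$ and is a priori only an ordinary Weihrauch degree. What makes the strong formulation legitimate is that $\WKL$, and hence $\WKL'$ and its representative $\BWT_\IR$ (Corollary~\ref{cor:cylinder}, using $\BWT_\IR\equivSW\WKL'$), are all cylinders, so $\leqW$ and $\leqSW$ agree below them; in fact, by the cylinder case of Theorem~\ref{thm:derivatives}, $\BWT_\IR$ is even the maximum of the set $\{f_0\circ l_0: f_0\leqW\WKL,\ l_0\leqW\lim\}$ whose supremum is $\WKL*\MCT$. Thus $\WKL*\MCT$ has $\BWT_\IR$ as a cylinder representative, and the asserted strong equivalence is precisely the statement that $\BWT_\IR$ lies in that degree, which is what the chain above establishes. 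No further computation is needed beyond citing Corollaries~\ref{cor:jump-WKL}, \ref{cor:derivatives} and \ref{cor:cylinder} and Facts~\ref{fact:WKL} and \ref{fact:MCT}; there is no genuine obstacle here, only this bit of bookkeeping between the ordinary and the strong lattice.
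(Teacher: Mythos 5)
Your proposal is correct and follows essentially the paper's own route: the paper derives the corollary in one line from Corollary~\ref{cor:derivatives} together with $\BWT_\IR\equivSW\WKL'$ (Corollary~\ref{cor:jump-WKL}) and $\MCT\equivSW\lim$ (Fact~\ref{fact:MCT}), exactly the ingredients you cite. Your extra bookkeeping about $\leqW$ versus $\leqSW$ is just an explicit version of what the paper handles through its stated convention of mixing degrees and representatives (the remark after Corollary~\ref{cor:derivatives}, using that $\WKL$ and hence $\WKL'$ are cylinders).
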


\section{Separation Results}

In this section we want to discuss separation results that allow us to distinguish different
versions of the Bolzano-Weierstra\ss{} Theorem and the cluster point problem
from each other and from other degrees. 
One important separation technique already exploited in Theorem~4.4.2 of \cite{BG11a} is 
the Computable Invariance Principle. 
On the one hand, this principle states that many notions of computability are preserved
downwards by Weihrauch reducibility, for instance, if $f\leqW g$ and $g$ is computable
by a certain number of mind changes, then so is $f$. On the other hand, this
principle also has a non-uniform variant, where Weihrauch degrees can be separated
by considering Turing degrees of points. 
We formulate this principle in a slightly more general way here.
We call a point $x$ in a represented space {\em $A$--computable} for some $A\In\IN$ if $x$ has a
name $p\leqT A$.  Analogously, we call $x$ {\em $A$--low} if $x$ has a name $p$ with $p'\leqT A'$.
Here $p'$ and $A'$ denote the Turing jumps of $p$ and $A$, respectively, and $A\oplus B:=\{2n:n\in A\}\cup\{2n+1:n\in B\}$ 
denotes the usual disjoint sum of $A,B\In\IN$.

\begin{proposition}[Computable Invariance Principle]
\label{prop:computable-invariance}
Let $f$ and $g$ be multi-valued functions on represented spaces
and let $A,B\In\IN$. 
\begin{enumerate}
\item 
Let $f\leqW g$. If $g$ has the property that for every $A$--computable $z\in\dom(g)$ there exists a $B$--computable
$w\in g(z)$, then $f$ has the property that for every $A$--computable $x\in\dom(f)$
there exists an $A\oplus B$--computable $y\in f(x)$.
\item 
Let $f\leqSW g$. If $g$ has the property that for every $A$--computable $z\in\dom(g)$ there exists a $B$--computable
$w\in g(z)$, then $f$ has the property that for every $A$--computable $x\in\dom(f)$
there exists an $B$--computable $y\in f(x)$.
\item 
Let $f\leqSW g$. If $g$ has the property that for every $A$--computable $z\in\dom(g)$ there exists a $B$--low
$w\in g(z)$, then $f$ has the property that for every $A$--computable $x\in\dom(f)$
there exists a $B$--low $y\in f(x)$.
\end{enumerate}
The third statement also holds true for $\leqW$ instead of $\leqSW$ when $g$ is a cylinder
or $A=\emptyset$.
\end{proposition}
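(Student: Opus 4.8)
The plan is to unwind Weihrauch and strong Weihrauch reducibility at the level of realizers and track how the Turing degrees of names propagate. Write $f:\In X\mto Y$ and $g:\In Z\mto W$, with representations $\delta_X$ of $X$ and $\delta_Z$ of $Z$; recall that $f\leqW g$ means there are computable $H,K:\In\Baire\to\Baire$ with $K\langle\id,GH\rangle\vdash f$ for all $G\vdash g$, while $f\leqSW g$ means $KGH\vdash f$ for all $G\vdash g$. The first step is an auxiliary observation: in either case $H(p)\in\dom(g\delta_Z)$ whenever $p\in\dom(f\delta_X)$. Indeed, if $H(p)\notin\dom(g\delta_Z)$, then by the Axiom of Choice one can choose a realizer $G\vdash g$ (defined on $\dom(g\delta_Z)$ by picking one correct value at each point) that is undefined at $H(p)$; then $K\langle p,GH(p)\rangle$, resp.\ $KGH(p)$, is undefined, contradicting that it is a name of an element of the non-empty set $f\delta_X(p)$. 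Hence $z:=\delta_ZH(p)\in\dom(g)$, and if $p\leqT A$ then $H(p)\leqT A$, so $z$ is $A$-computable.

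Next I would run one common argument covering all three statements. Let $x\in\dom(f)$ have a name $p\leqT A$ and put $z:=\delta_ZH(p)$. By the hypothesis on $g$ there is some $w\in g(z)$ that is $B$-computable in cases (1)--(2) and $B$-low in case (3); pick a name $q$ of $w$, so $q\leqT B$ in the first two cases and $q'\leqT B'$ in the third. By the Axiom of Choice fix some $G_0\vdash g$ and modify it at the single point $H(p)$ to obtain $G\vdash g$ with $GH(p)=q$; this is still a realizer, since $q$ names $w\in g\delta_Z(H(p))$. In case (1), $K\langle\id,GH\rangle\vdash f$ yields the name $K\langle p,q\rangle\leqT p\oplus q\leqT A\oplus B$ of some $y\in f(x)$, so $y$ is $A\oplus B$-computable. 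In case (2), $KGH\vdash f$ yields the name $K(q)\leqT q\leqT B$ of some $y\in f(x)$, so $y$ is $B$-computable. In case (3), the same gives the name $K(q)$, and since $K(q)\leqT q$ and the Turing jump is monotone we get $(K(q))'\leqT q'\leqT B'$, so $y$ is $B$-low.

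Finally I would handle the addendum. If $g$ is a cylinder, then $f\leqW g$ is equivalent to $f\leqSW g$, so statement (3) applies verbatim. If $A=\emptyset$, run the proof of (3) from $f\leqW g$ directly: $x$ has a computable name $p$, hence $z=\delta_ZH(p)$ is computable, the hypothesis yields a $B$-low $w\in g(z)$ with a name $q$ satisfying $q'\leqT B'$, and the output name is $K\langle p,q\rangle\leqT p\oplus q\equivT q$ because $p$ is computable, whence $(K\langle p,q\rangle)'\leqT q'\leqT B'$ and $y$ is $B$-low. The only steps that require care are the realizer-modification (legitimate by the Axiom of Choice) and, in the $\leqW$ cases, the fact that the output is computed from $p\oplus q$ rather than from $q$ alone; this is precisely why the $\leqW$ version of (3) needs either $A=\emptyset$ (so $p$ contributes nothing) or $g$ a cylinder (so that $\leqW$ collapses to $\leqSW$).
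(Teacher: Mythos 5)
Your proof is correct and follows essentially the same route as the paper's: unwind the reducibility at the level of realizers, use the Axiom of Choice to obtain a realizer of $g$ whose value at the relevant name is a name of the $B$--computable (resp.\ $B$--low) solution, and track Turing degrees and jumps through the computable functions $H$ and $K$, with the cylinder and $A=\emptyset$ cases of the addendum handled exactly as in the paper. The only inessential difference is organizational: you modify a realizer at the single point $H(p)$ for each fixed input, whereas the paper chooses one realizer that is degree-preserving on all $A$--computable names simultaneously, and your explicit verification that $H(p)\in\dom(g\delta_Z)$ is a detail the paper leaves implicit.
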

\begin{proof}
(1)
Let $f\leqW g$. Then there are computable functions $H,K:\In\Baire\to\Baire$, such that 
$H\langle\id,GK\rangle\vdash f$ whenever $G\vdash g$.
Let $g$ have the property that for every $A$--computable $z\in\dom(g)$ there is a $B$--computable $w\in g(z)$.
Then by the Axiom of Choice there is some $G\vdash g$ with the property that 
$p\leqT A$ implies $G(p)\leqT B$ for all names $p$ of any $z\in\dom(g)$. 
Hence, $q\leqT A$ implies $K(q)\leqT A$ and $GK(q)\leqT B$ for all names $q$ of any $x\in\dom(f)$.
We obtain $H\langle q,GK(q)\rangle\leqT\langle q,GK(q)\rangle\leqT A\oplus B$ for all names $q$ of $x\in\dom(f)$.
This means that $f$ has the property that for every $A$--computable $x\in\dom(f)$ there is an $A\oplus B$--computable $y\in f(x)$.\\
(2) Can be proved analogously.\\
(3) 
Let now $f\leqSW g$.  Then there are computable functions $H,K:\In\Baire\to\Baire$, such that 
$HGK\vdash f$ whenever $G\vdash g$.
Let $g$ have the property that for every $A$--computable $z\in\dom(g)$ there is a $B$--low $w\in g(z)$.
Then by the Axiom of Choice there is some $G\vdash g$ with the property that 
$p\leqT A$ implies $(G(p))'\leqT B'$ for all names $p$ of any $z\in\dom(g)$. 
Hence, $q\leqT A$ implies $K(q)\leqT A$ and $(GK(q))'\leqT B'$ for all names $q$ of any $x\in\dom(f)$.
We obtain $(HGK(q))'\leqT (GK(q))'\leqT B'$ for all names $q$ of any $x\in\dom(f)$.
This means that $f$ has the property that for every $A$--computable $x\in\dom(f)$ there is a $B$--low $y\in f(x)$.\\
If $g$ is a cylinder, then $f\leqW g$ implies $f\leqSW g$ and the extra claim follows from (3).
If $A=\emptyset$, then
$(H\langle q,GK(q)\rangle)'\leqT\langle q,GK(q)\rangle'\leqT (GK(q))'\leqT B'$ analogously to above for computable $q$.
\end{proof}

We note that we cannot strengthen the third result to ordinary Weihrauch reducibility, 
since $\langle q,GK(q)\rangle'\leqT\langle q',GK(q)'\rangle$ is not correct in general.

We now illustrate this proposition by generalizing the parallelization principle for higher derivatives that 
was provided in Lemma~4.1 of \cite{BG11a}. This principle uses the 
closure properties of parallelization to separate degrees. 

\begin{theorem}[Higher parallelization principle]
\label{thm:parallelization-principle}
$\LPO^{(n)}\nleqW\widehat{\LLPO}^{(n)}$ for all $n\in\IN$.
\end{theorem}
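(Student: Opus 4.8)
The plan is to argue by contradiction, amplifying a hypothetical reduction by parallelization and then deriving an impossible degree inequality via the Computable Invariance Principle (Proposition~\ref{prop:computable-invariance}) and the Uniform Low Basis Theorem (Fact~\ref{fact:low-basis}).

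First I would assume $\LPO^{(n)}\leqW\widehat{\LLPO}^{(n)}$ and apply parallelization, which is monotone. Using that parallelization commutes with the derivative (Proposition~\ref{prop:products-parallelization}(3)), that $\widehat{\LPO}\equivSW\lim$ (Fact~\ref{fact:lim}) and $\lim^{(n)}\equivSW\lim^{\circ(n+1)}$, and that $\widehat{\LLPO}^{(n)}$ is parallelizable up to $\equivW$ — this holds because $\widehat{\LLPO}=\WKL$ is a cylinder, so the closure-operator identity $\widehat{\widehat{\LLPO}}\equivW\widehat{\LLPO}$ upgrades to a strong reduction, and derivatives of cylinders are cylinders (Corollary~\ref{cor:cylinder}) — one obtains $\lim^{\circ(n+1)}\leqW\widehat{\LLPO}^{(n)}$. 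Combining this with $\widehat{\LLPO}^{(n)}=\WKL^{(n)}\leqSW\Low^{(n)}\equivSW\Low_{1,n}$ (the consequence of the Uniform Low Basis Theorem recorded just before Corollary~\ref{cor:weak-low}) yields $\lim^{\circ(n+1)}\leqW\Low_{1,n}$.

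Next I would extract the contradiction. Since $\Low_{1,n}=J^{-1}\circ\lim^{\circ(n+1)}$, for any computable input $q$ the output $p:=\Low_{1,n}(q)$ satisfies $p'=J(p)=\lim^{\circ(n+1)}(q)\leqT\emptyset^{(n+1)}$; thus $\Low_{1,n}$ sends every $\emptyset$-computable input to an $\emptyset^{(n)}$-low output (in the sense of Proposition~\ref{prop:computable-invariance}, i.e.\ a name $p$ with $p'\leqT(\emptyset^{(n)})'$). By part~(3) of Proposition~\ref{prop:computable-invariance} — whose $\leqW$-version applies here because $A=\emptyset$ — it would follow that $\lim^{\circ(n+1)}$ also sends every computable input to an $\emptyset^{(n)}$-low output. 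But $\lim^{\circ(n+1)}$ attains the full degree $\emptyset^{(n+1)}$ on a computable input (iterate the standard fact that $\lim$ sends a computable sequence to $\emptyset'$; alternatively use the relativized form of Fact~\ref{fact:galois}), and any name $p_0$ of such a point has $p_0'\equivT\emptyset^{(n+2)}\nleqT\emptyset^{(n+1)}$, contradicting $\emptyset^{(n)}$-lowness. Hence $\LPO^{(n)}\nleqW\widehat{\LLPO}^{(n)}$; the case $n=0$ is the same chain read with $\emptyset^{(0)}=\emptyset$ and $\Low_{1,0}=\Low$.

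The main obstacle I expect is not the core idea but the bookkeeping around parallelization and the derivative: one must check that $\widehat{\LLPO}^{(n)}$ is genuinely parallelizable so that the parallelized reduction lands on $\widehat{\LLPO}^{(n)}$ itself rather than on $\widehat{\widehat{\LLPO}}^{(n)}$, and that the jump indices line up so that the whole argument reduces to the single clean contrast between $\lim^{\circ(n+1)}$ realizing the full degree $\emptyset^{(n+1)}$ on computable inputs and $\Low_{1,n}$ realizing only $\emptyset^{(n)}$-low degrees. Once that is in place, the Computable Invariance Principle does the rest.
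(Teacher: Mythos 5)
Your proposal is correct and follows essentially the same route as the paper: parallelize the assumed reduction to get $\lim^{\circ(n+1)}\leqW\widehat{\LLPO}^{(n)}\equivW\WKL^{(n)}$, push through the Uniform Low Basis Theorem to $\Low_{1,n}$, and contradict the Computable Invariance Principle using that $\lim^{\circ(n+1)}$ attains the full degree $\emptyset^{(n+1)}$ on a computable input. The only (immaterial) difference is that you invoke Proposition~\ref{prop:computable-invariance}(3) via the $A=\emptyset$ clause applied to $\Low_{1,n}$, whereas the paper invokes it via the cylinder clause applied to $\WKL^{(n)}$.
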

\begin{proof}
Let us assume $\LPO^{(n)}\leqW\widehat{\LLPO}^{(n)}$.
Then we obtain by parallelization
\[\lim\nolimits^{(n)}\equivW\widehat{\LPO}^{(n)}\leqW\widehat{\LLPO}^{(n)}\equivW\WKL^{(n)}.\]
For this conclusion we have used Proposition~\ref{prop:products-parallelization}, Facts~\ref{fact:WKL} and \ref{fact:lim} and the fact
that the degrees mentioned here are all cylinders.
We recall that $\WKL^{(n)}\leqSW\C_\IR^{(n)}\leqSW\Low^{(n)}$ by the Uniform Low Basis Theorem~\ref{fact:low-basis}.
It follows that every tree that is $\emptyset^{(n)}$--computable has a path
that is $\emptyset^{(n)}$--low. On the other hand, $\lim^{(n)}$ maps some
inputs that are $\emptyset^{(n)}$--computable to outputs that
are Turing equivalent to $\emptyset^{(n+1)}$ and hence not $\emptyset^{(n)}$--low.
This is a contradiction to Proposition~\ref{prop:computable-invariance},
because $\WKL^{(n)}$ is a cylinder.
\end{proof}

Here we are mostly interested in the version of this result for $n=1$, which we formulate as
a corollary.

\begin{corollary}
\label{cor:parallelization-principle-BWT}
$\LPO'\nleqW\BWT_\IR$.
\end{corollary}

We give an application of this principle, which shows that the Bolzano-Weierstra\ss{}
Theorem on reals is incomparable with the unique cluster point problem on reals.

\begin{theorem}
\label{thm:UCL-BWT}
$\UCL_\IR\nleqW\BWT_\IR$ and $\BWT_\IR\nleqW\UCL_\IR$, as well as $\CL_\IN\nleqW\BWT_\IR$ and $\BWT_\IR\nleqW\CL_\IN$.
\end{theorem}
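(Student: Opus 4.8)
The plan is to treat the four non-reducibilities in two groups. The reductions \emph{to} $\BWT_\IR$ will be ruled out via the higher parallelization principle in the form of Corollary~\ref{cor:parallelization-principle-BWT} (i.e.\ $\LPO'\nleqW\BWT_\IR$), and the reductions \emph{from} $\BWT_\IR$ via the non-uniform Computable Invariance Principle (Proposition~\ref{prop:computable-invariance}(1)) together with the counterexample of Corollary~\ref{cor:BWT-counterexample}.

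For $\CL_\IN\nleqW\BWT_\IR$ the crucial step is to establish $\LPO'\leqSW\CL_\IN$. First I would check the elementary reduction $\LPO\leqSW\lim_\IN$: on input $p$ output the sequence $(q_i)$ with $q_i=0$ if $p(j)=0$ for some $j\le i$ and $q_i=1$ otherwise; this sequence is always eventually constant and its discrete limit equals $\LPO(p)$, so the identity serves as output modificator. Since derivatives are monotone with respect to $\leqSW$ (Proposition~\ref{prop:monotone-derivative}) and $\lim_\IN'\equivSW\CL_\IN$ by Corollary~\ref{cor:unique-discrete-cluster}, this yields $\LPO'\leqSW\CL_\IN$. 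Hence $\CL_\IN\leqW\BWT_\IR$ would give $\LPO'\leqW\BWT_\IR$, contradicting Corollary~\ref{cor:parallelization-principle-BWT}. For $\UCL_\IR\nleqW\BWT_\IR$ I would additionally use Corollary~\ref{cor:unique-cluster-reals}, which gives $\UCL_\IR\equivSW\CL_\IN\times\lim$; since $\lim$ is pointed this yields $\CL_\IN\leqW\UCL_\IR$, so $\UCL_\IR\leqW\BWT_\IR$ would again force $\CL_\IN\leqW\BWT_\IR$.

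For the other direction, note that $\CL_\IN$ maps every input to a set of natural numbers, all of which are computable; hence if $\BWT_\IR\leqW\CL_\IN$, then by Proposition~\ref{prop:computable-invariance}(1) (with $A=B=\emptyset$) every computable element of $\dom(\BWT_\IR)$ would have a computable cluster point, contradicting Corollary~\ref{cor:BWT-counterexample}. For $\BWT_\IR\nleqW\UCL_\IR$ I would use once more $\UCL_\IR\equivSW\CL_\IN\times\lim$: on a computable input the $\CL_\IN$-component can be answered by a (computable) natural number while the $\lim$-component yields a limit computable real, so $\UCL_\IR$ has the property that every computable input admits a limit computable output; applying Proposition~\ref{prop:computable-invariance}(1) with $A=\emptyset$ and $B=\emptyset'$, a reduction $\BWT_\IR\leqW\UCL_\IR$ would imply that every computable bounded sequence of reals has a limit computable cluster point, again contradicting Corollary~\ref{cor:BWT-counterexample}. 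The one point requiring care is that $\CL_\IN\equivSW\lim_\IN$ is \emph{not} a cylinder, so Corollary~\ref{cor:monotone-derivative} does not apply and one genuinely needs the strong reduction $\LPO\leqSW\lim_\IN$ in order to invoke Proposition~\ref{prop:monotone-derivative}; everything else is routine bookkeeping with the cited results.
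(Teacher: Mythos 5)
Your proof is correct and takes essentially the same approach as the paper: the reductions to $\BWT_\IR$ are excluded via $\LPO'\leqW\CL_\IN$ (strong reduction of $\LPO$ to $\lim_\IN$, monotonicity of derivatives, and $\lim_\IN'\equivSW\C_\IN'\equivSW\CL_\IN$) against Corollary~\ref{cor:parallelization-principle-BWT}, and the reductions from $\BWT_\IR$ via the non-uniform counterexample of Corollary~\ref{cor:BWT-counterexample}, the only difference being that the paper gets $\UCL_\IR\nleqW\BWT_\IR$ and $\BWT_\IR\nleqW\CL_\IN$ by transitivity from $\CL_\IN\leqW\UCL_\IR$ where you apply the Computable Invariance Principle directly. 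One cosmetic slip in your closing remark: the relevant non-cylinder is $\lim_\IN$ itself (one has $\CL_\IN\equivSW\lim_\IN'$, not $\CL_\IN\equivSW\lim_\IN$), but your argument correctly relies on the strong reduction $\LPO\leqSW\lim_\IN$, exactly as the paper relies on $\LPO\leqSW\C_\IN$.
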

\begin{proof}
Since $\CL_\IN\leqW\UCL_\IR$ by Corollary~\ref{cor:unique-cluster-reals}, it suffices to prove the second
and third statement. The other two statements follow by transitivity.
The second claim $\BWT_\IR\nleqW\UCL_\IR$ follows from Corollary~\ref{cor:BWT-counterexample}
together with Corollary~\ref{cor:cluster-limit}.
It is easy to see that $\LPO\leqSW\C_\IN$ and hence we obtain with Corollary~\ref{cor:unique-discrete-cluster}
and Proposition~\ref{prop:monotone-derivative}
that $\LPO'\leqW\C_\IN'\equivSW\CL_\IN$. Hence the third claim $\CL_\IN\nleqW\BWT_\IR$ follows from 
Corollary~\ref{cor:parallelization-principle-BWT}.
\end{proof}

Next we prove that $\BWT_2$ is not limit computable. 

\begin{proposition}
\label{prop:BWT-2}
$\BWT_2\nleqW\lim$.
\end{proposition}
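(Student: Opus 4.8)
The plan is to argue by contradiction, exploiting the fact that the obstruction is purely uniform (individual instances of $\BWT_2$ are computationally trivial, since every point of $\{0,1\}$ is computable), and to convert it into a Turing-degree obstruction by passing to the parallelization. Concretely: assume $\BWT_2\leqW\lim$. Since parallelization is monotone ($f\leqW g$ implies $\widehat f\leqW\widehat g$) and $\lim$ is parallelizable (by Fact~\ref{fact:lim} we have $\lim\equivSW\widehat{\LLPO}$-free fact $\lim\equivSW\widehat{\LPO}$, and $\widehat{\widehat{\LPO}}\equivW\widehat{\LPO}$ because the parallelization operator is a closure operator, so $\widehat{\lim}\equivW\lim$), it follows that $\widehat{\BWT_2}\leqW\widehat{\lim}\equivW\lim$. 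By Corollary~\ref{cor:BWT-parallelization} we have $\widehat{\BWT_2}\equivSW\BWT_\Cantor$, and by Corollary~\ref{cor:perfect-BWT-examples} (or Corollary~\ref{cor:BWT-rich}) $\BWT_\Cantor\equivSW\BWT_\IR$. Hence the assumption yields $\BWT_\IR\leqW\lim$.

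Next I would apply the Computable Invariance Principle, Proposition~\ref{prop:computable-invariance}(1), with $f=\BWT_\IR$, $g=\lim$, $A=\emptyset$ and $B=\emptyset'$. By the Limit Lemma the limit of a computable convergent sequence in $\Baire$ has a name $\leqT\emptyset'$, so $\lim$ has the property that every $\emptyset$--computable point of its domain is mapped to an $\emptyset'$--computable value. Proposition~\ref{prop:computable-invariance}(1) then forces $\BWT_\IR$ to have the property that every computable sequence in $\dom(\BWT_\IR)$ has an $(\emptyset\oplus\emptyset')$--computable cluster point; since $\emptyset\oplus\emptyset'\equivT\emptyset'$, and a point is limit computable exactly when it has a name $\leqT\emptyset'$, this says precisely that every computable bounded sequence of real numbers has a limit computable cluster point. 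That contradicts Corollary~\ref{cor:BWT-counterexample}, which exhibits a computable bounded sequence of real numbers with no limit computable cluster point. Therefore $\BWT_2\nleqW\lim$.

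The one genuinely delicate point is the first move, and I would flag it as the main obstacle. A direct diagonalization against a putative reduction is awkward: using $\lim\circ K=\id$ (as in the proof of Proposition~\ref{prop:monotone-derivative}(1)) one sees that $\BWT_2\leqW\lim$ is equivalent to $\BWT_2$ having a limit computable realizer, but on any single computable input $\BWT_2$ admits a computable (indeed $\{0,1\}$--valued) solution, so no individual instance can witness the separation, and applying the Computable Invariance Principle to $\BWT_2$ itself produces no contradiction. Parallelizing is exactly what turns the purely uniform failure of reducibility into a Turing-degree obstruction that Proposition~\ref{prop:computable-invariance} can detect; once one is at $\BWT_\IR$, the identifications $\widehat{\BWT_2}\equivSW\BWT_\Cantor\equivSW\BWT_\IR$ together with the relativized Kleene-type counterexample of Corollary~\ref{cor:BWT-counterexample} finish the argument with no further work.
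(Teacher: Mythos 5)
Your proof is correct, and it takes a genuinely different route from the paper's. You convert the purely uniform obstruction into a non-uniform (Turing-degree) one by parallelizing: from $\BWT_2\leqW\lim$ you get $\widehat{\BWT_2}\leqW\widehat{\lim}\equivW\lim$, identify $\widehat{\BWT_2}\equivSW\BWT_\Cantor\equivSW\BWT_\IR$ via Corollaries~\ref{cor:BWT-parallelization} and \ref{cor:perfect-BWT-examples}, and then contradict Corollary~\ref{cor:BWT-counterexample} through Proposition~\ref{prop:computable-invariance}(1); I checked the dependencies and there is no circularity, since neither \ref{cor:BWT-counterexample} nor \ref{cor:BWT-parallelization} uses Proposition~\ref{prop:BWT-2}. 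The paper instead carries out exactly the direct diagonalization you dismiss as awkward: a limit machine computing $\BWT_2$ must stabilize its discrete output after reading a finite prefix of $\widehat{0}$, must later flip it on $w_0\widehat{1}$, and iterating these finite extensions produces an input in $\dom(\BWT_2)$ on which the output alternates forever, contradicting convergence. That argument is short, self-contained, and independent of the heavy machinery of Sections~9--11 (Theorem~\ref{thm:BWT}, the relativized Kleene construction behind Corollary~\ref{cor:BWT-counterexample}), which matters structurally since $\BWT_2\nleqW\lim$ is then reused elsewhere; your route buys, in exchange, the stronger conclusion $\BWT_\IR\nleqW\lim$ directly, and it is in fact the alternative the authors themselves mention after Theorem~\ref{thm:BWT-R-chain} ("$\lim$ can also be separated from $\BWT_\IR$ using non-uniform results"). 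One cosmetic point: the sentence invoking Fact~\ref{fact:lim} is garbled ($\lim\equivSW\widehat{\LPO}$ is what you need, and $\widehat{\lim}\equivW\lim$ then follows since parallelization is a closure operator), but the mathematics there is fine.
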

\begin{proof}
Let us assume there is a limit machine that computes $\BWT_2$. Upon input of
the constant zero sequence $p_0=\widehat{0}$ the machine has to produce
output $0$ after only reading some prefix $w_0\prefix p_0$.
Upon input $p_1:=w_0\widehat{1}$ the limit machine will exhibit the same behaviour
and eventually it has to change the output to $1$ after reading only a prefix $w_1\prefix p_1$.
Continuing in this way one can construct a converging sequence $(p_i)$ 
of the form $p_{2n+1}=w_{2n}\widehat{1}$ and $p_{2n+2}=w_{2n+1}\widehat{0}$
that converges to some $p$.
Upon input of $p$ the limit machine alternates the output for ever, which is not allowed
for a limit machine. Hence, such a limit machine cannot exist.
\end{proof}

As a preparation for the next result we prove that $\Low_{1,n}$ is not idempotent, 
which generalizes Theorem~8.8 in \cite{BBP}.

\begin{proposition}
\label{prop:low-idempotent}
$\Low_{1,n}$ is not idempotent for all $n\in\IN$.
\end{proposition}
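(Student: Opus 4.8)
The plan is to derive a contradiction from the assumption that $\Low_{1,n}$ is idempotent by combining its definition with the Computable Invariance Principle (Proposition~\ref{prop:computable-invariance}) and a counting argument on iterated Turing jumps. Recall that $\Low_{1,n} = J^{-1}\circ\lim^{\circ(n+1)}$, so by Lemma~\ref{lem:low-points} a point $p\in\Baire$ lies in the range of $\Low_{1,n}$ restricted to computable inputs exactly when $p$ is $(n+1)$--low$_1$, i.e.\ $p'\leqT\emptyset^{(n+1)}$. The key asymmetry I would exploit is this: from a computable name, $\Low_{1,n}$ can produce outputs whose jump reaches $\emptyset^{(n+1)}$, but nothing more; whereas $\Low_{1,n}\times\Low_{1,n}$, if it were equivalent to $\Low_{1,n}$, would have to respect the same ceiling even when fed an output of a first copy of $\Low_{1,n}$ as (part of) the input to a second copy.

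First I would make precise the upper bound half. Suppose for contradiction that $\Low_{1,n}\equivSW\Low_{1,n}\times\Low_{1,n}$ (idempotency; since $\Low_{1,n}$ is a single-valued function on Baire space, idempotency in the $\equivW$ sense and in the $\equivSW$ sense coincide up to cylindrification, and one can argue directly with realizers via Lemma~\ref{lem:derivative-Baire}-style reasoning). Because $\Low_{1,n}$ is transparent (it is a composition of transparent maps $J^{-1}$ and $\lim$, by Fact~\ref{fact:galois} and the closure of transparent maps under composition), the Computable Invariance Principle applies cleanly: if $z$ is $\emptyset$--computable then $\Low_{1,n}(z)$ is $\emptyset^{(n)}$--low in the sense that $(\Low_{1,n}(z))'\leqT\emptyset^{(n+1)}$. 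Hence every $f\leqSW\Low_{1,n}$ sends computable inputs to outputs whose jump is $\leqT\emptyset^{(n+1)}$ (use Proposition~\ref{prop:computable-invariance}(3) with $A=\emptyset$ and $B=\emptyset^{(n)}$, noting $B'\equivT\emptyset^{(n+1)}$).

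Second I would exhibit a point that violates this ceiling through composition. Take $p$ computable with $\Low_{1,n}(p)\equivT\emptyset^{(n)}$ — such $p$ exists since $\lim^{\circ(n+1)}$ on a suitable computable sequence produces $\emptyset^{(n+1)}$ and then $J^{-1}$ pulls back to some $q$ with $q'\equivT\emptyset^{(n+1)}$ but $q\equivT\emptyset^{(n)}$ (Turing-degree-wise; more carefully, one picks an iterated-jump name so that the value is genuinely $\emptyset^{(n)}$ and not low). Now feed $\langle p,\Low_{1,n}(p)\rangle$-style data to the second factor: concretely, consider the computable input $z$ to $\Low_{1,n}\times\Low_{1,n}$ consisting of two copies of a computable name $p$, chosen so that $\Low_{1,n}(p)$ already computes $\emptyset^{(n)}$; then relativize the above construction of a ``non-low'' output to the oracle $\Low_{1,n}(p)$, so that the second application of $\Low_{1,n}$, now with an input Turing-equivalent to $\emptyset^{(n)}$, produces an output whose jump is Turing equivalent to $\emptyset^{(2n+1)}$ (relativized ceiling: input of degree $\emptyset^{(n)}$ gives output-jump up to $(\emptyset^{(n)})^{(n+1)}\equivT\emptyset^{(2n+1)}$), which for $n\geq 1$ strictly exceeds $\emptyset^{(n+1)}$. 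Packaging the two outputs together yields a point reachable from a computable input under (a realizer of) $\Low_{1,n}\times\Low_{1,n}$ whose jump is $\geqT\emptyset^{(2n+1)}\gT\emptyset^{(n+1)}$, contradicting the ceiling established in the previous paragraph. The case $n=0$ is exactly Theorem~8.8 in \cite{BBP} and can be cited.

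The main obstacle I anticipate is making the ``relativized ceiling'' bookkeeping honest: one must feed the second copy of $\Low_{1,n}$ an input that is genuinely of Turing degree $\emptyset^{(n)}$ (not merely $\leqT\emptyset^{(n)}$) and arranged computably-in-the-first-output, and then invoke the relativized form of Proposition~\ref{prop:computable-invariance} with $A=\Low_{1,n}(p)$ to pin the second output's jump at $\emptyset^{(2n+1)}$ rather than something smaller. This requires choosing the auxiliary computable sequences so that the iterated limits do not accidentally collapse the jump level — i.e.\ ensuring the witnessing inputs are ``jump-optimal.'' Once that is set up, the arithmetic $2n+1 > n+1$ for $n\geq 1$ closes the argument.
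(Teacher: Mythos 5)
Your upper-bound half is fine and matches the paper: by Lemma~\ref{lem:low-points} and Proposition~\ref{prop:computable-invariance}(3) (with $A=\emptyset$), any $f\leqW\Low_{1,n}$ maps computable inputs to $(n+1)$--low outputs. But the half where you violate this ceiling rests on a confusion between the product and composition. Idempotency means $\Low_{1,n}\times\Low_{1,n}\leqW\Low_{1,n}$, and $\Low_{1,n}\times\Low_{1,n}$ acts on a \emph{pair} of inputs in parallel: if the input pair $\langle s,t\rangle$ is computable, then both components $s$ and $t$ are computable, so \emph{each} component of the output is $(n+1)$--low, i.e.\ has jump $\leqT\emptyset^{(n+1)}$. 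Your plan to ``feed the output of the first copy as input to the second copy'' and thereby push the second output's jump up to $\emptyset^{(2n+1)}$ describes the composition $\Low_{1,n}\circ\Low_{1,n}$ (or the compositional product), which is not what idempotency is about; no such feeding is possible inside $\Low_{1,n}\times\Low_{1,n}$, and your claimed witness simply does not exist (also, for $n=0$ your arithmetic $2n+1>n+1$ fails, so the scheme would not even reprove the base case you cite).

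The idea that is actually needed, and that your proposal is missing, is that the \emph{join} of two $(n+1)$--low points need not be $(n+1)$--low. This is a genuine computability-theoretic fact requiring a construction: the paper invokes the Theorem of Spector (Proposition~V.2.26 in \cite{Odi89}) to get $p,q\in\Baire$ with $\langle p,q\rangle\equivT J(p)\equivT J(q)\equivT\emptyset^{(n+1)}$, so $p$ and $q$ are $(n+1)$--low while $\langle p,q\rangle$ is not. Since $p,q$ are $(n+1)$--low, Lemma~\ref{lem:low-points} gives computable $s,t$ with $\Low_{1,n}(s)=p$ and $\Low_{1,n}(t)=q$, so $\Low_{1,n}\times\Low_{1,n}$ maps a computable input to the non-$(n+1)$--low pair $\langle p,q\rangle$, contradicting your (correct) ceiling via Proposition~\ref{prop:computable-invariance}. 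Without an exact-pair-type construction of this kind, the counterexample cannot be produced, because componentwise the product never exceeds the $(n+1)$--low bound on computable inputs; the only possible excess is in the pairing, and that is exactly what your argument never addresses.
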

\begin{proof}
Let $r\in\Baire$ be such that $r\equivT\emptyset^{(n+1)}$. 
By the Theorem of Spector (see Proposition~V.2.26 in \cite{Odi89})
there are $p,q\in\Baire$ such that
\[\langle p,q\rangle\equivT J(p)\equivT J(q)\equivT r.\]
Hence $p,q$ are $(n+1)$--low, but $\langle p,q\rangle$ is not $(n+1)$--low.
In particular, there are computable $s,t\in\Baire$ such that $\lim^{\circ(n+1)}(s)=J(p)$ and $\lim^{\circ(n+1)}(t)=J(q)$.
Hence $\langle\Low_{1,n}\times\Low_{1,n}\rangle\langle s,t\rangle=\langle p,q\rangle$ and the function
$\Low_{1,n}\times\Low_{1,n}$ maps some computable inputs to values which are not $(n+1)$--low,
in contrast to $\Low_{1,n}$, which maps all computable inputs to outputs that are $(n+1)$--low
by Lemma~\ref{lem:low-points}. By Proposition~\ref{prop:computable-invariance} this means  
$\Low_{1,n}\times\Low_{1,n}\nleqW\Low_{1,n}$.
\end{proof}

We can now describe a strictly increasing finite chain of degrees related to the Bolzano-Weierstra\ss{} Theorem.

\begin{theorem}
\label{thm:BWT-R-chain}
$\C_\IR\lW\lim\equivSW\UBWT_\IR\lW\BWT_\IR\lW\CL_\IR\lW\Low'\lW\lim'\lW\CL_\Baire$.
\end{theorem}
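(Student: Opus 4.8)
The plan is to establish the chain of strict reducibilities one link at a time, using the characterizations already proved together with the Computable Invariance Principle (Proposition~\ref{prop:computable-invariance}) as the main separation tool.

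First I would deal with the reductions, which are all easy consequences of earlier facts. The reduction $\C_\IR\leqW\lim$ holds since $\C_\IR\leqSW\Low\leqSW\lim$ (Fact~\ref{fact:low-basis} and Fact~\ref{fact:galois}); $\lim\equivSW\UBWT_\IR$ is Corollary~\ref{cor:UBWT-R}; $\UBWT_\IR\leqSW\BWT_\IR$ is immediate since $\UBWT_\IR$ is a restriction of $\BWT_\IR$ (or use Proposition~\ref{prop:unique-BWT} together with $\lim_\IR\leqSW\BWT_\IR$); $\BWT_\IR\leqSW\CL_\IR$ holds because $\BWT_\IR=\K_\IR\circ\KL_\IR$ factors through $\C_\IR\circ\L_\IR=\CL_\IR$ via $\C_\IR\leqSW\C_\IR$ and $\KL_\IR\leqSW\L_\IR$ on the relevant domain (alternatively $\BWT_\IR\equivSW\C_\IR'$ restricted to compact sets, hence below $\C_\IR'\equivSW\CL_\IR$ by Theorem~\ref{thm:derivative-closed-choice}); $\CL_\IR\leqSW\Low'$ is Corollary~\ref{cor:uniform-relative-low-basis}; $\Low'\leqSW\lim'$ follows from $\Low\leqSW\lim$ by monotonicity of the derivative (Proposition~\ref{prop:monotone-derivative}), noting $\lim$ is a cylinder; and $\lim'\leqSW\CL_\Baire$ holds since $\lim'\equivSW\lim\circ\lim$ (Example~\ref{ex:derivatives}) is below $\C_\Baire'\equivSW\CL_\Baire$ using $\lim\leqSW\C_\Baire$ (Fact~\ref{fact:closed-choice}) and Theorem~\ref{thm:cluster-Baire}.

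Next I would prove the strictness of each link, working left to right. For $\C_\IR\lW\lim$: if $\lim\leqW\C_\IR$ then, since $\C_\IR$ is a cylinder, $\lim\leqSW\C_\IR$, hence $\lim\leqSW\Low$ by Fact~\ref{fact:low-basis}; but $\lim$ maps some computable input to a value Turing equivalent to $\emptyset'$, which is not low, contradicting Proposition~\ref{prop:computable-invariance}(3). For $\lim\lW\BWT_\IR$: by Corollary~\ref{cor:single-valued-BWT} every single-valued function below $\BWT_\IR$ is limit computable, while $\BWT_2\leqSW\BWT_\IR$ is genuinely multi-valued; more directly, if $\BWT_\IR\leqW\lim$ then $\BWT_2\leqW\lim$, contradicting Proposition~\ref{prop:BWT-2}. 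For $\BWT_\IR\lW\CL_\IR$: by Corollary~\ref{cor:parallelization-principle-BWT} we have $\LPO'\nleqW\BWT_\IR$, while $\LPO'\leqW\CL_\IN\leqW\CL_\IR$ as shown in the proof of Theorem~\ref{thm:UCL-BWT}; hence $\CL_\IR\nleqW\BWT_\IR$. For $\CL_\IR\lW\Low'$: suppose $\Low'\leqW\CL_\IR\leqSW\Low'$; the point is a cardinality/idempotency obstruction — I expect this to be the main obstacle. One clean route: $\Low'\equivSW\Low\stars\lim$-type composition; if $\Low'\leqW\CL_\IR\leqSW\Low'$ gave equivalence, then since $\CL_\IR$ is a cylinder so would $\Low'$ be, but $\Low'$ cannot be a cylinder for cardinality reasons on the output (it is essentially $J^{-1}\circ\lim'$, single-valued into Baire space, so if it were a cylinder it would absorb $\lim$, yet $\Low$ is not idempotent by Proposition~8.8 of \cite{BBP}, and Proposition~\ref{prop:low-idempotent} shows $\Low_{1,n}$ is not idempotent — applying Corollary~\ref{cor:idempotent}-style reasoning blocks the equivalence). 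Concretely, use that $\CL_\IR$ is strongly idempotent (Corollary~\ref{cor:cluster-idempotent}) while $\Low'\equivSW(J^{-1})'\circ\lim'$; if $\CL_\IR\equivSW\Low'$ then $\Low'$ would be idempotent, so $\Low'\times\Low'\leqSW\Low'$, but $\Low'\equivSW\Low_{1,0}'\equivSW\Low_{1,1}$ is not idempotent by Proposition~\ref{prop:low-idempotent} with $n=1$ — contradiction.

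For $\Low'\lW\lim'$: if $\lim'\leqW\Low'$ then $\lim'\leqSW\Low'$ since $\Low'$ is... not obviously a cylinder, so instead reduce via super strong reducibility — by Proposition~\ref{prop:derivative-super-strong}, $\lim'\leqSSW\Low'$ would give $\lim\leqSW\Low$, contradicting the $\C_\IR\lW\lim$ argument above; to get $\leqSSW$ from $\leqW$ I would use that $\lim$ is a cylinder so $\lim'$ is a cylinder (Corollary~\ref{cor:cylinder}), upgrading $\leqW$ to $\leqSW$, and then argue the reduction is super strong, or more simply apply Proposition~\ref{prop:computable-invariance}(3): $\Low'\leqSW\Low_{1,1}$ sends computable inputs to $2$-low outputs, whereas $\lim'$ sends some computable input to a value Turing above $\emptyset''$, which is not $2$-low. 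Finally $\lim'\lW\CL_\Baire$: if $\CL_\Baire\leqW\lim'\equivSW\lim^{\circ 2}$, then $\C_\Baire'\leqW\lim^{\circ 2}$, so by Theorem~\ref{thm:derivatives} $\C_\Baire\leqW\lim$ would follow (composing with $\lim$ on the input side, since $\lim^{\circ 2}\equivSW\lim\circ\lim$), i.e.\ $\C_\Baire$ is limit computable; but $\C_\Baire$ is complete for all effectively Borel single-valued functions (Fact~\ref{fact:closed-choice}), which are not all limit computable — alternatively, $\C_\Baire\leqW\lim$ contradicts Fact~\ref{fact:closed-choice} directly since $\C_\Baire\equivSW\C_\Baire'$ by Theorem~\ref{thm:cluster-Baire} would then force $\lim'\equivW\lim$, false. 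Assembling these seven reductions and seven strictness arguments completes the proof.
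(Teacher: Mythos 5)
Most of your chain is handled correctly and, for the most part, by the same arguments as the paper: the reductions are the same easy consequences of Corollary~\ref{cor:UBWT-R}, Corollary~\ref{cor:uniform-relative-low-basis}, monotonicity of the derivative and Theorem~\ref{thm:cluster-Baire}; the separations $\lim\lW\BWT_\IR$ (via Proposition~\ref{prop:BWT-2}) and $\BWT_\IR\lW\CL_\IR$ (via $\LPO'\leqW\CL_\IN\leqW\CL_\IR$ and Corollary~\ref{cor:parallelization-principle-BWT}) use exactly the paper's ingredients; your invariance-principle argument for $\C_\IR\lW\lim$ is a valid self-contained substitute for the paper's citation of an external result; and your argument for $\Low'\lW\lim'$ via Proposition~\ref{prop:computable-invariance}(3) with $A=\emptyset$, $B=\emptyset'$ is sound and is precisely the alternative the paper mentions after its proof (the paper itself uses that $\lim'$ is idempotent while $\Low'\equivSW\Low_{1,1}$ is not). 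For $\CL_\IR\lW\Low'$ your concrete argument is the paper's: note only that from $\Low'\leqW\CL_\IR$ and $\CL_\IR\leqSW\Low'$ you get $\Low'\equivW\CL_\IR$, not $\equivSW$, but this suffices since ordinary idempotency is preserved under $\equivW$, $\CL_\IR$ is idempotent by Corollary~\ref{cor:cluster-idempotent}, and Proposition~\ref{prop:low-idempotent} denies idempotency of $\Low_{1,1}\equivSW\Low'$.

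There is, however, a genuine gap in your last link $\lim'\lW\CL_\Baire$. You argue: if $\CL_\Baire\leqW\lim'$ then $\C_\Baire'\leqW\lim'$, ``so by Theorem~\ref{thm:derivatives} $\C_\Baire\leqW\lim$ would follow.'' That inference is invalid: Theorem~\ref{thm:derivatives} factorizes the function on the \emph{left} of $f\leqW g'$ as $g_0\circ l_0$; it gives no way to cancel a $\lim$ on the right, and indeed a reduction $f'\leqW g'$ between jumps does not imply $f\leqW g$ — Example~\ref{ex:strong-super-strong} exhibits $c\nleqSW c_p$ with $c'\equivSW c_p'$, and Proposition~\ref{prop:derivative-super-strong} shows that recovering a reduction between the originals requires \emph{super strong} reducibility of the jumps. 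Since your ``alternatively'' clause also starts from the unjustified conclusion $\C_\Baire\leqW\lim$, the whole strictness argument for this link collapses. The correct (and the paper's) route avoids any jump inversion: by Theorem~\ref{thm:cluster-Baire}, $\CL_\Baire\equivSW\C_\Baire$, and by Fact~\ref{fact:closed-choice}(4) one has $\lim''\leqSW\C_\Baire$; so it suffices to know $\lim'\lW\lim''$, which follows for instance from Proposition~\ref{prop:computable-invariance} with $A=\emptyset$, since every $f\leqW\lim'$ maps computable inputs to $\emptyset''$-computable outputs, whereas $\lim''=\lim^{\circ 3}$ maps some computable input to a point Turing equivalent to $\emptyset'''$.
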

\begin{proof}
The strict reduction $\C_\IR\lW\lim$ was proved in Proposition~4.8 of \cite{BG11a}.
The equivalence $\lim\equivSW\UBWT_\IR$ was proved in Corollary~\ref{cor:UBWT-R}.
The reductions $\UBWT_\IR\leqW\BWT_\IR\leqW\CL_\IR$ are clear.
Since $\BWT_2\leqSW\BWT_\IR$, we clearly obtain $\BWT_\IR\nleqW\lim$ by transitivity and Proposition~\ref{prop:BWT-2}.
Since $\UCL_\IR\nleqW\BWT_\IR$ by Theorem~\ref{thm:UCL-BWT} and $\UCL_\IR\leqW\CL_\IR$, 
we obtain $\CL_\IR\nleqW\BWT_\IR$ by transitivity.
The reduction $\CL_\IR\leqW\Low'$ was proved in Corollary~\ref{cor:cluster-low}
and since $\Low\leqSW\lim$, we obtain $\Low'\leqSW\lim'$.
By Proposition~\ref{prop:low-idempotent} $\Low'$ is not idempotent,
whereas $\lim'$ is clearly idempotent and $\CL_\IR$ is idempotent by Corollary~\ref{cor:cluster-idempotent}.
Hence $\Low'\nleqW\CL_\IR$ and $\lim'\nleqW\Low'$ follow. 
By Theorem~\ref{thm:cluster-Baire} we have $\CL_\Baire\equivSW\C_\Baire$ and $\C_\Baire$ is known to be complete
for all single-valued function on computable metric spaces that are effectively Borel measurable, see Fact~\ref{fact:closed-choice}.
In particular, we obtain $\lim'\lW\lim''\leqSW\C_\Baire$.
\end{proof}

Alternatively, $\lim$ can also be separated from $\BWT_\IR$ using non-uniform results, such as 
Corollary~\ref{cor:BWT-counterexample} and Proposition~\ref{prop:computable-invariance}(1).
Analogously, $\lim'$ can be separated from $\Low'$ using Proposition~\ref{prop:computable-invariance}(2).
We mention that it follows from previous results that $\BWT_\IR\sqcup\UCL_\IR$ is
strictly between $\BWT_\IR$ and $\CL_\IR$.

\begin{proposition}
\label{prop:BWT-UCL-R}
$\BWT_\IR\lW\BWT_\IR\sqcup\UCL_\IR\lW\CL_\IR$.
\end{proposition}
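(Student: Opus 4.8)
The plan is to check the two reductions and their strictness separately, relying only on facts already established for $\BWT_\IR$, $\UCL_\IR$ and $\CL_\IR$. The non-strict reductions are immediate: $\BWT_\IR\leqW\BWT_\IR\sqcup\UCL_\IR$ holds because $\sqcup$ is the supremum; and since both $\BWT_\IR$ and $\UCL_\IR$ are restrictions of $\CL_\IR$ with the same values (so $\BWT_\IR\sqsupseteq\CL_\IR$ and $\UCL_\IR\sqsupseteq\CL_\IR$, hence both $\leqSW\CL_\IR$ --- alternatively invoke Corollary~\ref{cor:uniform-relative-low-basis}), the supremum property gives $\BWT_\IR\sqcup\UCL_\IR\leqW\CL_\IR$ (in fact $\leqSW$).

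For the strictness of the first reduction I would argue by contradiction: if $\BWT_\IR\sqcup\UCL_\IR\leqW\BWT_\IR$, then by transitivity $\UCL_\IR\leqW\BWT_\IR\sqcup\UCL_\IR\leqW\BWT_\IR$, contradicting Theorem~\ref{thm:UCL-BWT}; hence $\BWT_\IR\lW\BWT_\IR\sqcup\UCL_\IR$.

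The substantial point is $\CL_\IR\nleqW\BWT_\IR\sqcup\UCL_\IR$. Suppose $\CL_\IR\leqW\BWT_\IR\sqcup\UCL_\IR$; combined with the reduction $\BWT_\IR\sqcup\UCL_\IR\leqW\CL_\IR$ just proved this yields $\CL_\IR\equivW\BWT_\IR\sqcup\UCL_\IR$. By Corollary~\ref{cor:cluster-fractal} the degree $\CL_\IR$ is a strong fractal, hence join-irreducible by Proposition~\ref{prop:join-irreducible-fractals}; applied to the binary coproduct this forces $\CL_\IR\equivW\BWT_\IR$ or $\CL_\IR\equivW\UCL_\IR$. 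The first alternative contradicts $\BWT_\IR\lW\CL_\IR$ from Theorem~\ref{thm:BWT-R-chain}. In the second, $\BWT_\IR\leqW\CL_\IR\equivW\UCL_\IR$, again contradicting Theorem~\ref{thm:UCL-BWT}. This establishes $\BWT_\IR\sqcup\UCL_\IR\lW\CL_\IR$ and completes the argument.

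The step I expect to need most care is the use of join-irreducibility for the coproduct $\BWT_\IR\sqcup\UCL_\IR$: one should state explicitly that a join-irreducible degree equivalent to $f\sqcup g$ must be equivalent to $f$ or to $g$. This is legitimate since the paper already treats join-irreducibility for binary coproducts (cf.\ the discussion of $\C_\Cantor\sqcup\C_\IN$ before Example~\ref{ex:no-derivative}); everything else is routine bookkeeping with the earlier separation results.
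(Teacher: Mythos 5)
Your proof is correct and follows essentially the same route as the paper: the supremum property of $\sqcup$ plus the incomparability $\UCL_\IR\nleqW\BWT_\IR$ from Theorem~\ref{thm:UCL-BWT} for the first strict inequality, and join-irreducibility of $\CL_\IR$ (Corollary~\ref{cor:cluster-fractal}) for the second. The only difference is cosmetic: you run the explicit case analysis $\CL_\IR\equivW\BWT_\IR$ or $\CL_\IR\equivW\UCL_\IR$, whereas the paper packages the same point as ``a supremum of two incomparable degrees is not join-irreducible.''
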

\begin{proof}
Since $\BWT_\IR\sqcup\UCL_\IR$ is the supremum of $\BWT_\IR$ and $\UCL_\IR$, which 
are incomparable by Theorem~\ref{thm:UCL-BWT}, it follows that $\BWT_\IR\lW\BWT_\IR\sqcup\UCL_\IR$.
It is clear that $\BWT_\IR\leqW\CL_\IR$ and $\UCL_\IR\leqW\CL_\IR$ and we obtain $\BWT_\IR\sqcup\UCL_\IR\leqW\CL_\IR$.
A supremum of two incomparable degrees is clearly not join-irreducible, but $\CL_\IR$
is join irreducible by Corollary~\ref{cor:cluster-fractal}, 
hence $\BWT_\IR\sqcup\UCL_\IR\lW\CL_\IR$ follows.
\end{proof}

Since $\K_\IR'\sqcup\UC_\IR'\equivW\BWT_\IR\sqcup\UCL_\IR$ is not join-irreducible (as shown in the previous proof)
and derivatives are join-irreducible by Proposition~\ref{prop:join-irreducibility},
we obtain the following example that shows that derivatives and coproducts do not commute.

\begin{example}
\label{ex:coproduct-derivative}
$\K_\IR'\sqcup\UC_\IR'\lW(\K_\IR\sqcup\UC_\IR)'$.
\end{example}

We now provide a strictly increasing finite chain of degrees related to the discrete
Bolzano-Weierstra\ss{} Theorem.

\begin{theorem}
$\C_\IN\equivSW\UBWT_\IN\lW\BWT_\IN\lW\CL_\IN\equivSW\UCL_\IN\lW\UCL_\IR\lW\CL_\IR$.
\end{theorem}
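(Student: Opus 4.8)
The plan is to establish the chain link by link, reading off the equivalences and reductions from results already proved and deriving each strictness by a short transitivity argument that invokes the separation results of this section. The two equivalences are immediate: $\C_\IN\equivSW\UBWT_\IN$ is Corollary~\ref{cor:UBWT-N} and $\CL_\IN\equivSW\UCL_\IN$ is part of Corollary~\ref{cor:unique-discrete-cluster}. The reductions $\UBWT_\IN\leqSW\BWT_\IN$, $\BWT_\IN\leqSW\CL_\IN$ and $\UCL_\IR\leqSW\CL_\IR$ are all instances of the fact that a restriction of a multi-valued function reduces (strongly) to it; for $\BWT_\IN\leqSW\CL_\IN$ one uses that a bounded sequence in $\IN$ automatically has a cluster point by the pigeonhole principle, so that $\BWT_\IN$ is literally a restriction of $\CL_\IN$ (the outputs agree, being in both cases the set of cluster points). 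Finally $\UCL_\IN\lW\UCL_\IR$ is exactly the statement recorded in Example~\ref{ex:UCL-R}.

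It then remains to verify three strictness assertions. For $\BWT_\IN\nleqW\UBWT_\IN$: since $\UBWT_\IN\equivSW\C_\IN$ and $\C_\IN\leqW\lim$ by Fact~\ref{fact:BCT}, it suffices to show $\BWT_\IN\nleqW\lim$; but $\BWT_2\leqSW\BWT_\IN$ by Corollary~\ref{cor:BWT-embedding} applied to the computable embedding $\{0,1\}\into\IN$, and $\BWT_2\nleqW\lim$ by Proposition~\ref{prop:BWT-2}. For $\CL_\IN\nleqW\BWT_\IN$: by Corollary~\ref{cor:BWT-metric} together with $\BWT_\Cantor\equivSW\BWT_\IR$ (Corollary~\ref{cor:perfect-BWT-examples}) we have $\BWT_\IN\leqSW\BWT_\IR$, so a reduction $\CL_\IN\leqW\BWT_\IN$ would give $\CL_\IN\leqW\BWT_\IR$, contradicting Theorem~\ref{thm:UCL-BWT}. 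For $\CL_\IR\nleqW\UCL_\IR$: since $\BWT_\IR\leqSW\CL_\IR$, a reduction $\CL_\IR\leqW\UCL_\IR$ would yield $\BWT_\IR\leqW\UCL_\IR$, again contradicting Theorem~\ref{thm:UCL-BWT}; alternatively one may invoke $\BWT_\IR\sqcup\UCL_\IR\lW\CL_\IR$ from Proposition~\ref{prop:BWT-UCL-R} together with $\UCL_\IR\leqW\BWT_\IR\sqcup\UCL_\IR$.

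The proof involves no new construction; the only thing requiring care is to make sure each strictness is reduced to a separation genuinely established earlier, rather than to the statement being proved. In that sense there is no real obstacle: the substance has already been invested in Proposition~\ref{prop:BWT-2}, in Example~\ref{ex:UCL-R}, and above all in Theorem~\ref{thm:UCL-BWT}, whose non-reducibility $\CL_\IN\nleqW\BWT_\IR$ ultimately rests on the higher parallelization principle (Corollary~\ref{cor:parallelization-principle-BWT}).
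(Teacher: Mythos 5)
Your proof is correct, and for most links it coincides with the paper's own argument: the equivalences via Corollaries~\ref{cor:UBWT-N} and \ref{cor:unique-discrete-cluster}, the ``restriction'' reductions, the separation $\BWT_\IN\nleqW\UBWT_\IN$ via $\BWT_2\nleqW\lim$ (Proposition~\ref{prop:BWT-2}), the citation of Example~\ref{ex:UCL-R} for $\UCL_\IN\lW\UCL_\IR$, and (in your alternative route) Proposition~\ref{prop:BWT-UCL-R} for the last strictness. The one step where you genuinely diverge is $\CL_\IN\nleqW\BWT_\IN$: you push $\BWT_\IN$ up to $\BWT_\IR$ via Corollaries~\ref{cor:BWT-metric} and \ref{cor:perfect-BWT-examples} and quote $\CL_\IN\nleqW\BWT_\IR$ from Theorem~\ref{thm:UCL-BWT}, so your separation ultimately rests on the higher parallelization principle $\LPO'\nleqW\BWT_\IR$ (Corollary~\ref{cor:parallelization-principle-BWT}); the paper instead parallelizes both sides, using $\widehat{\BWT_\IN}\equivW\BWT_\IR\lW\lim'\equivW\widehat{\CL_\IN}$, which rests on Theorem~\ref{thm:BWT-R-chain} (and hence, for $\BWT_\IR\lW\lim'$, on the non-idempotency of $\Low'$). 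Your route is shorter and reuses an already stronger statement; the paper's route has the by-product that even the parallelizations $\widehat{\BWT_\IN}$ and $\widehat{\CL_\IN}$ separate, identifying them with $\BWT_\IR$ and $\lim'$ respectively. Neither route is circular. Only a cosmetic quibble: $\C_\IN\leqW\lim$ is not literally stated in Fact~\ref{fact:BCT}; it follows, e.g., from $\C_\IN\equivW\lim_\IN$ together with $\lim\equivSW\widehat{\lim_\IN}$ in Fact~\ref{fact:lim}.
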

\begin{proof}
The first equivalence $\C_\IN\equivSW\UBWT_\IN$ has been proved in Corollary~\ref{cor:UBWT-N}
and the equivalence $\CL_\IN\equivSW\UCL_\IN$ has been proved in Corollary~\ref{cor:unique-discrete-cluster}.
The reductions $\UBWT_\IN\leqW\BWT_\IN\leqW\CL_\IN$ and $\UCL_\IN\leqW\UCL_\IR\leqW\CL_\IR$
are obvious. We need to prove the strictness claims. 
By Proposition~\ref{prop:BWT-2} we have $\BWT_2\nleqW\lim$. Since clearly $\BWT_2\leqW\BWT_\IN$
and $\UBWT_\IN\equivW\C_\IN\leqW\lim$, we obtain $\BWT_\IN\nleqW\UBWT_\IN$ by transitivity.
We mention that $\widehat{\CL_\IN}\equivW\widehat{\C_\IN'}\equivW{\widehat{\C_\IN}'}\equivW\lim'$,
which follows from Corollary~\ref{cor:unique-discrete-cluster}, Proposition~\ref{prop:products-parallelization}, 
Fact~\ref{fact:lim} and Propositions~\ref{prop:unique-choice-N} and \ref{prop:monotone-derivative}.
It is clear that we have $\BWT_2\leqW\BWT_\IN\leqW\BWT_\IR$ and we obtain by Corollaries~\ref{cor:BWT-parallelization},
\ref{cor:BWT-parallelizable-idempotent-cylinder} and \ref{cor:perfect-BWT-examples} that $\widehat{\BWT_\IN}\equivW\BWT_\IR$. 
Hence by Theorem~\ref{thm:BWT-R-chain} and parallelization 
it follows that $\widehat{\BWT_\IN}\equivW\BWT_\IR\lW\lim'\equivW\widehat{\CL_\IN}$.
Hence, $\CL_\IN\nleqW\BWT_\IN$.
By Example~\ref{ex:UCL-R} $\UCL_\IN\lW\UCL_\IR$.
The strictness of the reduction $\UCL_\IR\lW\CL_\IR$ follows from Proposition~\ref{prop:BWT-UCL-R}.
\end{proof}

\section{Cardinality-Based Separation Techniques}

In this section we discuss separation results for finite versions of the Bolzano-Weierstra\ss{} Theorem.
For this purpose we will exploit that the Bolzano-Weierstra\ss{} Theorem $\BWT_X$ is obviously slim.
We recall that for slim $f$ we always have $\range(f)=\range(\U f)$.
For slim functions we get the following necessary condition on the cardinality of ranges.
The proof uses the Axiom of Choice. 
By $|X|$ we denote the {\em cardinality} of a set $X$.

\begin{proposition}
\label{prop:cardinality-slim}
Let $f$ and $g$ be multi-valued functions on represented spaces
and let $f$ be slim. Then 
$f\leqSW g\TO|\range(f)|\leq|\range(g)|$.
\end{proposition}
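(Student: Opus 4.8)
The plan is to unfold the definition of strong Weihrauch reducibility and use the slimness of $f$ to produce, for each element $y$ of $\range(f)$, a distinct element of $\range(g)$. Suppose $f\leqSW g$ with $f:\In X\mto Y$ and $g:\In Z\mto W$. Then there are computable $H,K:\In\Baire\to\Baire$ such that $HGK\vdash f$ for every $G\vdash g$. By the Axiom of Choice, fix one realizer $G\vdash g$. Now fix $y\in\range(f)$. Since $f$ is slim, there is some $x\in\dom(f)$ with $f(x)=\{y\}$; pick a name $p\in\Baire$ of $x$ with respect to the representation $\delta_X$. Then $HGK(p)$ is a $\delta_Y$--name of the unique point $y$, and in particular $GK(p)$ is defined; write $w:=\delta_W GK(p)\in W$. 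Since $K(p)\in\dom(g\delta_Z)$, we have $w\in g\delta_Z(K(p))\In\range(g)$.

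The assignment $y\mapsto w$ just described need not be a function on $Y$ directly (different names $p$ could in principle be chosen), so the clean way to phrase it is: choose, via the Axiom of Choice, for every $y\in\range(f)$ a name $p_y$ of a point $x_y$ with $f(x_y)=\{y\}$, and set $\Phi(y):=\delta_W GK(p_y)$. This defines a map $\Phi:\range(f)\to\range(g)$. The key point is that $\Phi$ is injective: if $\Phi(y_1)=\Phi(y_2)$, then $GK(p_{y_1})$ and $GK(p_{y_2})$ are names of the same point $w\in W$; but $H$ is a function, so $HGK(p_{y_1})$ and $HGK(p_{y_2})$ must then be literally the same sequence, hence name the same point of $Y$. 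Since $HGK(p_{y_i})$ is a name of the unique output $y_i$ (because $f(x_{y_i})=\{y_i\}$ is a singleton), we conclude $y_1=y_2$. Wait --- more carefully: $HGK(p_{y_1})=H(GK(p_{y_1}))=H(GK(p_{y_2}))=HGK(p_{y_2})$ as elements of $\Baire$, and this common sequence is a $\delta_Y$--name of $y_1$ and also of $y_2$, so $y_1=\delta_Y(HGK(p_{y_1}))=y_2$.

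Having an injection $\Phi:\range(f)\into\range(g)$ yields $|\range(f)|\leq|\range(g)|$, which is the claim. The only subtlety --- and the single place the Axiom of Choice is genuinely invoked --- is the simultaneous selection of the names $p_y$; everything else is a direct computation with realizers. I expect no real obstacle here: the argument is essentially the observation that a realizer-level reduction, composed with the output-modifying computable function $H$, sends distinct singleton outputs of $f$ to distinct points in the image of $g$, because $H$ is single-valued. (Note that this argument genuinely uses strong reducibility: for ordinary $\leqW$ the modifier $H$ also has access to the input name $p_y$, so $H\langle p_{y_1},GK(p_{y_1})\rangle$ and $H\langle p_{y_2},GK(p_{y_2})\rangle$ could agree without $GK(p_{y_1})$ and $GK(p_{y_2})$ naming the same point, and the injectivity step breaks down.)
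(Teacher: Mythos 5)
The gap is in the injectivity step. From $\Phi(y_1)=\Phi(y_2)$ you only know that $GK(p_{y_1})$ and $GK(p_{y_2})$ are $\delta_W$--names of the same point of $W$; they need not be equal as elements of $\Baire$, and $H$ is merely a computable function on names, with no extensionality guarantee with respect to $\delta_W$. So the chain $H(GK(p_{y_1}))=H(GK(p_{y_2}))$ silently assumes $GK(p_{y_1})=GK(p_{y_2})$ as sequences, which is exactly what is not available: the fixed realizer $G$ may answer the two input names with two distinct names of the same output point, and then nothing in your argument prevents $H$ from turning these into names of the distinct points $y_1\not=y_2$. (The intermediate claim that $\Phi$ is injective is in fact true, but any proof of it must exploit that the reduction works for \emph{every} realizer of $g$ -- e.g.\ a realizer that swaps the two names would otherwise break the reduction -- and that quantifier is never used in your justification beyond the mere existence of $G$.)

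The paper closes exactly this hole by forcing canonical names on the output side of $g$: by the Axiom of Choice it picks a right inverse $S$ of the output representation of $g$ (so $\delta_W\circ S=\id_W$ in your notation) and passes from $G$ to $S\delta_W G$, which is again a realizer of $g$; since the reduction holds for all realizers, $HS\delta_W GK\vdash f$ still holds. With this modified realizer, equality of output points of $g$ really does give equality of the names fed to $H$, and your argument goes through; the paper phrases the conclusion as a surjection $w\mapsto\delta_Y HS(w)$ from the subset $\delta_W GK(\dom(f\delta_X))\In\range(g)$ onto $\range(f)$ (surjectivity being exactly where slimness enters), which yields $|\range(f)|\leq|\range(g)|$ just as well as an injection would. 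Note that this is an additional, essential use of the Axiom of Choice beyond choosing the realizer $G$ and the names $p_y$.
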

\begin{proof}
We consider multi-valued functions on represented spaces
$f:\In(X,\delta_X)\mto(Y,\delta_Y)$ and $g:\In(W,\delta_W)\mto(Z,\delta_Z)$. 
Then, by the Axiom of Choice there is some right inverse $S:Z\to\Baire$ of $\delta_Z$,
i.e.\ $\delta_Z\circ S=\id_Z$.
Let $f$ be slim and $f\leqSW g$. Then there are computable $H,K$
such that $HGK\vdash f$ for all $G\vdash g$.
By the Axiom of Choice, there is some realizer $G\vdash g$. Without loss of generality
we assume $\dom(G)=\dom(g\delta_W)$.
Then $S\delta_ZG\vdash g$ follows and hence
$HS\delta_ZGK\vdash f$. Since $f$ is slim, for each $y\in\range(f)$ there is an $x\in\dom(f)$ 
such that $f(x)=\{y\}$. Let $p\in\Baire$ be such that $\delta_X(p)=x$. 
Then $\delta_YHS\delta_ZGK(p)\in f\delta_X(p)=\{y\}$. Hence 
\[|\range(f)|\leq|\delta_ZGK(\dom(f\delta_X))|\leq|\range(g)|\]
follows.
\end{proof}

We would like to have a similar necessary criterion for ordinary 
Weihrauch reducibility. This criterion is harder to obtain since
the direct access to the input gives a much higher degree of freedom
and we will only be able to prove such a criterion in a special case.
For this purpose we use strong fractals.
As a side remark we mention that it follows from Proposition~\ref{prop:cardinality-slim}
that all slim strong fractals with target space $\IN$ and at least two elements in the range are discontinuous.\footnote{We
call a multi-valued function $f:\In X\mto\IN$ {\em continuous}, if $f^{-1}\{n\}=\{x\in X:n\in f(x)\}$
is open in $\dom(f)$ for each $n\in\IN$. Otherwise, $f$ is called {\em discontinuous}.}

\begin{lemma}
If $f:\In X\mto\IN$ is a slim strong fractal and $|\range(f)|\geq 2$, then $f$ is discontinuous.
\end{lemma}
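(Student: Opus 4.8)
The plan is to argue by contraposition: assuming $f\colon\In X\mto\IN$ is a continuous slim strong fractal, I will show that $|\range(f)|\leq 1$. The key idea is that a strong fractal can reconstruct all of its behaviour from an arbitrarily small clopen piece of its domain, but continuity forces any sufficiently small piece to be mapped into a single value, and these two facts together pin down the whole range.

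First I would fix the representation $\delta_X$ of $X$ and suppose $|\range(f)|\geq 2$, aiming for a contradiction with continuity. By the definition of continuity given in the footnote, continuity of $f$ means that $f^{-1}\{n\}=\{x\in\dom(f):n\in f(x)\}$ is open in $\dom(f)$ for each $n$; transporting this through $\delta_X$, for each name $p\in\dom(f\delta_X)$ the value(s) of $f\delta_X(p)$ are locally constant in the sense that there is a basic clopen neighbourhood of $p$ (a cylinder $w\IN^\IN$ with $w\prefix p$) such that $f\delta_X(q)\ni n$ for all names $q$ in that neighbourhood, where $n\in f\delta_X(p)$. Since $f$ is slim, picking $p$ so that $f\delta_X(p)=\{n\}$ is a singleton, I get a nonempty clopen set $A:=w\IN^\IN\cap\dom(f\delta_X)$ on which every realizer $F\vdash f$ satisfies $\delta_\IN F(q)=n$ for all $q\in A$ — because the only continuous (indeed, any) realizer must output a name of an element of $f\delta_X(q)$, and on Cantor/Baire the identity represents $\IN$ up to extension, so $F(q)$ is forced to name $n$. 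Hence $f_A$ has range exactly $\{n\}$, in particular $|\range(f_A)|=1$.

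Next, because $f$ is a strong fractal and $A$ is clopen and nonempty in $\dom(f\delta_X)$, we have $f\leqSW f_A$. Now $f_A$ is slim (it is a restriction of the slim $f$, and the witness inputs for elements of $\range(f_A)$ lie in $X_A$), so Proposition~\ref{prop:cardinality-slim} applies and gives $|\range(f)|\leq|\range(f_A)|=1$. This contradicts the assumption $|\range(f)|\geq 2$, completing the contrapositive and hence the lemma.

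The main obstacle I anticipate is the step that extracts, from mere continuity of the set-valued $f$, a genuinely clopen piece $A$ of $\dom(f\delta_X)$ on which $f$ is single-valued with a fixed value: one must be careful that continuity is stated for $f$ itself (via the $f^{-1}\{n\}$ sets) rather than for a realizer, and that slimness is used to locate a point $p$ with $f\delta_X(p)$ a singleton so that the local-constancy neighbourhood really traps all realizers to one value. Once $A$ is in hand, the remaining two steps (the strong-fractal reduction $f\leqSW f_A$ and the cardinality bound from Proposition~\ref{prop:cardinality-slim}) are immediate, so essentially all the content is in producing $A$. An alternative, slightly slicker route avoids naming the value $n$ explicitly: observe that for a continuous $f:\In X\mto\IN$ the map $\dom(f\delta_X)\to\IN$ given by choosing the least element of $f\delta_X(\cdot)$ is continuous, hence locally constant, so any basic clopen cylinder meeting $\dom(f\delta_X)$ contains a smaller clopen $A$ on which this least-value is constant; combined with slimness this again yields $|\range(f_A)|=1$ and the same conclusion follows.
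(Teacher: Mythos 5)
Your strategy is the same as the paper's (use continuity to produce a clopen cylinder $A$ on which the problem trivializes, then play this off against the strong-fractal reduction $f\leqSW f_A$ and the cardinality bound), but the decisive step fails as you argue it. From continuity you correctly obtain, for a name $p$ of a slimness witness with $f\delta_X(p)=\{n\}$, a cylinder $w\IN^\IN$ with $w\prefix p$ such that $n\in f\delta_X(q)$ for all $q\in A:=w\IN^\IN\cap\dom(f\delta_X)$. This says only that $n$ remains an \emph{admissible} value throughout $A$, not that it is the \emph{only} admissible value there: a realizer need only name some element of $f\delta_X(q)\supseteq\{n\}$, so it is not ``forced to name $n$'', and the conclusion $|\range(f_A)|=1$ does not follow from continuity and slimness. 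Indeed it is false in general: take $X=\Baire$ with the identity representation, $f(\widehat{0})=\{0\}$, $f(\widehat{1})=\{1\}$, and $f(q)=\{0,1\}$ for all other $q$. This $f$ is slim and continuous in the sense of the footnote ($f^{-1}\{0\}=\Baire\setminus\{\widehat{1}\}$ and $f^{-1}\{1\}=\Baire\setminus\{\widehat{0}\}$ are open), yet every cylinder contains points with value set $\{0,1\}$, so $|\range(f_A)|=2$ for every clopen $A$. (This $f$ is not a strong fractal, so the lemma is safe, but it refutes the inference you make, which only uses continuity and slimness.) The same example defeats your ``slicker'' alternative: the least-value selection $x\mapsto\min f(x)$ is not continuous, since $\{x:\min f(x)=1\}=\{\widehat{1}\}$ is not open.

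The repair is where the actual content lies, and it changes the endgame. From ``$n$ is admissible at every point of $A$'' you get that the \emph{constant} function with output a fixed name of $n$ is a realizer of $f_A$ (here any $p\in\dom(f\delta_X)$ and any $n\in f\delta_X(p)$ would do; slimness is not yet needed). Now apply the strong-fractal property: there are computable $H,K$ with $HGK\vdash f$ for all $G\vdash f_A$. Plugging in the constant realizer $G$ makes $HGK$ constant, so there would be a single $c\in\IN$ with $c\in f(x)$ for every $x\in\dom(f)$. Slimness together with $|\range(f)|\geq 2$ yields two points whose value sets are disjoint singletons, so no such $c$ exists --- contradiction. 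Note that strong reducibility is essential here (with ordinary reducibility $H$ could consult the original input), and that this route does not go through Proposition~\ref{prop:cardinality-slim} at all. The paper's own one-line proof asserts the same singleton-range property $|\range(f_A)|=1$ without elaboration, so you have reproduced its route; but as the example shows, the step that is really available from continuity is the existence of a constantly realizable restriction, and the contradiction should be drawn from that.
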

\begin{proof}
Let $\delta_X$ be the representation of $X$.
If $f:\In X\mto\IN$ is continuous then for each $p\in\dom(f\delta_X)$ there is some $w\prefix p$
such that $|\range(f_A)|=1$ for $A=w\IN^\IN$. This implies $|\range(f_A)|=1<2=|\range(f)|$ and
hence $f\nleqSW f_A$ according to Proposition~\ref{prop:cardinality-slim}.
This implies that $f$ is not a strong fractal.
\end{proof}

Now we can formulate and prove a cardinality based separation principle for 
slim functions whose unique part is a strong fractal.

\begin{theorem}[Cardinality condition for strong fractals]
\label{thm:cardinality-strong-fractals}
Let $f:\In X\mto\IN$ and $g:\In Z\mto\IN$ be multi-valued functions on represented spaces.
If $f$ is slim and $\U f$ is a strong fractal, 
then $f\leqW g\TO|\range(f)|\leq|\range(g)|$.
\end{theorem}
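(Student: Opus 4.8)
The plan is to reduce to the finite case and then use the self-similarity of $\U f$ to neutralise the direct input access that ordinary Weihrauch reducibility permits. First I would note that we may assume $\range(g)$ is finite, say $|\range(g)|=k$, since otherwise $\range(f)\In\IN$ forces $|\range(f)|\leq\aleph_0=|\range(g)|$; and since $f$ is slim, $\range(f)=\range(\U f)$, so it suffices to show $|\range(\U f)|\leq k$ (we may also assume $\dom(\U f)\neq\eps$, else $\range(f)=\eps$). Suppose for contradiction there are $x_0,\dots,x_k\in\dom(\U f)$ with the values $n_i:=\U f(x_i)$ pairwise distinct. Since $f\leqW g$, fix computable $H,K$ with $H\langle\id,GK\rangle\vdash f$ for all $G\vdash g$. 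An arbitrary realizer of $g$ need not be continuous, so I would first pass to a tame one: using the Axiom of Choice pick $G\vdash g$ and a right inverse $S$ of $\delta_\IN$, and set $G':=S\circ\delta_\IN\circ G$. Then $G'\vdash g$ as well (the value $\delta_\IN G'(r)=\delta_\IN G(r)$ is unchanged), and $G'$ takes at most $k$ distinct sequence values, namely the finitely many $S(v)$ with $v\in\range(g)$.

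The technical heart is to construct a prefix $w$ such that the restriction $(\U f)_A$ to $A:=w\Baire\cap\dom(\U f\delta_X)$ has range of size at most $k$. I would do this by a finite iteration: enumerate $\range(g)=\{v_1,\dots,v_k\}$ and build a nested chain $\dom(\U f\delta_X)=A^{(0)}\supseteq A^{(1)}\supseteq\dots\supseteq A^{(k)}=:A$, each $A^{(j)}$ a basic cylinder intersected with $\dom(\U f\delta_X)$. At step $j$, if some name $q\in A^{(j-1)}$ satisfies $\delta_\IN GK(q)=v_j$, use continuity of $H$ at $\langle q,S(v_j)\rangle$ to pick a prefix $w_j\prefix q$ past which the value $\delta_\IN H\langle\cdot,S(v_j)\rangle$ (where defined) stays equal to $n_{(j)}:=\delta_\IN H\langle q,S(v_j)\rangle$, and put $A^{(j)}:=w_j\Baire\cap\dom(\U f\delta_X)$; if there is no such $q$, put $A^{(j)}:=A^{(j-1)}$. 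Taking the $w_j$ increasingly long makes the recorded prefixes nest, so every $q\in A$ extends every recorded $w_j$, and $A$ is relatively clopen and non-empty in $\dom(\U f\delta_X)$.

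Now I would derive the contradiction. For any name $q\in A$, the value $\delta_\IN GK(q)$ equals some $v_j\in\range(g)$, and this $v_j$ must be one handled at step $j$ with a recorded $w_j,n_{(j)}$ — otherwise there would be no point of $A^{(j-1)}\supseteq A$ with that value. Since $q$ extends $w_j$ and the second coordinate $G'K(q)$ is literally $S(v_j)$, the freezing property gives $\delta_\IN H\langle q,G'K(q)\rangle=\delta_\IN H\langle q,S(v_j)\rangle=n_{(j)}$. As $H\langle\id,G'K\rangle\vdash f$, this means $(\U f)_A$ sends every input to one of the $\leq k$ numbers $n_{(1)},\dots,n_{(k)}$, i.e. $|\range((\U f)_A)|\leq k$. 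But $\U f$ is a strong fractal, so $\U f\leqSW(\U f)_A$, while $(\U f)_A\leqSW\U f$ is trivial; both functions are single-valued, hence slim, so Proposition~\ref{prop:cardinality-slim} yields $|\range(\U f)|=|\range((\U f)_A)|\leq k$, contradicting $|\range(\U f)|\geq k+1$. Together with $\range(f)=\range(\U f)$ this proves $|\range(f)|\leq|\range(g)|$.

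The main obstacle is exactly the feature that makes such a cardinality bound fail for $\leqW$ without the fractal hypothesis: the output modifier $H$ may inspect the original input and branch on it. Strong fractality is what repairs this — retreating to a small clopen cell "freezes" whatever finite portion of the input $H$ inspects, at no cost to the range — but this only works with two coordinated ingredients, finiteness of $\range(g)$ (so that only finitely many second-coordinate oracle answers $S(v)$ can occur, each frozen by a single continuity step) and the passage from $G$ to the finite-valued $G'$ (so that continuity of $H$ may legitimately be invoked with the second coordinate held fixed). I expect the bookkeeping in the iterative construction of $A$ — ensuring the prefixes nest and that every oracle answer occurring inside $A$ was actually processed — to be the fiddly part, but conceptually routine.
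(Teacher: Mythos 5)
Your proof is correct, but it is organized dually to the paper's. The paper runs an induction over the values of $f$: using slimness it picks a name $p_i$ on which $f$ is the singleton $\{k_i\}$, sets $n_i:=GK(p_i)$, freezes $H$ on a cylinder $w_i\Baire$ with second coordinate $n_i$, and argues that the $n_i$ are pairwise distinct (a repeated oracle answer inside an earlier cylinder would force a repeated $f$-value); strong fractality of $\U f$ together with Proposition~\ref{prop:cardinality-slim} is invoked at \emph{every} step to guarantee that the shrinking cell still carries the full range of $\U f$, so the construction directly produces an injection-like enumeration of $|\range(f)|$ many distinct elements of $\range(g)$, handling finite and infinite $|\range(f)|$ uniformly. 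You instead dispose of the infinite case trivially, then make a single finite pass over the at most $k=|\range(g)|$ possible oracle answers, freezing $H$ for each fixed answer $S(v_j)$ on a common nested cell $A$, and you invoke strong fractality and Proposition~\ref{prop:cardinality-slim} exactly once at the end to transfer the bound $|\range((\U f)_A)|\leq k$ back to $\U f$. Your preprocessing $G':=S\circ\delta_\IN\circ G$ plays precisely the role of the paper's ``without loss of generality $G$ has target space $\IN$'', and your appeal to continuity of $H$ with the second coordinate held at a fixed sequence is at the same level of rigor as the paper's statement $H\langle w_i\Baire,n_i\rangle=k_i$ (in both cases one should add that the frozen output prefix determines the $\delta_\IN$-value only for outputs that are valid names, which is guaranteed wherever you apply it, namely at names $q$ with $G'K(q)=S(v_j)$). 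What your arrangement buys is that fractality is used only once and the bookkeeping is a finite loop; what the paper's buys is a direct, case-free injection argument that never needs the finiteness reduction. Both rest on the same two pillars: continuity of the outer reduction with the finitely many oracle answers fixed, and the combination slim $+$ strong fractal $+$ Proposition~\ref{prop:cardinality-slim}.
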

\begin{proof}
Let us assume that $f$ is slim, $\U f$ is a strong fractal and $f\leqW g$.
Let $\delta_X$ be the representation of $X$.
Then there are computable $H,K$ such that $H\langle\id,GK\rangle\vdash f$ for all $G\vdash g$.
We assume that $\dom(K)=\dom(f\delta_X)$.
For simplicity and without loss of generality we assume that $G$ and $H$ have target space $\IN$.
We consider the following claim: for each $i\in\IN$ with $|\range(f)|>i$ 
there exists 
\begin{enumerate}
\item $k_i\in\range(f)\setminus\{k_0,...,k_{i-1}\}$,
\item $n_i\in\range(g)\setminus\{n_0,...,n_{i-1}\}$,
\item $p_i\in\dom(f\delta_X)$, 
\item $w_i\prefix p_i$, 
\end{enumerate}
such that $w_{i-1}\prefix w_i$, $GK(p_i)=n_i$ and $H\langle w_i\IN^\IN,n_i\rangle=k_i$.
Let $w_{-1}$ be the empty word. 
We prove this claim by induction on $i$. 
Firstly, if $|\range(f)|>0$, then there exists some $k_0\in\range(f)$ and since $f$ is slim, 
there exists $p_0\in\dom(f\delta_X)$ such that $f\delta_X(p_0)=\{k_0\}$. Let $n_0:=GK(p_0)$. 
By continuity of $H$ there is some $w_0\prefix p_0$
such that $H\langle w_0\IN^\IN,n_0\rangle=k_0$. Let $A_0:=w_0\IN^\IN$. 
Since $A_0$ is clopen and has non-empty intersection with $\dom(\U f\delta_X)$ and $\U f$ is a strong fractal, 
we obtain $\U f\leqSW\U f_{A_0}$. By Proposition~\ref{prop:cardinality-slim} and since $f$ is slim this implies
$|\range(f)|=|\range(\U f)|\leq|\range(\U f_{A_0})|$. 
If $|\range(f)|>1$, then there is some $k_1\in\range(\U f_{A_0})\setminus\{k_0\}$.
Since $\U f_{A_0}$ is slim, there exists a $p_1\in\dom(f\delta_X)$ such that $w_0\prefix p_1$ and such that
$f\delta_X(p_1)=\{k_1\}$. Let $n_1:=GK(p_1)$. Since $k_1\not=k_0$, we obtain $n_1\not=n_0$.
By continuity of $H$ there is some $w_1\prefix p_1$ with $w_0\prefix w_1$
such that $H\langle w_1\IN^\IN,n_1\rangle=k_1$. 
The proof can now continue inductively as above with $A_1:=w_1\IN^\IN$,
which proves the claim.
The claim implies $|\range(f)|\leq|\range(g)|$.
\end{proof}

From this result we can derive a number of separation results for the Bolzano-Weierstra\ss{} Theorem
of finite spaces. We recall that $\UBWT_X$ is always a strong fractal by Proposition~\ref{prop:join-irreducible-BWT}
and $\BWT_X$ is obviously slim for any represented Hausdorff space $X$.

\begin{theorem}
\label{thm:BWT-finite}
$\BWT_n\lW\BWT_{n+1}\lW\BWT_\IN\lW\BWT_\IR$ for all $n\in\IN$.
\end{theorem}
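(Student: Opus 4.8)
The plan is to establish the chain $\BWT_n\lW\BWT_{n+1}\lW\BWT_\IN\lW\BWT_\IR$ in two parts: first the easy reductions, then the strictness claims, where the cardinality-based separation theorem does most of the work.

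First I would record the trivial reductions. The inclusions $n\subseteq n+1\subseteq\IN\subseteq\IR$ (identifying $k$ with $\{0,\dots,k-1\}$) give computable embeddings $\iota$ in each case — for the finite-into-finite and finite-into-$\IN$ cases these are just inclusion maps, and $\IN\into\IR$ is the standard computable embedding of the discrete naturals into $\IR$. Hence $\BWT_n\leqW\BWT_{n+1}\leqW\BWT_\IN\leqW\BWT_\IR$ follows directly from Corollary~\ref{cor:BWT-embedding} (in fact with $\leqSW$). So the entire content is in the three strictness claims.

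For $\BWT_n\lW\BWT_{n+1}$: I would observe that $\range(\BWT_k)=k=\{0,\dots,k-1\}$, so $|\range(\BWT_n)|=n<n+1=|\range(\BWT_{n+1})|$, and apply Theorem~\ref{thm:cardinality-strong-fractals} in the contrapositive. To invoke that theorem I need that $\BWT_n:\In n^\IN\mto\IN$ is slim — which is noted just before the statement, since $\BWT_X$ is slim for any represented Hausdorff space — and that $\U\BWT_n=\UBWT_n$ is a strong fractal, which is exactly Proposition~\ref{prop:join-irreducible-BWT}. Thus if $\BWT_{n+1}\leqW\BWT_n$ held we would get $n+1\leq n$, a contradiction, so $\BWT_{n+1}\nleqW\BWT_n$ and the reduction is strict. (One should check the types match: both functions have target space $\IN$ after composing with the embedding $n+1\into\IN$, which is harmless since $\BWT_{n+1}\equivW\BWT_{n+1}$ viewed with codomain $\IN$.)

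For $\BWT_{n+1}\lW\BWT_\IN$: the same cardinality argument applies since $|\range(\BWT_{n+1})|=n+1$ while $\range(\BWT_\IN)=\IN$ is infinite, so $\BWT_\IN\nleqW\BWT_{n+1}$ by Theorem~\ref{thm:cardinality-strong-fractals} (again using that $\BWT_\IN$ is slim with $\UBWT_\IN$ a strong fractal); alternatively one can note $\BWT_n\lW\BWT_{n+1}$ for all $n$ already forces $\BWT_{n+1}\lW\sup_n\BWT_n\leqW\BWT_\IN$, but the direct cardinality argument is cleaner. Finally, for $\BWT_\IN\lW\BWT_\IR$: here the ranges are both infinite, so the cardinality technique is useless and a genuinely different argument is needed — this is the main obstacle. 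The natural route is via parallelization and the known separations: by Corollaries~\ref{cor:BWT-parallelization} and \ref{cor:perfect-BWT-examples} one has $\widehat{\BWT_\IN}\equivW\widehat{\BWT_2}\equivW\BWT_\Cantor\equivW\BWT_\IR$, whereas $\BWT_\IN\equivSW\LLPO^{*}{}'$ (Corollary~\ref{cor:jump-LLPO-star}) and $\LLPO^{*}$ is not parallelizable in the relevant sense — indeed $\widehat{\LLPO^*}\equivW\widehat{\LLPO}\equivW\WKL\lW\lim$, while $\BWT_\IN$ sits strictly above what $\WKL$-level information allows. Concretely I would argue: if $\BWT_\IR\leqW\BWT_\IN$ then $\BWT_\IR\equivW\widehat{\BWT_\IN}\leqW\widehat{\BWT_\IN}$ gives nothing, so instead use that $\BWT_\IN$ is weakly limit computable of a restricted kind — more precisely, by Corollary~\ref{cor:jump-LLPO-star} and monotonicity, every $\emptyset$-computable input to $\BWT_\IN$ has an output obtainable from $\LLPO^{*}{}'$, and via the analogue of Proposition~\ref{prop:computable-invariance} one checks that such outputs cannot realize the full $\BWT_\IR\equivSW\WKL'$. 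The slickest packaging: since $\BWT_\IN\leqW\BWT_\IR$ and $\widehat{\BWT_\IN}\equivW\BWT_\IR$, strictness $\BWT_\IN\lW\BWT_\IR$ is equivalent to $\BWT_\IN$ not being parallelizable; and $\BWT_\IN\equivW\widehat{\BWT_\IN}$ would give $\BWT_\IN\equivW\BWT_\IR\equivSW\WKL'$, but $\BWT_\IN\leqW\LLPO^{*}{}'\leqW\C_\IR'$ and $\WKL'\equivSW\C_\Cantor'$ together with the hierarchy $\BWT_n\lW\BWT_\IN$ (which forces $\BWT_\IN$ to be a supremum of a strictly increasing chain, hence — were it $\WKL'$ — contradicting join-irreducibility considerations, since $\widehat{}$ of the chain stabilizes). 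I expect the cleanest correct route will be to cite Theorem~\ref{thm:BWT-R-chain} and parallelization directly: $\widehat{\BWT_\IN}\equivW\BWT_\IR\lW\lim'\equivW\widehat{\CL_\IN}$ shows the parallelizations differ, but more relevantly, since $\BWT_\IN$ is not parallelizable (if it were, $\BWT_\IN\equivW\BWT_\IR$, but then $\BWT_\IR$ would have range effectively $\IN$-bounded in the sense of the cardinality lemma applied fiberwise, contradiction), we conclude $\BWT_\IN\lW\widehat{\BWT_\IN}\equivW\BWT_\IR$. I would work out in the write-up the precise lemma chain — most likely: $\BWT_\IR\not\leqW\BWT_\IN$ because $\BWT_\IR\equivSW\WKL'$ and $\WKL\nleqW\LLPO^*$ (cited as following from Lemma~4.1 in \cite{BG11a}), lifted across the derivative using Corollary~\ref{cor:monotone-derivative} since $\WKL$ and $\LLPO^*$ — or rather their cylindrifications — are cylinders.
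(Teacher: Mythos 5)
Your reductions and the first two strictness claims are exactly the paper's argument: $\BWT_n\leqW\BWT_{n+1}\leqW\BWT_\IN\leqW\BWT_\IR$ from Corollary~\ref{cor:BWT-embedding}, and $\BWT_n\lW\BWT_{n+1}\lW\BWT_\IN$ from Theorem~\ref{thm:cardinality-strong-fractals} using that $\BWT_k$ is slim and $\UBWT_k$ is a strong fractal (Proposition~\ref{prop:join-irreducible-BWT}). That part is fine.

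The gap is in $\BWT_\IN\lW\BWT_\IR$. None of the routes you sketch is actually carried out, and the one you settle on as ``most likely'' is invalid: you cannot lift the non-reduction $\WKL\nleqW\LLPO^*$ across the derivative via Corollary~\ref{cor:monotone-derivative}. That corollary only gives monotonicity in the positive direction ($f\leqW g\TO f'\leqW g'$ when $g$ is a cylinder); it does not reflect non-reducibility downward, and indeed the paper stresses that it cannot: Example~\ref{ex:strong-super-strong} exhibits Weihrauch-inequivalent functions whose derivatives are even strongly equivalent, and Example~\ref{ex:UCL-R} shows equivalent functions with inequivalent derivatives. So ``$\WKL\nleqW\LLPO^*$, hence $\WKL'\nleqW{\LLPO^*}'$'' is a non sequitur. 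Your alternative sketch (``$\BWT_\IN$ not parallelizable, because otherwise the cardinality lemma applied fiberwise\dots'') is circular as stated, since the cardinality theorem requires target space $\IN$ on both sides and does not apply to $\BWT_\IR$. The separation the paper uses is much more elementary and non-uniform: every value of $\BWT_\IN$ is a natural number, hence computable, whereas by Corollary~\ref{cor:BWT-counterexample} there is a computable bounded real sequence none of whose cluster points is even limit computable; Proposition~\ref{prop:computable-invariance}(1) (with $A=B=\emptyset$) then immediately yields $\BWT_\IR\nleqW\BWT_\IN$. You gestured at computable invariance early in your last paragraph but abandoned it; making that the whole argument is what closes the gap.
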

\begin{proof}
The reductions $\BWT_n\leqW\BWT_{n+1}\leqW\BWT_\IN\leqW\BWT_\IR$ follow 
directly from Corollary~\ref{cor:BWT-embedding}.
Since $\BWT_n$ is slim and $\UBWT_n$ is a strong fractal for all $n\in\IN$,
we obtain $\BWT_n\lW\BWT_{n+1}\lW\BWT_\IN$ for all $n\in\IN$ by Theorem~\ref{thm:cardinality-strong-fractals}.
While $\BWT_\IN$ always produces a computable output, the output of $\BWT_\IR$
can even be necessarily not limit computable, see Corollary~\ref{cor:BWT-counterexample}.
This implies $\BWT_\IN\lW\BWT_\IR$ by Proposition~\ref{prop:computable-invariance}.
\end{proof}

Using Theorem~\ref{thm:cardinality-strong-fractals} and similar arguments
we can also prove the following result.

\begin{theorem}
$\lim_n\lW\lim_{n+1}\lW\lim_\IN\lW\lim_\IR$ for all $n\in\IN$.
\end{theorem}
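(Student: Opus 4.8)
The plan is to mirror the argument for Theorem~\ref{thm:BWT-finite} essentially verbatim, replacing $\BWT$ by $\lim$ throughout, and then handle the last strict reduction separately by a cardinality-of-range argument. First I would establish the (non-strict) reductions $\lim_n\leqW\lim_{n+1}\leqW\lim_\IN\leqW\lim_\IR$. These are immediate: a convergent sequence in $n=\{0,\dots,n-1\}$ is a convergent sequence in $n+1$, and the computable inclusion $n\hookrightarrow n+1\hookrightarrow\IN\hookrightarrow\IR$ induces the reductions by pre- and post-composing with the (computable) inclusion and its partial inverse; equivalently one invokes Proposition~\ref{prop:unique-BWT}, which gives $\lim_X=\UBWT_X$, together with Corollary~\ref{cor:BWT-embedding} applied to the unique restriction. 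So $\lim_n\equivSW\UBWT_n$, $\lim_\IN\equivSW\UBWT_\IN\equivSW\C_\IN$, and $\lim_\IR\equivSW\lim$ by Fact~\ref{fact:lim} and Corollary~\ref{cor:UBWT-R}.

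Next I would prove the strict reductions $\lim_n\lW\lim_{n+1}\lW\lim_\IN$ by the cardinality condition for strong fractals. The point is that $\lim_n=\UBWT_n$ is single-valued (hence trivially slim) and $\U(\lim_n)=\lim_n$ is itself a strong fractal: indeed $\UBWT_n$ is a strong fractal by Proposition~\ref{prop:join-irreducible-BWT}, and this property is inherited by $\lim_n$ since it equals its own unique restriction. The range of $\lim_n$ is exactly $\{0,\dots,n-1\}$ (every $i<n$ is the limit of the constant sequence $i$), so $|\range(\lim_n)|=n<n+1=|\range(\lim_{n+1})|$ and $|\range(\lim_n)|=n<\aleph_0=|\range(\lim_\IN)|$. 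Applying Theorem~\ref{thm:cardinality-strong-fractals} with $f=\lim_n$ and $g=\lim_{n+1}$ (resp. $g=\lim_\IN$) yields $\lim_n\nleqW\lim_{n+1}$ (resp. $\lim_n\nleqW\lim_\IN$), and combined with the reductions above this gives the strict inequalities. (One must observe that all these functions have target space $\IN$, or a subspace thereof, so the hypotheses of Theorem~\ref{thm:cardinality-strong-fractals} apply; for $\lim_\IR$ one instead uses that its range is $\IR$, which has cardinality $2^{\aleph_0}$.)

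For the final strict reduction $\lim_\IN\lW\lim_\IR$, the cardinality bound of Theorem~\ref{thm:cardinality-strong-fractals} cannot be used in the direction $\lim_\IR\nleqW\lim_\IN$ because $\lim_\IR$ has target space $\IR$, not $\IN$. Instead I would argue: $\lim_\IN\equivW\C_\IN$ maps every computable input to a computable output, whereas $\lim_\IR\equivSW\lim$ maps some computable input (a fast Cauchy name of a limit-computable non-computable real) to a non-computable output. By the Computable Invariance Principle, Proposition~\ref{prop:computable-invariance}(1) with $A=B=\emptyset$, a reduction $\lim_\IR\leqW\lim_\IN$ would force $\lim_\IR$ to send computable inputs to computable outputs, a contradiction. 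Hence $\lim_\IR\nleqW\lim_\IN$, and with $\lim_\IN\leqW\lim_\IR$ we conclude $\lim_\IN\lW\lim_\IR$.

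The one point requiring a little care — and the main obstacle — is verifying that $\U(\lim_n)$ is genuinely a strong fractal in the sense demanded by Theorem~\ref{thm:cardinality-strong-fractals}, i.e. that restricting a convergent-sequence input to a clopen neighborhood of a name still lets $\lim_n$ recompute itself. This follows because, as in the proof of Proposition~\ref{prop:join-irreducibility}, prepending a fixed finite block to a converging sequence does not change its limit, so the self-similarity witness $K$ can be taken to paste an arbitrary prefix $w$ in front; I would spell this out in one sentence. Everything else is routine bookkeeping with the embeddings and the cited results.
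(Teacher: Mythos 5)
Your overall strategy is exactly the intended one: the paper proves this chain by the same pattern as Theorem~\ref{thm:BWT-finite}, using the cardinality condition for strong fractals for the finite part and a computable-invariance argument for the last link, and your identification of $\lim_m=\UBWT_m$ (so that slimness is trivial and the strong fractal property comes from Propositions~\ref{prop:unique-BWT} and \ref{prop:join-irreducible-BWT}) is the right way to discharge the hypotheses. However, there is a concrete error in your middle paragraph: you apply Theorem~\ref{thm:cardinality-strong-fractals} with $f=\lim_n$ and $g=\lim_{n+1}$ (resp.\ $g=\lim_\IN$) and claim it yields $\lim_n\nleqW\lim_{n+1}$ (resp.\ $\lim_n\nleqW\lim_\IN$). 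That instantiation yields nothing, since the conclusion of the theorem, $|\range(f)|\leq|\range(g)|$, is satisfied when $f$ has the smaller range; and the non-reductions you assert are in fact false --- they contradict the reductions $\lim_n\leqW\lim_{n+1}\leqW\lim_\IN$ you established in your first paragraph, and they are not what strictness requires. For $\lim_n\lW\lim_{n+1}$ you need $\lim_{n+1}\nleqW\lim_n$, so the theorem must be applied with the \emph{larger-range} map in the role of $f$: take $f=\lim_{n+1}$, $g=\lim_n$, note $|\range(f)|=n+1>n=|\range(g)|$, and check the hypotheses for this $f$ (single-valued, hence slim, and $\U(\lim_{n+1})=\lim_{n+1}=\UBWT_{n+1}$ is a strong fractal). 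Likewise, $\lim_{n+1}\lW\lim_\IN$ needs $\lim_\IN\nleqW\lim_{n+1}$, obtained with $f=\lim_\IN$ (infinite range) against $g=\lim_{n+1}$ (finite range). With the roles of $f$ and $g$ swapped in this way, your argument goes through verbatim.

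Your handling of the final strict reduction is correct and parallels the paper's treatment of $\BWT_\IN\lW\BWT_\IR$: $\lim_\IN$ maps every computable convergent sequence of naturals (which is eventually constant) to a computable output, while $\lim_\IR\equivSW\lim$ maps some computable input to a non-computable output, so Proposition~\ref{prop:computable-invariance} excludes $\lim_\IR\leqW\lim_\IN$; together with $\lim_\IN\leqW\lim_\IR$ this gives $\lim_\IN\lW\lim_\IR$. Apart from the $f$/$g$ swap above, nothing else needs repair.
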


Since $\lim_X=\UBWT_X$ for Hausdorff spaces $X$ by Proposition~\ref{prop:unique-BWT}, 
we get the following corollary for the unique version of Bolzano-Weierstra\ss{} Theorem.

\begin{corollary}
$\UBWT_n\lW\UBWT_{n+1}\lW\UBWT_\IN\lW\UBWT_\IR$ for all $n\in\IN$.
\end{corollary}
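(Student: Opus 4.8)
The plan is to deduce the corollary directly from the theorem on the finite and infinite limit maps stated immediately above, using the identification of $\UBWT_X$ with $\lim_X$ supplied by Proposition~\ref{prop:unique-BWT}. First I would note that all the spaces appearing in the statement are Hausdorff: the finite sets $n=\{0,\dots,n-1\}$ and $\IN$ carry the discrete topology, and $\IR$ is metric. Therefore Proposition~\ref{prop:unique-BWT} applies verbatim to each of them and gives the equalities of multi-valued functions $\UBWT_n=\lim_n$ for every $n$, $\UBWT_\IN=\lim_\IN$, and $\UBWT_\IR=\lim_\IR$ --- not merely Weihrauch equivalences, but genuine identities of maps, hence a fortiori strong Weihrauch equivalences.

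Granting these identities, the asserted chain $\UBWT_n\lW\UBWT_{n+1}\lW\UBWT_\IN\lW\UBWT_\IR$ is literally the chain $\lim_n\lW\lim_{n+1}\lW\lim_\IN\lW\lim_\IR$, which is exactly the content of the theorem preceding this corollary. So no additional work is required beyond substituting one notation for the other.

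Consequently there is essentially no obstacle to overcome here: all the mathematical substance --- in particular the strict non-reducibilities, which rest on the cardinality condition for strong fractals (Theorem~\ref{thm:cardinality-strong-fractals}) together with the computable invariance argument separating $\lim_\IN$ from $\lim_\IR$ --- has already been carried out in the proof of the theorem about the $\lim$ maps. The only point to check in the present argument is Hausdorffness of the spaces involved, which is immediate. I would therefore present the proof in a single sentence: each of the spaces $n$, $\IN$, $\IR$ is Hausdorff, so $\UBWT_n=\lim_n$, $\UBWT_\IN=\lim_\IN$ and $\UBWT_\IR=\lim_\IR$ by Proposition~\ref{prop:unique-BWT}, and the claim is thus a restatement of the preceding theorem.
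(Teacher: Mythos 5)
Your proposal is correct and matches the paper's own argument exactly: the paper derives the corollary by noting that $\lim_X=\UBWT_X$ for Hausdorff spaces $X$ (Proposition on the unique Bolzano--Weierstra\ss{} Theorem) and then reading off the chain from the preceding theorem on $\lim_n$, $\lim_\IN$, $\lim_\IR$. No further comment is needed.
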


We mention that all non-trivial (unique) versions of Bolzano-Weierstra\ss{} are above $\LPO$.

\begin{proposition}
$\LPO\lW\UBWT_2$. 
\end{proposition}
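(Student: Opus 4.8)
The plan is to prove the reduction $\LPO\leqW\UBWT_2$, which is routine, together with the non-reduction $\UBWT_2\nleqW\LPO$, which is the substantive point; this yields $\LPO\lW\UBWT_2$. For the reduction: given $p\in\Baire$, let $K$ produce a name of the sequence $(x_n)\in\{0,1\}^\IN$ with $x_n:=1$ if $p(i)\not=0$ for all $i\leq n$ and $x_n:=0$ otherwise. Such a sequence is always eventually constant, hence has relatively compact range (the space $\{0,1\}$ being finite) and a unique cluster point, namely $\LPO(p)$. Thus $(x_n)\in\dom(\UBWT_2)$ and $\UBWT_2(x_n)=\LPO(p)$, so letting $H$ merely read off this bit we get $\LPO\leqSW\UBWT_2$. (Via Proposition~\ref{prop:unique-BWT}, $\UBWT_2=\lim_2$, and this just says $\LPO\leqSW\lim_2$.)

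For $\UBWT_2\nleqW\LPO$ I would argue by a direct diagonalization, in the spirit of the proof of Proposition~\ref{prop:BWT-2}; note that the cardinality criterion of Theorem~\ref{thm:cardinality-strong-fractals} and the Computable Invariance Principle (Proposition~\ref{prop:computable-invariance}) are of no use here, since both $\LPO$ and $\UBWT_2$ have two-element range and send every computable input to a computable output. Assume $\UBWT_2\leqW\LPO$, say $H\langle\id,\LPO K\rangle\vdash\UBWT_2$ with $H,K$ computable; it is convenient to argue directly with eventually constant $0$-$1$ sequences (all of which lie in $\dom(\UBWT_2)$), the passage to Cauchy names being routine. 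Distinguish two cases. If $\LPO K(\widehat{0})=0$, then some coordinate of $K(\widehat{0})$ is $0$; by continuity of $K$ there is $N$ with $\LPO K(0^Nq)=0$ for every continuation $q$, so $q\mapsto H\langle 0^Nq,0\rangle$ is a continuous map agreeing with $\UBWT_2$ on the clopen set of sequences beginning with $0^N$ — impossible, since $\UBWT_2$ is discontinuous there (the sequences $0^{N+k}\widehat{1}$ have cluster point $1$ but converge to $\widehat{0}$, whose cluster point is $0$). If instead $\LPO K(\widehat{0})=1$, then $H\langle\widehat{0},1\rangle=0$, so by continuity of $H$ there is $N$ with $H\langle 0^Nq,1\rangle=0$ for all $q$; the input $0^N\widehat{1}$ has unique cluster point $1$, which forces $\LPO K(0^N\widehat{1})=0$ and $H\langle 0^N\widehat{1},0\rangle=1$, so by continuity of $K$ and $H$ there is $M$ with $\LPO K(0^N1^Mq)=0$ and $H\langle 0^N1^Mq,0\rangle=1$ for all $q$; but then the input $0^N1^M\widehat{0}$ has unique cluster point $0$, giving $0=\UBWT_2(0^N1^M\widehat{0})=H\langle 0^N1^M\widehat{0},\LPO K(0^N1^M\widehat{0})\rangle=H\langle 0^N1^M\widehat{0},0\rangle=1$, a contradiction.

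The main obstacle is getting the diagonalization bookkeeping right: the single query to $\LPO$ admits two answers and the post-processor $H$ has unrestricted access to the entire input sequence, so one must track carefully how one bit of non-continuous information is ``used up''. The key point exploited above is that as soon as a finite prefix forces $\LPO K$ to output $0$, the map $H$ is left with purely continuous resources and can no longer follow the eventual value of the sequence; feeding it progressively longer alternating prefixes then produces the contradiction. Combining the two halves yields $\LPO\leqSW\UBWT_2$ and $\UBWT_2\nleqW\LPO$, hence $\LPO\lW\UBWT_2$.
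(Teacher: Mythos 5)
Your proof is correct, but the strictness part takes a genuinely different route from the paper. The paper gets $\LPO\leqW\lim_2$ (easy) and then separates via a quantitative invariant: $\LPO$ is computable with one mind change while $\lim_2$ is not computable with any fixed number of mind changes, so strictness follows from the Mind Change Lemma of the earlier paper, together with $\lim_2=\UBWT_2$. You instead prove $\UBWT_2\nleqW\LPO$ by a self-contained continuity/diagonalization argument against a hypothetical reduction $H\langle\id,\LPO K\rangle\vdash\UBWT_2$, exploiting that a $\LPO$-answer of $0$ is committed by a finite prefix of $K$'s input (so that from that point on the realizer has only continuous resources and cannot track the eventual value), and handling the answer-$1$ case by forcing a switch to answer $0$ on a suitable extension. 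Both arguments are sound; the paper's is shorter and plugs into the general mind-change framework (which separates $\LPO$ from everything needing unboundedly many mind changes at once), while yours avoids the external lemma entirely and makes the combinatorial mechanism explicit, in the same hands-on style as the paper's proof that $\BWT_2\nleqW\lim$. Two small presentational points: when you write $H\langle\cdot,0\rangle$ and $H\langle\cdot,1\rangle$ you are implicitly fixing the canonical realizer of $\LPO$ (legitimate, since the reduction must work for all realizers, in particular this one), and strictly $\UBWT_2(x_n)=\{\LPO(p)\}$ as a singleton set; neither affects correctness.
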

\begin{proof}
$\LPO\leqW\lim_2$ is easy to see. Moreover, $\LPO$ can be computed with one mind change,
whereas $\lim_2$ cannot be computed with any fixed number of mind changes.\footnote{Intuitively, a multi-valued function
is computable with at most $n$ mind changes, if it can be computed by a Turing machine, which is allowed
to revise its partial output at most $n$ times altogether, see Definition~4.3 in \cite{BG11a}.}
This implies $\LPO\lW\lim_2$ by the Mind Change Lemma~4.4 in \cite{BG11a}
and hence the claim since $\lim_2=\UBWT_2$. 
\end{proof}

Since $\LPO\leqW\lim_n$ and $\C_n\equivSW\K_n\leqSW\K_\IR\equivSW\widehat{\LLPO}$,
we can use the parallelization principle as stated in Theorem~\ref{thm:parallelization-principle}
in order to conclude that $\lim_n\nleqW\C_n$. 
The inverse reduction $\C_n\leqW\lim_n$ easily follows, since given a non-empty set $A\In\{0,...,n-1\}$
by negative information, one can always choose the smallest candidate of a member $x_i\in A$
that is not excluded by negative information at time step $i$ in order to get a sequence $(x_i)$ that converges to $x\in A$.
Together with Proposition~\ref{prop:BWT-2} we obtain the following corollary.

\begin{corollary}
$\C_n\lW\UBWT_n=\lim_n\lW\BWT_n\equivSW\C_n'$ for all $n\geq2$.
\end{corollary}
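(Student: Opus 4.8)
The plan is to unwind the statement into four assertions and settle each one, mostly by invoking results already proved. The two equalities are immediate: $\UBWT_n=\lim_n$ is the case $X=\{0,...,n-1\}$ of Proposition~\ref{prop:unique-BWT}, a finite discrete space being Hausdorff, and $\BWT_n\equivSW\C_n'$ is precisely Corollary~\ref{cor:jump-LLPO}. Thus the work reduces to the two strict reductions $\C_n\lW\lim_n$ and $\lim_n\lW\BWT_n$.

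For $\C_n\lW\lim_n$ I would first give the reduction $\C_n\leqW\lim_n$ directly, as sketched before the statement: given a $\psi_-$-name of a non-empty $A\In\{0,...,n-1\}$, at stage $i$ output the least element of $\{0,...,n-1\}$ not yet enumerated into the complement of $A$; the resulting sequence is monotone, hence eventually constant, and $\lim_n$ applied to it returns $\min A\in A$. For strictness I would appeal to the Higher Parallelization Principle (Theorem~\ref{thm:parallelization-principle}) in the case $n=0$, i.e.\ $\LPO\nleqW\widehat{\LLPO}$. On the one side, $\LPO\leqW\lim_2\leqW\lim_n$, since one can output $1$ while no zero has appeared in the input and switch once and for all to $0$ when one does. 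On the other side, $\C_n\equivSW\K_n\leqSW\K_\IR\equivSW\widehat{\LLPO}$: the first equivalence is Corollary~\ref{cor:compact-choice-metric} (finite discrete spaces are computably compact), $\K_n\leqSW\K_\IR$ follows from the embedding $i\mapsto i$ via Proposition~\ref{prop:embedding}, and $\K_\IR\equivSW\K_\Cantor\equivSW\C_\Cantor\equivSW\widehat{\LLPO}$ by Corollaries~\ref{cor:compact-choice-rich} and \ref{cor:compact-choice-metric} and Fact~\ref{fact:WKL}. Hence a reduction $\lim_n\leqW\C_n$ would force $\LPO\leqW\widehat{\LLPO}$, a contradiction, and $\C_n\lW\lim_n$ follows.

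For $\lim_n\lW\BWT_n$, the reduction $\lim_n=\UBWT_n\leqW\BWT_n$ is trivial because $\UBWT_n$ is a restriction of $\BWT_n$. For strictness I would use $\BWT_2\nleqW\lim$ (Proposition~\ref{prop:BWT-2}): since $\BWT_2\leqW\BWT_n$ by Corollary~\ref{cor:BWT-embedding} and $\lim_n\leqW\lim$ (as $\lim_n$ is a restriction of $\lim_\IN$ and $\lim_\IN\leqW\lim$ by Fact~\ref{fact:lim}), a reduction $\BWT_n\leqW\lim_n$ would yield $\BWT_2\leqW\lim$, which is impossible. This closes the chain.

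I do not expect a genuine obstacle, as every ingredient is already available; the only step that needs a little care is lining up the two hypotheses of Theorem~\ref{thm:parallelization-principle}, namely checking $\LPO\leqW\lim_n$ and tracing the reduction $\C_n\leqSW\widehat{\LLPO}$ through the chain of choice equivalences.
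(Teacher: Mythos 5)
Your proposal is correct and follows essentially the same route as the paper: the reduction $\C_n\leqW\lim_n$ via the least non-excluded candidate, strictness from $\LPO\leqW\lim_n$ together with $\C_n\equivSW\K_n\leqSW\K_\IR\equivSW\widehat{\LLPO}$ and Theorem~\ref{thm:parallelization-principle}, the identification $\UBWT_n=\lim_n$ from Proposition~\ref{prop:unique-BWT}, strictness of $\lim_n\lW\BWT_n$ from Proposition~\ref{prop:BWT-2}, and $\BWT_n\equivSW\C_n'$ from Corollary~\ref{cor:jump-LLPO}. The only difference is that you spell out the chain of choice equivalences in slightly more detail than the paper does, which is harmless.
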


We mention that we also get the following consequence of Theorem~\ref{thm:cardinality-strong-fractals}.

\begin{proposition}
$\UBWT_{n+1}\nleqW\BWT_n$ for all $n\in\IN$.
\end{proposition}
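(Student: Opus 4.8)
The plan is to deduce this from the cardinality condition for strong fractals, Theorem~\ref{thm:cardinality-strong-fractals}. First I would record the two structural facts about $\UBWT_{n+1}$: it is single-valued, hence slim, and since it already coincides with its own unique part, $\U(\UBWT_{n+1})=\UBWT_{n+1}$ is a strong fractal by Proposition~\ref{prop:join-irreducible-BWT}. Both $\UBWT_{n+1}$ and $\BWT_n$ have target space a finite subset of $\IN$ under the identification of $k$ with $\{0,\dots,k-1\}$, so Theorem~\ref{thm:cardinality-strong-fractals} applies verbatim.

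Next I would compute the two ranges. For $\BWT_n:\In n^\IN\mto n$ the constant sequence with value $i$ has $i$ as its unique cluster point for every $i<n$, so $\range(\BWT_n)=\{0,\dots,n-1\}$ and $|\range(\BWT_n)|=n$. Likewise the constant sequences show $\range(\UBWT_{n+1})=\{0,\dots,n\}$, so $|\range(\UBWT_{n+1})|=n+1$.

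Finally, assume for contradiction that $\UBWT_{n+1}\leqW\BWT_n$. Theorem~\ref{thm:cardinality-strong-fractals} then yields $n+1=|\range(\UBWT_{n+1})|\leq|\range(\BWT_n)|=n$, which is absurd; hence $\UBWT_{n+1}\nleqW\BWT_n$. For $n=0$ the conclusion is anyway trivial, since $\BWT_0$ is nowhere defined while $\UBWT_1$ is not, and this degenerate case is subsumed by the count $1\not\leq 0$.

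The only point requiring attention — and essentially the only obstacle — is checking that the hypotheses of Theorem~\ref{thm:cardinality-strong-fractals} are genuinely met: that $\UBWT_{n+1}$ is slim (immediate, being a partial single-valued function) and that its unique part is a strong fractal (this is exactly Proposition~\ref{prop:join-irreducible-BWT}, whose proof mirrors Proposition~\ref{prop:join-irreducibility}). Everything else is a direct cardinality count; in particular neither the derivative characterization $\BWT_n\equivSW\C_n'$ nor the identity $\UBWT_{n+1}=\lim_{n+1}$ is actually needed, although either could be cited for bookkeeping.
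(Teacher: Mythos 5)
Your proof is correct and follows exactly the route the paper intends: the proposition is stated there as a direct consequence of the cardinality condition for strong fractals (Theorem~\ref{thm:cardinality-strong-fractals}), applied with $f=\UBWT_{n+1}$ (slim because single-valued, and a strong fractal by Proposition~\ref{prop:join-irreducible-BWT}) and $g=\BWT_n$, yielding the impossible inequality $n+1\leq n$. Your range computations via constant sequences and the remark on the degenerate case $n=0$ are fine, so nothing further is needed.
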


It is also interesting to note that $\UBWT_n=\lim_n$ is complete
for all limit computable functions with range of cardinality $n$.

\begin{proposition}
\label{prop:lim-n}
Let $f:\In X\mto\{0,...,n-1\}$ be a multi-valued function on 
represented spaces and $n\in\IN$. Then the following are equivalent:
\begin{enumerate}
\item $f\leqW\lim$,
\item $f\leqSW\lim_n$. 
\end{enumerate}
An analogous results holds for $\IN$ instead of $n=\{0,...,n-1\}$.
\end{proposition}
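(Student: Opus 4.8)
The plan is to establish the two implications separately. $(2)\Rightarrow(1)$ is routine: $\lim_n\leqW\lim$ (after a trivial computable renaming $\lim_n$ is a restriction of $\lim_\IN$, and $\lim_\IN\leqW\widehat{\lim_\IN}\equivSW\lim$ by Fact~\ref{fact:lim}), hence $f\leqSW\lim_n$ yields $f\leqW\lim$ by transitivity. The same remark handles the version with codomain $\IN$.

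For $(1)\Rightarrow(2)$ I would first normalize the reduction. Since $\lim$ is a cylinder (Fact~\ref{fact:lim}), $f\leqW\lim$ already gives $f\leqSW\lim$, and since $\lim$ is the only realizer of itself up to extension there are computable $H,K:\In\Baire\to\Baire$ with $H\lim K\vdash f$. As $\lim$ is transparent (Fact~\ref{fact:galois}) we get $H\lim=\lim G$ for some computable $G$, so with $S:=GK$ (computable) we have $\lim S\vdash f$. Now I exploit that the outputs live in the discrete space $n$: writing $\delta_n$ for its Cauchy representation (with respect to the discrete metric and a dense sequence $\alpha$), the fast--Cauchy condition inside a $\delta_n$-name $q$ of a point $y$ collapses to $\alpha(q(j))=y$ for all $j\ge 1$, since distances $\le\tfrac12<1$ force equal terms. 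Hence, for $p\in\dom(f\delta_X)$, writing $S(p)=\langle s_0,s_1,s_2,\dots\rangle$ and $q:=\lim(S(p))$ (a $\delta_n$-name of some $y\in f\delta_X(p)$), the sequence of naturals $(s_i(1))_i$ is eventually constant with eventual value $q(1)$ and $\alpha(q(1))=y$. I would then put
\[
K'(p):=\langle\,\widehat{s_0(1)},\,\widehat{s_1(1)},\,\widehat{s_2(1)},\,\dots\,\rangle ,
\]
where $\widehat m$ is the constant sequence with value $m$ (always a legitimate $\delta_n$-name of $\alpha(m)\in n$); $K'$ is computable. Then $\delta_n^\IN K'(p)=(\alpha(s_i(1)))_i$ is eventually constant, so it lies in $\dom(\lim_n)$ and $\lim_n\delta_n^\IN K'(p)=y\in f\delta_X(p)$. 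Consequently, for every $L\vdash\lim_n$ we have $\delta_n\,L\,K'(p)=y\in f\delta_X(p)$, i.e.\ $\id\,L\,K'\vdash f$, which is exactly $f\leqSW\lim_n$. Taking $\alpha=\id$ and $\lim_\IN$ in place of $\lim_n$ gives the variant with codomain $\IN$.

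The main obstacle, modest as it is, lies in the last step: one must verify that $K'(p)$ really is an admissible $\lim_n$-input (each $\widehat{s_i(1)}$ a genuine $\delta_n$-name, the sequence $(\alpha(s_i(1)))_i$ discretely convergent --- here one uses that convergence in Baire space is coordinatewise eventual constancy, applied to the single coordinate the discrete Cauchy condition pins down) and that its $\lim_n$-value coincides with the output $y$ originally produced by $H\lim K$. This relies on exactly two structural inputs: that $\lim$ is a transparent cylinder (Facts~\ref{fact:lim} and \ref{fact:galois}), which lets the reduction be rewritten as $\lim\circ S$ with $S$ computable; and that a Cauchy name of a point of a discrete space already decides that point from a bounded prefix, which is what lets one output coordinate of $S(p)$ serve as an eventually correct guess at the value --- this is precisely where the finiteness (or discreteness) of the codomain is used, and where the argument would fail for a general codomain.
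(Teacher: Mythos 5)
Your proof is correct and takes essentially the same route as the paper's: you normalize the reduction to the form $\lim\circ S$ with computable $S$ (the paper phrases this as ``$f$ is computed by a limit Turing machine'') and then use that for a discrete codomain a single coordinate of the output name already determines the value, so the stage-wise guesses form a discretely convergent sequence whose limit is recovered by $\lim_n$. Your version just makes the paper's informal machine argument precise via the cylinder/transparency normalization and the collapse of the Cauchy representation on a discrete space.
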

\begin{proof}
It is clear that $f\leqSW\lim_n$ implies $f\leqW\lim$. Let us assume that $f\leqW\lim$.
It is known that this means that there is a limit Turing machine that computes $f$.
This means that the Turing machine has to stabilize the output on each output cell
in the long run. Since for a discrete output only the first component of the output
matters, we get a sequence of natural numbers in $\{0,...,n-1\}$ that converges.
This limit can be obtained with $\lim_n$.
\end{proof}

As a consequence we obtain the following result. 

\begin{theorem}
$\lim_n=\UBWT_n\equivSW\BWT_n\sqcap\lim$ for all $n\in\IN$.
\end{theorem}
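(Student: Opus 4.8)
The plan is to establish the two equalities separately and then glue them. The first, $\lim_n=\UBWT_n$, is simply Proposition~\ref{prop:unique-BWT} instantiated at the finite discrete metric space $X=n=\{0,\dots,n-1\}$: a sequence in $n$ has a unique cluster point precisely when it is eventually constant, and in that case the cluster point is its limit, so $\UBWT_n$ and $\lim_n$ are literally the same partial multi-valued function.

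For the strong equivalence $\lim_n\equivSW\BWT_n\sqcap\lim$ I would use that $\sqcap$ is the infimum with respect to $\leqSW$, so it suffices to show (i)~$\lim_n\leqSW\BWT_n$, (ii)~$\lim_n\leqSW\lim$, and (iii)~$\BWT_n\sqcap\lim\leqSW\lim_n$. Step~(i) is immediate, since $\UBWT_n$ is a restriction of $\BWT_n$ and any restriction of a map is strongly Weihrauch reducible to it. Step~(ii) follows from $\lim_n=\UBWT_n\leqSW\UBWT_\IR\equivSW\lim$ (Corollary~\ref{cor:UBWT-R}), or can be checked by hand by coding an eventually constant sequence in $n$ as a convergent sequence in Baire space. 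Together (i) and (ii) give $\lim_n\leqSW\BWT_n\sqcap\lim$, because the infimum is the greatest lower bound.

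The substance of the argument, and where I expect the main obstacle, is step~(iii). The tool is Proposition~\ref{prop:lim-n}: any limit computable function whose range lies inside $\{0,\dots,n-1\}$ is already strongly reducible to $\lim_n$. Now $\BWT_n\sqcap\lim$, being the infimum of $\BWT_n$ and $\lim$, lies below $\lim$ and is therefore limit computable; the delicate point is that its nominal codomain is the coproduct $n\sqcup\Baire$, not $\{0,\dots,n-1\}$, so Proposition~\ref{prop:lim-n} does not apply verbatim and one must argue that the second summand of the codomain contributes nothing. To organize this I would first rewrite $\BWT_n\sqcap\lim\equivSW(\C_n\sqcap\id)'$ using Proposition~\ref{prop:products-parallelization}(4) together with $\BWT_n\equivSW\C_n'$ (Theorem~\ref{thm:BWT} and $\K_n\equivSW\C_n$, or Corollary~\ref{cor:jump-LLPO}) and $\lim\equivSW\id'$ (Example~\ref{ex:derivatives}), and then analyse the ground degree $\C_n\sqcap\id$: its $\id$-component already carries a computable realizer of the meet, so one checks $\C_n\sqcap\id\equivW\id_n$, and the residual work is to see that, on passing to derivatives, these two sides still agree, giving $(\C_n\sqcap\id)'\equivSW\id_n'\equivSW\lim_n$. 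Pinning down precisely that the coproduct codomain is harmless after differentiation — i.e.\ that the $n$ possible outputs of $\lim_n$ really do suffice although $\lim$ has many more — is the crux of the whole statement.

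Finally, combining (i)–(iii) gives $\lim_n\leqSW\BWT_n\sqcap\lim\leqSW\lim_n$, so $\lim_n$, $\UBWT_n$ and $\BWT_n\sqcap\lim$ all name the same strong Weihrauch degree, which is the assertion of the theorem.
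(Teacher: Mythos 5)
Your easy direction is fine and matches the paper: $\lim_n=\UBWT_n$ is Proposition~\ref{prop:unique-BWT} at $X=n$, and $\lim_n\leqSW\BWT_n\sqcap\lim$ follows since $\sqcap$ is the infimum. The gap is exactly at the step you yourself call the crux, and the route you sketch for it cannot be completed. Rewriting $\BWT_n\sqcap\lim\equivSW(\C_n\sqcap\id)'$ is legitimate (Proposition~\ref{prop:products-parallelization}(4), $\BWT_n\equivSW\C_n'$, $\lim\equivSW\id'$), but from there you want to pass from the ground-level fact $\C_n\sqcap\id\equivW\id_n$ to $(\C_n\sqcap\id)'\equivSW\lim_n$. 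The jump is monotone only for $\leqSW$ (Proposition~\ref{prop:monotone-derivative}), and the paper explicitly exhibits the failure for $\leqW$: $\C_1\equivW\id_\IN\equivW\id$ while $\C_1'\lW\lim_\IN\lW\lim$ (Example~\ref{ex:derivatives}). So the ordinary equivalence you invoke buys nothing, and the strong reduction you would need, $\C_n\sqcap\id\leqSW\id_n$, is in fact false for $n\geq 2$: a strong reduction has no access to the original input, so against the realizer of $\id_n$ that always returns canonical names the outer functional $H$ can only produce $n$ fixed outputs, and on inputs whose Baire-space component avoids the finitely many fixed outputs on the $\id$-side this would yield a computable selection for $\C_n$, which does not exist. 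Thus your plan reduces the theorem to an unproved (and, via monotonicity, unprovable) strong equivalence; nothing in the proposal actually shows that the $\Baire$-summand of the codomain is harmless.

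The missing idea, which is how the paper argues, is a finite-range relabelling argument carried out at the level of realizers: if $f\leqSW\BWT_n$ and $f\leqSW\lim$ and the codomain $Y$ of $f$ is computably Hausdorff, then in a strong reduction $HGK\vdash f$ the functional $H$ only ever receives (a name of) one of the $n$ values $0,\dots,n-1$, so the relevant outputs of $f$ lie among at most $n$ fixed points $y_i:=\delta_YH(i)$; since finitely many distinct points in a computably Hausdorff space can be computably told apart, the bijection $h:i\mapsto y_i$ has computable inverse, so $f\equivSW h^{-1}f$, and $h^{-1}f$ is a limit computable function with codomain $\{0,\dots,n-1\}$, whence $f\leqSW\lim_n$ by Proposition~\ref{prop:lim-n}. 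Applying this to $f=\BWT_n\sqcap\lim$, whose codomain $n\sqcup\Baire$ is computably Hausdorff, gives the hard inequality $\BWT_n\sqcap\lim\leqSW\lim_n$. Some argument of this kind (exploiting that the $\BWT_n$-side of the reduction forces an essentially $n$-valued output) is indispensable; without it your proof is incomplete.
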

\begin{proof}
It is clear that $\UBWT_n\leqSW\BWT_n$ and $\lim_n\leqSW\lim$.
Hence we obtain $\lim_n=\UBWT_n\leqSW\BWT_n\sqcap\lim$.
We need to prove the inverse reduction.
Let now $f:\In (X,\delta_X)\mto(Y,\delta_Y)$ be a multi-valued function on represented spaces,
where $Y$ is a computable Hausdorff space, i.e.\ a space $Y$ such that 
the diagonal $\Delta_Y:=\{(x,y)\in Y\times Y:x=y\}$ is co-c.e.\ closed.
Let $f\leqSW\BWT_n$ and $f\leqSW\lim$. 
Then $f$ is limit computable and there are computable $H,K$ such that $HGK\vdash f$ for all $G\vdash\BWT_n$.
Since $\BWT_n$ has target space $\{0,...,n-1\}$, we can assume without 
loss of generality that this is also the target space of $G$ and the source space of $H$. 
Let now $y_i:=\delta_YH(i)$ for $i=0,...,n-1$. 
Without loss of generality, we assume that all the $y_i$ are pairwise different (otherwise
we replace $n$ by a suitable smaller $n$).
Then the map $h:\{0,...,n-1\}\to Y,i\mapsto y_i$ is clearly computable and bijective and
since $Y$ is a computable Hausdorff space and $\dom(h)$ is finite, the inverse $h^{-1}$ is also computable. 
Hence $h^{-1}f:\In X\mto\{0,...,n-1\}$ is limit computable and hence we obtain 
$f\equivSW h^{-1}f\leqSW\lim_n$ by Proposition~\ref{prop:lim-n}. 
This is, in particular, applicable to $f=\BWT_n\sqcap\lim$, since the output space
$Y=(\{0\}\times\{0,...,n-1\})\cup(\{1\}\times\Baire)$ (with the coproduct representation)
is a computable Hausdorff space.
Hence we obtain $\BWT_n\sqcap\lim\leqSW\lim_n$.
\end{proof}

It is interesting to point out that there are compact computable
metric spaces $X$ such that the Bolzano-Weierstra\ss{} Theorem
$\BWT_X$ is strictly between $\BWT_\IN$ and $\BWT_\IR$.

\begin{proposition}
\label{prop:BWT-infinite-compact}
$\BWT_X\nleqW\BWT_\IN$ 
for all computable metric spaces $X$ which  are infinite and compact.
\end{proposition}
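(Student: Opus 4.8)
The plan is to locate, inside every infinite compact computable metric space $X$, a computable copy of the one--point compactification $\omega+1$ of $\IN$, and to show that Bolzano--Weierstra\ss{} on that copy already fails to reduce to $\BWT_\IN$. Write $S:=\{0\}\cup\{2^{-k}:k\in\IN\}\In\IR$, a compact computable metric space homeomorphic to $\omega+1$. I would split the statement into two claims: (A) $\BWT_S\leqW\BWT_X$ for every infinite compact computable metric space $X$; and (B) $\BWT_S\nleqW\BWT_\IN$. Transitivity then finishes the proof: if $\BWT_X\leqW\BWT_\IN$ held, then $\BWT_S\leqW\BWT_X\leqW\BWT_\IN$, contradicting (B).

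For (A), since $X$ is infinite and compact it is not discrete, so it has a non--isolated point. Using the dense sequence of $X$ together with total boundedness I would build, by a nested--ball recursion, a computable sequence $(b_k)$ of pairwise distinct points converging to a computable point $b^*\notin\{b_k:k\in\IN\}$, with effectively positive lower bounds on all the distances $d(b_k,b_j)$ ($k\neq j$) and $d(b_k,b^*)$; the delicate point of this step is to steer the recursion towards a non--isolated point using only the semidecidable metric data, and this is where compactness of $X$ is genuinely used. Such a sequence is precisely a computable embedding $\iota\colon S\into X$ (sending $2^{-k}\mapsto b_k$ and $0\mapsto b^*$), so Corollary~\ref{cor:BWT-embedding} gives $\BWT_S\leqW\BWT_X$. (When $X$ is uncountable one may instead locate a computably embedded Cantor space and appeal to Corollary~\ref{cor:BWT-rich}, but I would not rely on that shortcut, since the embedding $S\into X$ is needed in the countable case anyway.)

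For (B), I would pass to compact choice. By Theorem~\ref{thm:BWT}, $\BWT_S\equivSW\K_S'$ and $\BWT_\IN\equivSW\K_\IN'$, and $\K_\IN\equivSW\LLPO^*$ by Proposition~\ref{pro:compact-choice-N}; moreover $\K_\IN\leqSW\K_S$ by the embedding $\IN\into S$ and Proposition~\ref{prop:embedding}. The heart of the matter is that $\K_S$ is strictly richer than $\K_\IN=\bigsqcup_m\C_m$: a realizer of $\K_\IN$ is handed a bound $m$ at the outset, whereas the negatively presented compact input to $\K_S$ carries an accumulation point --- the image of $0$ --- whose membership in the set is only co--c.e.\ and which cannot be ``bounded away'' in advance. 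I would make this precise by showing $\K_S\nleqW\K_\IN$: committing to ``$0$ lies in the set'' and turning out to be wrong leaves a realizer facing the actual solution set, which is a finite subset of $\{2^{-k}:k\in\IN\}$ with no a~priori bound, and with no way to post--process the spurious answer; this can be turned into a rigorous argument by a prefix--surgery on $\psi_-$-names, or via the cardinality/fractal machinery of this section applied to $\U\K_S$.

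The main obstacle is to propagate the separation through the jump, i.e.\ to get from $\K_S\nleqW\K_\IN$ to $\K_S'\nleqW\K_\IN'$, since derivatives are not $\leqW$-monotone. Here I would parallelize. Since $\K_\IN\equivSW\LLPO^*\leqW\widehat\LLPO\equivSW\WKL$ is weakly computable, $\widehat{\K_\IN}\equivSW\widehat{\LLPO^*}\equivSW\widehat\LLPO\equivSW\C_\Cantor$, hence $\widehat{\BWT_\IN}\equivSW\widehat{\K_\IN{}'}\equivSW(\widehat{\K_\IN})'\equivSW\C_\Cantor'\equivSW\BWT_\IR$ by Proposition~\ref{prop:products-parallelization}, Fact~\ref{fact:WKL} and Corollary~\ref{cor:jump-WKL}; thus it suffices to prove $\widehat{\BWT_S}\equivSW(\widehat{\K_S})'\nleqW\BWT_\IR=\WKL'$. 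This last non--reducibility is the crux, and I expect it to be the principal difficulty: it would be established by combining the characterization of derivatives (Theorem~\ref{thm:derivatives} and Corollary~\ref{cor:derivatives}: $f\leqW\WKL'$ iff $f=g_0\circ l_0$ with $g_0\leqW\WKL$ and $l_0\leqW\lim$) with a direct diagonalization against such factorizations, using a parallel family of $\K_S$-instances each of which, at an unknown ``escape stage'', must be answered either near the accumulation point or inside an unbounded finite set --- behaviour that no weakly computable map post--composed with a limit can reproduce. The bookkeeping in this final step, rather than any single conceptual ingredient, is where the work lies.
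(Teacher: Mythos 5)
There are two genuine gaps, one in each half of your plan.

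Step (A) is false as stated: an infinite compact computable metric space need not contain a computable copy of $S=\{0\}\cup\{2^{-k}:k\in\IN\}$. Take a computable sequence $(x_n)$ of rationals converging (slowly) to a non-computable real $x$ and let $X:=\{x_n:n\in\IN\}\cup\{x\}$ with the Euclidean metric and $\alpha(n):=x_n$; this is an infinite, compact computable metric space whose only accumulation point is $x$. Any embedding $\iota:S\into X$ must send $0$ to an accumulation point of $X$, hence to $x$; but a computable $\iota$ would make $\iota(0)$ a computable point of $X$, which $x$ is not. So no computable sequence $(b_k)$ of distinct points of $X$ converging to a \emph{computable} $b^*$ exists, and compactness does not help you ``steer'' towards the non-isolated point. (Note that the paper's proof only ever needs computable embeddings of the \emph{finite} discrete spaces $\{0,\dots,m+1\}$ into $X$, which do exist since $X$ is infinite.)

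Step (B) is irreparable in the form you propose, because the statement you reduce it to is false. Since $S\In\IR$, Corollary~\ref{cor:BWT-embedding} gives $\BWT_S\leqSW\BWT_\IR$, and $\BWT_\IR$ is parallelizable by Corollary~\ref{cor:BWT-parallelizable-idempotent-cylinder}; hence $\widehat{\BWT_S}\leqW\widehat{\BWT_\IR}\equivW\BWT_\IR$, and in fact $\widehat{\BWT_S}\equivW\widehat{\BWT_\IN}\equivW\BWT_\IR$ (the lower bound via $\BWT_2\leqW\BWT_S$). So no diagonalization can establish $\widehat{\BWT_S}\nleqW\BWT_\IR$, and parallelization cannot propagate a separation of $\BWT_S$ from $\BWT_\IN$ through the jump: both have the same parallelization. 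The separation $\BWT_S\nleqW\BWT_\IN$ is nevertheless true (it is exactly Corollary~\ref{cor:X-omega}), but it is obtained differently: assuming $\BWT_X\leqW\BWT_\IN$ with inner reduction function $K$, one uses that $\dom(\BWT_X)=X^\IN$ is compact under a proper admissible representation to extract a non-empty clopen set $A$ of names and a bound $m$ with $K(p)(k)\leq m$ on $A$; then strong fractality of $\BWT_X$ (Proposition~\ref{prop:join-irreducible-BWT}) yields $\BWT_{m+2}\leqSW\BWT_X\leqSW(\BWT_X)_A\leqSW\BWT_{m+1}$, contradicting the strict hierarchy $\BWT_{m+1}\lW\BWT_{m+2}$ of Theorem~\ref{thm:BWT-finite}. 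Some argument of this ``localize the bound, then exploit self-similarity'' type (or another device that works at the level of the reduction itself rather than of parallelizations) is what your proposal is missing.
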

\begin{proof}
We note that for compact $X$ we have that $\dom(\BWT_X)=X^\IN$ is compact. 
We use Schr\"oder's computably admissible representation $\delta$ of $X^\IN$, 
which is proper and hence $D=\dom(\delta)=\delta^{-1}(X^\IN)$ is compact (see \cite{Wei03}). 
Moreover, we assume that the input space $\IN^\IN$ of $\BWT_\IN$ is represented by the identity.
Let us assume $\BWT_X\leqW\BWT_\IN$. Then there are computable
$H,K$ such that $H\langle\id,GK\rangle\vdash\BWT_X$ for all $G\vdash\BWT_\IN$.
We note that $K(D)\In\dom(\BWT_\IN)=\{q\in\Baire:(\exists m)(\forall k)\;q(k)\leq m\}$.
We claim that there exist $m$ and $A$ non-empty and clopen in $D$ such that
\begin{eqnarray}
(\forall p\in A)(\forall k)\; K(p)(k)\leq m.
\label{eqn:BWT-infinite-compact}
\end{eqnarray}
Since $\BWT_X$ is a strong fractal by Proposition~\ref{prop:join-irreducible-BWT} this claim and Corollary~\ref{cor:BWT-embedding}
imply 
\[\BWT_{m+2}\leqSW\BWT_X\leqSW(\BWT_X)_A\leqSW\BWT_{m+1}\]
in contradiction to Theorem~\ref{thm:BWT-finite}. We need to prove the existence of $A$ and $m$ that 
satisfy (\ref{eqn:BWT-infinite-compact}).
Suppose there is no such suitable $A$ and $m$. 
In particular, since $A=D$ and $m=0$ do not satisfy (\ref{eqn:BWT-infinite-compact}), there exists a $p_0\in D$ and $k_0$
such that $K(p_0)(k_0)>0$. Since $K$ is continuous, there exists a clopen neighbourhood $A_0$ of $p_0$
in $D$ such that $K(p)(k_0)>0$ for all $p\in A_0$. 
At stage $s$ we suppose that we have $A_s$, which is non-empty and clopen in $D$, $p_s\in A_s$
and $k_s$ such that $K(p)(k_s)>s$ for all $p\in A_s$. Since $A=A_s$ and $m=s+1$ do not satisfy
(\ref{eqn:BWT-infinite-compact}), there exists $p_{s+1}\in A_s$ and $k_{s+1}$ such that
$K(p_{s+1})(k_{s+1})>s+1$. Using again the continuity of $K$ we obtain $A_{s+1}\In A_s$, which is
clopen and non-empty in $D$ 
such that $K(p)(k_{s+1})>s+1$ 
for all $p\in A_{s+1}$. Since $D$ is compact, there exists some $p\in\bigcap_{s=0}^\infty A_s$.
Clearly, $K(p)(k_s)>s$ for all $s$ in contradiction to $K(p)\in\dom(\BWT_\IN)$. 
This proves the claim.
\end{proof}

We give a concrete example.

\begin{corollary}
\label{cor:X-omega}
Let $X_\omu = \{-2^{-n}:n \in \IN\} \cup \{0\}$ with the
Euclidean metric. Then $\BWT_\IN\lW\BWT_{X_\omu}\lW\BWT_\IR$.
\end{corollary}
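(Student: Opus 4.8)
The plan is to derive this from Proposition~\ref{prop:BWT-infinite-compact}, Corollary~\ref{cor:BWT-embedding}, and the non-uniform criterion Proposition~\ref{prop:computable-invariance}(1), in close analogy with the proof of Theorem~\ref{thm:BWT-finite}.

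First I would record that $X_{\omu}$ is an infinite compact computable metric space. The sequence $n\mapsto-2^{-n}$ is dense in $X_{\omu}$, since $0=\lim_{n\to\infty}(-2^{-n})$ lies in its closure, and $d(-2^{-n},-2^{-m})=|2^{-n}-2^{-m}|$ is a computable double sequence of rationals; moreover $X_{\omu}$ is a closed and bounded subset of $\IR$, hence compact, and it is clearly infinite. Thus Proposition~\ref{prop:BWT-infinite-compact} applies and yields $\BWT_{X_{\omu}}\nleqW\BWT_\IN$. In the other direction, $n\mapsto-2^{-n}$ is a computable embedding $\IN\into X_{\omu}$: it is computable and injective, and its partial inverse is computable because each point $-2^{-n}$ is isolated in $X_{\omu}$ with isolation radius bounded below by the computable quantity $2^{-n-2}$, so from any Cauchy name of a point in the range one can eventually read off the correct index $n$. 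Hence $\BWT_\IN\leqW\BWT_{X_{\omu}}$ by Corollary~\ref{cor:BWT-embedding}, and combining this with the non-reduction above we obtain $\BWT_\IN\lW\BWT_{X_{\omu}}$.

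For the upper strict reduction I would use that $X_{\omu}$ is a subspace of $\IR$, so $\BWT_{X_{\omu}}\leqW\BWT_\IR$ follows from Corollary~\ref{cor:BWT-embedding} (or directly from Corollary~\ref{cor:BWT-metric}). Strictness follows exactly as in Theorem~\ref{thm:BWT-finite}: every cluster point of any sequence in $X_{\omu}$ equals $0$ or some $-2^{-n}$, and each of these is a computable point of $\IR$, so $\BWT_{X_{\omu}}$ maps computable inputs to computable outputs. By Corollary~\ref{cor:BWT-counterexample} the function $\BWT_\IR$ does not have this property, since it has a computable input none of whose cluster points is even limit computable. Applying Proposition~\ref{prop:computable-invariance}(1) with $A=B=\emptyset$ therefore gives $\BWT_\IR\nleqW\BWT_{X_{\omu}}$, and hence $\BWT_{X_{\omu}}\lW\BWT_\IR$. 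The proof is essentially bookkeeping once Proposition~\ref{prop:BWT-infinite-compact} is in hand; the only step that needs a moment's care is checking that $n\mapsto-2^{-n}$ really is a computable embedding, i.e.\ that its partial inverse is computable, which holds because the points of $X_{\omu}\setminus\{0\}$ are uniformly computably isolated.
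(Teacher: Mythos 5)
Your proposal is correct and follows essentially the same route as the paper: the reductions come from computable embeddings $\IN\into X_\omu\into\IR$ via Corollary~\ref{cor:BWT-embedding}, the lower strictness from Proposition~\ref{prop:BWT-infinite-compact}, and the upper strictness from the non-uniform observation that $\BWT_{X_\omu}$ only outputs computable points while $\BWT_\IR$ has a computable instance with no computable cluster point, combined with Proposition~\ref{prop:computable-invariance}. The only trivial slip is the parenthetical appeal to Corollary~\ref{cor:BWT-metric}, which bounds $\BWT_{X_\omu}$ by $\BWT_\Cantor$ rather than $\BWT_\IR$ directly and so additionally needs Corollary~\ref{cor:perfect-BWT-examples}; your main argument via the inclusion embedding does not rely on this.
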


Here the reductions to $\BWT_\IR$ 
are strict 
since there are computable sequences whose (unique) cluster point is
not computable, whereas $X_\omu$ only contains computable points.

\section{Cluster Points versus Accumulation Points}

We recall that a point $x$ is called {\em accumulation point} of a subset $A\In X$ of a topological space $X$,
if each open neighbourhood $U$ of $x$ has a non-empty intersection with $A\setminus\{x\}$.
By $A'$ we denote the {\em set of accumulation points} of $A$ (no confusion with the Turing jump is to be expected).
We define the accumulation point problem as follows.

\begin{definition}[Accumulation point problem]
Let $X$ be a computable metric space. We consider the map
\[\A_X:\AA_+(X)\to\AA_-(X),A\mapsto A'.\]
We call $\CA_X:=\C_X\circ\A_X$ the {\em accumulation point problem} of $X$.
\end{definition}

We note that the input $A$ is given with respect to positive information, whereas the output $A'$
is required with negative information. The accumulation point problem is particularly well-behaved for these
types of input and output information.
In Theorem~9.6 of \cite{BG09} we have proved that $\A_X$ is always limit computable.
With Theorems~\ref{thm:derivative-closed-choice} and \ref{thm:derivatives} we immediately 
get the following corollary, which shows that the accumulation point
problem is always reducible to the cluster point problem of the same space.

\begin{corollary}
$\CA_X\leqSW\C_X'\equivSW\CL_X$ for each computable metric space $X$.
\end{corollary}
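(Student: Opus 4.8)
The plan is to read the corollary off from the two cited theorems together with the fact, established in Theorem~9.6 of \cite{BG09}, that the accumulation operator $\A_X$ is limit computable. First I would recall that this means $\A_X\leqW\lim$; since $\lim$ is a cylinder (Fact~\ref{fact:lim}) and reductions to a cylinder upgrade to strong reductions, we get $\A_X\leqSW\lim$.

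Next I would put $\CA_X$ into the normal form required by Theorem~\ref{thm:derivatives}. By definition $\CA_X=\C_X\circ\A_X$, so setting $g_0:=\C_X$ and $l_0:=\A_X$ we have $\CA_X=g_0\circ l_0$ with $g_0\leqSW\C_X$ (trivially) and $l_0\leqSW\lim$ (by the previous step). The direction ``(2)$\TO$(1)'' of Theorem~\ref{thm:derivatives} holds in its strong form for an arbitrary $g$, without the cylinder hypothesis (which is only needed for the other direction); applying it with $g=\C_X$ yields $\CA_X\leqSW\C_X'$.

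Finally, Theorem~\ref{thm:derivative-closed-choice} gives $\C_X'\equivSW\CL_X$ for every computable metric space $X$, and chaining the two statements produces $\CA_X\leqSW\C_X'\equivSW\CL_X$, as claimed.

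I do not expect any genuine obstacle here: the result is ``immediate'' once the limit computability of $\A_X$ is available. The only two points deserving a word of care are (i) invoking the non-cylinder version of Theorem~\ref{thm:derivatives}, since $\C_X$ need not be a cylinder (e.g.\ for $X=\IN$), and (ii) noting that limit computability of $\A_X$ in fact gives a \emph{strong} reduction $\A_X\leqSW\lim$ because $\lim$ is a cylinder, which is what the strong form of Theorem~\ref{thm:derivatives} requires.
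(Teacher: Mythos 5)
Your proposal is correct and follows essentially the same route as the paper: the paper also deduces the corollary immediately from the limit computability of $\A_X$ (Theorem~9.6 of \cite{BG09}) together with Theorems~\ref{thm:derivatives} and \ref{thm:derivative-closed-choice}, applied to the decomposition $\CA_X=\C_X\circ\A_X$. Your two points of care (using the strong, non-cylinder form of Theorem~\ref{thm:derivatives}, and upgrading $\A_X\leqW\lim$ to $\A_X\leqSW\lim$ because $\lim$ is a cylinder) are exactly the details implicit in the paper's one-line argument.
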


The inverse reduction cannot hold in general. For instance $\CA_\IN\equivSW\C_0$, since
subsets of natural numbers have no accumulation points. However, the following 
result yields a substitute for the inverse result.

\begin{proposition}
$\CL_X\leqSW\CA_{X\times[0,1]}$ for each computable metric space $X$.
\end{proposition}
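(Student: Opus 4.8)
The plan is to encode a sequence $(x_n)$ in $X$ as a closed subset of $X\times[0,1]$ whose set of accumulation points is exactly the set of cluster points of $(x_n)$, placed at height $0$. Concretely, given $(x_n)\in\dom(\CL_X)$ I would take
\[A:=\overline{\{(x_n,2^{-n}):n\in\IN\}}\In X\times[0,1].\]
From a $\delta_X^\IN$-name of $(x_n)$ one can compute, for each $n$, a Cauchy name of the point $(x_n,2^{-n})$ in $X\times[0,1]$; and a rational open ball of $X\times[0,1]$ meets $A$ if and only if it contains some $(x_n,2^{-n})$, which is a c.e.\ condition in the parameters. Hence one can enumerate all rational open balls meeting $A$, i.e.\ compute a $\psi_+$-name of $A\in\AA_+(X\times[0,1])$. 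This is the computable preprocessing map $K$ of the intended strong reduction.

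The combinatorial heart of the argument is the claim $A'=\L_X(x_n)\times\{0\}$. First, no point $(x,t)$ with $t>0$ is an accumulation point: since the second coordinates $2^{-n}$ are pairwise distinct and their only limit point is $0$, for $t>0$ the set $(\{2^{-n}:n\}\cup\{0\})\setminus\{t\}$ is closed and does not contain $t$, so a sufficiently small neighbourhood of $(x,t)$ meets $\{(x_n,2^{-n}):n\}$ in at most the single point whose second coordinate equals $t$ (and in none when $t\notin\{2^{-n}:n\}$), whence no point of $A$ distinct from $(x,t)$ lies in it. Second, for $t=0$, a basic neighbourhood of $(x,0)$ of radius $\varepsilon$ contains $(x_n,2^{-n})$ precisely when $d(x,x_n)<\varepsilon$ and $2^{-n}<\varepsilon$; unravelling the statement ``for every $\varepsilon>0$ there is such an $n$'' (using that $2^{-n}<\varepsilon$ forces $n$ to be large) shows it is equivalent to ``for every $k$ and every $\varepsilon>0$ there is $n\ge k$ with $d(x,x_n)<\varepsilon$'', i.e.\ to $x$ being a cluster point of $(x_n)$. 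Here one uses the standard fact that in a metric space the derived set of a set equals the derived set of its closure, so that passing to $\overline{\phantom{X}}$ in the definition of $A$ is harmless. In particular $A'\neq\emptyset$, since $(x_n)$ has a cluster point, so $\A_{X\times[0,1]}(A)\neq\emptyset$ and hence $A\in\dom(\CA_{X\times[0,1]})$.

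With this in place the reduction is immediate. Let $G\vdash\CA_{X\times[0,1]}$. Applied to the $\psi_+$-name of $A$ produced by $K$, it outputs a name of some point $(x,0)\in\A_{X\times[0,1]}(A)=\L_X(x_n)\times\{0\}$. Taking $H$ to be the computable projection onto the first coordinate (on names), $HGK$ outputs a name of $x$, and $x\in\L_X(x_n)=\CL_X(x_n)$. Since $H$ only consults the output of $G$ and $K$ only the original input, this shows $HGK\vdash\CL_X$ for every $G\vdash\CA_{X\times[0,1]}$, i.e.\ $\CL_X\leqSW\CA_{X\times[0,1]}$.

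I expect the only genuine work to be the verification that $A'=\L_X(x_n)\times\{0\}$, and in particular the equivalence of the accumulation-point condition at $(x,0)$ with the cluster-point condition for $(x_n)$; everything else (computability of $K$ and $H$, and the strongness of the reduction) is routine. A minor point to keep clean is the choice of heights: all that is needed is a computable, strictly decreasing sequence in $(0,1]$ converging to $0$ with pairwise distinct terms, and $2^{-n}$ is a convenient such choice.
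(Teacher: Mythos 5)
Your proposal is correct and follows essentially the same route as the paper: encode $(x_n)$ as the (closed) set $\{(x_n,2^{-n}):n\in\IN\}\In X\times[0,1]$, observe that its accumulation points are exactly the cluster points of $(x_n)$ at height $0$, compute its positive information from the sequence, apply $\CA_{X\times[0,1]}$, and project to the first coordinate. You merely spell out the verification of $A'=\L_X(x_n)\times\{0\}$ and the harmlessness of taking the closure, which the paper leaves implicit.
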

\begin{proof}
We note that for any sequence $(x_n)$ in $X$, the set
\[A:=\{(x_n,2^{-n}):n\in\IN\}\In X\times[0,1]\]
has the property that the set of its accumulation points is
\[A'=\{(x,0):\mbox{$x$ is cluster point of $(x_n)$}\}.\]
The map $X^\IN\to\AA_+(X\times[0,1])$ that maps any sequence
$(x_n)$ to the corresponding set $A$ is computable.
Likewise, the projection $\pr:X\times[0,1]\to X$ is computable.
A combination of these operations yields the reduction.
\end{proof}

The space $[0,1]$ in this result could be replaced by $X_\omu$ 
from Corollary~\ref{cor:X-omega}. For certain computable metric spaces $X$
such as Euclidean space $\IR$, Cantor space $\Cantor$ and Baire space $\Baire$
we know that $\C_{X\times[0,1]}\leqSW\C_{X}$ (see Section~7 of \cite{BBP}).
The above results imply $\CA_{X\times[0,1]}\leqSW\CL_{X\times[0,1]}\leqSW\CL_X\leqSW\CA_{X\times[0,1]}$.
Hence we get the following corollary.

\begin{corollary}
$\CA_\IR\equivSW\CL_\IR$, $\CA_\Cantor\equivSW\CL_\Cantor$, and $\CA_\Baire\equivSW\CL_\Baire$.
\end{corollary}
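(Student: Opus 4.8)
Since the preceding corollary already gives $\CA_X\leqSW\CL_X$ for every computable metric space $X$, for $X\in\{\IR,\Cantor,\Baire\}$ it suffices to establish the reverse reduction $\CL_X\leqSW\CA_X$, and the plan is to prove this directly for every computable metric space $X$ without isolated points, a class which obviously contains $\IR$, $\Cantor$ and $\Baire$. The reduction I have in mind is a strong one: its input modifier $K$ sends a sequence $(x_n)\in\dom(\CL_X)$ to a $\psi_+$--name of a closed set $A\In X$ whose set $A'$ of accumulation points coincides with the set $L$ of cluster points of $(x_n)$. Since $(x_n)\in\dom(\CL_X)$ forces $L\neq\emptyset$, we then have $A\in\dom(\CA_X)$, and $\CA_X=\C_X\circ\A_X$ applied to this name returns an arbitrary point of $A'=L$, i.e.\ an arbitrary cluster point of $(x_n)$; hence the output modifier can be taken to be the identity and $\CL_X\leqSW\CA_X$ follows.

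To build $A$, let $\alpha$ be the dense sequence of $X$. Since $X$ has no isolated points, every ball $B(x_n,2^{-n})$ is infinite, so $\alpha$ takes infinitely many distinct values in it; hence from a name of $x_n$ one can effectively search for two \emph{distinct} indices $j,k$ with $d(\alpha(j),x_n)<2^{-n}$ and $d(\alpha(k),x_n)<2^{-n}$. This search terminates and is uniform in $n$ and in the name of $(x_n)$. Set $D_n:=\{\alpha(j),\alpha(k)\}$, a two--element set of diameter at most $2^{-n+1}$ contained in $B(x_n,2^{-n})$, and let $A:=\overline{\bigcup_{n}D_n}$; a dense sequence in $A$, and therefore a $\psi_+$--name of $A$, is obtained simply by listing all points of all the $D_n$.

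The heart of the proof is the identity $A'=L$. In a metric space, passing to the closure does not change the set of accumulation points, so $A'=(\bigcup_{n}D_n)'$; and since each $D_n$ is finite, a point $x$ lies in $(\bigcup_{n}D_n)'$ if and only if $D_n$ meets every fixed neighbourhood of $x$ for infinitely many $n$. Because $\diam(D_n)\to0$ and $D_n\In B(x_n,2^{-n})$, the latter is in turn equivalent to $x_n$ coming arbitrarily close to $x$ for infinitely many $n$, that is, to $x\in L$. For the inclusion $L\In A'$ one needs that each $D_n$ carries \emph{two} distinct points: among them at least one differs from $x$, and they cannot all coincide in the limit (a common value $q$ would satisfy $d(q,x)\to0$ and hence equal $x$), so $x$ is indeed approached by infinitely many points of $\bigcup_{n}D_n$ other than $x$. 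Throughout, one works with the \emph{set} $\bigcup_{n}D_n$, so accidental coincidences among the chosen $\alpha(j)$ are harmless.

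I expect the one genuinely delicate point to be exactly this identity $A'=L$, and within it the assertion that the perturbation sets create no accumulation point that fails to be a cluster point --- which is precisely what the vanishing diameters together with the finiteness of the $D_n$ guarantee; the remaining steps, in particular computability of the $\psi_+$--name of $A$, are routine for a perfect computable metric space. Combining $\CL_X\leqSW\CA_X$ with the preceding corollary then yields $\CA_X\equivSW\CL_X$ for $X\in\{\IR,\Cantor,\Baire\}$. As a remark I would note that in the two compact--fibred cases the explicit construction can be bypassed: by the preceding proposition, with $X_\omu$ in place of $[0,1]$, one has $\CL_\Cantor\leqSW\CA_{\Cantor\times X_\omu}$ and $\CL_\Baire\leqSW\CA_{\Baire\times X_\omu}$, and $\Cantor\times X_\omu$, being compact and zero--dimensional, admits a computable closed embedding into $\Cantor$, while $\Baire\times X_\omu$ admits one into $\Baire$; under such closed embeddings positive information and the accumulation--point operation are preserved, so $\CA_{\Cantor\times X_\omu}\leqSW\CA_\Cantor$ and $\CA_{\Baire\times X_\omu}\leqSW\CA_\Baire$. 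This shortcut, however, does not cover $\IR$, since $\IR\times X_\omu$ does not embed into $\IR$ and $\CL_\IR$ is in fact strictly stronger than $\CL_\Cantor$; so the direct construction above appears to be the uniform way to treat all three spaces.
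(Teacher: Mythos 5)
Your proof is correct, but it takes a genuinely different route from the paper. The paper obtains the corollary with no new construction, simply by chaining the two results that precede it: since $\C_{X\times[0,1]}\leqSW\C_{X}$ for $X\in\{\IR,\Cantor,\Baire\}$, one gets $\CA_{X\times[0,1]}\leqSW\CL_{X\times[0,1]}\leqSW\CL_X\leqSW\CA_{X\times[0,1]}$ (with the remark that $[0,1]$ may be replaced by $X_\omu$); there the vanishing tags $2^{-n}$, which turn range points into genuine accumulation points, are attached in a second coordinate, which is why the auxiliary product space appears. You instead prove $\CL_X\leqSW\CA_X$ directly for every computable metric space without isolated points, realizing the same perturbation idea inside $X$ itself: the two-element sets $D_n\In B(x_n,2^{-n})$ of dense-subset points, where the two distinct points give $L\In A'$, finiteness of the $D_n$ together with $\diam(D_n)\to0$ gives $A'\In L$, and the search for the two points terminates because no ball is a singleton. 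What your route buys is that it never leaves the space $X$, so it yields the needed reduction verbatim for $\IR$, $\Cantor$ and $\Baire$ simultaneously, whereas the paper's chain identifies $\CL_X$ with $\CA_{X\times[0,1]}$ and the passage from the product back to $\CA_X$ is exactly the step your own closing remark shows to be delicate (manageable for $\Cantor$ and $\Baire$ via $X_\omu$ and computable closed embeddings, but not for $\IR$); what the paper's route buys is brevity, being an immediate consequence of results already proved. One expository caveat: your biconditional ``$x\in(\bigcup_n D_n)'$ iff $D_n$ meets every fixed neighbourhood of $x$ for infinitely many $n$'' is, for arbitrary finite sets, valid only in the `only if' direction (consider $D_n=\{x\}$ for all $n$); in your construction it is rescued precisely by $|D_n|=2$ and the vanishing diameters, which you do invoke when proving $L\In A'$, so the argument stands as written.
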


\section{The Contrapositive of the Bolzano-Weierstra\ss{} Theorem}

In this section we consider the following contrapositive of the
Bolzano-Weierstrass Theorem for sequences of reals:
Every sequence in $\bbR$ eventually bounded away from each point of
$[0,1]$ is eventually bounded away from the set $[0,1]$.
Here, for any sequence $(x_n)$ in $\bbR$,
\begin{itemize}
    \item \emph{$(x_n)$ is eventually bounded away from $x \in
        \bbR$} means that there exist $N \in \bbN$ and $\de>0$
        such that $|x_n-x|>\de$ for all $n \geq N$;
    \item \emph{$(x_n)$ is eventually bounded away from $S
        \subseteq \bbR$} means that there exist $N \in \bbN$ and
        $\de>0$ such that $|x_n-x|>\de$ for all $x \in S$ and
        $n\geq N$.
\end{itemize}

This statement is known in constructive mathematics (see e.g.\
\cite{BB07,Bri09}) as the antithesis of Specker's Theorem. In
particular in \cite{BB07} it is proved that this principle is
intuitionistically equivalent to a version of the Fan Theorem and therefore the
authors consider it as an intuitionistic substitute for the Bolzano-Weierstra\ss{} Theorem.
We use the following definition for the antithesis of Specker's Theorem.

\begin{definition}[Antithesis of Specker's Theorem]
We call $\AS:\In\IR^\IN\mto\IN\times\IN$ with
\[\AS(x_n) := \{(N,k)\in \IN\times \IN:(\fa x\in[0,1])(\fa n\geq N)|x_n-x|>2^{-k}\}\]
and 
$\dom(\AS):=\{(x_n) \in \bbR^\bbN:(\fa x\in [0,1])(\ex N, k)(\fa n\geq N)|x_n-x|>2^{-k}\}$
the {\em antithesis of Specker's Theorem}.
\end{definition}

The next proposition shows that in our setting the antithesis
of Specker's Theorem is definitely simpler than the
Bolzano-Weierstrass Theorem and in fact equivalent to the Baire Category Theorem
by Fact~\ref{fact:BCT}.

\begin{theorem}[Antithesis of Specker's Theorem]
\label{thm:AS}
$\AS \equivW \CN\equivW\BCT$.
\end{theorem}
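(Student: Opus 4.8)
The plan is to establish $\AS\equivW\CN$ directly; the equivalence $\CN\equivW\BCT$ is already available from Fact~\ref{fact:BCT}, so nothing further is needed there. I will prove the two reductions $\AS\leqW\CN$ and $\CN\leqW\AS$ separately, in each case by exhibiting computable pre- and post-processing maps in the sense of the characterization of $\leqW$ recalled after the definition of Weihrauch reducibility.

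For $\AS\leqW\CN$, first observe that if $(x_n)\in\dom(\AS)$ then there is a pair $(N,k)$ witnessing the eventual bounded-awayness from all of $[0,1]$; the task is to produce such a pair. The natural move is to encode the set of \emph{bad} pairs as negative information about a set of natural numbers. Using a standard pairing $\langle N,k\rangle$, I would enumerate $\langle N,k\rangle$ into the complement of the candidate set $S\In\IN$ as soon as we discover, via finitely much information about the reals $x_n$, a rational $q\in[0,1]$ and an $n\geq N$ with $|x_n-q|<2^{-k}$ (strictly, with a small rational slack so the test is c.e.\ and stays sound: observe $|x_n-q|<2^{-k}$ for a rational $q$ within $2^{-k-2}$ of a point of $[0,1]$, which forces the corresponding point of $[0,1]$ to violate the $(N,k)$ bound). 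A pair $\langle N,k\rangle$ survives forever exactly when $(\forall x\in[0,1])(\forall n\geq N)\,|x_n-x|>2^{-k}$, i.e.\ exactly when it is a valid output of $\AS$; and by hypothesis at least one such pair exists, so the set $S$ produced is a nonempty co-c.e.\ subset of $\IN$. Feeding $S$ to $\CN$ returns some $\langle N,k\rangle\in S$, and decoding the pair gives an element of $\AS(x_n)$. Some care is needed to check that the test really is c.e.\ open in the input sequence and that it never fires on a genuinely good pair — this soundness check is the only slightly delicate point, and it is handled by choosing the slack constants as above.

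For $\CN\leqW\AS$, I would go the other way: given $A\In\IN$ nonempty in negative description, build a sequence $(x_n)$ of reals whose cluster behaviour with respect to $[0,1]$ encodes $A$. The idea is to pack a copy of $\{0,1,\dots\}$ into a compact piece of $\IR$ sitting just outside $[0,1]$ — for instance use the points $y_m := 1 + 2^{-m}$ for $m\in\IN$, which accumulate only at $1\in[0,1]$. One lists $x_n$ by repeatedly outputting $y_m$ for the least $m$ not yet excluded from $A$ by the negative information seen so far; when $m$ gets excluded, switch to the next surviving index. If $A$ is nonempty, eventually the output stabilizes (for the purposes of cluster points near $[0,1]$) on $y_{m_0}$ for $m_0=\min A$ only finitely often — more precisely, the sequence fails to be eventually bounded away from $[0,1]$ iff it hits points $y_m$ with $m\to\infty$ infinitely often, which happens iff infinitely many indices get excluded, i.e.\ never, since $A\neq\emptyset$ bounds the exclusions. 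Thus $(x_n)\in\dom(\AS)$, and from a witness pair $(N,k)\in\AS(x_n)$ one reads off a bound below which all surviving $y_m$ lie, hence recovers some $m\in A$. Here the bookkeeping — making sure the constructed sequence is genuinely in $\dom(\AS)$ precisely when $A$ is nonempty, and that the post-processing map extracts an element of $A$ from any valid witness pair — is the main obstacle, essentially a variant of the argument used in Proposition~\ref{prop:unique-choice-N}. Combining the two reductions with Fact~\ref{fact:BCT} yields $\AS\equivW\CN\equivW\BCT$.
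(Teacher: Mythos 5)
Your first reduction, $\AS\leqW\CN$, is essentially the paper's argument: enumerate the pairs $\langle N,k\rangle$ that are killed by a c.e.\ test and apply $\C_\IN$ to the survivors. One imprecision: it is not true that a pair survives \emph{exactly} when it is a valid output of $\AS$; survival of your test only yields the non-strict bound $|x_n-x|\geq 2^{-k}$ for all $x\in[0,1]$, $n\geq N$ (and with your slack constants even a slightly weaker bound), so the post-processing must output $(N,k+1)$ rather than $(N,k)$. This is exactly the harmless adjustment the paper makes and is easily added.

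The genuine gap is in $\CN\leqW\AS$. With the values $y_m=1+2^{-m}$, every mind change of your construction moves from one point bounded away from $[0,1]$ to another such point, so a witness $(N,k)\in\AS(x_n)$ does \emph{not} prevent further mind changes after stage $N$: it only tells you that every candidate ever used satisfies $m<k$, i.e.\ $\min A<k$. The candidate visible at stage $N$ may still be excluded from $A$ later, and a bound on $\min A$ together with the negative description of $A$ does not let you compute a member of $A$: since the output in $\IN$ must be fixed after reading finitely much of the input, that residual task is precisely choice on $\{0,\dots,k-1\}$, which is discontinuous (already $\C_2\equivSW\LLPO$), hence not computable, so no computable post-processing can finish your reduction; in effect your argument only establishes something like $\K_\IN\leqW\AS$. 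The missing idea is the device used in the paper's proof: whenever the current candidate gets excluded, the constructed sequence first visits a point \emph{inside} $[0,1]$ (the paper outputs $0$, encoding candidate $i$ as $i+2$). Then any witness $(N,k)$ certifies that no mind change occurs at any stage $\geq N$, so the candidate at stage $N$ is final, lies in $A$, and is computable from $N$ and the original name of $A$. Your construction can be repaired the same way by emitting, say, the value $1$ once at every switch before moving to the next $y_m$.
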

\begin{proof}
We first show that $\AS \leqW \CN$. For $(x_n)$ eventually bounded
away from each point of $[0,1]$ we let
$B := \{\langle N,k\rangle\in \bbN:(\ex n \geq N)(-2^{-k} < x_n < 1+ 2^{-k})\}$.
Then $B$ is computably enumerable in $(x_n)$, and with the help of $\C_\IN$
we can obtain a point in $A = \IN\setminus B$.
For every $\langle N,k \rangle\in A$ we
have that $d(x_n, x)>2^{-(k+1)}$ for all $n \geq N$ and $x \in[0,1]$.
Thus $(N, k+1) \in \AS(x_n)$.

We now show $\CN \leqW \AS$. 
Let $p\in\Baire$ be such that $\psi_-(p)=A$, i.e.\ $\IN\setminus A=\{n\in\IN:(\exists k)\;p(k)=n+1\}$.
We now construct a
sequence $(x_n)$ that is eventually bounded away from each point of $[0,1]$ in
the following way. Start with $x_0=2$. For every $k$ we check whether
$p(k)=x_{2k}-1$. If this happens we let $x_{2k+1}:=0$ and $x_{2k+2} :=
\min\{i:(\forall n \leq k)\, p(n) \neq i+1\} +2$, otherwise let
$x_{2k+1} := x_{2k+2} := x_{2k}$.
Since $A \neq \eps$, $A$ has some least element, say $i\in \bbN$.
Therefore $(x_n)$ is eventually $i+2$ and hence is eventually bounded
away from each point of $[0,1]$. Let now $(N,k) \in \AS(x_n)$. For
every $n \geq N$, $x_n - 2 = i \in A$.
\end{proof}

\section{Conclusions}
\label{sec:conclusions}

The diagram in Figure~\ref{fig:choice} illustrates some of the Weihrauch degrees that we
have studied in this paper. The arrows indicate Weihrauch reducibility and not necessarily
strong reducibility. For details the reader should refer to the respective results.
 
\begin{figure}[htbp]
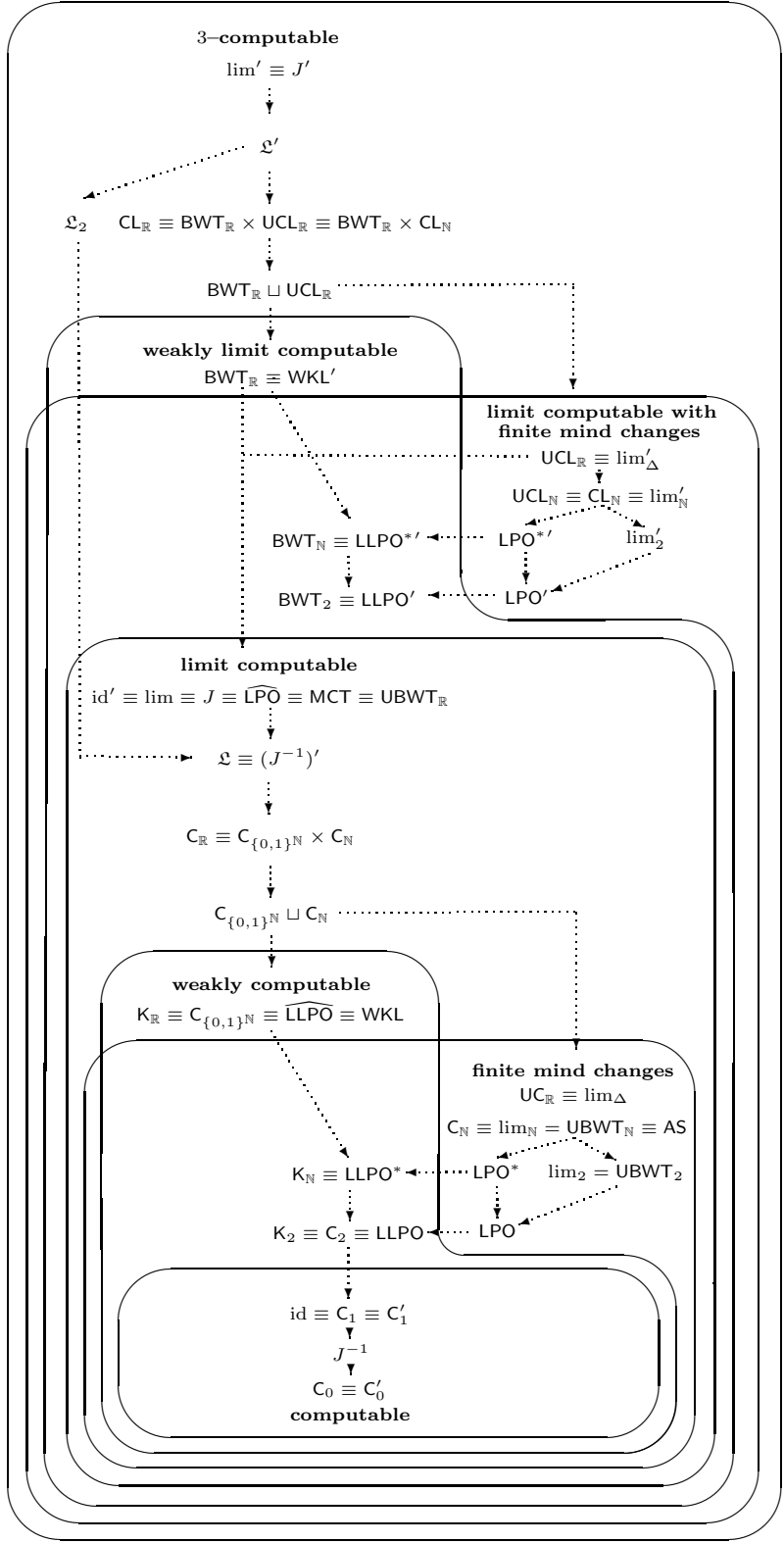

\begin{center}
\begin{scriptsize}
\input choice.pic
\end{scriptsize}
\caption{The Weihrauch Lattice}
\label{fig:choice}
\end{center}
\end{figure}

We briefly compare our results with results that have been obtained in other approaches.
We point out that not many exact transfer theorems between these different approaches are
known, although obviously similar ideas emerge in different settings.
More general comments in this direction can be found in \cite{BG11a}.

\subsection{Computable Analysis}

In computable analysis questions related to the Bolzano-Weierstra\ss{} Theorem have been 
studied in the past. For instance Mylatz \cite{Myl92} has classified the complexity
of the decision problem of whether a sequence contains a convergent subsequence. 
One obtains by the Theorem of Bolzano-Weierstra\ss{}
that for a sequence $(x_n)$ of real number the following holds:
\[\mbox{$(x_n)$ contains a cluster point $\iff (\exists i,j)(\forall k)(\exists n\geq k)\;x_n\in(i,j)$.}\]
This shows that the set of sequences with cluster points is $\SO{3}$.
It turns out that it is also $\SO{3}$--complete (see for instance Exercise~23.1 in \cite{Kec95})
and hence the decision procedure is equivalent to $\LPO^{(2)}$.
Moreover, von Stein \cite{Ste89} has studied the decision problem of whether a given $x$ is a 
cluster point of $(x_n)$ and one easily sees that this can be phrased as
\[\mbox{$x$ is a cluster point of $(x_n)\iff (\forall i)(\forall k)(\exists n\geq k)\;d(x,x_n)<2^{-i}$},\]
which is easily seen to be a $\PO{2}$--complete property and hence this decision procedure is equivalent to $\LPO'$.
Le Roux and Ziegler~\cite{LZ08a} have studied, among other things, sets which are co-c.e.\ closed
in the limit, they have provided a version of our Corollary~\ref{cor:co-ce-closed-limit} for Euclidean space and they
have first proved that there exists a bounded computable sequence $(x_n)$ of reals that has no 
limit computable cluster point.

\subsection{Constructive Analysis}

In constructive analysis Mandelkern has studied the Bolzano-Weierstra\ss{} Theorem.
His main result is that the theorem is equivalent to $\LPO$ and the Monotone Convergence
Theorem $\MCT$ (see \cite{Man88a,Ish04}). This can be understood from the perspective of our theory in light
of the reduction
$\BWT_\IR\leqSW\widehat{\LLPO}\stars\widehat{\LPO}\leqSW\widehat{\LPO'}$
and indeed Mandelkern proves the Bolzano-Weierstra\ss{} Theorem by a repeated
and parallelized application of $\LPO$.
In the framework of constructive analysis one typically does not distinguish between parallelizations and 
compositional closures. 
The classification of $\BWT_\IR$ being equivalent to $\LPO$ in the sense
of constructive analysis is a very rough classification from our perspective and, 
in particular, it does not explain the computational differences between $\LPO$, $\MCT$
and $\BWT_\IR$. For instance, $\LPO$ always yields computable solutions, $\MCT$ always
maps computable inputs to limit computable outputs, whereas $\BWT_\IR$ maps
some computable inputs necessarily to outputs that are not limit computable.
On the other hand, our approach cannot distinguish certain constructive principles
that are computably equivalent from our perspective.
For example, principles such as $\LPO$ and $\WLPO$ (which is a weak version of $\LPO$) 
are not intuitionistically equivalent, but equivalent in presence of Markov's principle.
As Markov's principle is computable from our perspective, $\LPO$ and $\WLPO$
have equivalent Weihrauch degrees.

\subsection{Reverse Mathematics}

The situation in reverse mathematics is similar to the situation in constructive analysis.
The Bolzano-Weierstra\ss{} Theorem $\BWT_\IR$ is known to be equivalent to $\ACA_0$
over $\RCA_0$, see \cite{Sim99}. The same holds true for the Monotone Convergence Theorem $\MCT$.
The system $\ACA_0$ of arithmetic comprehension is the reverse mathematics counterpart of 
(the parallelization and compositional closure of) $\LPO$ (similarly as discussed above).
That is, for a theorem $T$ being provable in $\ACA_0$ roughly corresponds to
the property that the analogous multi-valued function $f$ (that formalizes $T$) satisfies $f\leqW\widehat{\LPO}^{(n)}$ for some $n\in\IN$.
The classification in constructive analysis is based on intuitionistic logic and hence
uniform in our sense. In contrast to that, reverse mathematics is typically based
on classical logic. Hence the classification rather corresponds to our non-uniform pointwise
results.

\subsection{Proof Theory}

Reverse mathematics can be considered as a proof theoretic approach. However,
there are also finer classifications of the Bolzano-Weierstra\ss{} Theorem
in a proof theoretic setting (see Kohlenbach \cite{Koh08a} for a survey on this approach).
Our jumps $\LLPO^{(n)}$ and $\LPO^{(n)}$ correspond to the
proof theoretic principles $\SO{n+1}$-$\LLPO$ and $\SO{n+1}$-${\rm LEM}$, respectively, 
studied by Akama, Berardi, Hayashi and Kohlenbach \cite{ABHK04}.
Among many other things they proved that $\SO{2}$--$\LLPO$ does not imply $\SO{2}$--${\rm LEM}$,
which can be seen as a counterpart of our Corollary~\ref{cor:parallelization-principle-BWT}.
Our main results on the Bolzano-Weierstra\ss{} Theorem are closely related to results
of Safarik and Kohlenbach, Kreuzer and perhaps even more closely to results of Toftdal.
Toftdal \cite{Tof04} has proved that the Bolzano-Weierstra\ss{} Theorem is instancewise
equivalent to the principle $\SO{2}$-$\LLPO$ (over a weak intuitionistic base system).
Kohlenbach, Safarik and Kreuzer have proved that instancewise the Bolzano-Weierstra\ss{} Theorem 
is equivalent to $\SO{1}-\WKL$ over $\RCA_0$ (see \cite{SK10,Kre11}). Here $\SO{1}-\WKL$ can be considered as the counterpart
of our derivative $\WKL'$ of $\WKL$. 
These results can be considered as analogous of our Corollary~\ref{cor:jump-WKL} in the respective settings.
Our classification is fully uniform and does correspond rather to an even finer classification using linear logic.
However, also some of the proof theoretic results mentioned above are already proved in a linear fashion.
Exact metatheorems that allow translations from one setting to another one will have to be discussed elsewhere.
We close with mentioning that very recently, Kreuzer has studied the Bolzano-Weierstra\ss{} Theorem for the
Hilbert space $\ll{2}$, but with compactness interpreted in terms of the weak topology and
this version of $\BWT$ turned out to be equivalent to $\lim''$, see \cite{Kre11a}.

\subsection*{Acknowledgements}
We would like to thank the anonymous referees for corrections and suggestions that 
have helped to improve the revised version of our paper. 
We also would like to thank Arno Pauly for his comments and for providing 
Example~\ref{ex:LPO-arrow}.

\bibliographystyle{alpha}
\bibliography{../../../bibliography/new/lit,local}

\end{document}